\newtheorem{thm}{Theorem}
\newtheorem{prop}[thm]{Proposition}
\newtheorem{lem}[thm]{Lemma}
\newtheorem{cor}[thm]{Corollary}
\newtheorem{rem}[thm]{Remark}
\newtheorem{ex}[thm]{Example}
\renewcommand{\epsilon}{\varepsilon}
\renewcommand{\phi}{\varphi}
\renewcommand{\deg}{\operatorname{deg}}
\renewcommand{\P}{\operatorname{P}}
\newcommand{\SP}{\operatorname{S}}
\newcommand{\cone}{\operatorname{Cone}}
\newcommand{\BB}{\mathbb}
\newcommand{\separate}{\vskip5pt}
\newcommand{\im}{\operatorname{Im}}
\newcommand{\re}{\operatorname{Re}}
\newcommand{\tr}{\operatorname{Tr}}
\newcommand{\B}{\overline}
\newcommand{\HC}{\BB H_{\BB C}}
\newcommand{\degt}{\widetilde{\operatorname{deg}}}
\newcommand{\Span}{\operatorname{Span}}
\newcommand{\M}{\operatorname{Mx}}
\newcommand{\m}{\upmu}
\newcommand{\tlap}{\widetilde{\square}_{\m}}
\newcommand\textcyr[1]{{\fontencoding{OT2}\fontfamily{wncyr}\selectfont #1}}
\newcommand{\Zh}{\textit{\textcyr{Zh}}}
\begin{document}

\title{\bf Anti De Sitter Deformation of Quaternionic Analysis
and the Second Order Pole}
\author{Igor Frenkel and Matvei Libine}
\maketitle

\begin{abstract}
This is a continuation of a series of papers \cite{FL1, FL2, FL3},
where we develop quaternionic analysis from the point of view of
representation theory of the conformal Lie group and its Lie algebra.
In this paper we continue to study the quaternionic analogues of
Cauchy's formula for the second order pole.
These quaternionic analogues are closely related to regularization
of infinities of vacuum polarization diagrams in
four-dimensional quantum field theory.
In order to add some flexibility, especially when dealing with
Cauchy's formula for the second order pole, we introduce a
one-parameter deformation of quaternionic analysis.
This deformation of quaternions preserves conformal invariance
and has a geometric realization as anti de Sitter space sitting
inside the five-dimensional Euclidean space.
We show that many results of quaternionic analysis -- including
the Cauchy-Fueter formula -- admit a simple and canonical deformation.
We conclude this paper with a deformation of the quaternionic analogues of
Cauchy's formula for the second order pole.
\end{abstract}

\section{Introduction}

Let $\BB H$ denote the algebra of quaternions
$$
\BB H = 1\BB R \oplus i \BB R \oplus j \BB R \oplus k \BB R
$$
with the norm
$$
N(X)=(x^0)^2+(x^1)^2+(x^2)^2+(x^3)^2,
\qquad X = x^0 + ix^1 + jx^2 + kx^3 \in \BB H.
$$
Since the early days of quaternionic analysis, when the quaternionic
analogue of complex holomorphic functions was introduced, there was a
fundamental question about the natural quaternionic analogue of the ring
structure of holomorphic
functions\footnote{Some readers may point out the Cauchy-Kovalevskaya product
of quaternionic regular functions. But this operation is not satisfactory,
since it does not have good invariance properties with respect to the
conformal group action.}.
In particular, one can ask what is the
quaternionic version of the ring of polynomials $\BB C[z]$ and Laurent
polynomials $\BB C[z,z^{-1}]$. The representation theoretic approach that
we have developed in \cite{FL1,FL2,FL3} suggests the most naive candidates
for an answer: $\BB H$-valued polynomial functions on $\BB H$ and
$\BB H^{\times} = \{X \in \BB H ;\: X \ne 0 \}$ respectively:
\begin{equation}  \label{H-rings}
\BB H[x^0,x^1,x^2,x^3] \qquad \text{and} \qquad
\BB H[x^0,x^1,x^2,x^3,N(X)^{-1}].
\end{equation}
Another option is just to consider $\BB R$-valued
polynomial functions on $\BB H$ and $\BB H^{\times}$:
\begin{equation}  \label{R-rings}
\BB R[x^0,x^1,x^2,x^3] \qquad \text{and} \qquad
\BB R[x^0,x^1,x^2,x^3,N(X)^{-1}].
\end{equation}
Clearly, all four of these spaces of functions have natural ring
structures. However, these functions are typically neither regular nor
harmonic, so all the regular quaternionic structure is lost.
Representation theory asserts that all four quaternionic rings yield the
so-called middle series of representations of the conformal Lie algebra
$\mathfrak{sl}(2,\BB H) \simeq \mathfrak{so}(5,1)$
and that there are intertwining maps from tensor products of left regular
and right regular polynomials into the two rings (\ref{H-rings}) and from the
tensor products of harmonic polynomials into the two rings (\ref{R-rings}).
It is also natural to complexify the quaternions
$$
\HC = \BB H \otimes_{\BB R} \BB C
= 1\BB C \oplus i\BB C \oplus j\BB C \oplus k\BB C
$$
and the spaces of polynomial functions on them (\ref{H-rings}) and
(\ref{R-rings}). This brings us to the main objects of our study:
\begin{align}
{\cal W}^+ &= \HC[z^0,z^1,z^2,z^3],  \label{1} \\
{\cal W} &= \HC[z^0,z^1,z^2,z^3,N(Z)^{-1}], \label{2} \\
\Zh^+ &= \BB C [z^0,z^1,z^2,z^3], \label{3} \\
\Zh &= \BB C [z^0,z^1,z^2,z^3,N(Z)^{-1}]  \label{4}
\end{align}
-- the two versions of the rings of ordinary and Laurent polynomials in one
complex variable.

The relation between the quaternionic ring (\ref{3}) and harmonic functions
yields a reproducing integral formula for functions in $\Zh^+$.
On the other hand, the relation between the ring (\ref{1}) and the
regular functions is similar, but instead of a reproducing formula
we get an integral expression for a certain second order differential operator
$$
\M: {\cal W}^+ \to {\cal W}^+, \qquad \M f = \nabla f \nabla - \square f^+.
$$
(The operator $\M$ is directly related to the solutions of the Maxwell
equations for the gauge potential.)
All of these formulas can be regarded as quaternionic analogues of
the Cauchy's formula for the second order pole
$$
f'(z_0) = \frac 1{2\pi i} \oint \frac {f(z)\,dz}{(z-z_0)^2}
$$
(see \cite{FL1} for details).
The corresponding formulas for ${\cal W}$ and $\Zh$ are more involved and
are the main subject of this paper. They require certain regularizations
of infinities which are well known in four-dimensional quantum field theory
as ``vacuum polarizations'' in spinor and scalar cases respectively.
They are usually encoded by the Feynman diagrams shown in
Figure \ref{vpolar-diag} and play a key role in renormalization theory
(see, for example, \cite{Sm}).

\begin{figure}
\begin{center}
\begin{subfigure}[b]{0.25\textwidth}
\centering
\includegraphics[scale=1]{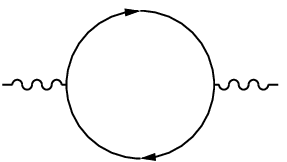}
\caption{Spinor case}
\end{subfigure}
\qquad
\begin{subfigure}[b]{0.25\textwidth}
\centering
\includegraphics[scale=1]{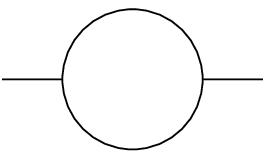}
\caption{Scalar case}
\end{subfigure}
\end{center}
\caption{Vacuum polarization diagrams}
\label{vpolar-diag}
\end{figure}

In order to explain our reproducing formula in the scalar case,
we recall the space ${\cal H}$ of harmonic functions on $\BB H^{\times}$.
It decomposes into two irreducible components:
\begin{equation}  \label{H-decomp-intro}
{\cal H} = {\cal H}^- \oplus {\cal H}^+
\end{equation}
with respect to the action of the conformal algebra $\mathfrak{sl}(2,\BB H)$.
Similarly, we can decompose $\Zh$ into three irreducible components:
\begin{equation}  \label{Zh-decomp-intro}
\Zh = \Zh^- \oplus \Zh^0 \oplus \Zh^+.
\end{equation}
The spaces $\Zh^+$ and $\Zh^-$ have already appeared in \cite{FL1},
but the appearance of $\Zh^0$ in quaternionic analysis is new.
We study equivariant embeddings of $\Zh^-$, $\Zh^0$ and $\Zh^+$ into
tensor products ${\cal H}^{\pm} \otimes {\cal H}^{\pm}$.
The cases $\Zh^-$ and $\Zh^+$ are fairly straightforward, but the case of
$\Zh^0$ -- which is the core of the scalar vacuum polarization -- is more
subtle.
As a consequence of these equivariant embeddings, we obtain projectors of
$\Zh$ onto its irreducible components.
Using these projectors we get a reproducing formula for all functions in $\Zh$,
which may be loosely stated as follows. Let
$$
(I_1f)(Z_1,Z_2) = \frac i{2\pi^3}
\int_{W \in U(2)} \frac{f(W) \,dV}{N(W-Z_1) \cdot N(W-Z_2)},
\qquad f \in \Zh, \: Z_1,Z_2 \in \BB D^+ \sqcup \BB D^-,
$$
where $\BB D^+$ and $\BB D^-$ are two certain open regions in $\HC$ both having
$U(2)$ as their Shilov boundary. Then
\begin{multline}  \label{reproducing-formula}
f(Z) =
 \lim_{\genfrac{}{}{0pt}{}{Z_1, Z_2 \to Z}{Z_1 \in \BB D^+,\: Z_2 \in \BB D^+}} (I_1f)(Z_1,Z_2)
- \lim_{\genfrac{}{}{0pt}{}{Z_1, Z_2 \to Z}{Z_1 \in \BB D^+,\: Z_2 \in \BB D^-}} (I_1f)(Z_1,Z_2) \\
- \lim_{\genfrac{}{}{0pt}{}{Z_1, Z_2 \to Z}{Z_1 \in \BB D^-,\: Z_2 \in \BB D^+}} (I_1f)(Z_1,Z_2)
+ \lim_{\genfrac{}{}{0pt}{}{Z_1, Z_2 \to Z}{Z_1 \in \BB D^-,\: Z_2 \in \BB D^-}} (I_1f)(Z_1,Z_2),
\qquad f \in \Zh, \: Z \in U(2).
\end{multline}
(see Remark \ref{reproducing-remark}).
A similar formula can be deduced for the operator $\M$ acting on ${\cal W}$.

The treatment of the projector onto $\Zh^0$ and the resulting reproducing
formula are not completely satisfactory, since the points $Z_1$ and $Z_2$
belong to the non-intersecting domains $\BB D^+$ and $\BB D^-$.
This phenomenon is well known in physics, where it results in the divergence
of the Feynman integral corresponding to the scalar vacuum polarization diagram.
Physicists have several methods to achieve this isolation of singularity
involving introduction of an auxiliary parameter.
Depending on the method, this auxiliary parameter can be interpreted as
dimension or mass.
The former method is incompatible with representation theoretic approach and
the latter is better from our point of view, but still breaks the conformal
symmetry down to the famous Poincare group.
There is, however, a third way to introduce an auxiliary parameter while
fully preserving the conformal invariance -- namely via anti de Sitter
deformation of the flat Minkowski space.
This is the method we pursue in the second part of the paper to
develop a deformation of quaternionic analysis.
First of all, we define a one-parameter family of conformal Laplacians
\begin{equation}  \label{tlap-intro}
\tlap = \square + \m^2 \bigl( \degt^2 + \degt \bigr)
\end{equation}
depending on a real parameter $\m$, where $\degt$ denotes the degree operator
plus identity and $\square$ is the ordinary Laplacian on $\BB H$.
As usual, the deformed Laplacian admits a quaternionic factorization into
two first order differential operators
\begin{equation}  \label{tlap-factorization-intro}
\tlap = \overrightarrow{\nabla}_{\m} (\overrightarrow{\nabla}_{\m}-\m)
= \overleftarrow{\nabla}_{\m} (\overleftarrow{\nabla}_{\m}+\m),
\end{equation}
where the arrows indicate that the operator $\overrightarrow{\nabla}_{\m}$
is applied to functions on the left and $\overleftarrow{\nabla}_{\m}$ is
applied on the right. 
This factorization allows us to define a one-parameter family of left and
right regular functions by the requirement
$$
\overrightarrow{\nabla}_{\m}f =0 \qquad \text{and} \qquad
g \overleftarrow{\nabla}_{\m} =0.
$$
Then we prove analogues of Cauchy-Fueter and Poisson formulas
as well as generalize certain other constructions and results from
quaternionic analysis.
In particular, the Poisson kernel $N(X-Y)^{-1}$ is replaced by the following
family of kernels depending on $\m$
\begin{equation} \label{K-intro}
\frac1{\langle \hat X - \hat Y,\hat X - \hat Y \rangle_{1,4}},
\end{equation}
where
$$
\hat X = \bigl( \sqrt{\m^{-2}+N(X)},x^0,x^1,x^2,x^3 \bigr), \quad
\hat Y = \bigl( \sqrt{\m^{-2}+N(Y)},y^0,y^1,y^2,y^3 \bigr)
$$
and the 5-dimensional space $\BB R^{1,4}$ is equipped with an indefinite
inner product
$$
\langle W, W' \rangle_{1,4} = w^0w'^0-w^1w'^1-w^2w'^2-w^3w'^3-w^4w'^4,
$$
$W=(w^0,w^1,w^2,w^3,w^4)$, $W'=(w'^0,w'^1,w'^2,w'^3,w'^4) \in \BB R^{1,4}$.
After developing basics of the anti de Sitter deformation of quaternionic
analysis we turn to the treatment of the second order pole.

As the expression for the reproducing kernel (\ref{K-intro}) indicates,
various one-parameter generalizations of results from quaternionic analysis
admit a natural geometric interpretation
when we identify the space of quaternions with a single sheet of a two-sheeted
hyperboloid in $\BB R^{1,4}$.
This hyperboloid is known to physicists as the anti de Sitter space.
Thus the anti de Sitter space-time geometry -- which has been extensively
studied by physicists (see, for example, \cite{BGMT} and references therein)
-- naturally provides a one-parameter deformation of (classical)
quaternionic analysis.
We obtain, in particular, a deformation of the representations
${\cal H}^{\pm}_{\m}$, $\Zh^{\pm}_{\m}$ and $\Zh^0_{\m}$ of the conformal Lie algebra
and find projectors onto these spaces.
This brings us back to our original motivation of the one-parameter deformation
of quaternionic analysis -- finding a representation-theoretic interpretation
of the regularization in quantum field theory.
This question will be addressed in a subsequent work.

The paper consists of two parts related by a common motivation of
development of quaternionic analysis using representation-theoretic methods.
In Sections \ref{preliminaries-section}-\ref{M-section} we study
structures related to the second order pole and in
Sections \ref{AdS-section}-\ref{Zh_mu-section} we develop the one-parameter
deformation of quaternionic analysis using geometry of the anti de Sitter space.
In Section \ref{preliminaries-section} we summarize the results of
quaternionic analysis that are used in this article and, in particular,
introduce the representation $(\rho_1,\Zh)$ of the conformal algebra
$\mathfrak{gl}(2,\HC) \simeq \mathfrak{gl}(4,\BB C)$
which is one of the main subject of this work.
In Section \ref{K-type-section} we give explicit $K$-types of $(\rho_1,\Zh)$,
and in Section \ref{irred-section} we show that the representation
$(\rho_1,\Zh)$ decomposes into three irreducible components
(\ref{Zh-decomp-intro}) (Theorem \ref{Zh-decomposition}).
We also prove that the subspaces $\Zh^-$, $\Zh^0$ and $\Zh^+$ are the images
under the natural multiplication maps of, respectively,
${\cal H}^- \otimes {\cal H}^-$, ${\cal H}^- \otimes {\cal H}^+$ and
${\cal H}^+ \otimes {\cal H}^+$ (Lemma \ref{image-lemma}).
In Section \ref{formal-section} we make a formal calculation of the
reproducing kernel for $\Zh^0$.
(Note that the reproducing kernels for $\Zh^+$ and $\Zh^-$
were computed in \cite{FL1}.)
In Section \ref{embeddings-section} we study conformally invariant
embeddings of the irreducible components $\Zh^{\pm}$ and $\Zh^0$
into tensor products ${\cal H}^\pm \otimes {\cal H}^\pm$,
the case of $\Zh^0$ being more subtle, and, as a consequence of these
embeddings, we obtain projectors of $\Zh$
onto its irreducible components (Theorem \ref{embedding},
Corollary \ref{proj-cor} and Theorem \ref{Zh^0-projector}).
In Section \ref{integral-section} we give a new derivation of the
identification of the one-loop Feynman diagram with the integral kernels of
the projection operators
$$
{\cal P}^+: {\cal H}^+ \otimes {\cal H}^+ \twoheadrightarrow \Zh^+
\hookrightarrow {\cal H}^+ \otimes {\cal H}^+ \qquad \text{and} \qquad
{\cal P}^-: {\cal H}^- \otimes {\cal H}^- \twoheadrightarrow \Zh^-
\hookrightarrow {\cal H}^- \otimes {\cal H}^-
$$
(cf. \cite{FL1}).
In Section \ref{M-section} we realize the spaces $\Zh^{\pm}$, $\Zh^0$
as well as the results of Section \ref{embeddings-section} in the setting
of the Minkowski space $\BB M$.
In the second part of the paper,Section \ref{AdS-section}, we introduce
the anti de Sitter deformation of the space of quaternions $\BB H$
together with the conformal Laplacian (\ref{tlap-intro}).
Then in Section \ref{conformal-section} we describe the action of the
conformal algebra $\mathfrak{so}(1,5)$ on the kernel of the conformal Laplacian.
In Section \ref{extension-section} we find simple extensions of the elements
of the kernel to $\BB R^{1,4}$ as solutions of the wave equation.
In Section \ref{H_mu-section} we introduce a space ${\cal H}_{\m}$ consisting
of the $K$-finite elements of the kernel of the conformal Laplacian.
Similarly to (\ref{H-decomp-intro}), we have a decomposition into irreducible
components ${\cal H}_{\m} = {\cal H}_{\m}^+ \oplus {\cal H}_{\m}^-$.
In Section \ref{Poisson-section} we prove an analogue of the Poisson formula
for the solutions of $\tlap\phi=0$ (Theorem \ref{Poisson5}).
In Section \ref{reg-fun-section} we factor the conformal Laplacian as a product
of two Dirac-type operators (\ref{tlap-factorization-intro}).
In Sections \ref{reg-fun-prop-section} and \ref{Cauchy-Fueter-section}
we proceed to study deformed quaternionic regular functions.
We prove analogues of the Cauchy's Theorem and the Cauchy-Fueter formula in the
deformed setting (Corollary \ref{cauchy} and Theorem \ref{Cauchy-Fueter-5}).
Finally, in Section \ref{Zh_mu-section} we introduce a deformation $\Zh_{\m}$
of the space $\Zh$ associated with the second order pole,
similarly to (\ref{Zh-decomp-intro}), decompose it into a direct sum
$\Zh^-_{\m} \oplus \Zh^0_{\m} \oplus \Zh^+_{\m}$
and obtain projectors onto these direct summands.

The first author was supported by the NSF grant DMS-1001633;
the second author was supported by the NSF grant DMS-0904612.

\section{Preliminaries}  \label{preliminaries-section}

We recall some notations from \cite{FL1}.
Let $\HC$ denote the space of complexified quaternions:
$\HC = \BB H \otimes \BB C$, it can be identified with the algebra of
$2 \times 2$ complex matrices:
$$
\HC = \BB H \otimes \BB C \simeq \biggl\{
Z= \begin{pmatrix} z_{11} & z_{12} \\ z_{21} & z_{22} \end{pmatrix}
; \: z_{ij} \in \BB C \biggr\}
= \biggl\{ Z= \begin{pmatrix} z^0-iz^3 & -iz^1-z^2 \\ -iz^1+z^2 & z^0+iz^3
\end{pmatrix} ; \: z^k \in \BB C \biggr\}.
$$
For $Z \in \HC$, we write
$$
N(Z) = \det \begin{pmatrix} z_{11} & z_{12} \\ z_{21} & z_{22} \end{pmatrix}
= z_{11}z_{22}-z_{12}z_{21} = (z^0)^2 + (z^1)^2 + (z^2)^2 + (z^3)^2
$$
and think of it as (the square of) the norm of $Z$.
We denote by $\HC^{\times}$ the group of invertible complexified quaternions:
$$
\HC^{\times} = \{ Z \in \HC ;\: N(Z) \ne 0 \}.
$$
Clearly, $\HC^{\times} \simeq GL(2,\BB C)$.
We realize $U(2)$ as a subgroup of $\HC^{\times}$:
$$
U(2) = \{ Z \in \HC ;\: Z^*=Z^{-1} \},
$$
where $Z^*$ denotes the complex conjugate transpose of a complex matrix $Z$.
For $R>0$, we set
$$
U(2)_R = \{ RZ ;\: Z \in U(2) \}
$$
and orient it as in \cite{FL1} so that
$\int_{U(2)_R} \frac{dV}{N(Z)^2} = -2\pi^3 i$,
where  $dV$ is a holomorphic 4-form defined by
$$
dV = dz^0 \wedge dz^1 \wedge dz^2 \wedge dz^3
= \frac14 dz_{11} \wedge dz_{12} \wedge dz_{21} \wedge dz_{22}.
$$
Recall that a group $GL(2,\HC) \simeq GL(4,\BB C)$ acts on $\HC$ by fractional
linear (or conformal) transformations:
\begin{equation}  \label{conformal-action}
h: Z \mapsto (aZ+b)(cZ+d)^{-1} = (a'-Zc')^{-1}(-b'+Zd'),
\qquad Z \in \HC,
\end{equation}
where
$h = \bigl(\begin{smallmatrix} a & b \\ c & d \end{smallmatrix}\bigr)
\in GL(2,\HC)$ and 
$h^{-1} = \bigl(\begin{smallmatrix} a' & b' \\ c' & d' \end{smallmatrix}\bigr)$.

For convenience we recall Lemmas 10 and 61 from \cite{FL1}:

\begin{lem}  \label{Z-W}
For $h = \bigl( \begin{smallmatrix} a & b \\ c & d \end{smallmatrix} \bigr)
\in GL(2,\HC)$
with $h^{-1} =
\bigl( \begin{smallmatrix} a' & b' \\ c' & d' \end{smallmatrix} \bigr)$,
let $\tilde Z = (aZ+b)(cZ+d)^{-1}$ and $\tilde W = (aW+b)(cW+d)^{-1}$.
Then
\begin{align*}
(\tilde Z - \tilde W) &= (a'-Wc')^{-1} \cdot(Z-W) \cdot (cZ+d)^{-1}  \\
&= (a'-Zc')^{-1} \cdot(Z-W) \cdot (cW+d)^{-1}.
\end{align*}
\end{lem}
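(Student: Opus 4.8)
The plan is to prove each of the two displayed identities by pulling the two inverse factors that appear in the final expression to the far left and far right of the difference $\tilde Z - \tilde W$, and then simplifying the remaining middle factor by means of the defining relations contained in $h^{-1}h = \mathrm{Id}$. The essential tool is the fact, recorded in (\ref{conformal-action}), that the conformal action admits two equivalent expressions: the ``right'' form $(aZ+b)(cZ+d)^{-1}$ and the ``left'' form $(a'-Zc')^{-1}(-b'+Zd')$. For the second identity I would write $\tilde Z$ in the left form and $\tilde W$ in the right form, so that
$$
\tilde Z - \tilde W = (a'-Zc')^{-1}(-b'+Zd') - (aW+b)(cW+d)^{-1},
$$
and then factor $(a'-Zc')^{-1}$ out on the left and $(cW+d)^{-1}$ out on the right to obtain
$$
\tilde Z - \tilde W = (a'-Zc')^{-1}\bigl[(-b'+Zd')(cW+d) - (a'-Zc')(aW+b)\bigr](cW+d)^{-1}.
$$

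Next I would expand the bracket, keeping every factor in its original order since the entries of $h$ are noncommuting quaternions (equivalently $2\times 2$ complex matrices). Sorting the result into four types of terms — those free of both $Z$ and $W$, those linear in $W$ alone, those linear in $Z$ alone, and those of the form $Z(\,\cdots)W$ — I would invoke the four relations obtained from $h^{-1}h=\mathrm{Id}$, namely $a'a+b'c=\mathrm{Id}$, $a'b+b'd=0$, $c'a+d'c=0$, and $c'b+d'd=\mathrm{Id}$. These make the constant term ($a'b+b'd$) and the cross term ($c'a+d'c$) vanish, while reducing the $W$-linear and $Z$-linear terms to $-W$ and $Z$, so that the bracket collapses to exactly $Z-W$; this yields the second line. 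The first identity then follows by the symmetric choice: write $\tilde Z$ in the right form and $\tilde W$ in the left form, factor out $(a'-Wc')^{-1}$ on the left and $(cZ+d)^{-1}$ on the right, and simplify the resulting bracket by the same four relations.

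The computation is entirely elementary, so the main obstacle is not conceptual but organizational: carefully bookkeeping the noncommutativity, i.e. ensuring that $Z$ and $W$ remain on the correct side of each coefficient throughout the expansion, so that the relations from $h^{-1}h=\mathrm{Id}$ apply verbatim rather than up to some spurious reordering. As a preliminary sanity check I would note that the equivalence of the two forms of the action in (\ref{conformal-action}) is itself established by these very same relations, which confirms that the bracket manipulations above are legitimate.
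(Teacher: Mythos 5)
Your proof is correct, and it is essentially the canonical argument: this lemma is not proved in the present paper (it is recalled verbatim as Lemma 10 of \cite{FL1}), and the proof there proceeds exactly as you do — writing one of $\tilde Z$, $\tilde W$ in the left form and the other in the right form of the action (\ref{conformal-action}), factoring $(a'-Zc')^{-1}$ or $(a'-Wc')^{-1}$ out on the left and $(cW+d)^{-1}$ or $(cZ+d)^{-1}$ on the right, and collapsing the middle bracket to $Z-W$ via the four relations from $h^{-1}h=\mathrm{Id}$. Your bookkeeping of the noncommuting factors and your observation that the same four relations also underlie the equivalence of the two forms of the action are both accurate, so nothing is missing.
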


\begin{lem}  \label{Jacobian_lemma}
Let $d\tilde V$ denote the pull-back of $dV$ under the map
$Z \mapsto (aZ+b)(cZ+d)^{-1}$, where
$h = \bigl( \begin{smallmatrix} a & b \\ c & d \end{smallmatrix} \bigr)
\in GL(2,\HC)$ and
$h^{-1}= \bigl( \begin{smallmatrix} a' & b' \\ c' & d' \end{smallmatrix} \bigr)$.
Then
$$
dV = N(cZ+d)^2 \cdot N(a'-Zc')^2 \,d\tilde V.
$$
\end{lem}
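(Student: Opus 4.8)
The plan is to compute the differential of the conformal map $F\colon Z \mapsto \tilde Z = (aZ+b)(cZ+d)^{-1}$ directly from Lemma \ref{Z-W}, and then to recognize the resulting pull-back factor as the determinant of a simple left-and-right multiplication operator on the four-dimensional space of $2\times 2$ matrices. Since $F$ is a holomorphic (rational) map and $dV$ is a holomorphic $4$-form, the pull-back $d\tilde V = F^*(dV)$ equals $\det(dF)\cdot dV$, where $\det(dF)$ is the holomorphic Jacobian determinant; no conjugate-linear terms enter, which is exactly what permits a clean factorization.

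First I would extract the differential of $F$ from the difference formula. Writing $W = Z + dZ$ in the first identity of Lemma \ref{Z-W} and letting $dZ \to 0$, the factor $(a'-Wc')^{-1}$ tends to $(a'-Zc')^{-1}$ while the signs of $\tilde Z - \tilde W$ and $Z-W$ cancel, so
$$
d\tilde Z = (a'-Zc')^{-1} \cdot dZ \cdot (cZ+d)^{-1}.
$$
Thus, regarding $dF$ as a linear endomorphism of $\HC \simeq M_2(\BB C)$ acting on the matrix of differentials $dZ$, it is the composition $L_A R_B$ of left multiplication by $A = (a'-Zc')^{-1}$ and right multiplication by $B = (cZ+d)^{-1}$.

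Next I would compute the Jacobian. Because $dV = \frac14\, dz_{11}\wedge dz_{12}\wedge dz_{21}\wedge dz_{22}$ has the same expression in the source and target matrix-entry coordinates, the common factor $\frac14$ cancels and $\det(dF)$ is just the determinant of $L_A R_B$ on $M_2(\BB C)$; the ordering of the entries is irrelevant since the determinant of an endomorphism is independent of the chosen basis. The key computation is the standard fact $\det(L_A R_B) = (\det A)^2 (\det B)^2$: the operator $L_A$ acts as $A$ on each of the two columns, so $\det L_A = (\det A)^2$, likewise $\det R_B = (\det B)^2$, and $L_A$ and $R_B$ commute. Substituting $\det A = N(a'-Zc')^{-1}$ and $\det B = N(cZ+d)^{-1}$ yields
$$
\det(dF) = N(a'-Zc')^{-2}\cdot N(cZ+d)^{-2},
$$
so that $d\tilde V = N(a'-Zc')^{-2} N(cZ+d)^{-2}\, dV$, which rearranges to the claimed identity $dV = N(cZ+d)^2 \cdot N(a'-Zc')^2\, d\tilde V$.

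I expect the only genuine content to be the determinant identity $\det(L_A R_B) = (\det A)^2 (\det B)^2$, together with the bookkeeping of the inverses $N(A) = N(a'-Zc')^{-1}$ and $N(B) = N(cZ+d)^{-1}$; everything else is a direct reading of Lemma \ref{Z-W}. The main thing to be careful about is the direction of the Jacobian factor (whether it multiplies $dV$ or $d\tilde V$) and the resulting single inversion, which is where a stray sign or reciprocal could creep in.
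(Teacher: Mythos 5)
Your proof is correct. Note, however, that this paper contains no proof of the lemma to compare against: it is recalled verbatim from \cite{FL1} (Lemma 61 there), so the statement is imported as a known fact. Your argument is a complete and self-contained justification: linearizing Lemma \ref{Z-W} at $W=Z$ correctly gives $d\tilde Z = (a'-Zc')^{-1}\,dZ\,(cZ+d)^{-1}$, and the determinant identity $\det(L_A R_B)=(\det A)^2(\det B)^2$ on $M_2(\BB C)$ (each of $L_A$, $R_B$ acting diagonally on the two columns, respectively rows) then yields $\det(dF)=N(a'-Zc')^{-2}N(cZ+d)^{-2}$, hence the stated relation between $dV$ and $d\tilde V$. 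Your observation that holomorphy of $F$ is what keeps conjugate-linear terms out of the Jacobian, and that the determinant of an endomorphism is basis-independent so the ordering of the matrix entries is immaterial, are exactly the points that need to be said. The common alternative route (used for statements of this type in \cite{FL1}) is to decompose the fractional linear transformation into generators of $GL(2,\HC)$ — translations $Z\mapsto Z+B$, multiplications $Z\mapsto AZD$, and the inversion $Z\mapsto Z^{-1}$ — verify the Jacobian factor for each, and use the cocycle property of $N(cZ+d)$ under composition; your direct computation avoids both the generation argument and the cocycle bookkeeping, at the small price of invoking the $\det(L_AR_B)$ identity.
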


We often use the matrix coefficient functions of $SU(2)$ described by
equation (27) of \cite{FL1} (cf. \cite{V}):
\begin{equation}  \label{t}
t^l_{n\,\underline{m}}(Z) = \frac 1{2\pi i}
\oint (sz_{11}+z_{21})^{l-m} (sz_{12}+z_{22})^{l+m} s^{-l+n} \,\frac{ds}s,
\qquad
\begin{matrix} l = 0, \frac12, 1, \frac32, \dots, \\ m,n \in \BB Z +l, \\
 -l \le m,n \le l, \end{matrix}
\end{equation}
$Z=\bigl(\begin{smallmatrix} z_{11} & z_{12} \\
z_{21} & z_{22} \end{smallmatrix}\bigr) \in \HC$,
the integral is taken over a loop in $\BB C$ going once around the origin
in the counterclockwise direction.
We regard these functions as polynomials on $\HC$.
For future use we state the multiplicativity property of matrix coefficients
\begin{equation} \label{t-mult}
t^l_{m \, \underline{n}} (Z_1Z_2) =
\sum_{j=-l}^l t^l_{m \, \underline{j}}(Z_1) \cdot t^l_{j \, \underline{n}}(Z_2).
\end{equation}
It is also useful to recall that
$$
t^l_{m\underline{n}}(Z^{-1}) = t^l_{m\underline{n}}(Z^+) \cdot N(Z)^{-2l}
\quad \text{is proportional to} \quad
t^l_{-n\underline{-m}}(Z) \cdot N(Z)^{-2l}.
$$

As in Section 2 of \cite{FL2}, we consider the space of $\BB C$-valued
functions on $\HC$ (possibly with singularities) which are holomorphic with
respect to the complex variables $z^0$, $z^1$, $z^2$, $z^3$ or
$z_{11}$, $z_{12}$, $z_{21}$, $z_{22}$ and harmonic,
i.e. satisfying $\square \phi=0$, where
$$
\square = \frac{\partial^2}{(\partial z^0)^2}+
\frac{\partial^2}{(\partial z^1)^2} + \frac{\partial^2}{(\partial z^2)^2} +
\frac{\partial^2}{(\partial z^3)^2}
= 4 \left( \frac{\partial^2}{\partial z_{11} \partial z_{22}}
- \frac{\partial^2}{\partial z_{12} \partial z_{21}} \right).
$$
We denote this space by $\widetilde{\cal H}$.
Then the conformal group $GL(2,\HC) \simeq GL(4,\BB C)$ acts on
$\widetilde{\cal H}$ by two slightly different actions:
\begin{align*}
\pi^0_l(h): \: \phi(Z) \quad &\mapsto \quad \bigl( \pi^0_l(h)\phi \bigr)(Z) =
\frac 1{N(cZ+d)} \cdot \phi \bigl( (aZ+b)(cZ+d)^{-1} \bigr),  \\
\pi^0_r(h): \: \phi(Z) \quad &\mapsto \quad \bigl( \pi^0_r(h)\phi \bigr)(Z) =
\frac 1{N(a'-Zc')} \cdot \phi \bigl( (a'-Zc')^{-1}(-b'+Zd') \bigr),
\end{align*}
where
$h = \bigl(\begin{smallmatrix} a & b \\ c & d \end{smallmatrix}\bigr)
\in GL(2,\HC)$ and 
$h^{-1} = \bigl(\begin{smallmatrix} a' & b' \\ c' & d' \end{smallmatrix}\bigr)$.
We have 
$$
(aZ+b)(cZ+d)^{-1} = (a'-Zc')^{-1}(-b'+Zd'), \qquad \forall Z \in \HC,
$$
and these two actions coincide on $SL(2, \HC) \simeq SL(4,\BB C)$,
which is defined as the connected Lie subgroup of $GL(2,\HC)$ with Lie algebra
$$
\mathfrak{sl}(2,\HC) = \{ x \in \mathfrak{gl}(2,\HC) ;\: \re (\tr x) =0 \}
\simeq \mathfrak{sl}(4,\BB C).
$$

We introduce two spaces of harmonic polynomials:
$$
{\cal H}^+ = \widetilde{\cal H} \cap \BB C[z_{11},z_{12},z_{21},z_{22}],
$$
$$
{\cal H} = \widetilde{\cal H} \cap \BB C[z_{11},z_{12},z_{21},z_{22}, N(Z)^{-1}]
$$
and the space of harmonic polynomials regular at infinity:
$$
{\cal H}^- = \bigl\{ \phi \in \widetilde{\cal H};\:
N(Z)^{-1} \cdot \phi(Z^{-1}) \in {\cal H}^+ \bigr\}.
$$
Then
\begin{align*}
{\cal H} &= {\cal H}^- \oplus {\cal H}^+,  \\
{\cal H}^+ &= \Span \bigl\{ t^l_{n\,\underline{m}}(Z) \bigr\}, \\
{\cal H}^- &= \Span \bigl\{ t^l_{n\,\underline{m}}(Z) \cdot N(Z)^{-(2l+1)} \bigr\}.
\end{align*}
In particular, there are no homogeneous harmonic functions in
$\BB C[z_{11},z_{12},z_{21},z_{22},N(Z)^{-1}]$ of degree $-1$.
Differentiating the actions $\pi^0_l$ and $\pi^0_r$, we obtain actions of
$\mathfrak{gl}(2,\HC) \simeq \mathfrak{gl}(4,\BB C)$ which preserve
the spaces ${\cal H}$, ${\cal H}^-$ and ${\cal H}^+$.
By abuse of notation, we denote these Lie algebra actions by
$\pi^0_l$ and $\pi^0_r$ respectively.
They are described in Subsection 3.2 of \cite{FL2}.

By Theorem 28 in \cite{FL1}, for each $R>0$, we have a
bilinear pairing between $(\pi^0_l, {\cal H})$ and $(\pi^0_r, {\cal H})$:
\begin{equation}  \label{H-pairing}
(\phi_1,\phi_2)_R = \frac 1{2\pi^2}
\int_{S^3_R} (\degt \phi_1)(Z) \cdot \phi_2(Z) \,\frac{dS}R,
\qquad \phi_1, \phi_2 \in {\cal H},
\end{equation}
where $S^3_R \subset \BB H$ is the three-dimensional sphere of radius $R$
centered at the origin
$$
S^3_R = \{ X \in \BB H ;\: N(X)=R^2 \},
$$
$dS$ denotes the usual Euclidean volume element on $S^3_R$, and
$\degt$ denotes the degree operator plus identity:
$$
\degt f = f + \deg f = f + z_{11}\frac{\partial f}{\partial z_{11}} +
z_{12}\frac{\partial f}{\partial z_{12}} + z_{21}\frac{\partial f}{\partial z_{21}}
+ z_{22}\frac{\partial f}{\partial z_{22}}.
$$
When this pairing is restricted to ${\cal H}^+ \times {\cal H}^-$,
it is $\mathfrak{gl}(2,\HC)$-invariant, independent of the choice of $R>0$,
non-degenerate and antisymmetric
$$
(\phi_1,\phi_2)_R = - (\phi_2,\phi_1)_R,
\qquad \phi_1 \in {\cal H}^+, \: \phi_2 \in {\cal H}^-.
$$
We have the following orthogonality relations with respect to the pairing
(\ref{H-pairing}):
\begin{equation}  \label{t-orthog}
\bigl( t^{l'}_{n'\,\underline{m'}}(Z), t^l_{m\underline{n}}(Z^{-1}) \cdot N(Z)^{-1} \bigr)_R
= -\bigl(t^l_{m\underline{n}}(Z^{-1}) \cdot N(Z)^{-1},t^{l'}_{n'\,\underline{m'}}(Z)\bigr)_R
= \delta_{ll'} \delta_{mm'} \delta_{nn'},
\end{equation}
where the indices $l,m,n$ are
$l = 0, \frac12, 1, \frac32, \dots$, $m,n \in \BB Z +l$, $-l \le m,n \le l$
and similarly for $l',m',n'$.

Let $\widetilde{\Zh}$ denote the space of $\BB C$-valued functions on $\HC$
(possibly with singularities) which are holomorphic with respect to the
complex variables $z_{11}$, $z_{12}$, $z_{21}$, $z_{22}$.
(There are no differential equations imposed on functions in $\widetilde{\Zh}$
whatsoever.) We recall the action of $GL(2,\HC)$ on $\widetilde{\Zh}$ given by
equation (49) in \cite{FL1}:
$$
\rho_1(h): \: f(Z) \quad \mapsto \quad \bigl( \rho_1(h)f \bigr)(Z) =
\frac {f \bigl( (aZ+b)(cZ+d)^{-1} \bigr)}{N(cZ+d) \cdot N(a'-Zc')},
$$
where
$h = \bigl(\begin{smallmatrix} a & b \\ c & d \end{smallmatrix}\bigr)
\in GL(2,\HC)$ and
$h^{-1} = \bigl(\begin{smallmatrix} a' & b' \\ c' & d' \end{smallmatrix}\bigr)$.
We have a natural $GL(2,\HC)$-equivariant multiplication map
$$
M : (\pi_l^0, \widetilde{\cal H}) \otimes (\pi_r^0, \widetilde{\cal H})
\to (\rho_1,\widetilde{\Zh})
$$
which is determined on pure tensors by
$$
M \bigl( \phi_1(Z_1) \otimes \phi_2(Z_2) \bigr) = (\phi_1 \cdot \phi_2)(Z),
\qquad \phi_1, \phi_2 \in \widetilde{\cal H}.
$$
Differentiating the $\rho_1$-action, we obtain an action of
$\mathfrak{gl}(2,\HC) \simeq \mathfrak{gl}(4,\BB C)$.
For convenience we recall Lemma 68 from \cite{FL1}.

\begin{lem}  \label{rho-algebra-action}
Let $\partial = \bigl( \begin{smallmatrix} \partial_{11} & \partial_{21} \\
\partial_{12} & \partial_{22} \end{smallmatrix} \bigr)= \frac 12 \nabla$, where
$\partial_{ij} = \frac{\partial}{\partial z_{ij}}$.
The Lie algebra action $\rho_1$ of $\mathfrak{gl}(2,\HC)$ on $\widetilde{\Zh}$
is given by
\begin{align*}
\rho_1 \begin{pmatrix} A & 0 \\ 0 & 0 \end{pmatrix} &:
f \mapsto \tr \bigl( A \cdot (-Z \cdot \partial f - f) \bigr)  \\
\rho_1 \begin{pmatrix} 0 & B \\ 0 & 0 \end{pmatrix} &:
f \mapsto \tr \bigl( B \cdot (-\partial f ) \bigr)  \\
\rho_1 \begin{pmatrix} 0 & 0 \\ C & 0 \end{pmatrix} &:
f \mapsto \tr \Bigl( C \cdot \bigl(
Z \cdot (\partial f) \cdot Z +2Zf \bigr) \Bigr)
= \tr \Bigl( C \cdot \bigl(Z \cdot \partial (Zf) \bigr) \Bigr)  \\
\rho_1 \begin{pmatrix} 0 & 0 \\ 0 & D \end{pmatrix} &:
f \mapsto \tr \Bigl( D \cdot \bigl( (\partial f) \cdot Z + f \bigr) \Bigr)
= \tr \Bigl( D \cdot \bigl( \partial (Zf) - f \bigr) \Bigr).
\end{align*}
\end{lem}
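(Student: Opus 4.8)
The plan is to differentiate the group action $\rho_1$ along one-parameter subgroups. Since $\mathfrak{gl}(2,\HC)$ is linearly spanned by the four block types appearing in the statement and $X \mapsto \rho_1(X)$ is linear, it suffices to treat each block separately. For a given block $X$ I would put $h(t) = \exp(tX)$, record the entries $a,b,c,d$ of $h(t)$ together with the entries $a',b',c',d'$ of $h(t)^{-1}$, and then assemble the three ingredients of the defining formula: the fractional linear image $(aZ+b)(cZ+d)^{-1}$ and the two scalar factors $N(cZ+d)$ and $N(a'-Zc')$. The Lie algebra action is recovered by differentiating at $t=0$ according to the sign convention of \cite{FL1}; concretely, because $f(Z) \mapsto f\bigl((aZ+b)(cZ+d)^{-1}\bigr)$ is a \emph{right} action of $GL(2,\HC)$ (the normalizing scalar is a cocycle and does not change the composition direction), the genuine Lie algebra homomorphism is $X \mapsto \frac{d}{dt}\big|_{t=0}\rho_1(\exp(-tX))$, which accounts for the overall minus signs. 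Two elementary facts are used throughout: $\frac{d}{dt}\big|_{t=0}\det(I+tM) = \tr M$ for the cocycle factors, and $\frac{d}{dt}\big|_{t=0} f(Z+tW) = \sum_{i,j} W_{ij}\,\partial_{ij} f$ for the argument, the latter sum being rewritten as a trace using the transpose convention $(\partial)_{pq} = \partial_{qp}$ built into $\partial = \tfrac12\nabla$.

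The diagonal and upper-triangular blocks are direct. For $X = \bigl(\begin{smallmatrix} 0 & B \\ 0 & 0\end{smallmatrix}\bigr)$ one has $h(t) = \bigl(\begin{smallmatrix} 1 & tB \\ 0 & 1\end{smallmatrix}\bigr)$, both cocycles equal $1$, and the argument is simply $Z+tB$, so the naive derivative is $\sum_{i,j} B_{ij}\partial_{ij} f = \tr(B\,\partial f)$. For $X = \bigl(\begin{smallmatrix} A & 0 \\ 0 & 0\end{smallmatrix}\bigr)$ the image is $e^{tA}Z$, the factor $N(a'-Zc') = \det(e^{-tA}) = e^{-t\tr A}$ contributes $\tr A$, and the argument contributes $\tr(A\,Z\,\partial f)$. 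For $X = \bigl(\begin{smallmatrix} 0 & 0 \\ 0 & D\end{smallmatrix}\bigr)$ the image is $Z e^{-tD}$, the factor $N(cZ+d) = e^{t\tr D}$ contributes $-\tr D$, and the argument contributes $-\tr\bigl((\partial f)\,Z\,D\bigr)$. Negating each of these three derivatives reproduces exactly the stated expressions $\tr\bigl(B\cdot(-\partial f)\bigr)$, $\tr\bigl(A\cdot(-Z\partial f - f)\bigr)$ and $\tr\bigl(D\cdot((\partial f)Z + f)\bigr)$ for the $B$-, $A$- and $D$-blocks.

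The main obstacle is the lower-triangular block $X = \bigl(\begin{smallmatrix} 0 & 0 \\ C & 0\end{smallmatrix}\bigr)$, the only case in which the Möbius transformation is genuinely nonlinear and in which both cocycle factors depend on $t$. Here $h(t) = \bigl(\begin{smallmatrix} 1 & 0 \\ tC & 1\end{smallmatrix}\bigr)$, so the image is $Z(I+tCZ)^{-1}$ while the two factors are $N(I+tCZ)$ and $N(I+tZC)$. I would expand $(I+tCZ)^{-1} = I - tCZ + O(t^2)$ to read off the first-order perturbation $-ZCZ$ of the argument, whose contribution is $-\tr\bigl(C\,Z\,(\partial f)\,Z\bigr)$ after performing the index contraction; meanwhile each determinant contributes $\tr(CZ)$, and since both sit in the denominator they jointly yield $-2\,\tr(C\,Zf)$. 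Assembling these and negating produces $\tr\bigl(C\cdot(Z\,(\partial f)\,Z + 2Zf)\bigr)$, as claimed. Finally, the two equivalent forms in the $C$- and $D$-blocks both follow from the single Leibniz identity $\partial(Zf) = (\partial f)\,Z + 2f$, which I would check entrywise from $(\partial)_{pq} = \partial_{qp}$ and $\partial_{kr}(z_{kc}f) = \delta_{cr}f + z_{kc}\partial_{kr}f$ summed over the two values of $k$: multiplying on the left by $Z$ gives $Z\,\partial(Zf) = Z\,(\partial f)\,Z + 2Zf$, while subtracting $f$ gives $\partial(Zf) - f = (\partial f)\,Z + f$. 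The only genuinely delicate bookkeeping is keeping the transpose convention in $\partial$ consistent when passing between the index sums and the matrix traces.
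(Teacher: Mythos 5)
Your proof is correct. Note that the paper itself does not prove this lemma at all: it is recalled verbatim as Lemma 68 of \cite{FL1}, so there is no in-paper argument to compare against, and your derivation by differentiating $\rho_1$ along the one-parameter subgroups $\exp(tX)$ for each of the four block types is the standard route such a proof must take. The delicate points all check out: the transpose convention $(\partial)_{pq}=\partial_{qp}$ makes $\sum_{i,j}W_{ij}\,\partial_{ij}f=\tr\bigl(W\cdot\partial f\bigr)$; in the $C$-block the two cocycle factors $N(1+tCZ)$ and $N(1+tZC)$ each contribute $\tr(CZ)$, which is exactly where the coefficient $2$ in $Z\cdot(\partial f)\cdot Z+2Zf$ comes from; and the identity $\partial(Zf)=(\partial f)\cdot Z+2f$ does yield both alternative forms stated in the lemma. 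Your treatment of the overall sign is also internally consistent: with the convention as stated in this paper (the argument of $f$ is built from the entries of $h$, not of $h^{-1}$), the assignment $h\mapsto\rho_1(h)$ composes as a right action, so the Lie algebra homomorphism is obtained by differentiating at $\exp(-tX)$, and this single global negation reproduces all four formulas with their correct relative signs -- which is the part that is forced by the computation rather than by convention.
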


This lemma implies that $\mathfrak{gl}(2, \HC)$ preserves the spaces
\begin{align*}
\Zh^+ &= \{\text{polynomial functions on $\HC$}\}
= \BB C[z_{11},z_{12},z_{21},z_{22}]
\qquad \qquad \text{and} \\
\Zh &= \{\text{polynomial functions on $\HC^{\times}$}\}
= \BB C[z_{11},z_{12},z_{21},z_{22},N(Z)^{-1}].
\end{align*}
Define
$$
\Zh^- = \biggl\{ f \in \Zh;\:
\rho_1\begin{pmatrix} 0 & 1 \\ 1 & 0 \end{pmatrix}f(Z)
= N(Z)^{-2} \cdot f(Z^{-1}) \in \Zh^+ \biggr\},
$$
this is another $\mathfrak{gl}(2,\HC)$-invariant space.
Comparing this with Definition 16 in \cite{FL1}, we can say that
$\Zh^-$ consists of those elements of $\Zh$ that are regular at infinity
according to the $\rho_1$-action of $GL(2,\HC)$.
Note that $\Zh^- \oplus \Zh^+$ is a proper subspace of $\Zh$.

Next we describe an invariant bilinear pairing on $\Zh$.
Recall Proposition 69 from \cite{FL1}:

\begin{prop}
The representation $(\rho_1,\Zh)$ of $\mathfrak{gl}(2,\HC)$
has a non-degenerate symmetric bilinear pairing
\begin{equation}  \label{1-pairing}
\langle f_1,f_2 \rangle =
\frac i{2\pi^3} \int_{U(2)_R} f_1(Z) \cdot f_2(Z) \,dV,
\qquad f_1, f_2 \in \Zh,
\end{equation}
where $R>0$. This bilinear pairing is $\mathfrak{gl}(2,\HC)$-invariant and
independent of the choice of $R>0$.
\end{prop}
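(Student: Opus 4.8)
The plan is to verify, in order, symmetry together with well-definedness, independence of $R$, conformal invariance, and non-degeneracy, handling the first three by a change-of-variables argument based on the recalled lemmas and reserving a separate analytic argument for non-degeneracy. Symmetry is immediate, since $f_1(Z)f_2(Z)=f_2(Z)f_1(Z)$ as $\BB C$-valued functions. Well-definedness is clear too: for $Z\in U(2)_R$ we have $Z=RW$ with $W\in U(2)$, so $N(Z)=R^2\det W$ with $|\det W|=1$, hence $N(Z)\neq 0$ and every $f\in\Zh$ is smooth on the compact cycle $U(2)_R$. The key structural observation is that for $f_1,f_2\in\Zh$ the $4$-form $\omega=f_1(Z)f_2(Z)\,dV$ is holomorphic of top degree on the complex $4$-fold $\HC^{\times}\simeq GL(2,\BB C)$, hence closed ($\bar\partial\omega=0$ by holomorphy, $\partial\omega=0$ for degree reasons). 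Therefore $\int_{U(2)_R}\omega$ depends only on the homology class of $U(2)_R$ in $\HC^{\times}$, and since the scaling homotopy $Z\mapsto (R'/R)Z$ carries $U(2)_R$ to $U(2)_{R'}$ through cycles lying in $\HC^{\times}$, all the $U(2)_R$ are homologous and the pairing is independent of $R>0$.

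For invariance I would first pass to the group level. Writing $\varphi_h(Z)=(aZ+b)(cZ+d)^{-1}$ and combining the definition of $\rho_1$ with Lemma \ref{Jacobian_lemma}, one obtains the pointwise identity
\[
\bigl(\rho_1(h)f_1\bigr)(Z)\,\bigl(\rho_1(h)f_2\bigr)(Z)\,dV
=(f_1f_2)\bigl(\varphi_h(Z)\bigr)\,\varphi_h^{*}(dV)=\varphi_h^{*}\omega,
\]
so that $\langle\rho_1(h)f_1,\rho_1(h)f_2\rangle=\frac{i}{2\pi^3}\int_{\varphi_h(U(2)_R)}\omega$. For $h$ near the identity, $\varphi_h$ is holomorphic and non-singular on a neighbourhood of $U(2)_R$ (as $cZ+d$ is invertible there) and carries $U(2)_R$ into $\HC^{\times}$, since $N(\varphi_h(Z))=N(aZ+b)/N(cZ+d)\neq 0$; deforming $h$ to the identity along a path gives a homotopy of cycles inside $\HC^{\times}$, so $\varphi_h(U(2)_R)$ is homologous to $U(2)_R$ and, $\omega$ being closed, $\langle\rho_1(h)f_1,\rho_1(h)f_2\rangle=\langle f_1,f_2\rangle$. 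Differentiating at $h=e$ gives the $\mathfrak{gl}(2,\HC)$-invariance $\langle\rho_1(X)f_1,f_2\rangle+\langle f_1,\rho_1(X)f_2\rangle=0$. As an independent check, one may differentiate directly using Lemma \ref{rho-algebra-action}: each of the four generator types reduces to Stokes' theorem on the closed manifold $U(2)_R$, the basic identities being $\int_{U(2)_R}z_{ij}\partial_{ij}g\,dV=-\int_{U(2)_R}g\,dV$ and $\int_{U(2)_R}z_{jk}\partial_{ik}g\,dV=0$ for $i\neq j$, both read off from $d\bigl(z_{jk}\,g\,\omega_{ik}\bigr)$, where $\omega_{ik}$ is the holomorphic $3$-form with $dz_{ik}\wedge\omega_{ik}=\pm\,dV$.

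For non-degeneracy, suppose $\langle f,f_2\rangle=0$ for all $f_2\in\Zh$, i.e. $\int_{U(2)_R}f\,f_2\,dV=0$. The restrictions $\{f_2|_{U(2)_R}:f_2\in\Zh\}$ form a subalgebra of $C(U(2)_R)$ that contains the constants and the coordinate functions, separates points, and is closed under complex conjugation: for $W\in U(2)$ one has $W^{-1}=W^{*}$, whence $\overline{w_{ij}}=(W^{-1})_{ji}=N(W)^{-1}(\operatorname{adj}W)_{ji}$ and $\overline{N(W)^{-1}}=N(W)$, so $\overline{f_2}|_{U(2)_R}$ is again the restriction of an element of $\Zh$. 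By the Stone--Weierstrass theorem this subalgebra is dense in $C(U(2)_R)$, so the bounded functional $g\mapsto\int_{U(2)_R}f\,g\,dV$ vanishes identically and the complex measure $f\,dV|_{U(2)_R}$ is zero. Since $U(2)_R$ is a maximal totally real submanifold of $\HC^{\times}$, the restriction of the holomorphic volume form $dV$ is nowhere zero, so $f\equiv 0$ on $U(2)_R$; as $f$ is holomorphic on the connected manifold $\HC^{\times}$ and $U(2)_R$ is a uniqueness set, $f\equiv 0$, proving non-degeneracy.

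The routine parts are symmetry and $R$-independence; the real work lies in the invariance and the non-degeneracy. For the invariance the one delicate point is the contour deformation: I must ensure the homotopy $\varphi_{h_t}(U(2)_R)$ stays inside $\HC^{\times}=\{N\neq 0\}$, which is why I restrict to $h$ near the identity, this being sufficient for the infinitesimal (Lie-algebra) statement. For non-degeneracy the crux is the conjugation-closedness of $\Zh|_{U(2)}$ together with the facts that $dV$ restricts to a nowhere-vanishing form on the maximal totally real uniqueness set $U(2)_R$; these are exactly what convert the vanishing of all pairings into the vanishing of the measure $f\,dV$, and hence of $f$ itself.
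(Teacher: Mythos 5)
Your proposal is correct, but it takes a genuinely different route from the one the paper relies on. In fact the paper gives no proof at all here: the proposition is recalled verbatim from \cite{FL1} (Proposition 69), and the argument there -- whose traces are visible in this paper -- is computational. One parameterizes $U(2)_R$ explicitly, uses Schur orthogonality of the $SU(2)$ matrix coefficients (\ref{t}) on spheres together with a residue computation in the determinant-phase variable to establish the orthogonality relations (\ref{orthogonality}) for the $K$-type functions $t^l_{n\,\underline{m}}(Z)\cdot N(Z)^k$; these relations deliver symmetry, non-degeneracy and $R$-independence simultaneously, while $\mathfrak{gl}(2,\HC)$-invariance is checked infinitesimally from Lemma \ref{rho-algebra-action} by exactly the Stokes-type identities you list as an ``independent check.'' (The proof of the deformed orthogonality relations (\ref{mu-orthogonality}) in Section \ref{Zh_mu-section} is a faithful template for what that classical computation looks like.) Your argument replaces this machinery with softer geometry: closedness of the holomorphic top-degree form $f_1f_2\,dV$ on $\HC^{\times}$ plus homotopy invariance of integrals over cycles handles both $R$-independence and -- after the pointwise identity $(\rho_1(h)f_1)(\rho_1(h)f_2)\,dV=\varphi_h^{*}\omega$, correctly extracted from Lemma \ref{Jacobian_lemma} -- group-level invariance near the identity, which suffices after differentiation; non-degeneracy follows from Stone--Weierstrass applied to the conjugation-closed algebra $\Zh|_{U(2)_R}$ together with the facts that $U(2)_R$ is a maximal totally real submanifold, so that $dV$ restricts to a nowhere-vanishing form and $U(2)_R$ is a uniqueness set for holomorphic functions on the connected manifold $\HC^{\times}$. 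The trade-off is clear: your proof is self-contained, avoids all special-function identities, and its non-degeneracy argument works for any conjugation-closed separating function algebra; but it does not produce the explicit dual pairings (\ref{orthogonality}), which the paper needs repeatedly afterwards (in Proposition \ref{basis-prop}, Theorem \ref{embedding}, Theorem \ref{Zh^0-projector}), whereas the computational route yields them as its main by-product. The only points you should make explicit are the orientation conventions (that the orientation of $U(2)_R$ fixed by $\int_{U(2)_R}dV/N(Z)^2=-2\pi^3i$ is compatible with the scaling and near-identity pushforwards), since otherwise the homology argument could only determine the pairing up to sign; this is a routine check, not a gap.
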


We have the following orthogonality relations with respect to the pairing
(\ref{1-pairing}):
\begin{equation}  \label{orthogonality}
\bigl\langle t^{l'}_{n'\,\underline{m'}}(Z) \cdot N(Z)^{k'},
t^l_{m\underline{n}}(Z^{-1}) \cdot N(Z)^{-k-2} \bigr\rangle
= \frac1{2l+1} \delta_{kk'}\delta_{ll'} \delta_{mm'} \delta_{nn'},
\end{equation}
where the indices $k,l,m,n$ are $k \in \BB Z$,
$l = 0, \frac12, 1, \frac32, \dots$, $m,n \in \BB Z +l$, $-l \le m,n \le l$
and similarly for $k',l',m',n'$.

We know from \cite{JV} and \cite{FL1} that the representations
$(\rho_1, \Zh^+)$ and $(\rho_1, \Zh^-)$ are $\BB C$-linear dual to each other
with respect to (\ref{1-pairing}), irreducible when restricted to
$\mathfrak{sl}(2,\HC)$ and possess inner products which make them unitary
representations of the real form $\mathfrak{su}(2,2)$ of $\mathfrak{sl}(2,\HC)$,
where we regard $\mathfrak{su}(2,2)$ and $\mathfrak{u}(2,2)$ as subalgebras
of $\mathfrak{gl}(2,\HC)$ as in (\ref{u(2,2)}).

We often regard the group $U(2,2)$ as a subgroup of $GL(2,\HC)$
as described in Subsection 3.5 of \cite{FL1}. That is
$$
U(2,2) = \Biggl\{ \begin{pmatrix} a & b \\ c & d \end{pmatrix}
\in GL(2,\HC) ;\: a,b,c,d \in \HC,\:
\begin{matrix} a^*a = 1+c^*c \\ d^*d = 1+b^*b \\ a^*b=c^*d \end{matrix}
\Biggr\}.
$$
The Lie algebra of $U(2,2)$ is
\begin{equation}  \label{u(2,2)}
\mathfrak{u}(2,2) = \Bigl\{
\begin{pmatrix} A & B \\ B^* & D \end{pmatrix} \in \mathfrak{gl}(2,\HC);\:
A,B,D \in \HC ,\: A=-A^*, D=-D^* \Bigr\}.
\end{equation}
The maximal compact subgroup of $U(2,2)$ is
\begin{equation} \label{U(2)xU(2)}
U(2) \times U(2) = \biggl\{
\begin{pmatrix} a & 0 \\ 0 & d \end{pmatrix} \in GL(2, \HC);\:
a,d \in \HC, \: a^*a=d^*d=1 \biggr\}.
\end{equation}

The group $U(2,2)$ acts on $\HC$ by fractional linear transformations
(\ref{conformal-action}) preserving $U(2) \subset \HC$ and open domains
\begin{equation*}  
\BB D^+ = \{ Z \in \HC;\: ZZ^*<1 \}, \qquad
\BB D^- = \{ Z \in \HC;\: ZZ^*>1 \},
\end{equation*}
where the inequalities $ZZ^*<1$ and $ZZ^*>1$ mean that the matrix $ZZ^*-1$
is negative and positive definite respectively.
The sets $\BB D^+$ and $\BB D^-$ both have $U(2)$ as the Shilov boundary.

Similarly, for each $R>0$ we can define a conjugate of $U(2,2)$
$$
U(2,2)_R = \begin{pmatrix} R & 0 \\ 0 & 1 \end{pmatrix} U(2,2)
\begin{pmatrix} R^{-1} & 0 \\ 0 & 1 \end{pmatrix} \quad \subset GL(2,\HC).
$$
Each group $U(2,2)_R$ is a real form of $GL(2,\HC)$, preserves $U(2)_R$
and open domains
\begin{equation}  \label{D_R}
\BB D^+_R = \{ Z \in \HC ;\: ZZ^*<R^2 \}, \qquad
\BB D^-_R = \{ Z \in \HC ;\: ZZ^*>R^2 \}.
\end{equation}
These sets $\BB D^+_R$ and $\BB D^-_R$ both have $U(2)_R$ as the Shilov boundary.

\section{$K$-type Basis of $(\rho_1,\Zh)$}  \label{K-type-section}

In this section we describe a convenient basis of $(\rho_1,\Zh)$ consisting
of $K$-types for the maximal compact subgroup $U(2) \times U(2)$ of $U(2,2)$.

\begin{prop}  \label{basis-prop}
The functions 
\begin{equation}  \label{Zh-basis}
t^l_{n\,\underline{m}}(Z) \cdot N(Z)^k, \qquad
l=0, \frac12, 1, \frac32, \dots, \quad m,n=-l,-l+1,\dots,l, \quad k=0,1,2,\dots,
\end{equation}
form a vector space basis of $\Zh^+ = \BB C[z_{11},z_{12},z_{21},z_{22}]$.
\end{prop}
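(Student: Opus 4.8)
The plan is to reduce the statement to the classical ``separation of variables'' (Fischer) decomposition of polynomials into harmonics, and then read off the basis using the description of harmonic polynomials from the Preliminaries. First I would record that everything is homogeneous: from the integral formula (\ref{t}), replacing $Z$ by $\lambda Z$ multiplies $t^l_{n\,\underline{m}}$ by $\lambda^{(l-m)+(l+m)}=\lambda^{2l}$, so $t^l_{n\,\underline{m}}(Z)$ is homogeneous of degree $2l$, while $N(Z)=z_{11}z_{22}-z_{12}z_{21}$ is homogeneous of degree $2$. Hence each proposed element $t^l_{n\,\underline{m}}(Z)\cdot N(Z)^k$ is homogeneous of degree $2l+2k$. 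Writing $\Zh^+=\bigoplus_{d\ge 0}{\cal P}_d$, where ${\cal P}_d$ is the space of degree-$d$ homogeneous polynomials in $z_{11},z_{12},z_{21},z_{22}$, the ``basis'' claim decouples across degrees: it suffices to prove that for each fixed $d$ the elements $\{t^l_{n\,\underline{m}}(Z)N(Z)^k : 2l+2k=d,\ -l\le m,n\le l\}$ form a basis of ${\cal P}_d$. Note that $2l=d-2k$ runs through $d,d-2,d-4,\dots$ down to $0$ or $1$, i.e.\ $k=0,1,\dots,\lfloor d/2\rfloor$.

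The heart of the argument is the decomposition ${\cal P}_d=\bigoplus_{0\le k\le d/2} N(Z)^k\,{\cal H}_{d-2k}$, where ${\cal H}_j\subset{\cal P}_j$ denotes the homogeneous harmonic polynomials of degree $j$. I would prove this with the Fischer inner product $\langle p,q\rangle_F=(\,\overline{p(\partial)}\,q\,)(0)$ on ${\cal P}_d$, under which distinct monomials are orthogonal (so it is a genuine, nondegenerate, positive-definite Hermitian form) and, crucially, the adjoint of a multiplication operator $M_f$ is the constant-coefficient differential operator $\overline{f}(\partial)$. Since $N(Z)$ has real coefficients and $\square=4(\partial_{11}\partial_{22}-\partial_{12}\partial_{21})$, multiplication $M_N:{\cal P}_{d-2}\to{\cal P}_d$ is Fischer-adjoint to $N(\partial)=\partial_{11}\partial_{22}-\partial_{12}\partial_{21}=\tfrac14\square$. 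Applying the finite-dimensional identity $W=\operatorname{im}T\oplus\ker T^{*}$ to $T=M_N$ gives ${\cal P}_d=N(Z)\,{\cal P}_{d-2}\oplus\ker(\square|_{{\cal P}_d})=N(Z)\,{\cal P}_{d-2}\oplus{\cal H}_d$, and iterating on $d$ yields the full orthogonal decomposition, including the uniqueness (directness) of the harmonic expansion.

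To finish, I would invoke the fact from the Preliminaries that ${\cal H}^+=\Span\{t^l_{n\,\underline{m}}(Z)\}$, so that ${\cal H}_{d-2k}=\Span\{t^{(d-2k)/2}_{n\,\underline{m}}(Z):-l\le m,n\le l\}$, these matrix coefficients being linearly independent (by the orthogonality relations (\ref{t-orthog}), which already exhibit them as a $K$-type basis of ${\cal H}^+$). Multiplying a basis of each ${\cal H}_{d-2k}$ by $N(Z)^k$ and taking the direct sum over $k$ produces a basis of ${\cal P}_d$ consisting exactly of the claimed functions; assembling over all $d$ gives the proposition. As a numerical sanity check one verifies $\sum_{k=0}^{\lfloor d/2\rfloor}(d-2k+1)^2=\binom{d+3}{3}=\dim{\cal P}_d$. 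The one genuinely nontrivial point -- the main obstacle -- is the directness and exhaustiveness of the harmonic decomposition; the Fischer-adjointness identity $M_N^{*}=\tfrac14\square$ disposes of it cleanly, and it is the only computation requiring care, resting solely on $N(Z)$ having real coefficients. (Alternatively, this decomposition could simply be cited as the classical theorem on separation of variables for the Laplacian in four variables.)
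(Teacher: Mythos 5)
Your proof is correct, and it takes a genuinely different route from the paper's. The paper's own argument has two independent halves: linear independence of the functions (\ref{Zh-basis}) is read off from the orthogonality relations (\ref{orthogonality}) for the invariant pairing (\ref{1-pairing}), and spanning is then established by a dimension count -- comparing $\binom{d+3}{3}$ with $(d+1)^2+(d-1)^2+(d-3)^2+\dots$ degree by degree, with the equality proved by induction. You instead prove the structural statement ${\cal P}_d=\bigoplus_{k} N(Z)^k\,{\cal H}_{d-2k}$ (the Fischer/separation-of-variables decomposition) via the adjointness $M_N^{*}=\tfrac14\square$ with respect to the Fischer form, which delivers spanning and directness simultaneously; linear independence then only needs to be checked inside each harmonic space ${\cal H}_{2l}$, where it follows from (\ref{t-orthog}) together with the recalled fact ${\cal H}^+=\Span\bigl\{t^l_{n\,\underline{m}}(Z)\bigr\}$, and the paper's combinatorial identity becomes an optional sanity check rather than a step of the proof. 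What each buys: the paper's proof is shorter given that the orthogonality relations (\ref{orthogonality}) for the pairing on $\Zh$ are already in place, but its spanning step is purely numerical; yours is more self-contained and conceptual -- it exhibits the decomposition of ${\cal P}_d$ that underlies the whole picture (and implicitly Lemma \ref{image-lemma} and Theorem \ref{Zh-decomposition}), requires nothing beyond the classical Fischer argument and the description of ${\cal H}^+$ from the Preliminaries, and makes the equality of dimensions automatic. One small wording caveat: the iterated sum $\bigoplus_k N(Z)^k{\cal H}_{d-2k}$ needs only directness, which you correctly get from injectivity of $M_N$ and induction, so calling it ``orthogonal'' at every stage is more than you use (though it is in fact true for the Fischer form).
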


\begin{proof}
Clearly, the functions $t^l_{n\,\underline{m}}(Z) \cdot N(Z)^k$ are polynomials.
From the orthogonality relations (\ref{orthogonality}) it follows that
they are linearly independent.
It remains to show that they span all of $\Zh^+$.
We can do that by comparing the dimensions of the subspaces homogeneous
functions of degree $d$ in $\Zh^+$ and the space spanned by (\ref{Zh-basis}).

The number of monomials
$(z_{11})^{\alpha_{11}}(z_{12})^{\alpha_{12}}(z_{21})^{\alpha_{21}}(z_{22})^{\alpha_{22}}$
in $\Zh^+$ with $\alpha_{11} + \alpha_{12} + \alpha_{21} + \alpha_{22} = d$ is
\begin{equation}  \label{dim1}
\begin{pmatrix} d+3 \\ 3 \end{pmatrix} = \frac{(d+3)(d+2)(d+1)}6.
\end{equation}
On the other hand, for $k$ and $l$ fixed, there are exactly $(2l+1)^2$
basis elements (\ref{Zh-basis}) and they are all homogeneous of degree $2l+2k$.
Therefore, the dimension of the subspace of homogeneous functions of degree $d$
inside the span of (\ref{Zh-basis}) is
\begin{equation}  \label{dim2}
(d+1)^2+(d-1)^2+(d-3)^2+\dots.
\end{equation}
Finally, it is easy to show by induction that (\ref{dim1}) and (\ref{dim2})
are in fact equal.
\end{proof}

We conclude this section with a decomposition of $(\rho_1,\Zh)$ into $K$-types.

\begin{cor}
The functions 
$$
t^l_{n\,\underline{m}}(Z) \cdot N(Z)^k, \qquad
l=0, \frac12, 1, \frac32, \dots, \quad m,n=-l,-l+1,\dots,l, \quad k \in\BB Z,
$$
form a vector space basis of $\Zh = \BB C[z_{11},z_{12},z_{21},z_{22},N(Z)^{-1}]$.
\end{cor}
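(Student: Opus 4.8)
The plan is to bootstrap from Proposition~\ref{basis-prop}, which already proves the assertion for $\Zh^+$ (the case $k \geq 0$), and to upgrade it to all $k \in \BB Z$ by adjoining negative powers of $N(Z)$. As in the proposition, I would prove linear independence and spanning separately; neither step requires a new dimension count, since the essential combinatorial work has already been done in the polynomial case.

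For linear independence, I would invoke the orthogonality relations (\ref{orthogonality}), noting that they are valid for \emph{all} integers $k, k'$, not merely for $k, k' \geq 0$. The key observation is that for each basis candidate $t^l_{n\,\underline{m}}(Z) \cdot N(Z)^k$ the function $t^l_{m\underline{n}}(Z^{-1}) \cdot N(Z)^{-k-2}$ again lies in $\Zh$ (being a polynomial in the $z_{ij}$ times a power of $N(Z)^{-1}$), and by (\ref{orthogonality}) these two families are dual to one another under the pairing (\ref{1-pairing}). Hence, given any finite relation $\sum c_{l,m,n,k}\, t^l_{n\,\underline{m}}(Z) \cdot N(Z)^k = 0$, pairing against the appropriate dual function isolates and kills each coefficient $c_{l,m,n,k}$ in turn, which establishes independence of the full family.

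For spanning, I would write an arbitrary element $f \in \Zh = \BB C[z_{11},z_{12},z_{21},z_{22},N(Z)^{-1}]$ in the form $f = g \cdot N(Z)^{-M}$, where $g \in \Zh^+$ is an honest polynomial and $M \geq 0$ is an integer; this is possible because $N(Z)$ is itself a polynomial in the $z_{ij}$, so every occurrence of $N(Z)^{-1}$ can be cleared onto a common denominator. By Proposition~\ref{basis-prop}, $g$ expands as a finite linear combination of the functions $t^l_{n\,\underline{m}}(Z) \cdot N(Z)^j$ with $j \geq 0$. Multiplying this expansion by $N(Z)^{-M}$ replaces each $N(Z)^j$ by $N(Z)^{j-M}$ with $j - M \in \BB Z$, exhibiting $f$ as a finite combination of functions from the proposed family. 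Therefore these functions span $\Zh$, and together with the independence above they form a basis.

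Since both halves reduce almost immediately to the already-proven polynomial statement, there is no serious obstacle here; the only point demanding a moment's care is verifying that the dual functions used in the independence argument genuinely belong to $\Zh$, so that the pairing (\ref{1-pairing}) applies. This is clear once one recalls that $t^l_{m\underline{n}}(Z^{-1})$ is proportional to $t^l_{-n\,\underline{-m}}(Z) \cdot N(Z)^{-2l}$, so that the dual functions are themselves members of the same extended family up to reindexing.
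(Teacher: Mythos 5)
Your proposal is correct and follows essentially the same route as the paper: linear independence via the orthogonality relations (\ref{orthogonality}), which hold for all $k \in \BB Z$, and spanning by reduction to Proposition \ref{basis-prop} after clearing denominators by a power of $N(Z)$. The paper states this in one line; you have merely made explicit the two steps it leaves implicit, including the check that the dual functions lie in $\Zh$.
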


\begin{proof}
The functions $t^l_{n\,\underline{m}}(Z) \cdot N(Z)^k$ are linearly independent
by (\ref{orthogonality}) and
by Proposition \ref{basis-prop} span the entire space $\Zh$.
\end{proof}

\section{Irreducible Components of $(\rho_1,\Zh)$}  \label{irred-section}

In this section we decompose $(\rho_1,\Zh)$ into irreducible components,
identify these irreducible components as images of multiplication maps
and describe their unitary structures.

\begin{thm}  \label{Zh-decomposition}
The representation $(\rho_1,\Zh)$ of $\mathfrak{gl}(2,\HC)$ has the
following decomposition into irreducible components:
$$
(\rho_1,\Zh) = (\rho_1,\Zh^-) \oplus (\rho_1,\Zh^0) \oplus (\rho_1,\Zh^+),
$$
where
\begin{align*}
\Zh^+ &= \BB C-\text{span of }
\bigl\{ t^l_{n\,\underline{m}}(Z) \cdot N(Z)^k;\: k \ge 0 \bigr\}, \\
\Zh^- &= \BB C-\text{span of }
\bigl\{ t^l_{n\,\underline{m}}(Z) \cdot N(Z)^k;\: k \le -(2l+2) \bigr\}, \\
\Zh^0 &= \BB C-\text{span of }
\bigl\{ t^l_{n\,\underline{m}}(Z) \cdot N(Z)^k;\: -(2l+1) \le k \le -1 \bigr\}.
\end{align*}
\end{thm}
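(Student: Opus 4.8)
The plan is to split the statement into three parts: that the three spans form a vector-space direct sum equal to $\Zh$, that each summand is $\mathfrak{gl}(2,\HC)$-invariant, and that each is irreducible. For the decomposition, Proposition \ref{basis-prop} and its corollary show that $\{t^l_{n\,\underline{m}}(Z)\cdot N(Z)^k\}_{k\in\BB Z}$ is a basis of $\Zh$, and for each fixed $l$ the three ranges $k\ge 0$, $-(2l+1)\le k\le -1$, $k\le -(2l+2)$ partition $\BB Z$; this gives $\Zh=\Zh^-\oplus\Zh^0\oplus\Zh^+$ as vector spaces immediately. I would also check that these spans match the earlier definitions of $\Zh^{\pm}$: the range $k\ge 0$ is exactly Proposition \ref{basis-prop}, and for $\Zh^-$ one substitutes $f=t^l_{n\,\underline{m}}(Z)N(Z)^k$ into $N(Z)^{-2}f(Z^{-1})$ and uses $t^l_{m\underline{n}}(Z^{-1})=t^l_{m\underline{n}}(Z^+)N(Z)^{-2l}$ to see that the result lands in $\Zh^+$ precisely when $k\le -(2l+2)$.

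For invariance, the spaces $\Zh^+=\BB C[z_{11},z_{12},z_{21},z_{22}]$ and $\Zh^-$ are already known to be $\mathfrak{gl}(2,\HC)$-invariant. For $\Zh^0$ I would argue by duality rather than by the Lie-algebra action. By the orthogonality relations (\ref{orthogonality}) each basis element $t^l_{n\,\underline{m}}(Z)N(Z)^k$ has, up to scalar, a single partner under the pairing (\ref{1-pairing}), namely a basis element whose $N$-exponent equals $-2l-k-2$. Tracking this exponent shows that $\Zh^+$ pairs with $\Zh^-$, whereas $\Zh^0$ pairs with $\Zh^0$ and is orthogonal to $\Zh^+\oplus\Zh^-$; nondegeneracy then forces $\Zh^0=(\Zh^+\oplus\Zh^-)^{\perp}$. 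Since $\Zh^+\oplus\Zh^-$ is invariant and the pairing (\ref{1-pairing}) is $\mathfrak{gl}(2,\HC)$-invariant and nondegenerate, its orthogonal complement $\Zh^0$ is invariant as well.

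For irreducibility, the cases $\Zh^+$ and $\Zh^-$ are recalled from \cite{JV,FL1}, so the genuinely new point is $\Zh^0$. First I would record the $K=U(2)\times U(2)$ structure: for fixed $(l,k)$ the span of $\{t^l_{n\,\underline{m}}(Z)N(Z)^k\}_{-l\le m,n\le l}$ is an irreducible $K$-type of spin $(l,l)$ whose two central characters are determined by $k$, so the $K$-types of $\Zh$ are multiplicity free and every invariant subspace is a sum of them. Then I would read off from Lemma \ref{rho-algebra-action} the selection rules of the two off-diagonal generators: the block $\rho_1\bigl(\begin{smallmatrix}0&B\\0&0\end{smallmatrix}\bigr)$ lowers the degree by one and sends the $K$-type $(l,k)$ into $(l+\frac12,k-1)$ and $(l-\frac12,k)$, while $\rho_1\bigl(\begin{smallmatrix}0&0\\C&0\end{smallmatrix}\bigr)$ raises the degree by one and sends $(l,k)$ into $(l+\frac12,k)$ and $(l-\frac12,k+1)$. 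Restricted to the band $-(2l+1)\le k\le -1$, these moves connect every $K$-type to the distinguished one $(l,k)=(0,-1)$, i.e. to $N(Z)^{-1}$ (lower $l$ at fixed $k$ up to the top edge $k=-1$, then descend that edge), and conversely reach every $K$-type from $N(Z)^{-1}$ (ascend the bottom edge $k=-(2l+1)$, then raise $l$ at fixed $k$). Thus $\Zh^0$ is the cyclic module generated by $N(Z)^{-1}$ and has no proper nonzero invariant subspace.

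The main obstacle is this last step. The selection rules say only which transitions are \emph{allowed}; irreducibility additionally requires that the particular matrix coefficients realizing the connecting moves are \emph{nonzero}, especially along the two edges $k=-1$ and $k=-(2l+1)$ of the band, where invariance of $\Zh^0$ forces some of the four a priori possible transitions to vanish. Establishing these non-vanishings is a direct but delicate computation: one applies the operators of Lemma \ref{rho-algebra-action} to $t^l_{n\,\underline{m}}(Z)N(Z)^k$ and isolates the $t^{l\pm 1/2}$-components, and this is where the bulk of the technical work lies.
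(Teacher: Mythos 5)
Your architecture (vector-space decomposition, invariance, irreducibility) is sound, and the first two parts go through: the decomposition follows from Proposition \ref{basis-prop} exactly as you say, and your duality argument for the invariance of $\Zh^0$ is correct and is actually a different route from the paper's. The paper never invokes the pairing here; you instead read off from (\ref{orthogonality}) that the pairing partner of exponent $k'$ at level $l$ has exponent $-2l-k'-2$, so $\Zh^0=(\Zh^+\oplus\Zh^-)^{\perp}$, and invariance of $\Zh^0$ follows from invariance and nondegeneracy of (\ref{1-pairing}) together with the known invariance of $\Zh^{\pm}$. That shortcut is legitimate and arguably cleaner for the invariance statement alone.

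The genuine gap is the one you flag yourself and then do not close: the non-vanishing of the transition coefficients. This is not a deferrable technicality — it is the entire content of the theorem, because the exact placement of the boundaries $k=0$, $k=-1$, $k=-(2l+1)$, $k=-(2l+2)$ is determined by precisely when those coefficients vanish; a selection-rule argument that does not compute them cannot distinguish the stated decomposition from a wrong one, nor rule out a proper invariant subspace inside one of the bands. The paper's proof consists essentially of this computation. One first proves the harmonic decomposition (\ref{Z^+f}),
$$
Z^+f_l = \frac{Z^+ \cdot (\partial^+ f_l) \cdot Z^+ + Z^+f_l}{2l+1}
+ \frac{\partial f_l \cdot N(Z)}{2l+1},
$$
which yields the explicit formulas (\ref{B-action}) and (\ref{C-action}): the $B$-generator carries the $K$-type $(l,k)$ to $(l-\tfrac12,k)$ with coefficient $\tfrac{2l+k+1}{2l+1}$ and to $(l+\tfrac12,k-1)$ with coefficient $\tfrac{k}{2l+1}$, while the $C$-generator carries $(l,k)$ to $(l+\tfrac12,k)$ with coefficient $\tfrac{2l+k+2}{2l+1}$ and to $(l-\tfrac12,k+1)$ with coefficient $\tfrac{k+1}{2l+1}$. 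One must then also check that the harmonic factors multiplying these coefficients are nonzero: $\partial f_l\neq 0$ for $l\neq 0$, and for the degree-$(2l+1)$ factor the paper uses the trace identity $\tr(Z\cdot\partial f+f)=(\deg+2)f$ to conclude $Z\cdot(\partial f_l)\cdot Z + Zf_l = (Z\cdot\partial f_l+f_l)\cdot Z\neq 0$. With these facts your connectivity argument through $N(Z)^{-1}$ does close (and is the same argument as the paper's); without them the proposal stops exactly where the proof has to start.
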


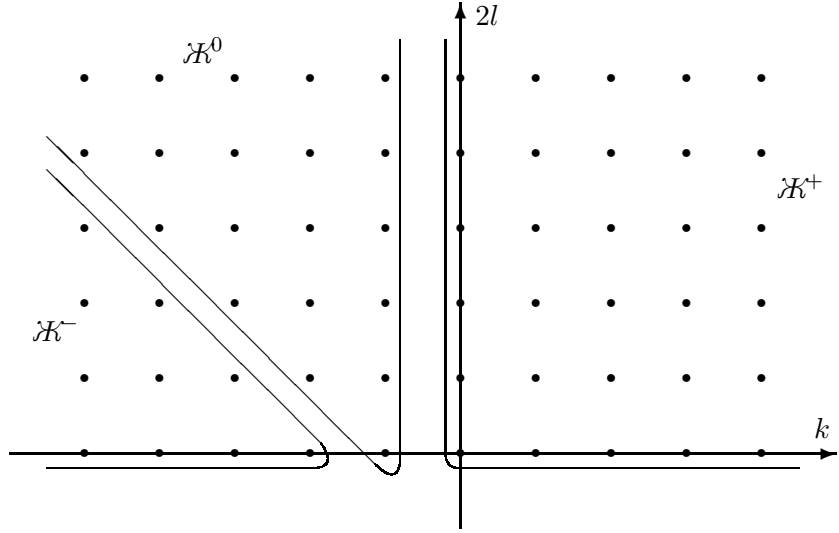
\begin{figure}
\begin{center}
\setlength{\unitlength}{1mm}
\begin{picture}(110,70)
\multiput(10,10)(10,0){10}{\circle*{1}}
\multiput(10,20)(10,0){10}{\circle*{1}}
\multiput(10,30)(10,0){10}{\circle*{1}}
\multiput(10,40)(10,0){10}{\circle*{1}}
\multiput(10,50)(10,0){10}{\circle*{1}}
\multiput(10,60)(10,0){10}{\circle*{1}}
\thicklines
\put(60,0){\vector(0,1){70}}
\put(0,10){\vector(1,0){110}}
\thinlines
\put(58,10){\line(0,1){55}}
\put(60,8){\line(1,0){45}}
\put(52,10){\line(0,1){55}}
\put(5,8){\line(1,0){35}}
\put(41.4,11.4){\line(-1,1){36.4}}
\put(48.6,8.6){\line(-1,1){43.6}}
\qbezier(58,10)(58,8)(60,8)
\qbezier(40,8)(43.8,8)(41.4,11.4)
\qbezier(52,10)(52,5.2)(48.6,8.6)
\put(62,67){$2l$}
\put(107,12){$k$}
\put(3,25){$\Zh^-$}
\put(23,62){$\Zh^0$}
\put(102,44){$\Zh^+$}
\end{picture}
\end{center}
\caption{Decomposition of $(\rho_1,\Zh)$ into irreducible components}
\end{figure}


\begin{proof}
Note that the basis elements (\ref{Zh-basis}) consist of functions
of the kind
$$
f_l(Z) \cdot N(Z)^k, \qquad \square f_l(Z)=0, \quad
l=0, \frac12, 1, \frac32, \dots, \quad k \in\BB Z,
$$
where the functions $f_l(Z)$ range over a basis of harmonic functions which
are polynomials of degree $2l$.
Recall that we consider $U(2) \times U(2)$ as a subgroup of $GL(2,\HC)$
via (\ref{U(2)xU(2)}).
For $k$ and $l$ fixed, these functions span an irreducible representation
of $U(2) \times U(2)$, which -- when restricted to $SU(2) \times SU(2)$ --
becomes isomorphic to $V_l \boxtimes V_l$, where $V_l$ denotes the
irreducible representation of $SU(2)$ of dimension $2l+1$.

To determine the effect of matrices of the kind
$\bigl( \begin{smallmatrix} 0 & B \\ 0 & 0 \end{smallmatrix} \bigr)
\in \mathfrak{gl}(2,\HC)$ with $B \in \HC$,
we use Lemma \ref{rho-algebra-action} describing their action and compute
$$
\partial \bigl( f_l(Z) \cdot N(Z)^k \bigr)
= \partial f_l \cdot N(Z)^k + k Z^+f_l \cdot N(Z)^{k-1}.
$$
By direct computation we have:
$$
\partial f_l \cdot N(Z) = Z^+ \deg f_l - Z^+ \cdot (\partial^+ f_l) \cdot Z^+
= 2l Z^+f_l - Z^+ \cdot (\partial^+ f_l) \cdot Z^+,
$$
$$
\square (Z^+f_l) = Z^+ \square f_l + 4\partial f_l
\qquad \text{and} \qquad
\square ( N(Z) \cdot g ) = N(Z) \cdot \square g + 4(\deg + 2) g.
$$
Hence we can write
\begin{equation}  \label{Z^+f}
Z^+f_l = \Bigl(Z^+f_l - \frac{\partial f_l \cdot N(Z)}{2l+1} \Bigr)
+ \frac{\partial f_l \cdot N(Z)}{2l+1}
= \frac{Z^+ \cdot (\partial^+ f_l) \cdot Z^+ + Z^+f_l}{2l+1}
+ \frac{\partial f_l \cdot N(Z)}{2l+1}
\end{equation}
and
\begin{equation}  \label{B-action}
\partial \bigl( f_l(Z) \cdot N(Z)^k \bigr)
= \frac{2l+k+1}{2l+1} \partial f_l \cdot N(Z)^k
+ \frac{k}{2l+1} \bigl( Z^+ \cdot (\partial^+ f_l) \cdot Z^+ + Z^+f_l \bigr)
\cdot N(Z)^{k-1}
\end{equation}
with $\partial f_l$ and $Z^+ \cdot (\partial^+ f_l) \cdot Z^+ + Z^+f_l$
being harmonic and having degrees $2l-1$ and $2l+1$ respectively.

Next we determine the effect of matrices of the kind
$\bigl( \begin{smallmatrix} 0 & 0 \\ C & 0 \end{smallmatrix} \bigr)
\in \mathfrak{gl}(2,\HC)$ with
$C \in \HC$. Again, we use Lemma \ref{rho-algebra-action} and compute
$$
Z \cdot \partial \bigl( f_l \cdot N(Z)^k \bigr) \cdot Z + 2 Zf_l \cdot N(Z)^k
= Z \cdot (\partial f_l) \cdot Z \cdot N(Z)^k
+ (k+2) Zf_l \cdot N(Z)^k.
$$
Conjugating (\ref{Z^+f}) we see that
$$
Zf_l = \frac{Z \cdot (\partial f_l) \cdot Z + Zf_l}{2l+1}
+ \frac{\partial^+ f_l \cdot N(Z)}{2l+1}.
$$
Therefore,
\begin{multline}  \label{C-action}
Z \cdot \partial \bigl( f_l \cdot N(Z)^k \bigr) \cdot Z + 2 Zf_l \cdot N(Z)^k \\
= \frac{2l+k+2}{2l+1} \bigl( Z \cdot (\partial f_l) \cdot Z + Zf_l \bigr)
\cdot N(Z)^k + \frac{k+1}{2l+1} \partial^+ f_l \cdot N(Z)^{k+1}
\end{multline}
with $Z \cdot (\partial f_l) \cdot Z + Zf_l$ and $\partial^+ f_l$
being harmonic and having degrees $2l+1$ and $2l-1$ respectively.

The actions of
$\bigl( \begin{smallmatrix} A & 0 \\ 0 & 0 \end{smallmatrix} \bigr)$,
$\bigl( \begin{smallmatrix} 0 & B \\ 0 & 0 \end{smallmatrix} \bigr)$,
$\bigl( \begin{smallmatrix} 0 & 0 \\ C & 0 \end{smallmatrix} \bigr)$ and
$\bigl( \begin{smallmatrix} 0 & 0 \\ 0 & D \end{smallmatrix} \bigr)$
are illustrated in Figure \ref{actions}. In the diagram describing
$\rho_1\bigl( \begin{smallmatrix} 0 & B \\ 0 & 0 \end{smallmatrix} \bigr)$
the vertical arrow disappears if $l=0$ or $2l+k+1=0$
and the diagonal arrow disappears if $k=0$.
Similarly, in the diagram describing
$\rho_1\bigl( \begin{smallmatrix} 0 & 0 \\ C & 0 \end{smallmatrix} \bigr)$
the vertical arrow disappears if $2l+k+2=0$ and the
diagonal arrow disappears if $k=-1$ or $l=0$.
This proves that $\Zh^+$, $\Zh^-$ and $\Zh^0$ are
$\mathfrak{gl}(2,\HC)$-invariant subspaces of $\Zh$.
Note that 
$$
\tr (Z \cdot \partial f + f) =
\tr \begin{pmatrix}
z_{11} \partial_{11}f + z_{12} \partial_{12}f + f & \ast \ast \ast \\
\ast \ast \ast & z_{21} \partial_{21}f + z_{22} \partial_{22}f + f
\end{pmatrix}
= (\deg+2)f,
$$
hence
$Z \cdot (\partial f_l) \cdot Z + Zf_l = (Z \cdot \partial f_l + f_l) \cdot Z$
and its conjugate $Z^+ \cdot (\partial^+ f_l) \cdot Z^+ + Z^+f_l$ are never zero.
It follows from (\ref{B-action}) and (\ref{C-action}) that the
subrepresentations $(\rho_1, \Zh^+)$, $(\rho_1, \Zh_2^-)$, $(\rho_1, \Zh^0)$
are irreducible with respect to $\rho_1$-action of $\mathfrak{gl}(2,\HC)$.
\end{proof}

\begin{figure}
\begin{center}
\begin{subfigure}[b]{0.21\textwidth}
\centering
\includegraphics[scale=0.2]{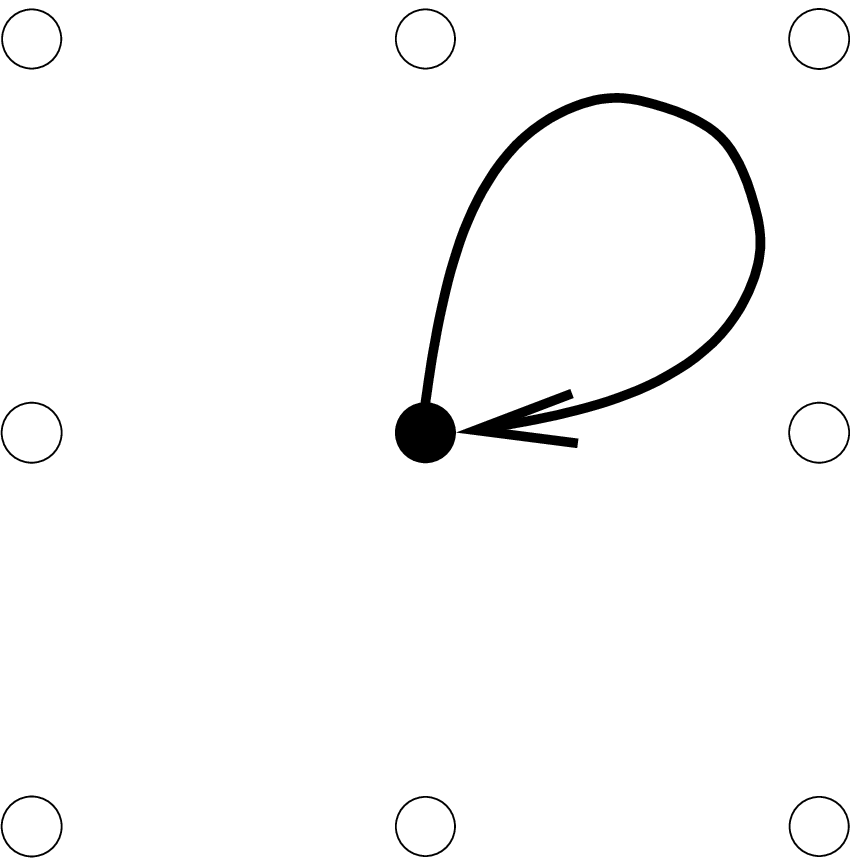}
\caption{Action of
$\rho_1\bigl(\begin{smallmatrix} A & 0 \\ 0 & 0 \end{smallmatrix}\bigr)$}
\end{subfigure}
\quad
\begin{subfigure}[b]{0.21\textwidth}
\centering
\includegraphics[scale=0.2]{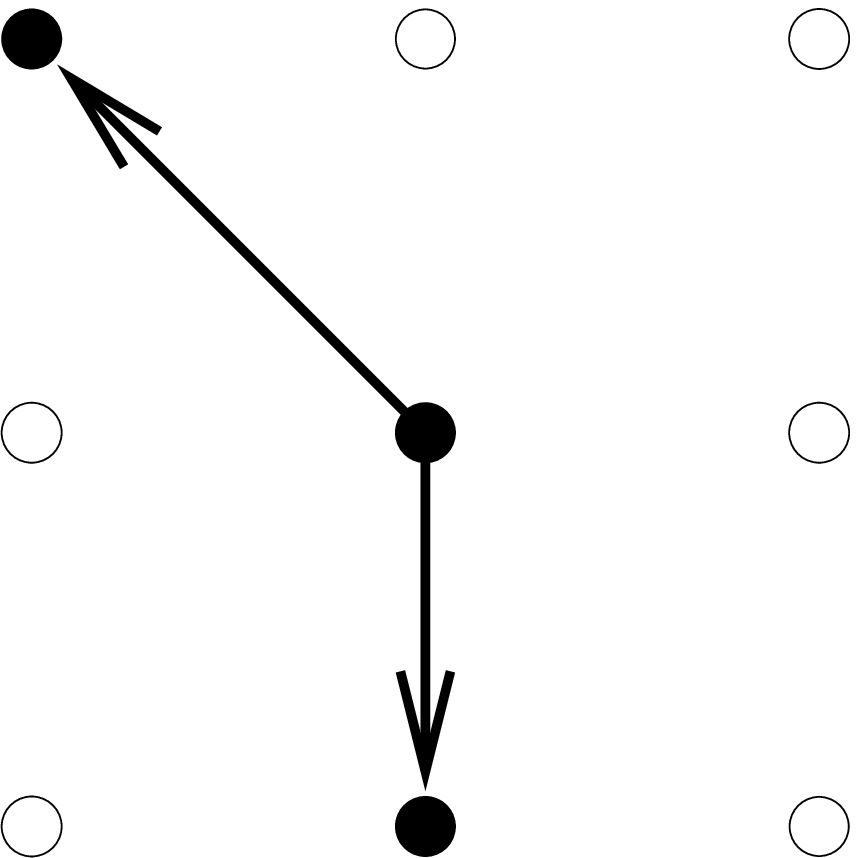}
\caption{Action of
$\rho_1\bigl(\begin{smallmatrix} 0 & B \\ 0 & 0 \end{smallmatrix}\bigr)$}
\end{subfigure}
\quad
\begin{subfigure}[b]{0.21\textwidth}
\centering
\includegraphics[scale=0.2]{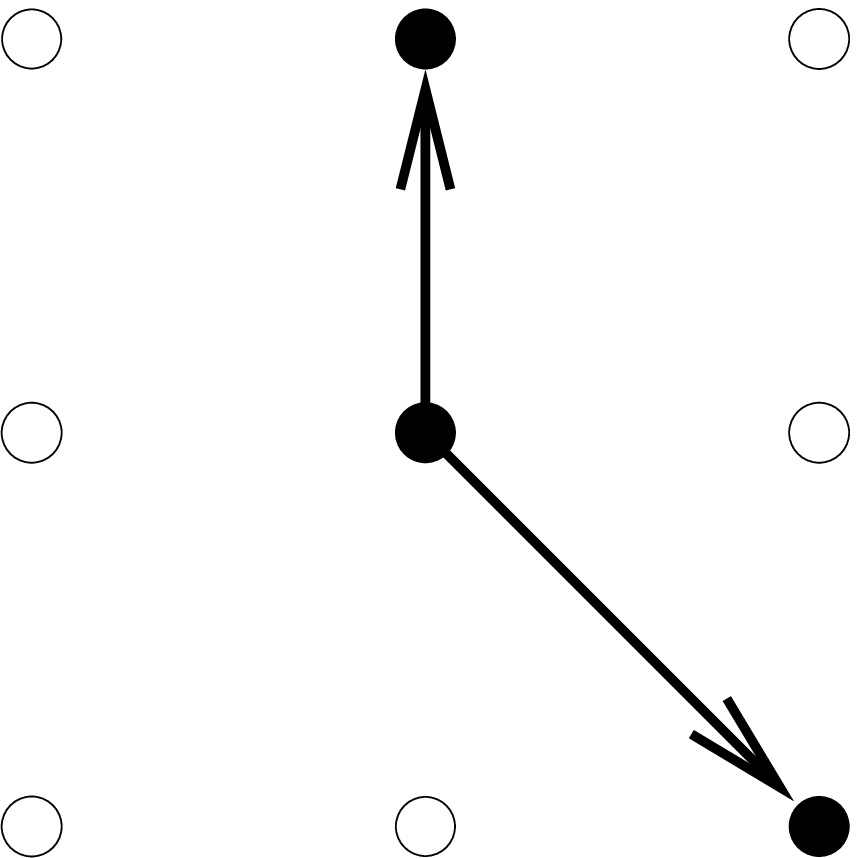}
\caption{Action of
$\rho_1\bigl(\begin{smallmatrix} 0 & 0 \\ C & 0 \end{smallmatrix}\bigr)$}
\end{subfigure}
\quad
\begin{subfigure}[b]{0.21\textwidth}
\centering
\includegraphics[scale=0.2]{AD.eps}
\caption{Action of
$\rho_1\bigl(\begin{smallmatrix} 0 & 0 \\ 0 & D \end{smallmatrix}\bigr)$}
\end{subfigure}
\end{center}
\caption{}
\label{actions}
\end{figure}


Our next task is to identify the images under the natural
$\mathfrak{gl}(2,\HC)$-equivariant multiplication maps:
\begin{equation}  \label{M}
M: (\pi^0_l, {\cal H}^\pm) \otimes (\pi^0_r, {\cal H}^\pm) \to (\rho_1, \Zh)
\end{equation}
sending pure tensors
$$
\phi_1(Z_1) \otimes \phi_2(Z_2) \mapsto (\phi_1 \cdot \phi_2)(Z).
$$

\begin{lem}  \label{image-lemma}
Under the multiplication maps
$(\pi^0_l, {\cal H}^\pm) \otimes (\pi^0_r, {\cal H}^\pm) \to (\rho_1, \Zh)$,
\begin{enumerate}
\item
The image of ${\cal H}^+ \otimes {\cal H}^+$ in $\Zh$ is $\Zh^+$;
\item
The image of ${\cal H}^- \otimes {\cal H}^-$ in $\Zh$ is $\Zh^-$;
\item
The image of ${\cal H}^- \otimes {\cal H}^+$ in $\Zh$ is $\Zh^0$.
\end{enumerate}
\end{lem}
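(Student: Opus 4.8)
The plan is to prove all three parts by a single mechanism: reduce to irreducibility via equivariance, establish containment through an $SU(2) \times SU(2)$ $K$-type count, and conclude with a homogeneity bookkeeping of the power of $N(Z)$. First I would note that each multiplication map $M \colon {\cal H}^{\pm} \otimes {\cal H}^{\pm} \to (\rho_1, \Zh)$ is $\mathfrak{gl}(2,\HC)$-equivariant, so its image is a $\mathfrak{gl}(2,\HC)$-invariant subspace of $\Zh$. Since $\Zh^+$, $\Zh^-$ and $\Zh^0$ are irreducible by Theorem \ref{Zh-decomposition}, it suffices in each case to show that the image is nonzero and contained in the claimed summand; irreducibility then forces equality. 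Nonvanishing is immediate from the explicit products $1 \cdot 1 = 1$, $N(Z)^{-1} \cdot N(Z)^{-1} = N(Z)^{-2}$ and $N(Z)^{-1} \cdot 1 = N(Z)^{-1}$, lying in $\Zh^+$, $\Zh^-$ and $\Zh^0$ respectively.

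The containment is the substance. Recall from the proof of Theorem \ref{Zh-decomposition} that for each fixed $l$ the functions $t^l_{n\underline{m}}(Z)$, and more generally $t^l_{n\underline{m}}(Z) \cdot N(Z)^k$, span a copy of $V_l \boxtimes V_l$ under $SU(2) \times SU(2)$; hence every $SU(2) \times SU(2)$-isotypic type occurring in $\Zh$ is \emph{diagonal}. Taking $\phi_1$ spanning a $V_{l_1} \boxtimes V_{l_1}$ inside ${\cal H}^{\pm}$ and $\phi_2$ spanning a $V_{l_2} \boxtimes V_{l_2}$ inside ${\cal H}^{\pm}$, the product $\phi_1 \phi_2$ lies in the image of $(V_{l_1} \otimes V_{l_2}) \boxtimes (V_{l_1} \otimes V_{l_2})$ under the equivariant map $M$. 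Because the target carries no off-diagonal types, only components $V_l \boxtimes V_l$ with $V_l \subset V_{l_1} \otimes V_{l_2}$ can survive, so by Clebsch--Gordan the image involves only $l$ with $|l_1 - l_2| \le l \le l_1 + l_2$. Finally, homogeneity determines the accompanying power: writing such a component as $t^l_{n\underline{m}}(Z) \cdot N(Z)^k$ and matching total degrees fixes $k$ uniquely.

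Carrying out the degree bookkeeping then places each component in the correct summand. In part (1) the product has degree $2l_1 + 2l_2$, so $k = l_1 + l_2 - l \ge 0$ and the image lies in $\Zh^+$. In part (2), since ${\cal H}^-$ is spanned by functions of degree $-2l-2$, the product has degree $-2l_1 - 2l_2 - 4$, giving $k = -l_1 - l_2 - l - 2 \le -(2l+2)$, so the image lies in $\Zh^-$. In part (3) the product has degree $2l_2 - 2l_1 - 2$, giving $k = l_2 - l_1 - l - 1$; the inequalities $l_1 - l_2 \le l$ and $l_2 - l_1 \le l$ (both consequences of $l \ge |l_1 - l_2|$) translate exactly into $-(2l+1) \le k \le -1$, placing the image in $\Zh^0$.

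I expect part (3) to be the main obstacle, and the crux is exactly the \emph{lower} Clebsch--Gordan bound $l \ge |l_1 - l_2|$ --- equivalently, the fact that the harmonic constituents of a product of two harmonic polynomials of degrees $2l_1$ and $2l_2$ never fall below degree $|2l_1 - 2l_2|$. The upper bound $l \le l_1 + l_2$ is automatic from the degree of the product and already suffices for parts (1) and (2), but both inequalities defining $\Zh^0$ require the lower bound. The cleanest route to it is the diagonality observation above: since $\Zh$ carries only the types $V_l \boxtimes V_l$, equivariance forces the off-diagonal summands of the source to die and simultaneously confines the surviving $l$ to the Clebsch--Gordan window. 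This is the technical heart of the subtle case $\Zh^0$.
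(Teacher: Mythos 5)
Your proof is correct, and it takes a genuinely more uniform route than the paper's. The paper treats the three parts asymmetrically: part (1) is immediate because a product of two polynomials is a polynomial, and irreducibility of $\Zh^+$ finishes; part (2) is deduced from part (1) by applying the inversion element $\bigl(\begin{smallmatrix} 0 & 1 \\ 1 & 0 \end{smallmatrix}\bigr)$ via $\pi^0_l \otimes \pi^0_r$ on the source and via $\rho_1$ on the target; and part (3) is indirect: writing $J$ for the image of ${\cal H}^- \otimes {\cal H}^+$, the inclusion $\Zh^0 \subset J$ holds because $N(Z)^{-1} \in J$ generates $\Zh^0$, and if the inclusion were proper then Theorem \ref{Zh-decomposition} would force $J$ to contain $\Zh^+$ or $\Zh^-$, hence some $N(Z)^k$ with $k \ne -1$; this is ruled out using only the special case of Clebsch--Gordan that the trivial representation occurs in $V_l \otimes V_{l'}$ if and only if $l = l'$, combined with the fact that the relevant products are homogeneous of degree $-2$. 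You instead prove the hard containments directly and by one mechanism in all three cases: diagonality of the $SU(2) \times SU(2)$ types of $\Zh$, Schur's lemma, the full Clebsch--Gordan window $|l_1 - l_2| \le l \le l_1 + l_2$, and homogeneity bookkeeping. Your route buys uniformity (no inversion trick, no case distinction) and avoids the contradiction step, which implicitly uses that any invariant subspace of $\Zh$ strictly containing $\Zh^0$ must contain one of the other two summands; it also gives finer information, namely exactly which $K$-types can occur in each individual product. The paper's route buys brevity: parts (1) and (2) are nearly free, and part (3) requires only the weakest Clebsch--Gordan input because, once the decomposition theorem is available, it suffices to test whether the scalar functions $N(Z)^k$, $k \ne -1$, can lie in the image.
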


\begin{proof}
Note that the space ${\cal H}^+$ consists of harmonic polynomials.
The product of two polynomials is another polynomial,
hence the image of ${\cal H}^+ \otimes {\cal H}^+$ lies in $\Zh^+$.
Since $(\rho_1, \Zh)$ is irreducible, the image is all of $\Zh^+$.

Applying
$(\pi^0_l \otimes \pi^0_r)
\bigl(\begin{smallmatrix} 0 & 1 \\ 1 & 0 \end{smallmatrix}\bigr)$
to the left hand side of ${\cal H}^+ \otimes {\cal H}^+ \to \Zh^+$ and
$\rho_1 \bigl(\begin{smallmatrix} 0 & 1 \\ 1 & 0 \end{smallmatrix}\bigr)$
to the right hand side,
we see that the image of ${\cal H}^- \otimes {\cal H}^-$ is $\Zh^-$.

Let us denote by $J$ the image of ${\cal H}^- \otimes {\cal H}^+$ in $\Zh$.
Clearly, $J$ contains the function $N(Z)^{-1}$, which generates $\Zh^0$.
Hence $\Zh^0 \subset J$. It remains to show that $J \subset \Zh^0$.
By Theorem \ref{Zh-decomposition}, if $\Zh^0 \subsetneq J$,
then $J$ also contains $\Zh^+$ or $\Zh^-$ and hence functions $N(Z)^k$
with $k \ne -1$.
Thus it is sufficient to prove that $J$ cannot contain $N(Z)^k$ with $k \ne -1$.

By construction, $J$ is spanned by
\begin{equation}  \label{N(Z)^k}
N(Z)^{-(2l+1)} \cdot t^l_{n\,\underline{m}}(Z) \cdot t^{l'}_{n'\,\underline{m'}}(Z).
\end{equation}
Note that if $V_l$ and $V_{l'}$ are two irreducible representations of $SU(2)$
of dimensions $2l+1$ and $2l'+1$ respectively, then their tensor product
contains a copy of the trivial representation if and only if $l=l'$.
This means that a linear combination of the functions (\ref{N(Z)^k})
can express $N(Z)^k$ only if $l=l'$.
But then the homogeneity degree of (\ref{N(Z)^k}) is $-2$.
Therefore, $N(Z)^k \notin J$ if $k \ne -1$.
\end{proof}

As we have mentioned, the representations $(\rho_1, \Zh^+)$ and $(\rho_1, \Zh^-)$
are $\BB C$-linear dual to each other with respect to (\ref{1-pairing}).
On the other hand, the $\BB C$-linear dual of $(\rho_1, \Zh^0)$
with respect to (\ref{1-pairing}) is $(\rho_1, \Zh^0)$ itself.
We conclude this section with an explicit description of the unitary
structures on $(\rho_1,\Zh^+)$, $(\rho_1,\Zh^-)$ and $(\rho_1,\Zh^0)$. Define
\begin{equation}  \label{inner-product}
(f_1, f_2) = \frac{i}{2\pi^3} \int_{U(2)} f_1(Z) \cdot \B{f_2(Z)}
\,\frac{dV}{N(Z)^2}, \qquad f_1,f_2 \in \Zh.
\end{equation}
This pairing is an inner product.

\begin{prop}
The restrictions of $(\rho_1,\Zh^+)$, $(\rho_1,\Zh^-)$ and $(\rho_1,\Zh^0)$ to
$\mathfrak{u}(2,2)$ are unitary with respect to the inner product
(\ref{inner-product}).
\end{prop}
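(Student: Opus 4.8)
The plan is to deduce the unitarity from the $\mathfrak{gl}(2,\HC)$-invariance of the bilinear pairing (\ref{1-pairing}), by realizing the Hermitian form (\ref{inner-product}) as that pairing precomposed with an antilinear map. Define $\tau\colon\Zh\to\Zh$ to be the antilinear operator acting on the $K$-type basis by
$$
\tau\bigl(t^l_{n\,\underline m}(Z)\cdot N(Z)^k\bigr)=t^l_{m\,\underline n}(Z^{-1})\cdot N(Z)^{-k-2};
$$
this is precisely the restriction to $U(2)$ of $f\mapsto\B{f(Z)}\,N(Z)^{-2}$, since $|N(Z)|=1$ and $\B{t^l_{n\,\underline m}(Z)}=t^l_{m\,\underline n}(Z^{-1})$ there. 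Comparing (\ref{inner-product}) with (\ref{1-pairing}) on $U(2)$ gives $(f_1,f_2)=\langle f_1,\tau f_2\rangle$, and the orthogonality relations (\ref{orthogonality}) then show directly that the $K$-basis is orthogonal for (\ref{inner-product}) with positive norms $\tfrac1{2l+1}$ -- so that (\ref{inner-product}) is genuinely an inner product -- and that $\tau$ interchanges $\Zh^+$ with $\Zh^-$ and preserves $\Zh^0$, matching the self-duality of the three components under (\ref{1-pairing}).

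With the identity $(f_1,f_2)=\langle f_1,\tau f_2\rangle$ in hand, unitarity of $X\in\mathfrak{u}(2,2)$, namely $(\rho_1(X)f_1,f_2)=-(f_1,\rho_1(X)f_2)$, becomes equivalent to a single commutation relation. Indeed, using the invariance $\langle\rho_1(X)f_1,f_2\rangle=-\langle f_1,\rho_1(X)f_2\rangle$ of (\ref{1-pairing}) and its non-degeneracy, skew-Hermiticity of $\rho_1(X)$ holds if and only if $\rho_1(X)\tau=\tau\rho_1(X)$. Since $\tau$ is antilinear, $X\mapsto\tau\rho_1(X)\tau^{-1}$ has the form $\rho_1(\theta(X))$ for a conjugate-linear automorphism $\theta$ of $\mathfrak{gl}(2,\HC)$, and $\rho_1(X)\tau=\tau\rho_1(X)$ amounts to $\theta(X)=X$. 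Thus the whole Proposition follows once the fixed-point real form $\{X:\theta(X)=X\}$ is identified with $\mathfrak{u}(2,2)$ as given in (\ref{u(2,2)}).

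To compute $\theta$ I would conjugate each of the four block actions of Lemma \ref{rho-algebra-action} by $\tau$. Because $\tau$ combines complex conjugation, the conformal inversion $Z\mapsto Z^{-1}$ and the twist by $N(Z)^{-2}$, the inversion exchanges the roles of the off-diagonal blocks while conjugation replaces a coefficient matrix by its adjoint; one expects to obtain
$$
\theta\begin{pmatrix}A&B\\ C&D\end{pmatrix}=\begin{pmatrix}-A^*&C^*\\ B^*&-D^*\end{pmatrix},
$$
whose fixed set $\{A=-A^*,\ D=-D^*,\ C=B^*\}$ is exactly (\ref{u(2,2)}); since both $\mathfrak{u}(2,2)$ and $\{\theta=\mathrm{id}\}$ are real forms of $\mathfrak{gl}(4,\BB C)$ of real dimension $16$, the inclusion forces equality. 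I expect this verification to be the main obstacle: matching the signs and adjoints produced by $\tau$ against the explicit operators of Lemma \ref{rho-algebra-action} is exactly what singles out the defining relations of $\mathfrak{u}(2,2)$. As a more computational alternative one can instead check skew-Hermiticity generator by generator -- the diagonal anti-Hermitian generators exponentiate to the maximal compact $U(2)\times U(2)$, which preserves the orthogonal $K$-decomposition and acts unitarily on each $K$-type, while for the generators $\bigl(\begin{smallmatrix}0&B\\ B^*&0\end{smallmatrix}\bigr)$ the formulas (\ref{B-action}) and (\ref{C-action}), together with the norms $\tfrac1{2l+1}$, reduce the claim to matching the coefficients $\tfrac{2l+k+1}{2l+1},\ \tfrac{k}{2l+1}$ against $\tfrac{2l+k+2}{2l+1},\ \tfrac{k+1}{2l+1}$ between adjacent $K$-types.
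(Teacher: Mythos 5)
Your proposal is correct, and it takes a genuinely different route from the paper's proof. The paper argues at the group level: for $h^{-1}=\bigl(\begin{smallmatrix} a & b \\ c & d \end{smallmatrix}\bigr)\in U(2,2)$ one has $h=\bigl(\begin{smallmatrix} a^* & -c^* \\ -b^* & d^* \end{smallmatrix}\bigr)$, and the substitution $\tilde Z=(aZ+b)(cZ+d)^{-1}$ in the integral (\ref{inner-product}), combined with Lemma \ref{Jacobian_lemma} and the fact that $U(2,2)$ preserves $U(2)=\{Z^*=Z^{-1}\}$, makes all automorphy and Jacobian factors cancel, so the form is invariant under $h$ near the identity and hence under $\mathfrak{u}(2,2)$. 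You instead work algebraically on $K$-types: factoring the Hermitian form through the bilinear pairing, $(f_1,f_2)=\langle f_1,\tau f_2\rangle$, and reducing skew-Hermiticity of $\rho_1(X)$ to the commutation $[\rho_1(X),\tau]=0$. Your expected formula for $\theta$ does come out of the computation you outline: writing $\tau=\rho_1\bigl(\begin{smallmatrix} 0 & 1 \\ 1 & 0 \end{smallmatrix}\bigr)\circ T\circ C$, where $Tf(Z)=f(Z^T)$ and $C$ conjugates coefficients, conjugation by $C$ gives $X\mapsto\bar X$, conjugation by $T$ gives $\bigl(\begin{smallmatrix} A & B \\ C & D \end{smallmatrix}\bigr)\mapsto\bigl(\begin{smallmatrix} -D^T & B^T \\ C^T & -A^T \end{smallmatrix}\bigr)$ (easily checked against Lemma \ref{rho-algebra-action}), and conjugation by the inversion swaps the two diagonal and the two off-diagonal blocks, yielding exactly $\theta\bigl(\begin{smallmatrix} A & B \\ C & D \end{smallmatrix}\bigr)=\bigl(\begin{smallmatrix} -A^* & C^* \\ B^* & -D^* \end{smallmatrix}\bigr)$; its fixed set is literally the set (\ref{u(2,2)}), so the closing dimension count is not even needed. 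What your route buys: it proves along the way that (\ref{inner-product}) is positive definite -- the paper only asserts ``this pairing is an inner product'' -- with explicit $K$-type norms $\tfrac1{2l+1}$, and it explains conceptually why precisely the real form $\mathfrak{u}(2,2)$ acts unitarily (it is the fixed set of the conjugate-linear involution induced by $\tau$). What the paper's route buys: it is much shorter and gives invariance under actual group elements of $U(2,2)$ near the identity, not only the infinitesimal statement. One small correction to your logic: $\rho_1$ is not faithful on $\mathfrak{gl}(2,\HC)$ (the identity matrix acts by zero), so ``$\rho_1(X)\tau=\tau\rho_1(X)$ amounts to $\theta(X)=X$'' is only true modulo $\ker\rho_1$; this is harmless, because the implication your proof actually uses is $\theta(X)=X\Rightarrow\tau\rho_1(X)\tau^{-1}=\rho_1(X)\Rightarrow\rho_1(X)$ is skew-Hermitian, which needs no injectivity.
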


\begin{proof}
We only need to prove that the pairing (\ref{inner-product}) is
$\mathfrak{u}(2,2)$-invariant. It is enough to show that, for all
$h \in U(2,2)$ sufficiently close to the identity element, we have
$$
(f_1, f_2) = \bigl(\rho_1(h)f_1,\rho_1(h)f_2\bigr), \qquad f_1,f_2 \in \Zh.
$$
If $h^{-1}=
\bigl(\begin{smallmatrix} a & b \\ c & d \end{smallmatrix} \bigr)\in U(2,2)$,
then
$h = \bigl(\begin{smallmatrix} a^* & -c^* \\ -b^* & d^* \end{smallmatrix}\bigr)$.
(If $Z \in \HC$, $Z^* \in \HC$ denotes the matrix adjoint of $Z$ under
the standard identification of $\HC$ with $2 \times 2$ complex matrices,
see \cite{FL1} for details.) Writing $\tilde Z = (aZ+b)(cZ+d)^{-1}$ and
using Lemma \ref{Jacobian_lemma} together with the fact that $U(2,2)$ preserves
$U(2)=\{Z \in \HC ;\: Z^* = Z^{-1} \}$ we obtain:
\begin{multline*}
-2\pi^3i \cdot \bigl(\rho_1(h) f_1,\rho_1(h) f_2\bigr)  \\
= \int_{Z \in U(2)} \frac{f_1(\tilde Z)}{N(cZ+d) \cdot N(a^*+Zb^*)} \cdot
\B{\frac{f_2(\tilde Z)}{N(cZ+d) \cdot N(a^*+Zb^*)}} \,\frac{dV}{N(Z)^2}  \\
= \int_{\tilde Z \in U(2)} f_1(\tilde Z) \cdot \B{f_2(\tilde Z)}
\frac{N(c^*+Zd^*) \cdot N(aZ+b)}{\B{N(cZ+d)} \cdot \B{N(a^*+Zb^*)} \cdot N(Z)^2}
\,\frac{dV}{N(\tilde Z)^2} \\
= \int_{\tilde Z \in U(2)} f_1(\tilde Z) \cdot \B{f_2(\tilde Z)}
\,\frac{dV}{N(\tilde Z)^2}
= -2\pi^3i \cdot (f_1, f_2).
\end{multline*}
\end{proof}

\section{Formal Calculation of the Reproducing Kernel for $(\rho_1, \Zh^0)$}
\label{formal-section}

In \cite{FL1}, Proposition 27, we computed the reproducing kernels for
$(\rho_1, \Zh^+)$ and $(\rho_1, \Zh^-)$ by finding expansions for
$\frac1{N(Z-W)^2}$ in terms of basis functions (\ref{Zh-basis}).
In both cases the reproducing kernel is $\frac1{N(Z-W)^2}$,
but one gets different results depending on whether $ZW^{-1}$ lies in
$\BB D^+$ or $\BB D^-$:

\begin{prop}[Proposition 27, \cite{FL1}]  \label{prop27}
We have the following matrix coefficient expansions
$$
\frac 1{N(Z-W)^2} = \sum_{k,l,m,n}
(2l+1) t^l_{m \, n}(Z^{-1}) \cdot N(Z)^{-k-2} \cdot t^l_{n \, m}(W) \cdot N(W)^k
$$
which converges pointwise absolutely in the region
$\{ (Z,W) \in \HC^{\times} \times \HC; WZ^{-1} \in \BB D^+ \}$, and
$$
\frac 1{N(Z-W)^2} = \sum_{k,l,m,n} 
(2l+1) t^l_{m \, n}(Z) \cdot N(Z)^k \cdot t^l_{n \, m}(W^{-1}) \cdot N(W)^{-k-2}
$$
which converges pointwise absolutely in the region
$\{ (Z,W) \in \HC \times \HC^{\times}; ZW^{-1} \in \BB D^+\}$.
The sums are taken first over all $m,n = -l, -l+1, \dots, l$,
then over $k=0,1,2,3,\dots$ and $l=0,\frac 12, 1, \frac 32,\dots$.
\end{prop}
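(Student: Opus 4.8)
The plan is to reduce the asserted four-index expansion to a single scalar generating-function identity, verify that identity by diagonalizing, and treat convergence separately. The reduction rests on the factorization $N(Z-W)=N(Z)\cdot N(1-WZ^{-1})$ together with multiplicativity of matrix coefficients. For fixed $l$ the inner sums collapse by (\ref{t-mult}): since scalar multiplication commutes,
$$
\sum_{m,n} t^l_{m\,\underline n}(Z^{-1})\,t^l_{n\,\underline m}(W) = \sum_m t^l_{m\,\underline m}(Z^{-1}W) = \chi_l(Z^{-1}W),
$$
the character of the $(2l+1)$-dimensional representation. Because characters and $N$ are conjugation-invariant, $\chi_l(Z^{-1}W)=\chi_l(S)$ and $N(Z^{-1}W)=N(S)$ for $S:=WZ^{-1}$, and since $N(Z)^{-k-2}N(W)^k=N(Z)^{-2}N(S)^k$ the entire right-hand side of the first formula becomes $N(Z)^{-2}\sum_{k\ge0}\sum_l(2l+1)N(S)^k\chi_l(S)$. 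Thus the first expansion is equivalent to the scalar identity
$$
\frac{1}{N(1-S)^2} = \sum_{k\ge0}\sum_l (2l+1)\,N(S)^k\,\chi_l(S).
$$

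To prove this I would pass to the eigenvalues $\lambda_1,\lambda_2$ of $S$, so that $N(S)=\lambda_1\lambda_2$ and $\chi_l(S)=(\lambda_1^{2l+1}-\lambda_2^{2l+1})/(\lambda_1-\lambda_2)$. With $n=2l+1$ the right-hand side factors as $\bigl(\sum_{k\ge0}(\lambda_1\lambda_2)^k\bigr)\bigl(\sum_{n\ge1} n(\lambda_1^n-\lambda_2^n)/(\lambda_1-\lambda_2)\bigr)$; summing $\sum_k(\lambda_1\lambda_2)^k=(1-\lambda_1\lambda_2)^{-1}$ and $\sum_n n\lambda^n=\lambda(1-\lambda)^{-2}$, the difference quotient simplifies to $(1-\lambda_1\lambda_2)/\bigl((1-\lambda_1)^2(1-\lambda_2)^2\bigr)$, the factor $1-\lambda_1\lambda_2$ cancels, and one is left with $\bigl((1-\lambda_1)(1-\lambda_2)\bigr)^{-2}=N(1-S)^{-2}$, as required. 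The second expansion then follows by interchanging the roles of $Z$ and $W$ (equivalently, by applying the inversion relation for $t^l_{m\,\underline n}$ recalled after (\ref{t})).

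The step I expect to be the main obstacle is not the formal identity but the justification of \emph{absolute} convergence, which is exactly what licenses the collapse of the inner sums and the rearrangement above. Here I would use the hypothesis $WZ^{-1}\in\BB D^+$, i.e. $\|S\|_{\mathrm{op}}<1$, in an essential way: the Taylor expansion of $\det(1-S)^{-2}$ in the matrix entries of $S$ converges absolutely for $\|S\|_{\mathrm{op}}<1$, since $|\chi_l(S)|\le(2l+1)\|S\|_{\mathrm{op}}^{2l}$ and $|N(S)|\le\|S\|_{\mathrm{op}}^2$ furnish a geometric majorant summable in both $k$ and $l$. Re-expanding the entries of $S=WZ^{-1}$ and grouping by homogeneity in $W$ and in $Z^{-1}$ produces the double-indexed series with the basis $t^l_{n\,\underline m}(W)N(W)^k$ of $\Zh^+$ (Proposition \ref{basis-prop}) and the family $t^l_{m\,\underline n}(Z^{-1})N(Z)^{-k-2}$ of $\Zh^-$; the factor $2l+1$ is precisely the reciprocal of the normalization $\tfrac1{2l+1}$ in the orthogonality relations (\ref{orthogonality}), confirming that the two families are dual and hence that the resulting sum is the reproducing kernel. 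Controlling this re-expansion uniformly on compact subsets of $\{WZ^{-1}\in\BB D^+\}$ — using the boundedness $|t^l_{m\,\underline n}|\le1$ on $U(2)$ together with the operator-norm bound — is the technical heart of the argument; once it is in place, a Fubini-type rearrangement reduces everything to the scalar computation already carried out.
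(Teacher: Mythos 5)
Your core argument is correct, and it is the same method this paper itself uses for the analogous expansions (the formal calculation in Section \ref{formal-section}, Proposition \ref{1/N-exp-deformed}, and the expansion (\ref{1/N^2})): collapse the finite $m,n$ sums to characters via (\ref{t-mult}) and conjugation invariance, diagonalize $S=WZ^{-1}$, and sum the resulting series; your identity
$$
\sum_{n\ge1} n\,\frac{\lambda_1^n-\lambda_2^n}{\lambda_1-\lambda_2}
= \frac{1-\lambda_1\lambda_2}{(1-\lambda_1)^2(1-\lambda_2)^2}
$$
is exactly the computation that appears there. Your convergence bounds $|\chi_l(S)|\le(2l+1)\|S\|_{\mathrm{op}}^{2l}$ and $|N(S)|\le\|S\|_{\mathrm{op}}^2$ are also correct.

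The problem is your final paragraph, and it is conceptual: you have misidentified what must be proved, and the step you call ``the technical heart'' is both unnecessary and impossible. The proposition prescribes the order of summation -- first the \emph{finite} sums over $m,n$, then the series over $k$ and $l$. Consequently the collapse to characters is not a rearrangement of an infinite series that needs licensing; it is a term-by-term identity between the prescribed series and your scalar series, so absolute convergence of the scalar series (which you established with the geometric majorant) \emph{is} the assertion, and your proof is already finished at that point. Conversely, the stronger statement you defer to -- absolute convergence of the full four-index sum on all of $\{WZ^{-1}\in\BB D^+\}$ -- is false, so no uniform control will deliver it. Separate bounds of the shape $|t^l_{m\,\underline{n}}(Z^{-1})|\le\|Z^{-1}\|_{\mathrm{op}}^{2l}$, $|t^l_{n\,\underline{m}}(W)|\le\|W\|_{\mathrm{op}}^{2l}$ only cover the strictly smaller region $\|Z^{-1}\|_{\mathrm{op}}\cdot\|W\|_{\mathrm{op}}<1$, and this is not a defect of the bounds: take
$$
Z^{-1}=\begin{pmatrix}\cosh t & \sinh t\\ \sinh t & \cosh t\end{pmatrix},
\qquad W=rZ, \qquad 0<r<1,
$$
so that $WZ^{-1}=r\cdot 1\in\BB D^+$, $N(Z)=1$, $N(W)=r^2$. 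Already for $l=\tfrac12$ the inner sum of absolute values is $2r\cosh 2t$ while the signed sum (the character) is $2r$; in general the inner sum of absolute values for spin $l$ grows like $(re^{2t})^{2l}$ up to subexponential factors, because the extreme entries of $\pi_l(Z^{-1})$ and $\pi_l(W)$ are each of size $\approx (re^{2t})^{l}$ and the cancellation between them is destroyed by taking absolute values. Hence for any $t>0$ and any $r$ with $e^{-2t}<r<1$ the four-index sum fails to converge absolutely, even though $WZ^{-1}\in\BB D^+$. The moral is that the cancellation inside the finite $m,n$ sums is essential, and the order-of-summation clause in the statement is the point, not a technicality. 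If you delete the last paragraph and simply observe that the prescribed grouping makes your scalar computation the complete proof, your argument is correct and coincides with the paper's.
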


In this section we formally compute the reproducing kernel for
$(\rho_1, \Zh^0)$. There are some issues with convergence that require
justification, but it is nice to see that this formally computed kernel
agrees with the formula for a projector onto $\Zh^0$ that will be obtained
in the next section (Theorem \ref{Zh^0-projector}).

Recall that $\Zh^0$ is the $\BB C$-span of
$\bigl\{ t^l_{n\,\underline{m}}(Z) \cdot N(Z)^k;\: -(2l+1) \le k \le -1 \bigr\}$.
In light of the orthogonality relations (\ref{orthogonality})
we would like to compute the series
\begin{equation}  \label{expansion}
\sum_{k,l,m,n} (2l+1) t^l_{n \, \underline{m}}(Z^{-1}) \cdot N(Z)^{-k-2} \cdot
t^l_{m \, \underline{n}}(W) \cdot N(W)^k,
\end{equation}
the sum is being taken over all $l=0,\frac 12, 1, \frac 32,\dots$,
$m,n \in \BB Z + l$ with $m,n = -l, -l+1, \dots, l$ and
$-(2l+1) \le k \le -1$.
By the multiplicativity property of matrix coefficients (\ref{t-mult}),
(\ref{expansion}) equals
$$
\sum_{k,l,n} \frac{2l+1}{N(Z)^2} \cdot
t^l_{n \, \underline{n}}(Z^{-1}W) \cdot N(Z^{-1}W)^k
= \sum_{l,n} \frac{2l+1}{N(Z)^2} \cdot t^l_{n \, \underline{n}}(Z^{-1}W) \cdot
\frac{N(Z^{-1}W)^{-(2l+1)}-1}{1-N(Z^{-1}W)}.
$$
Assume further that $Z^{-1}W$ can be diagonalized as
$\bigl(\begin{smallmatrix} \lambda_1 & 0 \\
0 & \lambda_2 \end{smallmatrix}\bigr)$ with
$\lambda_1 \ne \lambda_2$.
This is allowed since the set of matrices with different eigenvalues is
dense in $\HC$.
Then the sum $\sum_{n} t^l_{n \, \underline{n}}(Z^{-1}W)$
is just the character $\chi_l(Z^{-1}W)$ of the irreducible representation of
$GL(2,\BB C)$ of dimension $2l+1$ and equals
$\frac{\lambda_1^{2l+1}-\lambda_2^{2l+1}}{\lambda_1-\lambda_2}$.
Hence (\ref{expansion}) is equal to
\begin{multline*}
\sum_l \frac{2l+1}{N(Z)^2} \cdot
\frac{\lambda_1^{2l+1}-\lambda_2^{2l+1}}{\lambda_1-\lambda_2}
\cdot \frac{(\lambda_1\lambda_2)^{-(2l+1)}-1}{1-\lambda_1\lambda_2} \\
=
- \sum_l \frac{2l+1}{N(Z)^2} \cdot
\frac{\lambda_1^{2l+1}-\lambda_2^{-(2l+1)}}
{(\lambda_1-\lambda_2)(1-\lambda_1\lambda_2)}
- \sum_l \frac{2l+1}{N(Z)^2} \cdot
\frac{\lambda_1^{-(2l+1)}-\lambda_2^{2l+1}}
{(\lambda_1-\lambda_2)(1-\lambda_1\lambda_2)}.
\end{multline*}
The first sum converges absolutely if $|\lambda_1|<1$ and $|\lambda_2|>1$:
\begin{multline*}
\sum_l \frac{2l+1}{N(Z)^2} \cdot
\frac{\lambda_1^{2l+1}-\lambda_2^{-(2l+1)}}
{(\lambda_1-\lambda_2)(1-\lambda_1\lambda_2)}
=
\frac{N(Z)^{-2}}{(\lambda_1-\lambda_2)(1-\lambda_1\lambda_2)}
\biggl( \frac{\lambda_1}{(1-\lambda_1)^2}
- \frac{\lambda_2}{(1-\lambda_2)^2} \biggr)  \\
=
\frac{N(Z)^{-2}}{(1-\lambda_1)^2(1-\lambda_2)^2}
= \frac{N(Z)^{-2}}{N(1-Z^{-1}W)^2}
= \frac1{N(Z-W)^2}.
\end{multline*}
The second sum converges absolutely if $|\lambda_1|>1$ and $|\lambda_2|<1$:
$$
\sum_l \frac{2l+1}{N(Z)^2} \cdot
\frac{\lambda_1^{-(2l+1)}-\lambda_2^{2l+1}}
{(\lambda_1-\lambda_2)(1-\lambda_1\lambda_2)}
= \frac1{N(Z-W)^2}.
$$
Of course, the set of $Z$ and $W$ where both sums converge absolutely is empty,
but these formal calculations strongly suggest that there is a way to make
sense of the series (\ref{expansion}) in terms of distributions:
\begin{multline}  \label{expansion-formal}
\sum_{k,l,m,n} (2l+1) t^l_{n \, \underline{m}}(Z^{-1}) \cdot N(Z)^{-k-2} \cdot
t^l_{m \, \underline{n}}(W) \cdot N(W)^k \\
= - \biggl( \operatorname{Reg}_+ \frac1{N(Z-W)^2}
+ \operatorname{Reg}_-\frac1{N(Z-W)^2} \biggr)
\end{multline}
with $ZW^{-1} \in U(2)$ and $\operatorname{Reg}_{\pm} \frac1{N(Z-W)^2}$
denoting some sort of regularizations of $\frac1{N(Z-W)^2}$.

\section{Equivariant Embeddings of and Projectors onto
the Irreducible Components of $(\rho_1, \Zh)$}
\label{embeddings-section}


In this section we construct $\mathfrak{gl}(2,\HC)$-equivariant embeddings
of the irreducible components of $(\rho_1, \Zh)$ into tensor products
$(\pi^0_l, {\cal H}^\pm) \otimes (\pi^0_r, {\cal H}^\pm)$
with the property that, when composed with the multiplication map,
the result is the identity map on that irreducible component.
The tensor product $(\pi^0_l, {\cal H}^+) \otimes (\pi^0_r, {\cal H}^+)$
was decomposed into a direct sum of irreducible components in \cite{JV}
with $(\rho_1, \Zh^+)$ being one of these components,
and it was shown that each irreducible component has multiplicity one.
Hence the $\mathfrak{gl}(2,\HC)$-equivariant map
$\Zh^+ \to {\cal H}^+ \otimes {\cal H}^+$
is unique up to a scalar multiple.
Dually, the multiplicity of $(\rho_1, \Zh^-)$ in
$(\pi^0_l, {\cal H}^-) \otimes (\pi^0_r, {\cal H}^-)$ is one
and the $\mathfrak{gl}(2,\HC)$-equivariant map
$\Zh^- \to {\cal H}^- \otimes {\cal H}^-$
is unique up to a scalar multiple as well.
On the other hand, the equivariant embedding
$\Zh^0 \hookrightarrow {\cal H}^- \otimes {\cal H}^+$
requires a more subtle approach.
As an immediate application of these embedding maps we obtain
projectors of $(\rho_1, \Zh)$ onto its irreducible components.

We consider the maps
\begin{equation}  \label{fork}
\Zh \ni f \quad \mapsto \quad (I_R f)(Z_1,Z_2) =
\frac i{2\pi^3} \int_{W \in U(2)_R} \frac{f(W) \,dV}{N(W-Z_1) \cdot N(W-Z_2)}
\quad \in \B{{\cal H} \otimes {\cal H}},
\end{equation}
where $\B{{\cal H} \otimes {\cal H}}$ denotes the Hilbert space obtained by
completing ${\cal H} \otimes {\cal H}$ with respect to the unitary structure
coming from the tensor product of unitary representations
$(\pi^0_l, {\cal H})$ and $(\pi^0_r, {\cal H})$.
If $Z_1, Z_2 \in \BB D^-_R \sqcup \BB D^+_R$, the integrand has no singularities
and the result is a holomorphic function in two variables $Z_1, Z_2$
which is harmonic in each variable separately.
We will see soon that the result depends on whether $Z_1$ and $Z_2$
are both in $\BB D^+_R$, both in $\BB D^-_R$ or one is in $\BB D^+_R$ and
the other is in $\BB D^-_R$.
Thus the expression (\ref{fork}) determines four different maps.

\begin{lem}  \label{equivariance}
The maps $f \mapsto (I_R f)(Z_1,Z_2)$ are $U(2,2)_R$ and
$\mathfrak{gl}(2,\HC)$-equivariant.
\end{lem}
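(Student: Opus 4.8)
The plan is to prove $U(2,2)_R$-equivariance of the integral operator $I_R$ by a direct change-of-variables computation, exploiting the transformation laws already recorded in Lemmas~\ref{Z-W} and~\ref{Jacobian_lemma}, and then to deduce the infinitesimal $\mathfrak{gl}(2,\HC)$-equivariance by the standard density/analyticity argument. Recall that the domain of $I_R$ carries the $\rho_1$-action while the target $\B{{\cal H}\otimes{\cal H}}$ carries the action $\pi^0_l\otimes\pi^0_r$, where $\pi^0_l$ acts in the variable $Z_1$ and $\pi^0_r$ acts in the variable $Z_2$. So the precise claim to verify is
\begin{equation} \label{equivar-goal}
\bigl(I_R(\rho_1(h)f)\bigr)(Z_1,Z_2)
= \bigl((\pi^0_l\otimes\pi^0_r)(h)(I_Rf)\bigr)(Z_1,Z_2),
\qquad h\in U(2,2)_R.
\end{equation}

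First I would expand both sides of~\eqref{equivar-goal} using the explicit formulas for $\rho_1$, $\pi^0_l$ and $\pi^0_r$ from the Preliminaries. Writing $h=\bigl(\begin{smallmatrix} a & b \\ c & d \end{smallmatrix}\bigr)$ with $h^{-1}=\bigl(\begin{smallmatrix} a' & b' \\ c' & d' \end{smallmatrix}\bigr)$ and $\tilde W=(aW+b)(cW+d)^{-1}$, the left-hand side is an integral over $W\in U(2)_R$ of $\rho_1(h)f$ evaluated at $W$, i.e.\ of $f(\tilde W)\,N(cW+d)^{-1}N(a'-Wc')^{-1}$, against the kernel $N(W-Z_1)^{-1}N(W-Z_2)^{-1}$. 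The key move is the substitution $W\mapsto\tilde W$: since $h\in U(2,2)_R$ preserves $U(2)_R$, the contour is unchanged, and Lemma~\ref{Jacobian_lemma} supplies the Jacobian factor $dV=N(cW+d)^2\,N(a'-Wc')^2\,d\tilde V$. The two denominator factors $N(W-Z_i)$ are then rewritten by applying Lemma~\ref{Z-W} with $\tilde Z_i=(aZ_i+b)(cZ_i+d)^{-1}$: each $N(W-Z_i)$ splits off precisely the factors $N(cW+d)$, $N(a'-Z_ic')$ (or the symmetric variant) needed to cancel against the Jacobian and the $\rho_1$-cocycle, leaving the $\pi^0_l$-cocycle $N(cZ_1+d)^{-1}$ in the $Z_1$-slot and the $\pi^0_r$-cocycle $N(a'-Z_2c')^{-1}$ in the $Z_2$-slot, together with $f(\tilde W)\,d\tilde V/(N(\tilde W-\tilde Z_1)N(\tilde W-\tilde Z_2))$. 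Renaming the dummy variable recovers exactly the right-hand side of~\eqref{equivar-goal}.

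The bookkeeping of cocycle factors is the main obstacle: there are two applications of Lemma~\ref{Z-W} (one per denominator), and one must choose, for each $Z_i$, the correct one of the two equivalent expressions provided by that lemma so that the surviving factors assemble into $\pi^0_l$ in the first variable and $\pi^0_r$ in the second. This asymmetric choice is what makes the output land in $(\pi^0_l,{\cal H})\otimes(\pi^0_r,{\cal H})$ rather than in some mismatched action, and it is exactly where the structure of~\eqref{fork} — with $Z_1$ appearing through $N(W-Z_1)$ and $Z_2$ through $N(W-Z_2)$ — is used. I would treat the $U(2,2)$ case (i.e.\ $R=1$) first and then conjugate by $\bigl(\begin{smallmatrix} R & 0 \\ 0 & 1 \end{smallmatrix}\bigr)$ to obtain the general $U(2,2)_R$ statement for free.

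Finally, to pass from the group to the Lie algebra, I would differentiate~\eqref{equivar-goal} at $h=\mathrm{identity}$ along a one-parameter subgroup. Since $U(2,2)_R$ is a real form of $GL(2,\HC)$ and the actions $\rho_1$, $\pi^0_l$, $\pi^0_r$ are all given by rational (hence real-analytic) expressions in the matrix entries of $h$ that extend holomorphically to a neighbourhood in $GL(2,\HC)$, the identity of the differentiated operators holds first for $\mathfrak{u}(2,2)_R$ and then, by $\BB C$-linearity and the fact that $\mathfrak{u}(2,2)_R$ is a real form spanning $\mathfrak{gl}(2,\HC)$ over $\BB C$, for all of $\mathfrak{gl}(2,\HC)$. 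I expect no difficulty here beyond noting that differentiation under the integral sign is justified because $Z_1,Z_2\in\BB D^\pm_R$ keep the integrand smooth and bounded on the compact contour $U(2)_R$, so the family is real-analytic in $h$ near the identity.
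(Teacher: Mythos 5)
Your proposal is correct and follows essentially the same route as the paper: the same change of variables $W\mapsto\tilde W$ on the contour $U(2)_R$, with Lemma~\ref{Jacobian_lemma} supplying the Jacobian and Lemma~\ref{Z-W} applied once in each of its two forms so that the surviving cocycles are exactly $N(cZ_1+d)^{-1}$ and $N(a'-Z_2c')^{-1}$, followed by complexification via $\mathfrak{gl}(2,\HC)\simeq\BB C\otimes\mathfrak{u}(2,2)_R$ for the Lie algebra statement. The only differences are cosmetic: the paper treats general $R$ directly rather than reducing to $R=1$ by conjugation, and it leaves the differentiation step implicit where you spell it out.
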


\begin{proof}
We need to show that, for all $h \in U(2,2)_R$, the maps (\ref{fork})
commute with the action of $h$. Writing
$h= \bigl(\begin{smallmatrix} a' & b' \\ c' & d' \end{smallmatrix}\bigr)$,
$h^{-1}= \bigl(\begin{smallmatrix} a & b \\ c & d \end{smallmatrix}\bigr)$,
$$
\tilde Z_1 = (aZ_1+b)(cZ_1+d)^{-1}, \qquad
\tilde Z_2 = (aZ_2+b)(cZ_2+d)^{-1}, \qquad
\tilde W = (aW+b)(cW+d)^{-1}
$$
and using Lemmas \ref{Z-W} and \ref{Jacobian_lemma} we obtain:
\begin{multline*}
\int_{W \in U(2)_R} \frac {(\rho_1(h)f)(W) \,dV}{N(W-Z_1) \cdot N(W-Z_2)} \\
= \int_{W \in U(2)_R} \frac{f(\tilde W) \cdot N(cW+d)^{-2} \cdot N(a'-Wc')^{-2} \,dV}
{N(\tilde W-\tilde Z_1) \cdot N(\tilde W- \tilde Z_2) \cdot
N(cZ_1+d) \cdot N(a'-Z_2c')}  \\
= \frac1{N(cZ_1+d) \cdot N(a'-Z_2c')} \int_{\tilde W \in U(2)_R}
\frac{f(\tilde W) \,dV} {N(\tilde W-\tilde Z_1) \cdot N(\tilde W- \tilde Z_2)}.
\end{multline*}
This proves the $U(2,2)_R$-equivariance.
The $\mathfrak{gl}(2,\HC)$-equivariance then follows since
$\mathfrak{gl}(2,\HC) \simeq \BB C \otimes \mathfrak{u}(2,2)_R$.
\end{proof}

Now we compose the embedding maps $I_R$ with the multiplication map $M$
defined by (\ref{M}).

\begin{thm}  \label{embedding}
The maps $f \mapsto (I_R f)(Z_1,Z_2)$ have the following properties:
\begin{enumerate}
\item  \label{one}
If $Z_1, Z_2 \in \BB D^+_R$, then $I_R: \Zh \to {\cal H}^+ \otimes {\cal H}^+$,
$$
M \circ (I_R f)(Z_1,Z_2) = f \quad \text{if $f \in \Zh^+$}
\qquad \text{and} \qquad
(I_R f)(Z_1,Z_2) = 0 \quad \text{if $f \in \Zh^- \oplus \Zh^0$};
$$
\item
If $Z_1, Z_2 \in \BB D^-_R$, then $I_R: \Zh \to {\cal H}^- \otimes {\cal H}^-$,
$$
M \circ (I_R f)(Z_1,Z_2) = f \quad \text{if $f \in \Zh^-$}
\qquad \text{and} \qquad
(I_R f)(Z_1,Z_2) = 0 \quad \text{if $f \in \Zh^0 \oplus \Zh^+$}.
$$
\end{enumerate}
\end{thm}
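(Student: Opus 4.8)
The plan is to reinterpret $(I_R f)(Z_1,Z_2)$ as the value of the invariant pairing (\ref{1-pairing}) of $f$ against the $W$-dependent kernel $K_{Z_1,Z_2}(W)=\bigl(N(W-Z_1)N(W-Z_2)\bigr)^{-1}$, so that $(I_Rf)(Z_1,Z_2)=\langle f,K_{Z_1,Z_2}\rangle$, and then to analyze this kernel separately as a function of $W$ and as a function of $(Z_1,Z_2)$. I treat case \ref{one} in detail; the second case is entirely parallel with the roles of $+$ and $-$ interchanged.

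First I look at the dependence on $W$. For $Z_1,Z_2\in\BB D^+_R$ fixed and $W$ ranging over $U(2)_R$, each factor $N(W-Z_i)^{-1}$ is harmonic in $W$ and regular at infinity, so as a function of $W$ it lies in the completion $\B{{\cal H}^-}$; I expand it in the ${\cal H}^-$ basis $t^l_{n\,\underline m}(W^{-1})\cdot N(W)^{-1}$, the expansion converging uniformly on $U(2)_R$ since $Z_i$ is strictly interior (this is the single-power analogue of the convergence region in Proposition \ref{prop27}, established in \cite{FL1}). Multiplying the two expansions and invoking Lemma \ref{image-lemma}(2) — that the multiplication map sends ${\cal H}^-\otimes{\cal H}^-$ into $\Zh^-$ — I conclude $K_{Z_1,Z_2}(\cdot)\in\B{\Zh^-}$. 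Since the pairing (\ref{1-pairing}) pairs $\Zh^-$ nontrivially only with $\Zh^+$ (equivalently, by the orthogonality relations (\ref{orthogonality})), for every $f\in\Zh^-\oplus\Zh^0$ each term of the expansion pairs to zero, whence $(I_R f)(Z_1,Z_2)=0$. This is the vanishing assertion.

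For the codomain, note that for fixed $W\in U(2)_R$ the kernel is harmonic in each $Z_i\in\BB D^+_R$ and, since $\BB D^+_R$ is a neighbourhood of the origin on which the integrand has no singularities, $(I_Rf)(Z_1,Z_2)$ is holomorphic and regular at the origin in each variable; hence it lies in $\B{{\cal H}^+\otimes{\cal H}^+}$, and for $f$ a single basis vector it is in fact a finite sum in ${\cal H}^+\otimes{\cal H}^+$. For the reproducing property I pass to the diagonal: by the definition of the multiplication map $M$ one has $\bigl(M\circ(I_Rf)\bigr)(Z)=(I_Rf)(Z,Z)=\frac{i}{2\pi^3}\int_{U(2)_R}\frac{f(W)\,dV}{N(W-Z)^2}$, which is exactly the second-order reproducing integral for $\Zh^+$; by the reproducing formula of \cite{FL1} (a direct consequence of Proposition \ref{prop27} and (\ref{orthogonality})) it equals $f(Z)$ for $f\in\Zh^+$, $Z\in\BB D^+_R$, completing case \ref{one}. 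The second case repeats this with $Z_1,Z_2\in\BB D^-_R$: each factor $N(W-Z_i)^{-1}$ is now regular at the origin in $W$, hence in $\B{{\cal H}^+}$, so Lemma \ref{image-lemma}(1) puts the kernel in $\B{\Zh^+}$ and forces vanishing on $\Zh^0\oplus\Zh^+$; in the $Z_i$ variables the kernel now vanishes at infinity, placing the image in $\B{{\cal H}^-\otimes{\cal H}^-}$; and the diagonal restriction reproduces $f\in\Zh^-$ by the $\BB D^-_R$ form of the reproducing formula of \cite{FL1}.

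The main obstacle is the bookkeeping in the second step: one must justify that the honest convergent integral coincides with the formal term-by-term pairing against the expansion of $K_{Z_1,Z_2}$, and that the product of the two ${\cal H}^-$-expansions genuinely lands in $\B{\Zh^-}$ and not merely in $\B{\Zh}$. This is precisely the mechanism behind Lemma \ref{image-lemma}: decomposing the product of two matrix coefficients $t^{l_1}_{\cdot}(W^{-1})\,t^{l_2}_{\cdot}(W^{-1})$ by the $SU(2)$ Clebsch--Gordan rule, its $L$-component carries a factor $N(W)^{-(l_1+l_2-L)-2}$ with $L\le l_1+l_2$, so that in the standard basis the corresponding $\Zh$-element $t^L_{\cdot}(W)\cdot N(W)^k$ has $N(W)$-exponent $k\le-(2L+2)$, which is exactly the defining condition for $\Zh^-$ in Theorem \ref{Zh-decomposition}. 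Once this membership and the accompanying uniform convergence are secured, everything else reduces to the orthogonality relations (\ref{orthogonality}) and a direct appeal to the classical second-order reproducing formula.
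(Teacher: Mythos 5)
Your proof is correct, but it takes a genuinely different route from the paper's. The paper leans entirely on equivariance: it evaluates $I_R$ only on the three cyclic vectors $N(W)^{k}$, $k=0,-1,-2$, of the irreducible summands (a short computation with the expansion (\ref{1/N-expansion}) and the orthogonality relations (\ref{orthogonality})), and then invokes Lemma \ref{equivariance} together with the irreducibility from Theorem \ref{Zh-decomposition}: vanishing on a generator propagates to the whole component, equivariance applied to $I_R(1)=1\otimes 1$ shows the image of $\Zh^+$ consists of polynomials (hence lies in ${\cal H}^+\otimes{\cal H}^+$), and $M\circ I_R$ is an equivariant endomorphism of the irreducible $\Zh^+$ fixing $1$, hence the identity. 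You never use Lemma \ref{equivariance}: instead you expand the full kernel $K_{Z_1,Z_2}$ in the $W$-variable, place it in $\B{\Zh^-}$ (resp.\ $\B{\Zh^+}$) via the Clebsch--Gordan bookkeeping that underlies Lemma \ref{image-lemma}, read off the vanishing for \emph{every} $f$ at once from the block structure of the pairing ($\Zh^{\pm}$ dual to $\Zh^{\mp}$, $\Zh^0$ self-dual), and reduce the reproducing property to the one-variable second-order-pole formula (Proposition \ref{prop27} plus (\ref{orthogonality}), i.e.\ Theorem 70 of \cite{FL1}) by restricting to the diagonal $Z_1=Z_2=Z$; this is not circular, since that formula predates Theorem \ref{embedding}. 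What each approach buys: the paper's is shorter, needs convergence only at the generators, and gets polynomiality of $(I_Rf)$ for free from equivariance; yours is self-contained on the representation-theory side but must carry the convergence and rearrangement bookkeeping you flag, and in particular the assertion that $(I_Rf)$ is a \emph{finite} sum in ${\cal H}^+\otimes{\cal H}^+$ for a basis vector $f$ deserves to be spelled out (it follows from your own index computation: only the finitely many $(l_1,l_2)$ with $l_1+l_2=k'+l'$ survive the pairing), since that finiteness is exactly what makes $M\circ(I_Rf)$, and hence the diagonal-restriction step, well defined.
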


\begin{proof}
We prove part {\em \ref{one}} only,
the other part can be proven in the same way.
Note that the representations $(\rho_1, \Zh^-)$, $(\rho_1, \Zh^0)$ and
$(\rho_1, \Zh^+)$ are generated by $N(W)^{-2}$, $N(W)^{-1}$ and $1$ respectively.
For this reason we compute $(I_R N(W)^k)(Z_1,Z_2)$ for $k=-2,-1,0$.
Suppose $Z_1, Z_2 \in \BB D^+_R$ and use the matrix coefficient
expansion given by Proposition 25 in \cite{FL1}
\begin{equation}  \label{1/N-expansion}
\frac 1{N(Z-W)}= N(W)^{-1} \cdot \sum_{l,m,n}
t^l_{m\,\underline{n}}(Z) \cdot t^l_{n\,\underline{m}}(W^{-1}),
\qquad \begin{matrix} l=0,\frac 12, 1, \frac 32,\dots, \\
m,n = -l, -l+1, \dots, l, \end{matrix}
\end{equation}
which converges pointwise absolutely in the region
$\{ (Z,W) \in \HC \times \HC^{\times}; \: ZW^{-1} \in \BB D^+ \}$.
We compute:
\begin{multline*}
(I_R N(W)^k)(Z_1,Z_2)
= \frac i{2\pi^3} \int_{W \in U(2)_R} \frac{N(W)^k \,dV}{N(W-Z_1) \cdot N(W-Z_2)} \\
= \biggl\langle \frac{N(W)^k}{N(W-Z_1)}, \frac1{N(W^+-Z_2^+)} \biggr\rangle_W \\
= \sum_{l,m,n,l',m',n'} t^l_{n \, \underline{m}}(Z_1) \cdot
t^{l'}_{m' \, \underline{n'}}(Z_2^+) \cdot
\bigl\langle N(W)^{k-2-2l'} \cdot t^l_{m \, \underline{n}}(W^{-1}),
t^{l'}_{n' \, \underline{m'}}(W) \bigr\rangle.
\end{multline*}
By the orthogonality relations (\ref{orthogonality}) this is zero unless
$l=l'$, $m=m'$, $n=n'$ and $k=2l$.
Therefore,
$$
(I_R N(W)^k)(Z_1,Z_2) =
\begin{cases} 0 & \text{if $k=-2,-1$;} \\
1 & \text{if $k=0$.} \end{cases}
$$
By $\mathfrak{gl}(2,\HC)$-equivariance (Lemma \ref{equivariance}) we see that
$(I_R f)(Z_1,Z_2)$ is always a polynomial in $Z_1$ and $Z_2$, hence an element
of ${\cal H}^+ \otimes {\cal H}^+$ and part {\em \ref{one}} follows.
\end{proof}

\begin{ex}
Using similar computations one can show that
$$
(I_R N(W))(Z_1,Z_2) = \frac12 \tr(Z_1Z_2^+) \quad \text{and} \quad
(I_R w_{ij})(Z_1,Z_2) = \frac12 \bigl((z_{ij})_1+(z_{ij})_2\bigr),
\quad Z_1,Z_2 \in \BB D^+_R,
$$
where $w_{ij}$ denotes the $ij$-entry of the $2 \times 2$ matrix $W$.
\end{ex}

\begin{cor}  \label{proj-cor}
The maps $f \mapsto (I_Rf)(Z_1,Z_2)$ followed by the multiplication map
provide projectors onto the irreducible components of $(\rho_1, \Zh)$.
More precisely,
\begin{enumerate}
\item
If $Z \in \BB D^+_R$, then the map
$$
f \mapsto (\P^+ f)(Z) = 
\frac i{2\pi^3} \int_{W \in U(2)_R} \frac{f(W) \,dV}{N(W-Z)^2}
$$
is a projector onto $\Zh^+$;
\item
If $Z \in \BB D^-_R$, then the map
$$
f \mapsto (\P^- f)(Z) = 
\frac i{2\pi^3} \int_{W \in U(2)_R} \frac{f(W) \,dV}{N(W-Z)^2}
$$
is a projector onto $\Zh^-$.
\end{enumerate}
In particular, these maps $\P^+$ and $\P^-$ provide reproducing formulas
for functions in $\Zh^+$ and $\Zh^-$ respectively.
\end{cor}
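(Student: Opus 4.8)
The plan is to recognize the operators $\P^\pm$ as the compositions $M \circ I_R$ with the two arguments $Z_1$ and $Z_2$ set equal to a single point $Z$, and then to read off the projector property directly from Theorem \ref{embedding} together with the decomposition of Theorem \ref{Zh-decomposition}.

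First I would note that, by the definition of the multiplication map $M$ (which sends $\phi_1(Z_1) \otimes \phi_2(Z_2)$ to $(\phi_1 \cdot \phi_2)(Z)$, i.e.\ imposes $Z_1 = Z_2 = Z$), putting $Z_1 = Z_2 = Z$ in the integral (\ref{fork}) gives
$$
\bigl( M \circ (I_R f) \bigr)(Z)
= \frac i{2\pi^3} \int_{W \in U(2)_R} \frac{f(W) \,dV}{N(W-Z)^2}
= (\P^+ f)(Z), \qquad Z \in \BB D^+_R,
$$
and likewise for $\BB D^-_R$ and $\P^-$. The one point that needs care is that $I_R$ a priori takes values in the completion $\B{{\cal H} \otimes {\cal H}}$, on which $M$ need not be defined; however, for $Z_1, Z_2 \in \BB D^+_R$ the proof of Theorem \ref{embedding} shows that $(I_R f)(Z_1, Z_2)$ is an honest polynomial in $Z_1$ and $Z_2$, i.e.\ a finite sum of pure tensors in ${\cal H}^+ \otimes {\cal H}^+$, so $M$ acts on it by ordinary multiplication and the diagonal restriction is simply evaluation of polynomials. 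This is the only mildly delicate step, and it is already resolved by the placement of the image in the algebraic tensor product in Theorem \ref{embedding}.

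Next I would extract the three properties characterizing the projector. Since $M$ sends ${\cal H}^+ \otimes {\cal H}^+$ (products of harmonic polynomials) into $\Zh^+$, the image of $\P^+$ lies in $\Zh^+$. Part 1 of Theorem \ref{embedding} gives $M \circ (I_R f) = f$ for $f \in \Zh^+$, so $\P^+$ restricts to the identity on $\Zh^+$; the same part gives $(I_R f)(Z_1, Z_2) = 0$, hence $\P^+ f = 0$, for $f \in \Zh^- \oplus \Zh^0$. By the direct sum decomposition $\Zh = \Zh^- \oplus \Zh^0 \oplus \Zh^+$ of Theorem \ref{Zh-decomposition}, these three facts say exactly that $\P^+$ is the projection onto $\Zh^+$ along $\Zh^- \oplus \Zh^0$; in particular $(\P^+)^2 = \P^+$. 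The argument for $\P^-$ is identical, using part 2 of Theorem \ref{embedding} to see that $\P^-$ is the identity on $\Zh^-$ and annihilates $\Zh^0 \oplus \Zh^+$.

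The reproducing formulas are then immediate: restricting the identity $\P^+ f = f$ to $f \in \Zh^+$ and $Z \in \BB D^+_R$ gives
$$
f(Z) = \frac i{2\pi^3} \int_{W \in U(2)_R} \frac{f(W) \,dV}{N(W-Z)^2},
$$
and similarly $\P^- f = f$ reproduces every $f \in \Zh^-$ for $Z \in \BB D^-_R$.
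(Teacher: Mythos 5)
Your proof is correct and takes essentially the same route as the paper, which treats this corollary as an immediate consequence of Theorem \ref{embedding}: compose $I_R$ with the multiplication map $M$, restrict to the diagonal $Z_1 = Z_2 = Z$, and read off the projector property and reproducing formulas from the decomposition of Theorem \ref{Zh-decomposition}. The point you flag as delicate --- that $(I_R f)(Z_1,Z_2)$ lands in the algebraic tensor product ${\cal H}^+ \otimes {\cal H}^+$ (a polynomial in $Z_1$, $Z_2$), so that $M$ is defined on it --- is exactly the point settled at the end of the paper's proof of Theorem \ref{embedding} via $\mathfrak{gl}(2,\HC)$-equivariance.
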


(The reproducing formula for functions in $\Zh^+$ was obtained in
\cite{FL1}, Theorem 70.)

Now we suppose $Z_1 \in \BB D^+_R$ and $Z_2 \in \BB D^-_R$,
this case is much more subtle. Using the matrix coefficient expansion
(\ref{1/N-expansion}) of $N(Z-W)^{-1}$ one more time, we compute:
\begin{multline*}
(I_R N(W)^k)(Z_1,Z_2)
= \frac i{2\pi^3} \int_{W \in U(2)_R} \frac{N(W)^k \,dV}{N(W-Z_1) \cdot N(W-Z_2)} \\
= \biggl\langle \frac{N(W)^k}{N(W-Z_1)}, \frac1{N(W-Z_2)} \biggr\rangle_W \\
= N(Z_2)^{-1} \sum_{l,m,n,l',m',n'} t^l_{n \, \underline{m}}(Z_1) \cdot 
t^{l'}_{m' \, \underline{n'}}(Z_2^{-1}) \cdot
\bigl\langle N(W)^{k-1} \cdot t^l_{m \, \underline{n}}(W^{-1}),
t^{l'}_{n' \, \underline{m'}}(W) \bigr\rangle.
\end{multline*}
By the orthogonality relations (\ref{orthogonality}) this is zero unless
$l=l'$, $m=m'$, $n=n'$ and $k=-1$.
By $\mathfrak{gl}(2,\HC)$-equivariance (Lemma \ref{equivariance}) we can
conclude that $(I_R f)(Z_1,Z_2) =0$ if $f \in \Zh^- \oplus \Zh^+$.
So, let us assume now $l=l'$, $m=m'$, $n=n'$ and $k=-1$. In this case we get
$$
(I_R N(W)^{-1})(Z_1,Z_2)
= \sum_{l,m,n} \frac{N(Z_2)^{-1}}{2l+1} t^l_{n \, \underline{m}}(Z_1) \cdot
t^l_{m \, \underline{n}}(Z_2^{-1})
= \sum_{l,n} \frac{N(Z_2)^{-1}}{2l+1} t^l_{n \, \underline{n}}(Z_1 \cdot Z_2^{-1}).
$$
Assume further that $Z_1 \cdot Z_2^{-1}$ can be diagonalized as
$\bigl(\begin{smallmatrix} \lambda_1 & 0 \\
0 & \lambda_2 \end{smallmatrix}\bigr)$ with $\lambda_1 \ne \lambda_2$.
This is allowed since the set of matrices with different eigenvalues is
dense in $\HC$.
Since $Z_1 \in \BB D^+_R$ and $Z_2 \in \BB D^-_R$,
we have $|\lambda_1|, |\lambda_2| <1$.
Recall that $\chi_l$ denotes the character of the irreducible representation of
$GL(2,\BB C)$ of dimension $2l+1$ and $\chi_l(Z_1 \cdot Z_2^{-1})=
\frac{\lambda_1^{2l+1}-\lambda_2^{2l+1}}{\lambda_1-\lambda_2}$.
Hence
\begin{multline}  \label{I_R}
(I_R N(W)^{-1})(Z_1,Z_2)
= \sum_l \frac{N(Z_2)^{-1}}{2l+1} \chi_l(Z_1 \cdot Z_2^{-1})  \\
= \sum_l \frac{N(Z_2)^{-1}}{2l+1} \frac{\lambda_1^{2l+1}-\lambda_2^{2l+1}}
{\lambda_1-\lambda_2}
= \frac{N(Z_2)^{-1}}{\lambda_2-\lambda_1}
\log \biggl( \frac{1-\lambda_1}{1-\lambda_2} \biggr).
\end{multline}
Although this expression is valid only in the region where
$\lambda_1 \ne \lambda_2$, the right hand side clearly
continues analytically across the set of $Z_1 \cdot Z_2^{-1}$ for which
$\lambda_1 = \lambda_2$.
However, this is obviously not a polynomial in $Z_1$, $Z_2$, $N(Z_1)^{-1}$,
$N(Z_2)^{-1}$ and hence not an element of ${\cal H} \otimes {\cal H}$.
Note that composing $(I_R N(W)^{-1})(Z_1,Z_2)$ with the multiplication map $M$
amounts to setting $Z_1=Z_2=Z$ and letting $\lambda_1,\lambda_2 \to 1$,
but then the limit is infinite!
To get around this problem, observe that (\ref{I_R}) remains valid if we let
$Z_1$ and $Z_2$ approach two different points in $U(2)_R$ so that
$Z_1 \in \BB D^+_R$ and $Z_2 \in \BB D^-_R$. Thus we have a well defined operator
$$
f \quad \mapsto \quad (I_R^{+-} f)(Z_1,Z_2) = \frac i{2\pi^3}
\lim_{\genfrac{}{}{0pt}{}{Z'_1 \to Z_1,\: Z'_1 \in \BB D^+_R}{Z'_2 \to Z_2,\: Z'_2 \in \BB D^-_R}}
\int_{W \in U(2)_R} \frac{f(W) \,dV}{N(W-Z'_1) \cdot N(W-Z'_2)},
$$
where $Z_1, Z_2 \in U(2)_R$ and none of the eigenvalues of $Z_1 \cdot Z_2^{-1}$
is 1, i.e. $N(Z_1-Z_2)\ne 0$. Similarly, we can switch the roles of $Z_1$ and
$Z_2$ and define another operator
$$
f \quad \mapsto \quad (I_R^{-+} f)(Z_1,Z_2) = \frac i{2\pi^3}
\lim_{\genfrac{}{}{0pt}{}{Z'_1 \to Z_1,\: Z'_1 \in \BB D^-_R}{Z'_2 \to Z_2,\: Z'_2 \in \BB D^+_R}}
\int_{W \in U(2)_R} \frac{f(W) \,dV}{N(W-Z'_1) \cdot N(W-Z'_2)},
$$
where $Z_1, Z_2 \in U(2)_R$ and $N(Z_1-Z_2)\ne 0$.

It follows from Lemma \ref{equivariance} that the operators
$I_R^{+-}$ and $I_R^{-+}$ are $U(2,2)_R$-equivariant.
We already know that these operators annihilate $N(Z)^k$ for $k \ne -1$.
Hence they annihilate the entire $\Zh^- \oplus \Zh^+$.
Next we compute the limit
$$
\lim_{Z_1, Z_2 \to Z} \bigl( (I_R^{+-}+ I_R^{-+}) N(W)^{-1} \bigr)(Z_1,Z_2),
\qquad Z \in U(2)_R.
$$
As before, suppose that $Z_1 \cdot Z_2^{-1}$ has eigenvalues
$\lambda_1$ and $\lambda_2$ with $|\lambda_1|=|\lambda_2|=1$,
$\lambda_1 \ne 1$ and $\lambda_2 \ne 1$.
Then $N(Z_1)=\lambda_1\lambda_2 \cdot N(Z_2)$, $Z_2 \cdot Z_1^{-1}$
has eigenvalues $\lambda_1^{-1}$ and $\lambda_2^{-1}$.
Assume for a moment that $\lambda_1 \ne \lambda_2$,
then by (\ref{I_R}) we have:
\begin{multline*}
\bigl( (I_R^{+-}+ I_R^{-+}) N(W)^{-1} \bigr)(Z_1,Z_2)
= \frac{N(Z_2)^{-1}}{\lambda_2-\lambda_1}
\log \biggl( \frac{1-\lambda_1}{1-\lambda_2} \biggr)
+ \frac{N(Z_1)^{-1}}{\lambda_2^{-1}-\lambda_1^{-1}}
\log \biggl( \frac{1-\lambda_1^{-1}}{1-\lambda_2^{-1}} \biggr)  \\
= \frac{N(Z_2)^{-1}}{\lambda_2-\lambda_1}
\log \biggl( \frac{1-\lambda_1}{1-\lambda_2} \biggr)
- \frac{N(Z_2)^{-1}}{\lambda_2-\lambda_1}
\log \biggl( \frac{\lambda_2(\lambda_1-1)}{\lambda_1(\lambda_2-1)} \biggr)
=
-\frac1{N(Z_2)} \frac{\log\lambda_2-\log\lambda_1}{\lambda_2-\lambda_1}.
\end{multline*}
Hence,
$$
\bigl( (I_R^{+-}+ I_R^{-+}) N(W)^{-1} \bigr)(Z_1,Z_2)
= -\frac1{N(Z_2)} \cdot
\begin{cases} \frac{\log\lambda_2-\log\lambda_1}{\lambda_2-\lambda_1} &
\text{if $\lambda_1 \ne \lambda_2$;} \\
\lambda^{-1} & \text{if $\lambda_1 = \lambda_2 = \lambda$.} \end{cases}
$$
Therefore,
$$
\lim_{\genfrac{}{}{0pt}{}{Z_1, Z_2 \to Z}{N(Z_1-Z_2) \ne 0}}
\bigl( (I_R^{+-}+ I_R^{-+}) N(W)^{-1} \bigr)(Z_1,Z_2)
= - N(Z)^{-1}, \qquad Z \in U(2)_R.
$$
From the $U(2,2)_R$-equivariance we see that we have obtained a projector
onto $\Zh^0$:

\begin{thm}  \label{Zh^0-projector}
The $\mathfrak{gl}(2,\HC)$-equivariant map
$$
f \mapsto \bigl((I_R^{+-}+ I_R^{-+})f\bigr)(Z_1,Z_2)
\quad \in \B{{\cal H} \otimes {\cal H}},
\qquad f \in \Zh, \quad Z_1,Z_2 \in U(2)_R,
$$
is well-defined, annihilates $\Zh^- \oplus \Zh^+$ and satisfies
$$
M \circ \bigl((I_R^{+-}+ I_R^{-+})f\bigr) = f \quad \text{if $f \in \Zh^0$}.
$$

In particular, an operator $\P^0$ on $\Zh$
$$
f \mapsto (\P^0 f)(Z) =
- \lim_{\genfrac{}{}{0pt}{}{Z_1, Z_2 \to Z}{N(Z_1-Z_2) \ne 0}}
\bigl( (I_R^{+-}+ I_R^{-+})f \bigr)(Z_1,Z_2), \qquad Z \in U(2)_R,
$$
is well-defined, annihilates $\Zh^- \oplus \Zh^+$ and is
the identity mapping  on $\Zh^0$.

Finally, the operator $\P^0$ on $\Zh$ can be computed as follows:
\begin{multline*}
(\P^0 f)(Z) = \frac1{2\pi^3i} \lim_{\theta \to 0} \lim_{s \to 1} \biggl(
\int_{W \in U(2)_R} \frac{f(W)\,dV}{N(W-se^{i\theta}Z) \cdot N(W-s^{-1}e^{-i\theta}Z)}\\
+ \int_{W \in U(2)_R} \frac{f(W)\,dV}{N(W-s^{-1}e^{i\theta}Z) \cdot N(W-se^{-i\theta}Z)}
\biggr), \qquad Z \in U(2)_R.
\end{multline*}
\end{thm}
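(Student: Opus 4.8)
The plan is to reduce all assertions to the single cyclic generator $N(W)^{-1}$ of $\Zh^0$ and then propagate everything by equivariance. By Theorem \ref{Zh-decomposition} and the proof of Theorem \ref{embedding}, the modules $\Zh^+,\Zh^0,\Zh^-$ are generated under $\mathfrak{gl}(2,\HC)$ by $1$, $N(W)^{-1}$ and $N(W)^{-2}$ respectively. The computation carried out just before the theorem (valid for $Z_1\in\BB D^+_R$, $Z_2\in\BB D^-_R$) shows, via the orthogonality relations (\ref{orthogonality}), that $(I_RN(W)^k)(Z_1,Z_2)=0$ whenever $k\ne-1$, and the same holds with $Z_1,Z_2$ interchanged; so Lemma \ref{equivariance} forces both $I_R^{+-}$ and $I_R^{-+}$ to annihilate all of $\Zh^-\oplus\Zh^+$. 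For $f\in\Zh^0$, well-definedness of $I_R^{+-}f$ and $I_R^{-+}f$ reduces, again by equivariance, to the generator $N(W)^{-1}$, where it is precisely the statement that the right-hand side of (\ref{I_R}) continues analytically across the locus $\lambda_1=\lambda_2$.

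The crux is the behaviour of the sum on $N(W)^{-1}$ as $Z_1,Z_2\to Z\in U(2)_R$. With $\lambda_1,\lambda_2$ the eigenvalues of $Z_1\cdot Z_2^{-1}$, formula (\ref{I_R}) writes each of $I_R^{+-}N(W)^{-1}$ and $I_R^{-+}N(W)^{-1}$ as a single logarithm, and each diverges as $\lambda_1,\lambda_2\to1$. The decisive point is that in the sum the two logarithms telescope, $\log\frac{1-\lambda_1}{1-\lambda_2}-\log\frac{\lambda_2(\lambda_1-1)}{\lambda_1(\lambda_2-1)}=\log\lambda_1-\log\lambda_2$, leaving the finite divided difference $-N(Z_2)^{-1}\frac{\log\lambda_2-\log\lambda_1}{\lambda_2-\lambda_1}$, whose value at $\lambda_1=\lambda_2=\lambda$ is $-N(Z_2)^{-1}\lambda^{-1}$ by continuity. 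Sending $Z_1,Z_2\to Z$ (so $\lambda_1,\lambda_2\to1$) yields $-N(Z)^{-1}$. Since $M$ and $I_R^{\pm\mp}$ are all equivariant, $M\circ(I_R^{+-}+I_R^{-+})$ is an equivariant endomorphism of the irreducible module $\Zh^0$, hence a scalar by Schur's lemma; the generator computation pins the scalar to $-1$, so $M\circ(I_R^{+-}+I_R^{-+})=-\mathrm{id}$ on $\Zh^0$, and the sign built into the definition of $\P^0$ makes $\P^0$ the identity there. Together with the annihilation statement this establishes the first two parts.

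For the closed integral formula I would exhibit an explicit admissible family of approaches. Given $Z\in U(2)_R$ and parameters $0<s<1$, $0<\theta<\pi$, put $Z_1=se^{i\theta}Z$, $Z_2=s^{-1}e^{-i\theta}Z$ in the first integral and $Z_1=s^{-1}e^{i\theta}Z$, $Z_2=se^{-i\theta}Z$ in the second. Since $(e^{i\theta}Z)(e^{i\theta}Z)^*=ZZ^*$, one checks $se^{i\theta}Z\in\BB D^+_R$ and $s^{-1}e^{\pm i\theta}Z\in\BB D^-_R$, so the two integrals compute $I_R^{+-}$ and $I_R^{-+}$ applied to $N(W)^{-1}$ with common limit points $(e^{i\theta}Z,e^{-i\theta}Z)$. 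Here $Z_1\cdot Z_2^{-1}$ is the scalar $s^2e^{2i\theta}$, so the eigenvalues coincide and $N(Z_1-Z_2)\ne0$ as long as $s^2e^{2i\theta}\ne1$, which holds throughout. Letting $s\to1$ first brings both arguments onto $U(2)_R$ and produces exactly the two integrals displayed in the theorem; the coincident branch $-N(Z_2)^{-1}\lambda^{-1}$ with $\lambda=e^{2i\theta}$ and $N(Z_2)=e^{-2i\theta}N(Z)$ equals $-N(Z)^{-1}$ for every $\theta$, and this persists as $\theta\to0$. After the overall sign in $\P^0$ this reproduces $N(Z)^{-1}$, confirming the displayed expression.

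The main obstacle is exactly the mechanism that renders the scalar vacuum polarization divergent: the naive diagonal restriction $M$, i.e. setting $Z_1=Z_2=Z$, sends each of $I_R^{+-}N(W)^{-1}$ and $I_R^{-+}N(W)^{-1}$ to infinity. The whole construction succeeds only because the divergent logarithmic parts cancel in the sum, so the technical heart of the proof is the careful tracking of the branches of the logarithm in (\ref{I_R}) that underlies this cancellation and the passage to the divided difference. A secondary delicate point is the legitimacy of the iterated limit (first $s\to1$, then $\theta\to0$) and the independence of the answer from the chosen approach, which I would settle using the $U(2,2)_R$-equivariance of $I_R^{+-}+I_R^{-+}$ to reduce a general approach to the symmetric family above.
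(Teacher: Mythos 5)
Your proposal reproduces the paper's own argument: the same reduction to the generators $N(W)^k$, $k=0,-1,-2$, via the orthogonality relations (\ref{orthogonality}) and Lemma \ref{equivariance}, the same telescoping of the two logarithms in (\ref{I_R}) into the divided difference $-N(Z_2)^{-1}\bigl(\log\lambda_2-\log\lambda_1\bigr)/(\lambda_2-\lambda_1)$ with limit $-N(Z)^{-1}$, and the same verification of the iterated-limit formula through the approach family $se^{\pm i\theta}Z$, $s^{-1}e^{\mp i\theta}Z$; your explicit appeal to Schur's lemma merely formalizes the paper's ``from the $U(2,2)_R$-equivariance'' step. Your sign bookkeeping (the naive diagonal limit acts as $-\mathrm{id}$ on $\Zh^0$, corrected by the minus built into $\P^0$) matches the paper's actual computation, so the proposal is correct and essentially identical in approach.
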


Note that the space $\Zh$ consists of rational functions, and rational
functions on $\HC$ as well as analytic ones are completely determined by
their values on $U(2)_R$.
Note also that this integral formula for $\P^0$ is in complete agreement
with our previous formal computation (\ref{expansion-formal})
of the reproducing kernel for $(\rho_1,\Zh^0)$.

\begin{rem}  \label{reproducing-remark}
Every function $f \in \Zh$ can be written as
$f = \P^-f + \P^0f + \P^+f$. Combining the integral expressions for
$\P^{\pm}f$ and $\P^0f$ obtained in Corollary \ref{proj-cor} and
Theorem \ref{Zh^0-projector} we get a reproducing formula for all functions
in $\Zh$ that is equivalent to (\ref{reproducing-formula}).
\end{rem}

\section{The One-Loop Feynman Integral and Its Relation to \\
$(\pi^0_l, {\cal H}^{\pm}) \otimes (\pi^0_r, {\cal H}^{\pm})$}
\label{integral-section}

In this section we show that the identification of the one-loop
Feynman diagram with the integral kernel $p^0_1(Z_1,Z_2;W_1,W_2)$
of the integral operators expressing ${\cal P}^+$ and ${\cal P}^-$
found in \cite{FL1} is an immediate consequence of Theorem \ref{embedding}.
These operators ${\cal P}^+$ and ${\cal P}^-$ are the
$\mathfrak{gl}(2,\HC)$-equivariant composition maps
\begin{equation}  \label{P}
{\cal P}^+: {\cal H}^+ \otimes {\cal H}^+ \twoheadrightarrow \Zh^+
\hookrightarrow {\cal H}^+ \otimes {\cal H}^+ \qquad \text{and} \qquad
{\cal P}^-: {\cal H}^- \otimes {\cal H}^- \twoheadrightarrow \Zh^-
\hookrightarrow {\cal H}^- \otimes {\cal H}^-
\end{equation}
(the multiplication map followed by the embedding).
As we mentioned earlier, the multiplicities of $(\rho_1, \Zh^+)$ in 
$(\pi^0_l, {\cal H}^+) \otimes (\pi^0_r, {\cal H}^+)$ and of $(\rho_1, \Zh^-)$ in 
$(\pi^0_l, {\cal H}^-) \otimes (\pi^0_r, {\cal H}^-)$ are both one.
So the maps ${\cal P}^+$ and ${\cal P}^-$ are unique up to multiplication by
scalars and they are pinned down by imposing
$$
{\cal P}^+ (1 \otimes 1) = 1 \otimes 1
\qquad \text{and} \qquad
{\cal P}^- \bigl( N(Z_1)^{-1} \otimes N(Z_2)^{-1} \bigr)
= N(W_1)^{-1} \otimes N(W_2)^{-1}.
$$

For convenience we restate Theorem 34 and Corollary 39 from \cite{FL1}.
We define operators on $\cal H$ by
\begin{align*}
\bigl( \SP_R^+ \phi \bigr)(Z) &=
\frac1{2\pi^2} \int_{X \in S^3_R}
\frac {(\degt \phi)(X)}{N(X-Z)} \cdot \frac {dS}R,
\qquad Z \in \BB D_R^+, \\
\bigl( \SP_R^- \phi \bigr)(Z) &=
\frac1{2\pi^2} \int_{X \in S^3_R}
\frac {(\degt \phi)(X)}{N(X-Z)} \cdot \frac {dS}R,
\qquad Z \in \BB D_R^-.
\end{align*}

\begin{thm}  \label{Poisson}
The operators $\SP_R^-$ and $\SP_R^+$ are continuous linear operators
$\cal H \to \cal H$.
The operator $\SP_R^+$ has image in ${\cal H}^+$ and sends
$$
\begin{matrix}
t^l_{n\,\underline{m}}(X) \quad \mapsto \quad t^l_{n\,\underline{m}}(Z), \\
\quad \\ t^l_{n\,\underline{m}}(X) \cdot N(X)^{-2l-1} \quad \mapsto \quad
- R^{-2(2l+1)} \cdot t^l_{n\,\underline{m}}(Z),
\end{matrix}
\qquad
\begin{matrix}
l = 0, \frac12, 1, \frac32, \dots, \\ m,n \in \BB Z+l,\\  -l \le m, n \le l.
\end{matrix}
$$
The operator $\SP_R^-$  has image in ${\cal H}^-$ and sends
$$
\begin{matrix}
t^l_{n\,\underline{m}}(X) \quad \mapsto \quad
R^{2(2l+1)} \cdot N(Z)^{-2l-1} \cdot t^l_{n\,\underline{m}}(Z),\\
\quad \\ t^l_{n\,\underline{m}}(X) \cdot N(X)^{-2l-1}\quad \mapsto \quad
- t^l_{n\,\underline{m}}(Z) \cdot N(Z)^{-2l-1},
\end{matrix}
\qquad
\begin{matrix}
l = 0, \frac12, 1, \frac32, \dots, \\ m,n \in \BB Z+l,\\  -l \le m, n \le l.
\end{matrix}
$$
\end{thm}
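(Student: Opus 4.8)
The plan is to recognize each operator as a pairing against the Poisson-type kernel $N(X-Z)^{-1}$ and then to evaluate it on the $K$-type basis by feeding in the matrix coefficient expansion (\ref{1/N-expansion}) and invoking the orthogonality relations (\ref{t-orthog}). Comparing the definitions of $\SP_R^{\pm}$ with the pairing (\ref{H-pairing}), the first observation is that
$$
\bigl(\SP_R^{\pm}\phi\bigr)(Z) = \Bigl(\phi,\ \tfrac1{N(\,\cdot\,-Z)}\Bigr)_R ,
$$
where the pairing is taken in the integration variable $X\in S^3_R$ and $N(X-Z)^{-1}$, for $Z\notin S^3_R$, is harmonic in $X$ and so lies in ${\cal H}$. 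For $Z\in\BB D^+_R$ its singularity is inside the ball, so it is regular at infinity and belongs to ${\cal H}^-$ in the variable $X$; for $Z\in\BB D^-_R$ it is of polynomial type and belongs to ${\cal H}^+$.

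Second, I would insert the expansion (\ref{1/N-expansion}). Since for $X\in S^3_R$ one has $X^*X=R^2$, a direct computation gives $ZX^{-1}(ZX^{-1})^*=R^{-2}ZZ^*$, so $ZX^{-1}\in\BB D^+$ for every $Z\in\BB D^+_R$; symmetrically $X(Z^*Z)^{-1}X^*<1$, i.e. $XZ^{-1}\in\BB D^+$, for every $Z\in\BB D^-_R$. In each case the bound is uniform over the compact set $S^3_R$, hence the expansion converges uniformly there and may be integrated term by term. Using $N(X)=R^2$ on $S^3_R$, this writes $N(X-Z)^{-1}$ as a series of ${\cal H}^-$-type coefficients $t^l_{n\,\underline m}(X^{-1})N(X)^{-1}$ when $Z\in\BB D^+_R$, and of ${\cal H}^+$-type coefficients $t^l_{m\,\underline n}(X)$ when $Z\in\BB D^-_R$. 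Pairing a basis vector of the \emph{opposite} type against such a series, the relations (\ref{t-orthog}) collapse the double sum to one surviving term and give the two ``clean'' values $\SP_R^+\phi=\phi$ for $\phi\in{\cal H}^+$ and $\SP_R^-\phi=-\phi$ for $\phi\in{\cal H}^-$.

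The two remaining cases pair a basis vector against a series of its \emph{own} type, where (\ref{t-orthog}) says nothing directly; this is the main obstacle. It is resolved by exploiting that $N(X)\equiv R^2$ on $S^3_R$, so that an ${\cal H}^+$ vector and an ${\cal H}^-$ vector differing by a power of $N$ agree pointwise on $S^3_R$ up to the scalar $R^{\pm2(2l+1)}$, even though $\degt$ distinguishes them. For instance, if $\phi=t^l_{n\,\underline m}(X)N(X)^{-(2l+1)}\in{\cal H}^-$ then $\degt\phi=-(2l+1)\phi$, and after replacing $\phi$ by $R^{-2(2l+1)}t^l_{n\,\underline m}(X)$ under the integral one finds
$$
\bigl(\SP_R^+\phi\bigr)(Z) = -(2l+1)R^{-2(2l+1)}\cdot\tfrac1{2l+1}\bigl(\SP_R^+t^l_{n\,\underline m}\bigr)(Z) = -R^{-2(2l+1)}\,t^l_{n\,\underline m}(Z),
$$
using $\degt t^l_{n\,\underline m}=(2l+1)t^l_{n\,\underline m}$ and the clean value just established. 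The mirror-image manoeuvre reduces $\SP_R^-$ on ${\cal H}^+$ to the clean value $-\operatorname{Id}$ on ${\cal H}^-$ and produces the factor $R^{2(2l+1)}N(Z)^{-(2l+1)}$, matching the stated formula.

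Finally, the continuity of $\SP_R^{\pm}:{\cal H}\to{\cal H}$ follows once the action on the basis is known: the operators are diagonal on the $K$-type basis up to the uniformly controlled rescalings by powers of $R$, so boundedness with respect to the topology of \cite{FL1} is immediate; alternatively it is inherited from the continuous dependence of the kernel pairing $\phi\mapsto\bigl(\phi,N(\,\cdot\,-Z)^{-1}\bigr)_R$ on $Z$. The only analytic point requiring care, the interchange of summation and integration, is covered by the uniform convergence on $S^3_R$ noted in the second step.
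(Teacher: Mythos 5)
The paper you were given never proves this theorem: it is restated verbatim from \cite{FL1} (Theorem 34 and Corollary 39), so there is no in-paper argument to compare yours against, and your proof has to stand on its own. It does. The identification of $\SP_R^{\pm}\phi$ as the pairing (\ref{H-pairing}) of $\phi$ against the kernel $N(\,\cdot\,-Z)^{-1}$, the domain checks ($ZX^{-1}\in\BB D^+$ when $Z\in\BB D^+_R$ and $XZ^{-1}\in\BB D^+$ when $Z\in\BB D^-_R$, both using $X^*X=XX^*=R^2$ on $S^3_R$), the term-by-term integration of the expansion (\ref{1/N-expansion}), and the two ``clean'' evaluations via (\ref{t-orthog}) are all correct. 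More importantly, your reduction of the two crossed cases --- first extract the $\degt$-eigenvalue $\mp(2l+1)$, then use $N(X)\equiv R^2$ to trade the ${\cal H}^{\mp}$ basis vector for an ${\cal H}^{\pm}$ one pointwise on the sphere, then quote the clean case --- is exactly the right device for the situation where the orthogonality relations are silent; note that this step cannot be skipped, since the pairing (\ref{H-pairing}) of two elements of the \emph{same} type is genuinely nonzero and $R$-dependent (e.g.\ $(1,1)_R=R^2$), so no naive appeal to orthogonality would make those terms vanish. I verified that your chain of equalities reproduces all four stated formulas, including the signs and the powers of $R$.

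The one soft spot is the continuity clause. ``Uniformly controlled rescalings by powers of $R$'' is not accurate: the scalars $R^{\mp2(2l+1)}$ grow or decay exponentially in $l$ according to whether $R$ is smaller or larger than $1$, so boundedness is not ``immediate'' from the diagonal form; it depends on the topology placed on ${\cal H}$, which this paper never specifies (it, too, defers to \cite{FL1} on this point). So that clause remains asserted rather than proved in your write-up. This is a minor gap confined to a part of the statement that is tangential to its main content, namely the explicit action on the $K$-type basis.
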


Now, let us take a close look at the function of three variables
$$
\frac1{N(W-Z_1) \cdot N(W-Z_2)}.
$$
On the one hand, this function has appeared in (\ref{fork}) and is
responsible for $\mathfrak{gl}(2,\HC)$-equivariant embeddings of
$\Zh^\pm$ into ${\cal H}^{\pm} \otimes {\cal H}^{\pm}$.
On the other hand, as can be seen from Theorem \ref{Poisson}, this function
can be used to express the multiplication maps (\ref{M}):

\begin{lem}  \label{mult-integral}
Fix $R_1, R_2 >0$ and consider a map $\widetilde{M}$ on
${\cal H} \otimes {\cal H}$ sending pure tensors
$$
\phi_1(Z_1) \otimes \phi_2(Z_2) \quad \mapsto \quad
\frac1{(2\pi^2)^2} \iint_{\genfrac{}{}{0pt}{}{Z_1 \in S^3_{R_1}}{Z_2 \in S^3_{R_2}}}
\frac {(\degt \phi_1)(Z_1) \cdot (\degt \phi_2)(Z_2)}{N(W-Z_1) \cdot N(W-Z_2)}
\cdot \frac{dS_1\,dS_2}{R_1R_2}.
$$
\begin{enumerate}
\item
If $\phi_1, \phi_2 \in {\cal H}^+$ and $W \in \BB D_{R_1}^+ \cap \BB D_{R_2}^+$,
then $\widetilde{M}$ is the multiplication map:
$$
\widetilde{M} \bigl( \phi_1(Z_1) \otimes \phi_2(Z_2) \bigr)
= (\phi_1\cdot\phi_2)(W);
$$
\item
If $\phi_1, \phi_2 \in {\cal H}^-$ and $W \in \BB D_{R_1}^- \cap \BB D_{R_2}^-$,
then $\widetilde{M}$ is the multiplication map:
$$
\widetilde{M} \bigl( \phi_1(Z_1) \otimes \phi_2(Z_2) \bigr)
= (\phi_1\cdot\phi_2)(W);
$$
\item
If $\phi_1 \in {\cal H}^+$, $\phi_2 \in {\cal H}^-$ and
$W \in \BB D_{R_1}^+ \cap \BB D_{R_2}^-$, then $\widetilde{M}$
is the negative of the multiplication map:
$$
\widetilde{M} \bigl( \phi_1(Z_1) \otimes \phi_2(Z_2) \bigr)
= - (\phi_1\cdot\phi_2)(W).
$$
\end{enumerate}
\end{lem}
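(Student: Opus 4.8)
The plan is to exploit the fact that the integrand defining $\widetilde{M}$ separates completely between the $Z_1$ and $Z_2$ variables, so that the double integral factors as a product of two independent single integrals. First I would rewrite
$$
\widetilde{M}\bigl(\phi_1(Z_1) \otimes \phi_2(Z_2)\bigr)
= \Bigl( \frac1{2\pi^2} \int_{Z_1 \in S^3_{R_1}} \frac{(\degt\phi_1)(Z_1)}{N(W-Z_1)} \cdot \frac{dS_1}{R_1} \Bigr)
\cdot \Bigl( \frac1{2\pi^2} \int_{Z_2 \in S^3_{R_2}} \frac{(\degt\phi_2)(Z_2)}{N(W-Z_2)} \cdot \frac{dS_2}{R_2} \Bigr).
$$
Since $N$ is a quadratic form we have $N(W-Z_i) = N(Z_i-W)$, and each factor is then precisely $(\SP_{R_i}^{\pm}\phi_i)(W)$, the sign of the superscript being dictated by whether $W$ lies in $\BB D_{R_i}^+$ or $\BB D_{R_i}^-$.

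Next I would evaluate each factor using Theorem \ref{Poisson}. For $\phi \in {\cal H}^+$ and $W \in \BB D_R^+$, the operator $\SP_R^+$ fixes every basis element $t^l_{n\,\underline{m}}$, so by linearity $(\SP_R^+\phi)(W) = \phi(W)$. For $\phi \in {\cal H}^-$ and $W \in \BB D_R^-$, the operator $\SP_R^-$ sends each basis element $t^l_{n\,\underline{m}}(Z) \cdot N(Z)^{-(2l+1)}$ to its \emph{negative}, so $(\SP_R^-\phi)(W) = -\phi(W)$. Both identities extend to all of ${\cal H}^+$ and ${\cal H}^-$ respectively by linearity.

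Finally I would combine the two factors in each of the three cases and track the signs. In case 1 both factors equal $\phi_i(W)$, giving $(\phi_1 \cdot \phi_2)(W)$; in case 2 each factor carries a minus sign, and $(-1)^2 = 1$ restores $(\phi_1 \cdot \phi_2)(W)$; in case 3 exactly one factor carries a minus sign, yielding $-(\phi_1 \cdot \phi_2)(W)$. The passage from pure tensors to all of ${\cal H} \otimes {\cal H}$ is automatic, since the integrand is linear in each $\phi_i$ separately. The only point that demands genuine attention — rather than being a real obstacle — is the bookkeeping of which Poisson operator $\SP_R^{\pm}$, and hence which sign, applies in each variable: the operators $\SP_R^+$ and $\SP_R^-$ share the same integral formula and are distinguished only by the domain containing $W$, so one must be careful to read off the correct superscript from the hypotheses $W \in \BB D_{R_i}^{\pm}$ before invoking Theorem \ref{Poisson}. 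Once this is pinned down, the three cases follow immediately, and this also explains the change of sign in case 3 that is responsible for the divergence phenomenon discussed after (\ref{I_R}).
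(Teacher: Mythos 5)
Your proposal is correct and is exactly the argument the paper intends: the paper states this lemma without a separate proof, introducing it with the phrase ``as can be seen from Theorem \ref{Poisson},'' i.e.\ the integrand factors into the two Poisson-type integrals $(\SP_{R_1}^{\pm}\phi_1)(W)\cdot(\SP_{R_2}^{\pm}\phi_2)(W)$, and Theorem \ref{Poisson} gives $\SP_R^+\phi=\phi$ on ${\cal H}^+$ for $W\in\BB D_R^+$ and $\SP_R^-\phi=-\phi$ on ${\cal H}^-$ for $W\in\BB D_R^-$, whence the signs in the three cases. Your sign bookkeeping matches the statement in all three cases, so there is nothing to add.
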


Combining Theorem \ref{embedding} and Lemma \ref{mult-integral} we see that
the function
$$
p^0_1(Z_1,Z_2;W_1,W_2) =
\frac i{2\pi^3} \int_{T \in U(2)}
\frac{dV}{N(Z_1-T) \cdot N(Z_2-T) \cdot N(W_1-T) \cdot N(W_2-T)}
$$
can be interpreted as the integral kernel of the integral operators
expressing the $\mathfrak{gl}(2,\HC)$-equivariant compositions (\ref{P}).
Explicitly, we have:
\begin{multline*}
({\cal P}^+ (\phi_1 \otimes \phi_2))(W_1,W_2) \\
= \frac1{(2\pi^2)^2} \iint_{\genfrac{}{}{0pt}{}{Z_1 \in S^3_{R_1}}{Z_2 \in S^3_{R_2}}}
p^0_1(Z_1,Z_2;W_1,W_2) \cdot (\degt_{Z_1} \phi_1)(Z_1) \cdot
(\degt_{Z_2} \phi_2)(Z_2) \, \frac{dS_1\,dS_2}{R_1R_2},
\end{multline*}
where $\phi_1, \phi_2 \in {\cal H}^+$, $W_1, W_2 \in \BB D^+_1$ and $R_1, R_2>1$.
Similarly,
\begin{multline*}
({\cal P}^- (\phi_1 \otimes \phi_2))(W_1,W_2) \\
= \frac1{(2\pi^2)^2} \iint_{\genfrac{}{}{0pt}{}{Z_1 \in S^3_{R_1}}{Z_2 \in S^3_{R_2}}}
p^0_1(Z_1,Z_2;W_1,W_2) \cdot (\degt_{Z_1} \phi_1)(Z_1) \cdot
(\degt_{Z_2} \phi_2)(Z_2) \, \frac{dS_1\,dS_2}{R_1R_2},
\end{multline*}
where $\phi_1, \phi_2 \in {\cal H}^-$, $W_1, W_2 \in \BB D^-_1$ and
$0< R_1, R_2<1$.

We conclude this section with a comment that the integral kernel
$p^0_1(Z_1,Z_2;W_1,W_2)$ can be rewritten as an integral over $\BB R^4$ instead
of $U(2)$, as was done after Corollary 90 in \cite{FL1}.
Thus $p^0_1(Z_1,Z_2;W_1,W_2)$ gets identified with the integral represented
by the one-loop Feynman diagram (see Figure \ref{diagram}).

\begin{figure}
\center
\includegraphics[scale=1]{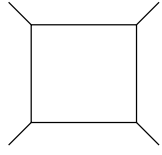}
\caption{One-loop Feynman diagram}
\label{diagram}
\end{figure}

\section{Minkowski Space Realization of $\Zh^-$, $\Zh^0$ and $\Zh^+$} \label{M-section}

In this section we realize the spaces $\Zh^-$, $\Zh^0$ and $\Zh^+$ in the
setting of the Minkowski space $\BB M$.
As in \cite{FL1}, we use $e_0$, $e_1$, $e_2$, $e_3$ in place of the more
familiar generators $1,i,j,k$ of $\BB H$, so that the symbol $i$ can be used
for $\sqrt{-1} \in \BB C$; and let $\tilde e_0 = -ie_0 \in \HC$. Then
$$
\BB M = \tilde e_0 \BB R \oplus e_1 \BB R \oplus e_2 \BB R \oplus e_3 \BB R
= \biggl\{
Z= \begin{pmatrix} z_{11} & z_{12} \\ z_{21} & z_{22} \end{pmatrix} \in \HC
; \: z_{11}, z_{22} \in i \BB R, \: z_{21} = -\overline{z_{12}} \biggr\}.
$$
Recall the generalized upper and lower half-planes introduced in
Section 3.5 in \cite{FL1}:
\begin{align*}
\BB T^- &= \{ Z = W_1 + i W_2 \in \HC ;\: W_1, W_2 \in \BB M ,\:
\text{$iW_2$ is positive definite} \},  \\
\BB T^+ &= \{ Z = W_1 + i W_2 \in \HC ;\: W_1, W_2 \in \BB M ,\:
\text{$iW_2$ is negative definite} \}
\end{align*}
and element $\gamma \in GL(2,\BB C)$ from Lemmas 54 and 63 of \cite{FL1}
which induces a fractional linear transformation on $\HC$ that we call
the ``Cayley transform''. Thus
$\gamma = \frac1{\sqrt 2} \bigl(\begin{smallmatrix} i & 1 \\ i & -1
\end{smallmatrix} \bigr) \in GL(2,\HC)$ with
$\gamma^{-1} = \frac1{\sqrt 2} \bigl(\begin{smallmatrix} -i & -i \\
1 & -1 \end{smallmatrix}\bigr)$.
The fractional linear map on $\HC$
$$
\pi_l(\gamma): \: Z \mapsto (Z-i)(Z+i)^{-1}
$$
maps $\BB D^+ \to \BB T^+$, $\BB D^- \to \BB T^-$, $U(2) \to \BB M$
(with singularities) and sends the sphere
$\{ Z \in U(2); N(Z)=1 \} = SU(2)$ into 
the two-sheeted hyperboloid $\{ Y \in \BB M;\: N(Y) = -1 \}$.
Conversely, the fractional linear map on $\HC$
$$
\pi_l(\gamma^{-1}): \: Z \mapsto -i(Z+1)(Z-1)^{-1}
$$
maps $\BB T^+ \to \BB D^+$, $\BB T^- \to \BB D^-$, $\BB M \to U(2)$,
has no singularities on $\BB M$, and sends the two-sheeted hyperboloid
$\{ Y \in \BB M;\: N(Y) = -1 \}$ into the sphere
$\{ Z \in U(2); N(Z)=1 \} = SU(2)$.

These fractional linear transformations induce the following maps on functions:
$$
\pi^0_l(\gamma): \: \phi(Z) \mapsto
\bigl( \pi^0_l(\gamma)\phi \bigr)(Z) = \frac2{N(Z-1)} \cdot
\phi \bigl( -i(Z+1)(Z-1)^{-1} \bigr),
$$
sends harmonic functions\footnote{
By harmonic functions on $U(2)$ we mean functions that are holomorphic and
harmonic in some open neighborhood of $U(2)$.}
on $\BB D^+$, $\BB D^-$ and $U(2)$ into solutions of the wave equation on,
respectively, $\BB T^+$, $\BB T^-$ and $\BB M$. Similarly,
$$
\pi^0_l(\gamma^{-1}): \: \phi(Z) \mapsto
\bigl( \pi^0_l(\gamma^{-1})\phi \bigr)(Z) = \frac{-2}{N(Z+i)} \cdot
\phi \bigl( (Z-i)(Z+i)^{-1} \bigr),
$$
sends solutions of the wave equation on $\BB T^+$, $\BB T^-$ and $\BB M$ into
harmonic functions on, respectively, $\BB D^+$, $\BB D^-$ and $U(2)$.
In particular, $\pi^0_l(\gamma)$ maps
\begin{equation}  \label{M-gen}
1 \mapsto 2 \cdot N(Z-1)^{-1}, \qquad N(Z)^{-1} \mapsto -2 \cdot N(Z+1)^{-1}.
\end{equation}


The light cone
$$
\cone = \{Y \in \BB M;\: N(Y)=0 \}
$$
can be divided into two parts:
$$
\cone^+ =\{ Y \in \cone ;\: i\tr Y \ge 0 \}
\qquad \text{and} \qquad
\cone^- =\{ Y \in \cone ;\: i\tr Y \le 0 \}.
$$
Next we calculate the (inverse) Fourier transform of the delta distributions
on $\cone^+$ and $\cone^-$:

\begin{lem}
We have the following absolutely convergent expansions:
\begin{align*}
\frac 1{N(Z)} &= \frac{1}{4\pi} \int_{P \in \cone^-}
e^{i\langle Z, P \rangle} \,\frac{dp^1dp^2dp^3}{|p^0|}, \qquad Z \in \BB T^-, \\
\frac 1{N(Z)} &= \frac{1}{4\pi} \int_{P\in \cone^+}
e^{i\langle Z, P \rangle} \,\frac{dp^1dp^2dp^3}{|p^0|}, \qquad Z \in \BB T^+,
\end{align*}
where $\langle Z,P \rangle = \tr (Z^+P)/2 = \tr (P^+Z)/2$ and
$P = p^0 \tilde e_0 + p^1e_1 + p^2e_2 + p^3e_3 \in \cone^{\pm} \subset \BB M$.
\end{lem}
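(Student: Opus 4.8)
The plan is to prove the first identity (for $Z\in\BB T^-$ against $\cone^-$) and then deduce the second one by the substitution $Z\mapsto -Z$, $P\mapsto -P$: this carries $\BB T^-$ to $\BB T^+$ and $\cone^-$ to $\cone^+$, while leaving $N(Z)$, $\langle Z,P\rangle$ and the measure $dp^1dp^2dp^3/|p^0|$ unchanged. For the first identity I would treat both sides as functions of $Z$ and establish three things: (i) each is holomorphic on the tube $\BB T^-$; (ii) each is invariant under the action $Z\mapsto gZg^*$ of $g\in SL(2,\BB C)$; and (iii) they agree on the ray $Z=sI$, $s>0$. Since the $SL(2,\BB C)$-orbit of the ray $\{sI:\ s>0\}$ is exactly the cone $C^+$ of positive-definite Hermitian matrices, and $C^+$ is a maximal totally real submanifold of the (convex, hence connected) domain $\BB T^-$, two holomorphic functions agreeing on $C^+$ must agree on all of $\BB T^-$.

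First I would verify convergence and holomorphy of the right-hand side. Writing $Z=W_1+iW_2$ with $iW_2$ positive definite and $P\in\cone^-$, the reality of $\langle W_1,P\rangle$ and $\langle W_2,P\rangle$ gives $\re\,(i\langle Z,P\rangle)=-\langle W_2,P\rangle$. In the coordinates $W_2=w^0\tilde e_0+w^1e_1+w^2e_2+w^3e_3$ the condition $iW_2>0$ is equivalent to $w^0>0$ and $(w^0)^2>(w^1)^2+(w^2)^2+(w^3)^2$, and on $\cone^-$ one has $p^0=-|\vec p\,|$, so that $\langle W_2,P\rangle=w^0|\vec p\,|+\vec w\cdot\vec p\ge(w^0-|\vec w\,|)\,|\vec p\,|$. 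Hence $|e^{i\langle Z,P\rangle}|\le e^{-\delta|\vec p\,|}$ with $\delta=w^0-|\vec w\,|>0$, giving absolute convergence; since $\delta$ can be chosen uniform on compact subsets of $\BB T^-$ and the integrand is holomorphic in $Z$ for each fixed $P$, differentiation under the integral sign shows the right-hand side is holomorphic on $\BB T^-$.

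Next I would check the two invariances and the base point. The left-hand side is invariant because $N(gZg^*)=\det g\cdot N(Z)\cdot\det g^*=N(Z)$. For the right-hand side, using $\operatorname{adj}(g)=g^{-1}$ for $g\in SL(2,\BB C)$ one finds $(gZg^*)^+=(g^{-1})^*Z^+g^{-1}$, whence $\langle gZg^*,P\rangle=\langle Z,\,g^{-1}P(g^{-1})^*\rangle$; substituting $Q=g^{-1}P(g^{-1})^*$ and invoking both the stability of $\cone^-$ under $SL(2,\BB C)$ and the invariance of $dp^1dp^2dp^3/|p^0|$ shows the integral is unchanged. It then remains to evaluate at $Z=sI$, $s>0$, which lies in $\BB T^-$ (take $W_1=0$, $W_2=-isI$). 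There $\langle sI,P\rangle=\tfrac s2\tr P=-isp^0=is|\vec p\,|$ on $\cone^-$, so the right-hand side equals
\[
\frac1{4\pi}\int_{\BB R^3} e^{-s|\vec p\,|}\,\frac{dp^1dp^2dp^3}{|\vec p\,|}=\int_0^\infty r\,e^{-sr}\,dr=\frac1{s^2}=\frac1{N(sI)},
\]
so that the two sides agree on the ray, the constant $\tfrac1{4\pi}$ being exactly what is needed.

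The step I expect to require the most care is the passage from agreement on $C^+$ to agreement on all of $\BB T^-$. Here $C^+$ is an open subset of the real form $\{Z=Z^*\}\subset\HC$, which is a maximal totally real subspace, since $\HC=\{Z=Z^*\}\oplus i\{Z=Z^*\}$ as real vector spaces. Choosing a real basis of Hermitian matrices as complex-linear coordinates on $\HC$ realizes $C^+$ as an open subset of $\BB R^4\subset\BB C^4$; a holomorphic function vanishing on such an open real slice has all Taylor coefficients zero at each of its points, hence vanishes in a neighbourhood, and by connectedness of $\BB T^-$ it vanishes identically. Applying this to the difference of the two sides finishes the first identity, and the substitution $Z\mapsto -Z$, $P\mapsto -P$ described above yields the second.
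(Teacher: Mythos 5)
Your proposal is correct, but it takes a noticeably different route from the paper's, so let me compare. Both proofs share the same skeleton: the same convergence estimate (positive definiteness of $iW_2$ giving exponential decay $e^{-\delta|\vec p\,|}$ on the cone), holomorphy of the integral in $Z$, and then verification of the identity on a real slice followed by analytic continuation. The difference lies in how the slice is chosen and what is computed there. The paper uses only the obvious invariance under spatial rotations $SO(3)$ to reduce to $Z = z^0\tilde e_0 + z^1 e_1$ with $z^0 \in \BB C^{\times}$, $z^1 \in \BB R$, and then evaluates the cone integral directly for such $Z$ by an iterated integration (substituting $u=\sqrt{(p^1)^2+s^2}$, integrating out $u$, then $p^1$), obtaining $4\pi/N(Z)$; uniqueness of analytic continuation in the spatial variables finishes the argument, and both signs $\pm$ are treated in parallel. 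You instead exploit the full $SL(2,\BB C)$-invariance $Z \mapsto gZg^*$, which collapses the computation to the one-line radial integral on the ray $Z=sI$ and lets you recover the whole identity from the totally real orbit $C^+$ of positive definite Hermitian matrices; you also get the $\cone^+$/$\BB T^+$ case for free via $Z\mapsto -Z$, $P\mapsto -P$, which is a genuine simplification. The price is that your argument leans on the Lorentz invariance of the measure $dp^1dp^2dp^3/|p^0|$ on the light cone under the transformations $P \mapsto hPh^*$. This is a standard and true fact, but it is the one ingredient the paper's proof never needs (rotation invariance of the measure is immediate, boost invariance is not), and if you were to prove it from scratch you would be doing a Jacobian computation of roughly the same weight as the explicit integral the paper performs. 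So the two proofs are of comparable total length; yours trades explicit integration for group-theoretic bookkeeping (stability of $\cone^{\pm}$ and of $\BB T^{\mp}$ under the action, transitivity on $C^+$, the identity theorem off a totally real submanifold), all of which you check correctly.
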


\begin{proof}
Note that $iW \in i \BB M$ is positive definite if and only if $N(W)<0$
and $i\tr W >0$ or, equivalently, if and only if
$W=w^0 \tilde e_0 + w^1 e_1 + w^2 e_2 + w^3 e_3 \in \BB M$ and
$w^0 > |\overrightarrow{w}| = \sqrt{(w^1)^2 + (w^2)^2 + (w^3)^2}$.
Similarly, $iW \in i \BB M$ is negative definite if and only if
$W=w^0 \tilde e_0 + w^1 e_1 + w^2 e_2 + w^3 e_3 \in \BB M$ and
$w^0 < |\overrightarrow{w}|$.
This implies that the two integrals converge absolutely
on the respective regions.

Each integral defines a complex analytic function of $Z$
on $\BB T^-$ and $\BB T^+$ respectively.
Hence to establish that the integrals are equal to $N(Z)^{-1}$,
it is sufficient to prove that for $Z$ of the form
$z^0 \tilde e_0 + z^1 e_1 + z^2 e_2 + z^3 e_3 \in \HC$ with
$z^0 \in \BB C^{\times}$ and $\overrightarrow{z} = (z^1, z^2, z^3) \in \BB R^3$.
Rotating $Z$ if necessary, without loss of generality we can assume that
$z^2=z^3=0$.
For $\overrightarrow{p} = (p^1, p^2 , p^3) \in \BB R^3$, let
\begin{align*}
P_+= |\overrightarrow{p}| \tilde e_0 + p^1e_1 + p^2e_2 + p^3e_3 \quad
&\in \cone^+,
\qquad |\overrightarrow{p}| = \sqrt{(p^1)^2+(p^2)^2+(p^3)^2}, \\
P_-= -|\overrightarrow{p}| \tilde e_0 + p^1e_1 + p^2e_2 + p^3e_3 \quad
&\in \cone^-.
\end{align*}
Let $s = \sqrt{(p^2)^2 + (p^3)^2}$ and substitute $u = \sqrt{(p^1)^2+s^2}$,
then the integrals in question become
\begin{multline*}
\int_{P \in \cone^{\mp}} e^{i\langle Z, P \rangle} \,\frac{dp^1dp^2dp^3}{|p^0|}
= \int_{\overrightarrow{p} \in \BB R^3}
\exp\bigl(i(\pm z^0 |\overrightarrow{p}| + z^1p^1)\bigr)
\,\frac{dp^1dp^2dp^3}{|\overrightarrow{p}|}  \\
=
2\pi \iint_{\genfrac{}{}{0pt}{}{s \ge 0}{-\infty < p^1 < \infty}}
s \exp\bigl(i(\pm z^0 \sqrt{(p^1)^2 + s^2} + z^1p^1)\bigr)
\,\frac {dp^1ds}{\sqrt{(p^1)^2 + s^2}}  \\
=
2\pi \int_{-\infty}^{\infty} \biggl( \int_{u \ge |p^1|}
e^{i(\pm z^0u + z^1p^1)} \,du \biggr)dp^1  \\
=
\pm \frac {2\pi i}{z^0} \int_{-\infty}^{\infty} e^{i(\pm z^0|p^1| + z^1p^1)} \,dp^1
= \frac {4\pi}{(z^1)^2-(z^0)^2} = \frac{4\pi}{N(Z)}.
\end{multline*}
\end{proof}

Therefore,
\begin{align*}
\frac1{N(Z_1-Z_2)} &= \frac{1}{4\pi} \int_{P \in \cone^-}
e^{i\langle Z_1-Z_2, P \rangle} \,\frac{dp^1dp^2dp^3}{|p^0|},
\qquad \text{whenever $Z_1-Z_2 \in \BB T^-$,}  \\
\text{and} \qquad
\frac1{N(Z_1-Z_2)} &= \frac{1}{4\pi} \int_{P \in \cone^+}
e^{i\langle Z_1-Z_2, P \rangle} \,\frac{dp^1dp^2dp^3}{|p^0|},
\qquad \text{whenever $Z_1-Z_2 \in \BB T^+$.}  \\
\end{align*}

\begin{cor}
Up to proportionality coefficients, the Fourier transforms of the following
distributions on $\BB M$ are:
\begin{align*}
\text{the FT of} \quad \frac1{N(Y-1)} \quad
&\text{is the distribution} \quad
f \mapsto \int_{P \in \cone^+} f(P) \cdot e^{-i\tr P/2} \,\frac{dp^1dp^2dp^3}{|p^0|},\\
\text{the FT of} \quad \frac1{N(Y+1)} \quad
&\text{is the distribution} \quad
f \mapsto \int_{P \in \cone^-} f(P) \cdot e^{i\tr P/2} \,\frac{dp^1dp^2dp^3}{|p^0|}.
\end{align*}
\end{cor}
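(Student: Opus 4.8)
The plan is to read off both Fourier transforms directly from the preceding Lemma by specializing its integral expansion to arguments of the form $Y\mp 1$; these two functions are, via (\ref{M-gen}), precisely the Minkowski-space avatars of the generators $1$ and $N(Z)^{-1}$, so Fourier-analyzing them is the natural next step. \emph{The key point that makes this work without any boundary-value regularization is that the shifted arguments land in the open tube domains}, so that $1/N(Y-1)$ and $1/N(Y+1)$ are honest smooth (indeed rapidly decaying) functions on $\BB M$ whose Fourier transforms exist in the classical sense.

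First I would verify the geometric fact that $Y-1\in\BB T^+$ and $Y+1\in\BB T^-$ for every $Y\in\BB M$. Since $\tilde e_0=-ie_0=-i$, we have $1=i\tilde e_0$, so writing $Y-1=Y+i(-\tilde e_0)$ in the form $W_1+iW_2$ with $W_1=Y\in\BB M$ and $W_2=-\tilde e_0\in\BB M$ gives $iW_2=-i\tilde e_0=-1$, which is negative definite; hence $Y-1\in\BB T^+$. The analogous computation with $W_2=\tilde e_0$ gives $iW_2=1$ positive definite, so $Y+1\in\BB T^-$. Because $\BB M$ consists of anti-Hermitian matrices, the same definiteness argument shows that $N$ does not vanish on $\BB T^\pm$, which is exactly why $1/N(Y\mp 1)$ has no singularities on $\BB M$.

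Next I would substitute $Z=Y-1\in\BB T^+$ into the $\cone^+$ identity of the Lemma (equivalently, set $Z_1=Y$, $Z_2=1$ in the displayed formula valid for $Z_1-Z_2\in\BB T^+$) and expand $\langle Y-1,P\rangle=\langle Y,P\rangle-\langle 1,P\rangle$. The only computation required is $\langle 1,P\rangle=\tr(1^+P)/2=\tr(P)/2$, using $1^+=1$; with $P=p^0\tilde e_0+p^1e_1+p^2e_2+p^3e_3$, $\tr e_j=0$ and $\tr\tilde e_0=-2i$, this equals $-ip^0$, so that $e^{-i\langle 1,P\rangle}=e^{-i\tr P/2}$. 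This yields
\[
\frac1{N(Y-1)}=\frac1{4\pi}\int_{P\in\cone^+}e^{i\langle Y,P\rangle}\,e^{-i\tr P/2}\,\frac{dp^1dp^2dp^3}{|p^0|},
\]
which exhibits $1/N(Y-1)$ as the inverse Fourier transform of the measure $e^{-i\tr P/2}\frac{dp^1dp^2dp^3}{|p^0|}$ supported on $\cone^+$; reading this relation backwards gives the asserted Fourier transform. The second statement follows identically from the $\cone^-$ identity applied to $Z=Y+1\in\BB T^-$, where $\langle Y+1,P\rangle=\langle Y,P\rangle+\tr P/2$ produces the factor $e^{+i\tr P/2}$.

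The only genuine care needed is bookkeeping with conventions: one must fix the normalization and sign convention of the Fourier transform on $\BB M$ (which is why the statement is phrased only up to proportionality), and check that the resulting measure on $\cone^\pm$ is a tempered distribution. Its only possible singularity is the $|p^0|^{-1}=|\overrightarrow{p}|^{-1}$ factor at the cone vertex, which is locally integrable in three momentum dimensions, while $e^{-i\tr P/2}=e^{-p^0}$ provides exponential decay on $\cone^+$ (and likewise $e^{+i\tr P/2}=e^{-|\overrightarrow{p}|}$ on $\cone^-$). I expect this convention-and-temperedness verification, rather than any substantive computation, to be the main (and only minor) obstacle.
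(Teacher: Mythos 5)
Your proposal is correct and follows exactly the route the paper intends: the Corollary is an immediate consequence of the preceding Lemma, obtained by noting $Y-1\in\BB T^+$, $Y+1\in\BB T^-$ for $Y\in\BB M$, applying the corresponding $\cone^{\pm}$ expansion, and splitting off $e^{\mp i\langle 1,P\rangle}=e^{\mp i\tr P/2}$ (whose decay on $\cone^{\pm}$ gives the convergence the paper remarks on). Your explicit verifications of the tube-domain membership and of $\langle 1,P\rangle=\tr P/2=-ip^0$ are precisely the details the paper leaves to the reader.
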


(The presence of the rapidly decaying term $e^{\pm i\tr P/2}$ ensures
convergence of the integrals.)

Combining this corollary with (\ref{M-gen}), we see that the Fourier transform
maps $\pi^0_l(\gamma)({\cal H}^+)$ into distributions supported on $\cone^+$
and $\pi^0_l(\gamma)({\cal H}^-)$ into distributions supported on $\cone^-$.
Since the Fourier transform maps products of functions into convolutions,
by Lemma \ref{image-lemma} the Fourier transform maps
$\rho_1(\Zh^+)$, $\rho_1(\Zh^-)$ and $\rho_1(\Zh^0)$
into distributions supported respectively in
$\{ Y \in \BB M ;\: N(Y)<0 ,\: i\tr Y \ge 0 \}$
-- the ``interior of $\cone^+$'',
$\{ Y \in \BB M ;\: N(Y)<0 ,\: i\tr Y \le 0 \}$
-- the ``interior of $\cone^-$'' and
$\{ Y \in \BB M ;\: N(Y)>0 \}$ -- the ``exterior of $\cone$''.

Next we set $R=1$ and pull back the maps $I_1$ defined by (\ref{fork}) via
$\pi_l(\gamma^{-1})$. Using Lemmas \ref{Z-W} and \ref{Jacobian_lemma}
we obtain a formula that formally looks like (\ref{fork}):
$$
\Zh \ni f \quad \mapsto \quad (I_1 f)(Z_1,Z_2) =
\frac i{2\pi^3} \int_{Y \in \BB M} \frac{f(Y) \,dV}{N(Y-Z_1) \cdot N(Y-Z_2)}
\quad \in \B{{\cal H} \otimes {\cal H}};
$$
however, the integration is over $Y \in \BB M$, the two copies of ${\cal H}$
are realized as solutions of the wave equation on $\BB M$ and
$Z_1, Z_2 \in \BB T^- \sqcup \BB T^+$.
Setting $Z_1=Z_2 \in \BB T^-$ and $Z_1=Z_2 \in \BB T^+$ results in projectors
of $\rho_1(\Zh)$ onto $\rho_1(\Zh^-)$ and $\rho_1(\Zh^+)$ respectively.

\begin{thm}
Let $f \in \rho_1(\Zh)$.
If $Z \in \BB T^+$, then the map
$$
f \mapsto \P^+_{\BB M}(Z) =
\frac i{2\pi^3} \int_{Y \in \BB M} \frac {f(Y) \,dV}{N(Y-Z)^2}
$$
is a projector onto $\rho_1(\Zh^+)$
and, in particular, provides a reproducing formula for functions in
$\rho_1(\Zh^+)$.
Similarly, if $Z \in \BB T^-$, then the map
$$
f \mapsto \P^-_{\BB M}(Z) =
\frac i{2\pi^3} \int_{Y \in \BB M} \frac {f(Y) \,dV}{N(Y-Z)^2}
$$
is a projector onto $\rho_1(\Zh^-)$
and, in particular, provides a reproducing formula for functions in
$\rho_1(\Zh^-)$.
\end{thm}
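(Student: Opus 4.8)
The plan is to realize both Minkowski operators as the coordinate expressions, under the Cayley transform, of the $U(2)$-based projectors $\P^+$ and $\P^-$ from Corollary \ref{proj-cor} taken at $R=1$, and then to invoke the elementary fact that conjugating a projector by an invertible intertwiner again yields a projector. First I would record that, by Corollary \ref{proj-cor} with $R=1$, the operator $f \mapsto \frac{i}{2\pi^3}\int_{W\in U(2)} f(W)\,dV/N(W-Z)^2$ is a projector of $\Zh$ onto $\Zh^+$ when $Z \in \BB D^+$, and onto $\Zh^-$ when $Z \in \BB D^-$. Since $\gamma \in GL(2,\HC)$, the action $\rho_1(\gamma)$ is an invertible operator on $\Zh$ which, by Theorem \ref{Zh-decomposition} together with the $\mathfrak{gl}(2,\HC)$-invariance of the three summands (and connectedness of $GL(2,\HC)\simeq GL(4,\BB C)$), preserves the decomposition $\Zh = \Zh^- \oplus \Zh^0 \oplus \Zh^+$; its realization on functions over $\BB M$ is precisely what defines $\rho_1(\Zh^{\pm})$.

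The central step is the change of variables already indicated in the paragraph preceding the theorem. Substituting $Y = \pi_l(\gamma)(W) = (W-i)(W+i)^{-1}$ transports the integral over $U(2)$ to an integral over $\BB M$: Lemma \ref{Z-W} rewrites each factor $N(Y-Z)$ in the denominator, while Lemma \ref{Jacobian_lemma} pulls back the holomorphic volume form $dV$. The norm-type automorphy factors produced by the two denominator factors and by the Jacobian combine with the $\rho_1(\gamma)$-twist on the source and target functions so that, after setting $Z_1 = Z_2 = Z$, exactly the Minkowski kernel $N(Y-Z)^{-2}$ survives with the stated normalization $\frac{i}{2\pi^3}$. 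Because $\pi_l(\gamma)$ maps $\BB D^+ \to \BB T^+$ and $\BB D^- \to \BB T^-$, the domain hypothesis $Z \in \BB D^{\pm}$ becomes $Z \in \BB T^{\pm}$, matching the statement. This identifies $\P^+_{\BB M}$ (resp. $\P^-_{\BB M}$) with the conjugate $\rho_1(\gamma) \circ \P^+ \circ \rho_1(\gamma)^{-1}$ (resp. the $\P^-$ analogue).

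Having established the conjugation, I would conclude as follows. Since $\P^{\pm}$ is idempotent with image $\Zh^{\pm}$ and kernel the sum of the two remaining summands, its conjugate by the invertible map $\rho_1(\gamma)$ is again idempotent, with image $\rho_1(\gamma)(\Zh^{\pm}) = \rho_1(\Zh^{\pm})$ and complementary kernel. In particular $\P^+_{\BB M}$ restricts to the identity on $\rho_1(\Zh^+)$, which is the asserted reproducing formula, and likewise $\P^-_{\BB M}$ is the identity on $\rho_1(\Zh^-)$.

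The hard part will be the bookkeeping in the change-of-variables step: one must track the several norm-type automorphy factors coming from Lemmas \ref{Z-W} and \ref{Jacobian_lemma} and verify that they cancel against the twists in $\rho_1(\gamma)$, leaving precisely the clean kernel $N(Y-Z)^{-2}$. A secondary point to check is convergence of the resulting integral over the noncompact space $\BB M$: for $Z \in \BB T^{\pm}$ the kernel $N(Y-Z)^{-2}$ is nonsingular along real $\BB M$, and the relevant functions, being images of harmonic polynomials under the Cayley transform, decay sufficiently fast at infinity on $\BB M$, so the integral converges absolutely and the formal manipulation is justified.
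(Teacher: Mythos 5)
Your proposal is correct and follows essentially the same route as the paper: the paper likewise obtains these operators by pulling back the $U(2)$-based maps $I_1$ of (\ref{fork}) (equivalently, the projectors of Corollary \ref{proj-cor} at $R=1$) through the Cayley transform, using Lemmas \ref{Z-W} and \ref{Jacobian_lemma} to track the automorphy factors, and then setting $Z_1=Z_2=Z \in \BB T^{\pm}$. One slip in wording: $\rho_1(\gamma)$ is not an invertible operator \emph{on} $\Zh$ preserving the three summands (the $\rho_1$-action of the group does not preserve $\Zh$ at all, only the Lie algebra does; $\rho_1(\gamma)$ carries $\Zh^{\pm}$ onto the genuinely different spaces $\rho_1(\Zh^{\pm})$ of functions on $\BB M$), but this intertwiner property is exactly what your conjugation argument actually uses, so the argument stands.
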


(The reproducing formulas for $\rho_1(\Zh^+)$ and $\rho_1(\Zh^-)$
were obtained in \cite{FL1}, Theorem 74.)

Next we introduce operators
$$
f \quad \mapsto \quad (I_{\BB M}^{+-} f)(Z_1,Z_2) = \frac i{2\pi^3}
\lim_{\genfrac{}{}{0pt}{}{Z'_1 \to Z_1,\: Z'_1 \in \BB T^+}{Z'_2 \to Z_2,\: Z'_2 \in \BB T^-}}
\int_{Y \in \BB M} \frac{f(Y) \,dV}{N(Y-Z'_1) \cdot N(Y-Z'_2)},
$$
where $Z_1, Z_2 \in \BB M$ and $N(Z_1-Z_2)\ne 0$.
Similarly, we can switch the roles of $Z_1$ and
$Z_2$ and define another operator
$$
f \quad \mapsto \quad (I_{\BB M}^{-+} f)(Z_1,Z_2) = \frac i{2\pi^3}
\lim_{\genfrac{}{}{0pt}{}{Z'_1 \to Z_1,\: Z'_1 \in \BB T^-}{Z'_2 \to Z_2,\: Z'_2 \in \BB T^+}}
\int_{Y \in \BB M} \frac{f(Y) \,dV}{N(Y-Z'_1) \cdot N(Y-Z'_2)},
$$
where $Z_1, Z_2 \in \BB M$ and $N(Z_1-Z_2)\ne 0$.
From Theorem \ref{Zh^0-projector} we obtain the following result:

\begin{thm}
The $\mathfrak{gl}(2,\HC)$-equivariant map
$$
f \mapsto \bigl((I_{\BB M}^{+-}+ I_{\BB M}^{-+})f\bigr)(Z_1,Z_2)
\quad \in \B{{\cal H} \otimes {\cal H}}, \qquad
f \in \rho_1(\Zh), \quad Z_1,Z_2 \in \BB M,
$$
is well-defined, annihilates $\rho_1(\Zh^-) \oplus \rho_1(\Zh^+)$
and satisfies
$$
M \circ \bigl((I_{\BB M}^{+-}+ I_{\BB M}^{-+})f\bigr) = f
\qquad \text{if $f \in \rho_1(\Zh^0)$}.
$$

In particular, an operator $\P^0$ on $\rho_1(\Zh)$
$$
f \mapsto (\P^0 f)(Z) =
- \lim_{\genfrac{}{}{0pt}{}{Z_1, Z_2 \to Z}{N(Z_1-Z_2) \ne 0}}
\bigl( (I_{\BB M}^{+-}+ I_{\BB M}^{-+})f \bigr)(Z_1,Z_2), \qquad Z \in \BB M,
$$
is well-defined, annihilates $\rho_1(\Zh^-) \oplus \rho_1(\Zh^+)$
and is the identity mapping on $\rho_1(\Zh^0)$.

Finally, the operator $\P^0$ on $\rho_1(\Zh)$
can be computed as follows:
\begin{multline*}
(\P^0 f)(Z) = \frac1{2\pi^3i} \lim_{t \to 0} \lim_{s \to 0} \biggl(
\int_{Y\in \BB M} \frac{f(Y)\,dV}{N(Y-Z+it+s) \cdot N(Y-Z-it-s)}\\
+ \int_{Y \in \BB M} \frac{f(Y)\,dV}{N(Y-Z+it-s) \cdot N(Y-Z-it+s)}
\biggr), \qquad Z \in \BB M.
\end{multline*}
\end{thm}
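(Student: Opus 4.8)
The plan is to deduce the theorem from its $U(2)$-counterpart, Theorem~\ref{Zh^0-projector}, by transporting every assertion through the Cayley transform $\pi_l(\gamma^{-1})$, in exactly the manner used a moment ago for the projectors $\P^+_{\BB M}$ and $\P^-_{\BB M}$. Recall that $\pi_l(\gamma^{-1})$ carries $\BB T^+\to\BB D^+$, $\BB T^-\to\BB D^-$ and $\BB M\to U(2)$ without singularities, and that it intertwines the relevant function spaces. First I would identify $I_{\BB M}^{+-}$ and $I_{\BB M}^{-+}$ as the pull-backs under $\pi_l(\gamma^{-1})$ of the operators $I_1^{+-}$ and $I_1^{-+}$ from Section~\ref{embeddings-section} (the instances with $R=1$). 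Applying Lemmas~\ref{Z-W} and~\ref{Jacobian_lemma} to the kernel $N(W-Z_1')^{-1}N(W-Z_2')^{-1}$ under the substitution $W=\pi_l(\gamma)(Y)$ turns the integral over $U(2)$ into the integral over $\BB M$ displayed in the statement, the Jacobian and the two norm factors producing precisely the automorphy factors that are absorbed into the $\rho_1(\gamma)$-action; this is the same computation already carried out in the preceding theorem.

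With this identification, the three structural claims transfer without further work. Since $\pi_l(\gamma^{-1})$ sends $\BB T^+$ into $\BB D^+$ and $\BB T^-$ into $\BB D^-$, approaching a point of $\BB M$ by $Z_1'\to Z_1$ from $\BB T^+$ and $Z_2'\to Z_2$ from $\BB T^-$ corresponds exactly to the approach from $\BB D^+$ and $\BB D^-$ used to define $I_1^{+-}$, and likewise for $I_1^{-+}$. Hence $(I_{\BB M}^{+-}+I_{\BB M}^{-+})f$ is well-defined, annihilates $\rho_1(\Zh^-)\oplus\rho_1(\Zh^+)$ and satisfies $M\circ(\,\cdot\,)=f$ on $\rho_1(\Zh^0)$, precisely because these statements hold in Theorem~\ref{Zh^0-projector}. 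The corresponding properties of $\P^0$ -- well-definedness, vanishing on $\rho_1(\Zh^-)\oplus\rho_1(\Zh^+)$, and being the identity on $\rho_1(\Zh^0)$ -- follow from the same intertwining.

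The one genuinely computational point is the explicit integral formula, where I must check that the additive perturbations $Z\mapsto Z\mp it\mp s$ place the two arguments in the correct tubes. Writing $1=i\tilde e_0$, so that $i\tilde e_0$ is the identity matrix, one finds for $s>0$ that $Z-it-s=(Z+t\tilde e_0)+i(-s\tilde e_0)$ has imaginary part with $i(-s\tilde e_0)=-sI$ negative definite, hence lies in $\BB T^+$, while $Z+it+s=(Z-t\tilde e_0)+i(s\tilde e_0)\in\BB T^-$; the two roles are reversed in the second integral, producing the $I^{-+}$ contribution. Moreover $N\bigl((Z-it-s)-(Z+it+s)\bigr)=4N(it+s)=-4t^2$ at $s=0$, so the separation condition $N(Z_1-Z_2)\neq 0$ persists throughout the inner limit $s\to 0$. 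The value of $(I_1^{+-}+I_1^{-+})N(W)^{-1}$ computed in the proof of Theorem~\ref{Zh^0-projector} is a fixed continuous function of the eigenvalues of $Z_1Z_2^{-1}$ and of $N(Z_2)$ whose limit, as $Z_1,Z_2\to Z$ along any path keeping $N(Z_1-Z_2)\neq 0$, equals $-N(Z)^{-1}$; thus the inner limit $s\to 0$ followed by the outer limit $t\to 0$ reproduces $-N(Z)^{-1}$, exactly as $\lim_{\theta\to 0}\lim_{s\to 1}$ did on $U(2)$. \emph{The main obstacle} is precisely this matching of approach directions: one must be sure that, under $\pi_l(\gamma^{-1})$, the additive Minkowski shifts realize the same regional limit as the multiplicative shifts $se^{i\theta}Z$ on $U(2)$. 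The cleanest way to discharge it is to invoke the path-independence of the limit just described, rather than to transport the perturbation curves explicitly.
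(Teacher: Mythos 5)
Your proposal is correct and follows essentially the same route as the paper: the paper itself offers no separate argument for this theorem beyond pulling back Theorem \ref{Zh^0-projector} via the Cayley transform $\pi_l(\gamma^{-1})$ (using Lemmas \ref{Z-W} and \ref{Jacobian_lemma}, exactly as for $\P^{\pm}_{\BB M}$), which is precisely what you do. Your additional verifications — that $Z-it-s\in\BB T^+$ and $Z+it+s\in\BB T^-$ for $s,t>0$, that $N(Z_1-Z_2)=-4t^2\ne 0$ along the inner limit, and that the limit is path-independent so the additive shifts reproduce what the multiplicative shifts $se^{\pm i\theta}Z$ did on $U(2)_R$ — correctly fill in the details the paper leaves implicit.
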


\section{Anti de Sitter Space}  \label{AdS-section}

We consider a 5-dimensional space $\BB R^{1,4}$ with coordinates
$(w^0,w^1,w^2,w^3,w^4)$ and metric coming from an indefinite inner product
$$
\langle W, W' \rangle_{1,4} = w^0w'^0-w^1w'^1-w^2w'^2-w^3w'^3-w^4w'^4.
$$
Corresponding to this metric, we have a wave operator
$$
\square_{1,4} = \frac{\partial^2}{(\partial w^0)^2}
- \frac{\partial^2}{(\partial w^1)^2} - \frac{\partial^2}{(\partial w^2)^2}
- \frac{\partial^2}{(\partial w^3)^2} - \frac{\partial^2}{(\partial w^4)^2}.
$$
We introduce notations
$$
\BB R^{1,4}_+ = \Bigl\{ (w^0,w^1,w^2,w^3,w^4) \in \BB R^{1,4} ;\:
w^0 > \sqrt{(w^1)^2+(w^2)^2+(w^3)^2+(w^4)^2} \Bigr\},
$$
$$
\|W\|_{1,4}= \sqrt{ (w^0)^2-(w^1)^2-(w^2)^2-(w^3)^2-(w^4)^2},
\qquad W \in \BB R^{1,4}_+.
$$
We fix a parameter $\m>0$ and introduce new coordinates
$(\rho,v^1,v^2,v^3,v^4)$ on $\BB R^{1,4}_+$ as follows:
$$
\begin{cases}
\rho = \|W\|_{1,4}, \\
v^i = (\m\rho)^{-1} w^i, & i=1,2,3,4.
\end{cases}
$$
Then
$$
\begin{cases}
w^0 = \m\rho \bigl( \m^{-2} + (v^1)^2+(v^2)^2+(v^3)^2+(v^4)^2 \bigr)^{1/2}, \\
w^i = \m\rho v^i, & i=1,2,3,4.
\end{cases}
$$
For each $\rho>0$, let us denote by $H_{\rho}$ the single sheet of
a two-sheeted hyperboloid
\begin{equation}  \label{H_rho}
H_{\rho} = \bigl\{ W \in \BB R^{1,4};\:
(w^0)^2-(w^1)^2-(w^2)^2-(w^3)^2-(w^4)^2=\rho^2,\: w^0>0 \bigr\}.
\end{equation}
Let us introduce differential operators
$$
\square = \frac{\partial^2}{(\partial v^1)^2}
+ \frac{\partial^2}{(\partial v^2)^2}
+ \frac{\partial^2}{(\partial v^3)^2} + \frac{\partial^2}{(\partial v^4)^2},
$$
$$
\deg = v^1\frac{\partial}{\partial v^1} + v^2\frac{\partial}{\partial v^2}
+ v^3\frac{\partial}{\partial v^3} + v^4\frac{\partial}{\partial v^4},
\qquad \degt f = \deg f + f.
$$
By direct computation we obtain:

\begin{lem}
We have
\begin{equation}  \label{deformed_laplacian}
\square_{1,4} = \frac{\partial^2}{\partial \rho^2}
+ \frac4{\rho} \frac{\partial}{\partial \rho} - \frac1{\m^2\rho^2} \square_{\m},
\end{equation}
where
$$
\square_{\m} = \square + \m^2(\deg^2 + 3\deg) 
= \square + \m^2 \bigl( \degt^2 + \degt -2 \bigr).
$$
\end{lem}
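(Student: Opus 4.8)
The plan is to recognize $\square_{1,4}$ as the Laplace--Beltrami operator of the flat metric $\langle\cdot,\cdot\rangle_{1,4}$ --- which is intrinsic and hence unchanged under the coordinate change $(w^0,\dots,w^4)\mapsto(\rho,v^1,\dots,v^4)$ since in Cartesian coordinates $\sqrt{|g|}=1$ --- and then to compute it in the new coordinates via $\square_g = |g|^{-1/2}\partial_a(|g|^{1/2}g^{ab}\partial_b)$. First I would record the differentials $dw^i = \m v^i\,d\rho + \m\rho\,dv^i$ and $dw^0 = \m\varphi\,d\rho + (\m\rho/\varphi)\sum_i v^i\,dv^i$, where $\varphi = (\m^{-2}+\sum_i(v^i)^2)^{1/2}$, and substitute them into $ds^2 = (dw^0)^2 - \sum_i(dw^i)^2$. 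The crucial simplification is that the $d\rho\,dv^i$ cross terms cancel identically while the coefficient of $d\rho^2$ collapses to $\m^2(\varphi^2 - \sum_i(v^i)^2)=1$; this yields the warped-product form $ds^2 = d\rho^2 + \m^2\rho^2\,g_H$, with $g_H = \varphi^{-2}(\sum_i v^i\,dv^i)^2 - \sum_i(dv^i)^2$ the induced metric on the slice $H_1$.

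Because the metric is block diagonal with no $\rho$--$v$ coupling, $\square_g$ splits into a radial and an angular part. For the radial part one has $\sqrt{|g|} = \m^4\rho^4\sqrt{|g_H|}$ with $\sqrt{|g_H|}$ independent of $\rho$, so $|g|^{-1/2}\partial_\rho(|g|^{1/2}\partial_\rho) = \rho^{-4}\partial_\rho(\rho^4\partial_\rho) = \partial_\rho^2 + \frac{4}{\rho}\partial_\rho$, which matches the first two terms. The angular part carries the factor $g^{ij} = (\m^2\rho^2)^{-1}(g_H)^{ij}$, and since all the $\rho$-dependence factors out it equals $(\m^2\rho^2)^{-1}\square_{g_H}$. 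Thus the whole computation reduces to identifying $\square_{g_H}$ with $-\square_\m$.

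The real obstacle is computing $\square_{g_H}$. Here $(g_H)_{ij} = \varphi^{-2}v^iv^j - \delta_{ij}$, and by the matrix-determinant lemma and Sherman--Morrison one obtains the clean expressions $\det g_H = 1/(\m^2\varphi^2)$ and $(g_H)^{ij} = -(\delta_{ij} + \m^2 v^iv^j)$. Feeding these into $\square_{g_H} = |g_H|^{-1/2}\partial_{v^i}(|g_H|^{1/2}(g_H)^{ij}\partial_{v^j})$, the second-order part contributes $-\square - \m^2\sum_{ij}v^iv^j\partial_{v^i}\partial_{v^j} = -\square - \m^2(\deg^2-\deg)$, using the operator identity $\sum_{ij}v^iv^j\partial_{v^i}\partial_{v^j} = \deg^2-\deg$. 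The first-order part requires the two auxiliary computations $\sum_i\partial_{v^i}(\delta_{ij}+\m^2 v^iv^j) = 5\m^2 v^j$ (a divergence in dimension four) and $\sum_i(\delta_{ij}+\m^2 v^iv^j)v^i = \m^2\varphi^2 v^j$, which combine through the product rule to produce exactly $-4\m^2\deg$. Adding the two contributions gives $\square_{g_H} = -[\square + \m^2(\deg^2+3\deg)] = -\square_\m$, establishing (\ref{deformed_laplacian}). The final rewriting $\deg^2 + 3\deg = \degt^2 + \degt - 2$ is then immediate upon expanding $\degt = \deg + 1$.
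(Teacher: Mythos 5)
Your proof is correct, but it takes a genuinely different route from the paper, which simply performs (and only gestures at) a direct chain-rule computation: rewriting $\partial/\partial w^a$ in terms of $\partial/\partial\rho$ and $\partial/\partial v^i$ and verifying the identity term by term. You instead exploit the coordinate invariance of the Laplace--Beltrami operator: you recast the flat metric in the new coordinates as the warped product $ds^2 = d\rho^2 + \m^2\rho^2 g_H$ (after checking that the $d\rho\,dv^i$ cross terms cancel and the $d\rho^2$ coefficient collapses to $1$), and then apply $\square_g = |g|^{-1/2}\partial_a\bigl(|g|^{1/2}g^{ab}\partial_b\bigr)$ blockwise. Your key computations all check out: $\det g_H = 1/(\m^2\varphi^2)$ and $(g_H)^{ij} = -(\delta_{ij}+\m^2v^iv^j)$ via Sherman--Morrison, the divergence $\sum_i\partial_{v^i}(\delta_{ij}+\m^2v^iv^j)=5\m^2v^j$, the identity $\sum_{ij}v^iv^j\partial_{v^i}\partial_{v^j}=\deg^2-\deg$, and the resulting first-order contribution $-4\m^2\deg$, giving $\square_{g_H}=-\square_{\m}$ exactly as needed. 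What your approach buys is conceptual transparency: it explains \emph{why} the operator splits into a radial part and a part tangential to the hyperboloids $H_\rho$ (which is precisely how the paper interprets the formula immediately after the lemma), it makes the appearance of the factor $\rho^{-4}\partial_\rho(\rho^4\partial_\rho)$ automatic from $\sqrt{|g|}\propto\rho^4$, and it would generalize painlessly to other dimensions and signatures. The cost is that it invokes pseudo-Riemannian machinery (invariance of the Laplace--Beltrami operator, warped products, matrix inversion lemmas) where the paper's route needs nothing beyond the multivariable chain rule; for a reader without that background the brute-force verification is more self-contained, though considerably messier to write out.
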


We think of
$\frac{\partial^2}{\partial \rho^2}
+ \frac4{\rho} \frac{\partial}{\partial \rho}$
as the ``radial'' part of the wave operator $\square_{1,4}$ and
$\square_{\m}$ as the part ``tangential'' to the hyperboloids $H_{\rho}$.
Notice that when $\m \to 0$, $\square_{\m}$ becomes the ordinary Laplacian.
We identify the space of quaternions $\BB H$ with one sheet of a two-sheeted
hyperboloid in $\BB R^{1,4}$ as follows:
$$
\BB H \ni \quad X = x^0 + ix^1 + jx^2 + kx^3
\quad \leftrightsquigarrow \quad
(\rho,v^1=x^0,v^2=x^1,v^3=x^2,v^4=x^3) \quad \in \BB R^{1,4}_+,
$$
where $\rho$ can be any fixed positive number.
We study functions on $\BB H$ that are annihilated by the conformal Laplacian
$$
\tlap = \square + \m^2( \deg^2 + 3\deg +2)
= \square + \m^2 \bigl( \degt^2 + \degt \bigr).
$$
The following lemma is verified by direct computation.


\begin{lem}  \label{fundamental_sol}
Let $X,Y \in \BB H$, with $Y$ fixed, and let
$$
\hat X = \bigl( \sqrt{\m^{-2}+N(X)},x^0,x^1,x^2,x^3 \bigr) \quad \text{and} \quad
\hat Y = \bigl( \sqrt{\m^{-2}+N(Y)},y^0,y^1,y^2,y^3 \bigr)
\quad \in \BB R^{1,4}_+,
$$
so that $\m \rho \hat X, \m \rho \hat Y \in H_{\rho}$. Then
$$
\degt \biggl( \frac1{\langle \hat X - \hat Y, \hat X - \hat Y \rangle_{1,4}}
\biggr) =
\biggl( 1 - \frac {\sqrt{\m^{-2}+N(Y)}}{\sqrt{\m^{-2}+N(X)}} \biggr)
\frac{2\m^{-2}}
{\bigl( \langle \hat X - \hat Y, \hat X - \hat Y \rangle_{1,4} \bigr)^2}
$$
and
$$
\tlap \biggl( \frac1{\langle \hat X - \hat Y, \hat X - \hat Y \rangle_{1,4}}
\biggr) =0.
$$
\end{lem}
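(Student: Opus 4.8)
The plan is to begin by putting the kernel into closed form. Writing $a=\sqrt{\m^{-2}+N(X)}$ and $b=\sqrt{\m^{-2}+N(Y)}$, one has $\langle\hat X,\hat X\rangle_{1,4}=\langle\hat Y,\hat Y\rangle_{1,4}=\m^{-2}$ because $\hat X$ and $\hat Y$ lie on the hyperboloid, so expanding the square gives $K:=\langle\hat X-\hat Y,\hat X-\hat Y\rangle_{1,4}=2\m^{-2}-2\langle\hat X,\hat Y\rangle_{1,4}=2\bigl(\m^{-2}+\langle X,Y\rangle_{\BB R^4}-ab\bigr)$, where $\langle X,Y\rangle_{\BB R^4}=x^0y^0+x^1y^1+x^2y^2+x^3y^3$. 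Two elementary identities will be used repeatedly: $\partial a/\partial x^i=x^i/a$ and, crucially, the hyperboloid relation $a^2-N(X)=\m^{-2}$.

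For the first (degree) formula I would compute directly. Since $\degt(1/K)=(K-\deg K)/K^2$, and $\deg$ is a derivation with $\deg\langle X,Y\rangle_{\BB R^4}=\langle X,Y\rangle_{\BB R^4}$ and $\deg a=N(X)/a$, one finds $\deg K=2\langle X,Y\rangle_{\BB R^4}-2bN(X)/a$, whence $K-\deg K=2\m^{-2}-2ab+2bN(X)/a=2\m^{-2}-2b(a^2-N(X))/a=2\m^{-2}(1-b/a)$. The final step is exactly where $a^2-N(X)=\m^{-2}$ forces the collapse. Dividing by $K^2$ reproduces the stated formula, with $b/a=\sqrt{\m^{-2}+N(Y)}/\sqrt{\m^{-2}+N(X)}$.

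For $\tlap(1/K)=0$ I would avoid the brute-force four-variable Laplacian and instead lift the problem to $\BB R^{1,4}$. The function $g:=1/K$ depends on $X$ alone (that is, on the $v$-coordinates), so its homogeneous degree $-1$ extension $G:=\rho^{-1}g$ is a genuine function on $\BB R^{1,4}_+$. Substituting $G=\rho^{-1}g$ into the radial/tangential splitting (\ref{deformed_laplacian}) and using $\tlap=\square_{\m}+2\m^2$, a one-line computation gives $\square_{1,4}G=-\m^{-2}\rho^{-3}\,\tlap g$, so it suffices to prove $\square_{1,4}G=0$. Now, using $W=\m\rho\hat X$ together with $\langle W,W\rangle_{1,4}=\rho^2$ and $\langle\hat Y,\hat Y\rangle_{1,4}=\m^{-2}$, the extension collapses to the simple form $G=\tfrac{\m}{2}D^{-1}$, where $D:=\m^{-1}\|W\|_{1,4}-\langle W,\hat Y\rangle_{1,4}$.

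Finally I would verify $\square_{1,4}(D^{-1})=2D^{-3}Q(\nabla D)-D^{-2}\square_{1,4}D=0$, where $Q(\nabla D)=(\partial_{w^0}D)^2-(\partial_{w^1}D)^2-\cdots-(\partial_{w^4}D)^2$ is the value of the $\BB R^{1,4}$ quadratic form on the gradient of $D$. The two standard facts for $r:=\|W\|_{1,4}$ on $\BB R^{1,4}_+$, namely $Q(\nabla r)=1$ and $\square_{1,4}r=4/r$, together with the $\square_{1,4}$-harmonicity of the linear function $\langle W,\hat Y\rangle_{1,4}$ and $\langle\hat Y,\hat Y\rangle_{1,4}=\m^{-2}$, give $Q(\nabla D)=2\m^{-1}D/r$ and $\square_{1,4}D=4\m^{-1}/r$; substituting, the two $\m^{-1}D^{-2}/r$ terms cancel. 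The main obstacle is precisely this cancellation: one must re-express $Q(\nabla D)$ back through $D$ itself (via $\langle W,\hat Y\rangle_{1,4}=\m^{-1}r-D$) so that it becomes visibly proportional to $D$, which is what matches the gradient term against the Laplacian term. I expect the authors instead establish $\tlap(1/K)=0$ by a direct four-variable computation, in which the same cancellation surfaces as the matching of the coefficients of $K^{-2}$ and $K^{-3}$ after expanding $\square(1/K)$ and $\m^2(\degt^2+\degt)(1/K)$; the lift to $\BB R^{1,4}$ merely makes that cancellation transparent.
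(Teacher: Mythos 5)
Your proof is correct, and half of it takes a genuinely different route from the one the paper intends, so let me compare. The paper offers no written argument for this lemma --- it says only that it ``is verified by direct computation.'' Your first identity is handled by exactly such a direct computation: with $a=\sqrt{\m^{-2}+N(X)}$, $b=\sqrt{\m^{-2}+N(Y)}$ and $K=\langle\hat X-\hat Y,\hat X-\hat Y\rangle_{1,4}$, you write $\degt(1/K)=(K-\deg K)/K^2$, and the hyperboloid relation $a^2-N(X)=\m^{-2}$ collapses $K-\deg K$ to $2\m^{-2}(1-b/a)$; every step checks. For $\tlap(1/K)=0$, however, you depart from brute force: you lift $1/K$ to its homogeneous degree $-1$ extension $G=\rho^{-1}/K$ on $\BB R^{1,4}_+$, use the splitting (\ref{deformed_laplacian}) together with $\tlap=\square_{\m}+2\m^2$ to get $\square_{1,4}G=-\m^{-2}\rho^{-3}\,\tlap(1/K)$, and then observe that $G=\frac{\m}{2}D^{-1}$ with $D=\m^{-1}\|W\|_{1,4}-\langle W,\hat Y\rangle_{1,4}$. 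Writing, as you do, $Q(\nabla D)$ for the Lorentzian square norm of the gradient, your two identities $Q(\nabla D)=2D/(\m\rho)$ and $\square_{1,4}D=4/(\m\rho)$ are correct (they use $Q(\nabla\rho)=1$, $\square_{1,4}\rho=4/\rho$, harmonicity of the linear term, $\langle\hat Y,\hat Y\rangle_{1,4}=\m^{-2}$, and the re-substitution $\langle W,\hat Y\rangle_{1,4}=\m^{-1}\rho-D$), so $2D^{-3}Q(\nabla D)-D^{-2}\square_{1,4}D=0$ and the lemma follows. What your route buys: it replaces an unpleasant four-variable expansion by three one-line facts, and it exhibits the geometric reason the lemma is true --- your lift is precisely the $\lambda=-1$ extension that the paper only introduces later, in Section \ref{extension-section} (cf. (\ref{lambda-values})), so you are showing directly that the kernel (\ref{K-intro}) extends to a solution of the wave equation on $\BB R^{1,4}$, which is the geometric origin of the identity. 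What the direct computation buys is merely self-containedness, and since the splitting (\ref{deformed_laplacian}) is proved in the paper immediately before this lemma, that advantage is slight; your argument is arguably the more illuminating proof. (Both proofs implicitly use that $K\neq 0$ for $X\neq Y$, which is the content of the lemma that follows in the paper.)
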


We conclude this section with the following result.

\begin{lem}
Whenever $X, Y \in \BB H$, $X \ne Y$, we have
$\langle \hat X - \hat Y, \hat X - \hat Y \rangle_{1,4} <0$.
\end{lem}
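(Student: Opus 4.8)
The plan is to expand $\langle \hat X - \hat Y, \hat X - \hat Y\rangle_{1,4}$ directly and reduce the statement to a strict Euclidean inequality. Writing $a=\sqrt{\m^{-2}+N(X)}$ and $b=\sqrt{\m^{-2}+N(Y)}$ for the leading components of $\hat X$ and $\hat Y$, the signature of $\langle\cdot,\cdot\rangle_{1,4}$ gives
\[
\langle \hat X - \hat Y, \hat X - \hat Y\rangle_{1,4}
= (a-b)^2 - \sum_{k=0}^{3}(x^k-y^k)^2
= (a-b)^2 - N(X-Y).
\]
Thus it suffices to show that $(a-b)^2 < N(X-Y)$ whenever $X\ne Y$.

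The key observation is that $a$ and $b$ are genuine Euclidean norms in a five-dimensional positive-definite space. Let $|\cdot|$ denote the standard Euclidean norm on $\BB R^5$ and set $\tilde X=(\m^{-1},x^0,x^1,x^2,x^3)$ and $\tilde Y=(\m^{-1},y^0,y^1,y^2,y^3)$, so that $a=|\tilde X|$ and $b=|\tilde Y|$. The reverse triangle inequality gives
\[
|a-b| = \bigl| \, |\tilde X| - |\tilde Y| \, \bigr| \le |\tilde X - \tilde Y|.
\]
Since $\tilde X$ and $\tilde Y$ share the same first coordinate $\m^{-1}$, we have $\tilde X - \tilde Y = (0,x^0-y^0,\dots,x^3-y^3)$ and hence $|\tilde X - \tilde Y|^2 = N(X-Y)$. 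This already yields the non-strict bound $(a-b)^2 \le N(X-Y)$.

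The only point requiring care -- and the step I expect to be the main obstacle -- is upgrading this to a strict inequality. Equality in the reverse triangle inequality is precisely the equality case of Cauchy--Schwarz, which forces $\tilde X$ and $\tilde Y$ to be non-negative scalar multiples of one another. But both vectors have the same strictly positive first coordinate $\m^{-1}$, so the only admissible proportionality constant is $1$; this means $\tilde X=\tilde Y$, i.e.\ $X=Y$. Since we assume $X\ne Y$, equality cannot occur, so $(a-b)^2 < N(X-Y)$ and therefore $\langle \hat X - \hat Y, \hat X - \hat Y\rangle_{1,4} < 0$, as claimed.
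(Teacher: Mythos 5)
Your proof is correct. It differs from the paper's argument mainly in how the underlying Euclidean geometry is packaged. You expand the quadratic form as $(a-b)^2 - N(X-Y)$ and then make a single application of the reverse triangle inequality in the positive-definite space $\BB R^5$, applied to the lifted vectors $\tilde X = (\m^{-1}, x^0,\dots,x^3)$ and $\tilde Y = (\m^{-1}, y^0,\dots,y^3)$; the equality analysis then collapses to one clean case, since Cauchy--Schwarz equality forces $\tilde Y = \lambda \tilde X$ with $\lambda \ge 0$ and the shared first coordinate $\m^{-1}>0$ pins $\lambda = 1$. The paper instead expands the form as $2\m^{-2} - 2\sqrt{\m^{-2}+N(X)}\sqrt{\m^{-2}+N(Y)} + 2(x^0y^0+\dots+x^3y^3)$ and chains two inequalities: $\sqrt{\m^{-2}+N(X)}\sqrt{\m^{-2}+N(Y)} \ge \m^{-2}+\sqrt{N(X)N(Y)}$ (strict unless $N(X)=N(Y)$, essentially AM--GM) and $\sqrt{N(X)N(Y)} \ge x^0y^0+\dots+x^3y^3$ (Cauchy--Schwarz in $\BB R^4$, strict unless $X,Y$ are non-negatively proportional), with the two equality conditions jointly forcing $X=Y$. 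At bottom both proofs are the strict Cauchy--Schwarz inequality in $\BB R^5$ for $\tilde X, \tilde Y$ -- the paper just factors it through the intermediate quantity $\m^{-2}+\sqrt{N(X)N(Y)}$ -- but your version is shorter, has a single equality case to check, and makes the five-dimensional Euclidean origin of the inequality explicit, while the paper's two-step version spells out the separate degeneracies (equal norms, positive proportionality) by elementary means.
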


\begin{proof}
We use two inequalities:
$$
\sqrt{\m^{-2}+N(X)} \sqrt{\m^{-2}+N(Y)} \ge \m^{-2} + \sqrt{N(X)N(Y)}
$$
and
$$
\sqrt{N(X)N(Y)} \ge x^0y^0+x^1y^1+x^2y^2+x^3y^3.
$$
The first inequality is strict unless $N(X)=N(Y)$; and the second inequality
is also strict unless $X$ and $Y$ are proportional with a non-negative
proportionality coefficient.
We have:
\begin{multline*}
\langle \hat X - \hat Y, \hat X - \hat Y \rangle_{1,4}
= \langle \hat X , \hat X \rangle_{1,4} + \langle \hat Y, \hat Y \rangle_{1,4}
-2 \langle \hat X, \hat Y \rangle_{1,4}  \\
= 2\m^{-2} - 2\sqrt{\m^{-2}+N(X)} \sqrt{\m^{-2}+N(Y)}
+ 2(x^0y^0+x^1y^1+x^2y^2+x^3y^3) \\
\le 2(x^0y^0+x^1y^1+x^2y^2+x^3y^3) - 2\sqrt{N(X)N(Y)} \le 0,
\end{multline*}
and if $X \ne Y$ at least one of the inequalities is strict.
\end{proof}

\section{Conformal Lie Algebra Action}  \label{conformal-section}

Let $SO^+(1,4)$ denote the connected component of the identity element in
$SO(1,4)$.
In this section we describe the action of $SO^+(1,4)$ and its Lie algebra
$\mathfrak{so}(1,4)$ on the space of solutions of $\tlap \phi =0$.
Then we extend the Lie algebra action to $\mathfrak{so}(1,5)$
(recall that the conformal Lie algebra in classical case is
$\mathfrak{sl}(2,\BB H) \simeq \mathfrak{so}(1,5)$).
Complexifying, we immediately obtain an action of
$\BB C \otimes \mathfrak{so}(1,5) \simeq \mathfrak{so}(6,\BB C)$.
The construction of the action of $\mathfrak{so}(1,5)$ will be very similar
to that of the indefinite orthogonal group $O(p,q)$ acting on the solutions
of the ultrahyperbolic wave equation in $\BB R^{p-1,q-1}$.
See \cite{KobO} (and references therein) for a description of this action of
$O(p,q)$ suitable for our purposes.

Fix a $\rho_0>0$ and recall that $H_{\rho_0}$ denotes
the single sheet of a two-sheeted hyperboloid (\ref{H_rho}).
The group $SO^+(1,4)$ acts linearly on $\BB R^{1,4}$ and preserves each
$H_{\rho_0}$. Hence it acts on functions on $H_{\rho_0}$ by
\begin{equation}  \label{pi-action}
\pi(a) : \quad f(W) \mapsto \bigl(\pi(a)f\bigr)(W) = f(a^{-1} \cdot W),
\qquad a \in SO^+(1,4).
\end{equation}

\begin{prop}  \label{SO(1,4)-action}
This action preserves the kernel of $\tlap$. That is, if $\phi$ is a
function on $H_{\rho_0}$ satisfying $\tlap\phi=0$ and $a \in SO^+(1,4)$,
then $\tlap \bigl(\pi(a)\phi\bigr)=0$.
\end{prop}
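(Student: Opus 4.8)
The plan is to deduce the invariance of $\tlap$ from the invariance of the flat wave operator $\square_{1,4}$ under the linear isometry group $SO^+(1,4)$, exploiting the radial/tangential splitting (\ref{deformed_laplacian}). The first observation is that $\tlap$ and $\square_{\m}$ differ only by an additive scalar: since $\tlap = \square + \m^2(\deg^2+3\deg+2) = \square_{\m} + 2\m^2$, and a scalar multiple of the identity commutes with every operator, it suffices to prove that $\square_{\m}$ commutes with the action $\pi(a)$ on functions on $H_{\rho_0}$.

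First I would lift everything to the five-dimensional picture. Given $\phi$ on $H_{\rho_0}$, regard it as the function $\tilde\phi$ on $\BB R^{1,4}_+$ that is independent of $\rho$, i.e. $\tilde\phi(\rho,v^1,\dots,v^4) = \phi(v^1,\dots,v^4)$. The very same formula $f(W)\mapsto f(a^{-1}\cdot W)$ defines a pullback on all of $\BB R^{1,4}_+$, which I also denote $\pi(a)$. Because $a$ is a linear isometry it preserves $\|W\|_{1,4}$, hence each hyperboloid $H_\rho$; consequently the coordinate $\rho=\|W\|_{1,4}$ is $a$-invariant, the pullback carries $\rho$-independent functions to $\rho$-independent functions, and on $H_{\rho_0}$ it restricts to exactly the operator $\pi(a)$ of the proposition. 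This yields the compatibility $\widetilde{\pi(a)\phi} = \pi(a)\tilde\phi$.

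Next I would invoke the two invariance facts and combine them. Since $a$ preserves $\langle\cdot,\cdot\rangle_{1,4}$, the constant-coefficient operator $\square_{1,4}$ commutes with $\pi(a)$ on $\BB R^{1,4}_+$. Since $\tilde\phi$ is independent of $\rho$, the radial part $\frac{\partial^2}{\partial\rho^2}+\frac4\rho\frac{\partial}{\partial\rho}$ annihilates it, so (\ref{deformed_laplacian}) gives $\square_{1,4}\tilde\phi = -\frac1{\m^2\rho^2}\square_{\m}\phi$, with $\square_{\m}$ acting in the $v$-variables. Applying $\square_{1,4}$ to $\widetilde{\pi(a)\phi}=\pi(a)\tilde\phi$, commuting $\square_{1,4}$ past $\pi(a)$, and using that $\rho$ — hence the factor $\frac1{\m^2\rho^2}$ — is $a$-invariant, I obtain
$$
-\frac1{\m^2\rho^2}\square_{\m}\bigl(\pi(a)\phi\bigr) = \pi(a)\Bigl(-\frac1{\m^2\rho^2}\square_{\m}\phi\Bigr) = -\frac1{\m^2\rho^2}\,\pi(a)\bigl(\square_{\m}\phi\bigr).
$$
Cancelling the common factor yields $\square_{\m}(\pi(a)\phi)=\pi(a)(\square_{\m}\phi)$, and therefore $\tlap(\pi(a)\phi)=\pi(a)(\tlap\phi)$; in particular $\tlap\phi=0$ forces $\tlap(\pi(a)\phi)=0$.

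The main obstacle is entirely the bookkeeping of the second paragraph: the $SO^+(1,4)$-action is \emph{a priori} nonlinear in the curvilinear coordinates $v^i$, so one must verify carefully that it is compatible with the $\rho$-independent extension and restricts correctly to $\pi(a)$ on $H_{\rho_0}$, and that $\rho$ is genuinely invariant so that the factor $\rho^{-2}$ may be pulled through the pullback. Once the identification $\widetilde{\pi(a)\phi}=\pi(a)\tilde\phi$ is secured, the conclusion is immediate from the isometry-invariance of $\square_{1,4}$ and the radial/tangential decomposition, with no further computation required.
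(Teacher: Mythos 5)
Your proposal is correct and follows essentially the same route as the paper's own proof: both reduce the claim to the isometry-invariance of $\square_{1,4}$, use the radial/tangential splitting (\ref{deformed_laplacian}) to transfer this invariance to $\square_{\m}$, and conclude via $\tlap = \square_{\m} + 2\m^2$. You merely spell out the $\rho$-independent lifting and the cancellation of the invariant factor $\rho^{-2}$, which the paper leaves implicit in its one-line ``hence.''
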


\begin{proof}
Let $a \in SO^+(1,4)$. The action of $a$ on $\BB R^{1,4}$
commutes with the wave operator $\square_{1,4}$.
Hence $\pi(a)$ commutes with the tangental part of the wave operator
$\square_{\m}$.
Therefore, $\pi(a)$ commutes with $\tlap = \square_{\m} + 2\m^2$.
\end{proof}

In order to extend the action to $\mathfrak{so}(1,5)$
we consider a 6-dimensional space $\BB R^{1,5}$ with
coordinates $(w^0,w^1,w^2,w^3,w^4,w^5)$ and indefinite inner product
$$
\langle W, W' \rangle_{1,5} = w^0w'^0-w^1w'^1-w^2w'^2-w^3w'^3-w^4w'^4-w^5w'^5.
$$
The group $SO(1,5)$ acts linearly on $\BB R^{1,5}$ preserving this inner product.
We introduce a function $\nu$ on $\BB R^{1,5}$:
$$
\nu(W)=w^5, \qquad W=(w^0,w^1,w^2,w^3,w^4,w^5) \in \BB R^{1,5}.
$$
We realize $SO(1,4)$ as the subgroup of $SO(1,5)$ fixing the last coordinate.
We can embed $\BB R^{1,4}$ into $\BB R^{1,5}$ as a hyperplane $w^5=const$
so that $SO(1,4)$ preserves it; and we choose to fix a particular embedding
$$
\BB R^{1,4} \in (w^0,w^1,w^2,w^3,w^4) \quad \leftrightsquigarrow \quad
(w^0,w^1,w^2,w^3,w^4,\rho_0) \in \BB R^{1,5}.
$$
This way the hyperboloid $H_{\rho_0}$ maps into the light cone in $\BB R^{1,5}$
$$
\operatorname{Cone}_{1,5} = \bigl\{ W \in \BB R^{1,5};\:
(w^0)^2-(w^1)^2-(w^2)^2-(w^3)^2-(w^4)^2-(w^5)^2=0 \bigr\},
$$
and this cone is obviously preserved by the $SO(1,5)$ action.
Let $\tilde H_{\rho_0}$ be the two-sheeted hyperboloid
$$
\tilde H_{\rho_0} = \{ W \in  \operatorname{Cone}_{1,5};\: w^5 = \rho_0 \}
\subset \operatorname{Cone}_{1,5} \subset \BB R^{1,5},
$$
then $H_{\rho_0}$ can be identified with $\{ W \in \tilde H_{\rho_0} ;\: w^0>0 \}$.
The group $SO(1,5)$ acts on $\tilde H_{\rho_0}$ by projective transformations:
$$
\pi(a): \quad W \mapsto \rho_0 \frac{a \cdot W}{\nu(a \cdot W)},
\qquad a \in SO(1,5).
$$
Of course, this action is defined only when $\nu(a \cdot W) \ne 0$.
Then we can extend this action to functions on $\tilde H_{\rho_0}$
by fixing a $\lambda \in \BB C$ and letting
$$
\varpi_{\lambda}(a): \quad f(W) \mapsto
\bigl(\varpi_{\lambda}(a)f\bigr)(W) = \rho_0^{-\lambda} \cdot
\bigl( \nu(a^{-1}\cdot W) \bigr)^{\lambda} \cdot f\bigl( \pi(a^{-1})W \bigr),
\qquad a \in SO(1,5).
$$
Finally, we set $\lambda=-1$ and let $\pi(a)=\varpi_{-1}(a)$:
$$
\pi(a): \quad f(W) \mapsto \bigl(\pi(a)f\bigr)(W) =
\frac{\rho_0}{\nu(a^{-1}\cdot W)} \cdot f\bigl( \pi(a^{-1})W \bigr),
\qquad a \in SO(1,5).
$$
This action extends previously defined action (\ref{pi-action}) of
$SO^+(1,4)$. Differentiating, we obtain an action of the
Lie algebra $\mathfrak{so}(1,5)$ on functions on $\tilde H_{\rho_0}$ and
$H_{\rho_0}$, which we still denote by $\pi$.
Complexifying, we immediately obtain an action of
$\BB C \otimes \mathfrak{so}(1,5) \simeq \mathfrak{so}(6,\BB C)$.

\begin{thm}
The $\pi$-action of the Lie algebra $\mathfrak{so}(6,\BB C)$ preserves the
kernel of $\tlap$. That is, if $\phi$ is a function on $H_{\rho_0}$ satisfying
$\tlap\phi=0$ and $h \in \mathfrak{so}(6,\BB C)$,
then $\tlap \bigl(\pi(h)\phi\bigr)=0$.
\end{thm}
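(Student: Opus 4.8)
The plan is to reduce the statement to verifying a single extra generator, using the Lie-algebraic structure of $\mathfrak{so}(1,5)$ together with Proposition \ref{SO(1,4)-action}, and only then to carry out one explicit computation. First I would observe that the set $\mathcal O$ of all linear operators on functions on $H_{\rho_0}$ that map $\ker\tlap$ into itself is closed under composition and under linear combinations, hence closed under the commutator bracket. Since the assignment $h \mapsto \pi(h)$ is a Lie algebra homomorphism, the set
$$
\mathfrak{g}_0 = \bigl\{\, h \in \mathfrak{so}(1,5) ;\: \pi(h) \text{ preserves } \ker\tlap \,\bigr\}
$$
is a Lie subalgebra of $\mathfrak{so}(1,5)$: if $\pi(h_1),\pi(h_2)\in\mathcal O$ then $\pi([h_1,h_2])=[\pi(h_1),\pi(h_2)]\in\mathcal O$. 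Differentiating Proposition \ref{SO(1,4)-action} (whose $SO^+(1,4)$-action is the restriction of $\pi$) shows at once that $\mathfrak{so}(1,4)\subseteq\mathfrak{g}_0$.

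Next I would invoke maximality. Write $\mathfrak{so}(1,5)=\mathfrak{so}(1,4)\oplus V$, where $V$ is spanned by the five infinitesimal transformations mixing the new coordinate $w^5$ with $w^0,\dots,w^4$. Under the adjoint action $V$ is the standard vector representation $\BB R^{1,4}$, which is irreducible over $\BB R$ (its complexification is the irreducible $5$-dimensional representation of $\mathfrak{so}(5,\BB C)$), and $[\mathfrak{so}(1,4),V]\subseteq V$. Consequently any subalgebra $\mathfrak{k}$ with $\mathfrak{so}(1,4)\subsetneq\mathfrak{k}$ satisfies $\mathfrak{k}=\mathfrak{so}(1,4)\oplus(\mathfrak{k}\cap V)$, and $\mathfrak{k}\cap V$ is a nonzero $\mathfrak{so}(1,4)$-submodule of $V$, hence all of $V$ by irreducibility; thus $\mathfrak{so}(1,4)$ is maximal in $\mathfrak{so}(1,5)$. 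Therefore it suffices to show that $\mathfrak{g}_0$ contains a single element outside $\mathfrak{so}(1,4)$, i.e. that $\pi(h)$ preserves $\ker\tlap$ for one generator $h\in V$.

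The core of the proof is this last point, which I expect to be the main obstacle. I would choose the most convenient generator $h\in V$ — say the boost $M_{05}$ mixing $w^0$ and $w^5$ — and compute $\pi(h)=\frac{d}{dt}\big|_{t=0}\varpi_{-1}\bigl(\exp(tM_{05})\bigr)$ explicitly as a first order differential operator on $H_{\rho_0}$ in the coordinates $(v^1,v^2,v^3,v^4)$. By the definition of $\varpi_\lambda$ this operator is the sum of the vector field generated by the projective transformation $W\mapsto\rho_0\,aW/\nu(aW)$ and a multiplication operator arising from differentiating the scalar factor $\rho_0/\nu(a^{-1}W)$ at the critical weight $\lambda=-1$. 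I would then verify by direct calculation that $\pi(h)$ maps $\ker\tlap$ to itself, e.g. by producing an intertwining relation of the form $\tlap\,\pi(h)=\pi'(h)\,\tlap$ for a suitable operator $\pi'(h)$, so that $\tlap\bigl(\pi(h)\phi\bigr)=0$ whenever $\tlap\phi=0$. The real difficulty is the bookkeeping: one must check that the multiplier term, with its specific weight $\lambda=-1$, conspires exactly with the first- and zeroth-order contributions produced when $\tlap=\square_{\m}+2\m^2$ is commuted past the vector field. This is precisely the infinitesimal form of the fact that $\tlap$ is a conformally covariant (Yamabe-type) operator on the anti de Sitter hyperboloid, whose conformal group is $SO(1,5)$, with the weight $-1$ making $\varpi_{-1}$ the covariant action; the analogous mechanism for the ultrahyperbolic wave equation under $O(p,q)$ is recorded in \cite{KobO}.

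Finally, complexification is immediate. The operator $\tlap$ has $\BB C$-linear coefficients and $\pi$ extends $\BB C$-linearly to $\BB C\otimes\mathfrak{so}(1,5)\simeq\mathfrak{so}(6,\BB C)$, so from $\mathfrak{g}_0=\mathfrak{so}(1,5)$ over $\BB R$ we conclude that every $h\in\mathfrak{so}(6,\BB C)$ satisfies $\tlap\bigl(\pi(h)\phi\bigr)=0$ whenever $\tlap\phi=0$, which is the assertion.
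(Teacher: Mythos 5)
Your proposal is correct and is essentially the paper's own argument: the paper likewise works with the real form, handles $\mathfrak{so}(1,4)$ by Proposition \ref{SO(1,4)-action}, reduces to the single hyperbolic rotation $a_t$ in the $(w^0w^5)$-plane (stated there as a generation fact, for which your maximality argument via irreducibility of the vector representation is a more detailed justification), and complexifies at the end. The one step you defer is exactly what the paper then executes: the derivative at $t=0$ of $\pi(a_t)\phi$ is computed to be $\bigl(1+\m^2N(X)\bigr)^{1/2}\degt\phi$, and a commutator calculation yields $\tlap\Bigl(\bigl(\m^{-2}+N(X)\bigr)^{1/2}\degt\phi\Bigr)=2\bigl(\m^{-2}+N(X)\bigr)^{1/2}\tlap\phi$, which is precisely the intertwining relation $\tlap\,\pi(h)=\pi'(h)\,\tlap$ you predicted.
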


\begin{proof}
It is sufficient to prove the result for
$h \in \mathfrak{so}(1,5) \subset \mathfrak{so}(6,\BB C)$ only.
For $h \in \mathfrak{so}(1,4) \subset \mathfrak{so}(1,5)$ the result is true
by Proposition \ref{SO(1,4)-action}.
As a Lie algebra, $ \mathfrak{so}(1,5)$ is generated by $\mathfrak{so}(1,4)$
and the Lie algebra of the one-parameter family of hyperbolic rotations in
the $(w^0w^5)$-plane:
$$
a_t: \quad w^0 \mapsto w^0 \cosh t + w^5 \sinh t, \quad
w^5 \mapsto w^5 \cosh t + w^0 \sinh t, \qquad t \in \BB R,
$$
$w^1$, $w^2$, $w^3$ and $w^4$ stay unchanged.
To compute $\frac{d}{dt} \bigr|_{t=0} \pi(a_t)$, we let $t \to 0$ and working
modulo terms of order $t^2$ we get
$$
\bigl(\pi(a_t)\phi\bigr)(W) = \frac{\rho_0}{\rho_0-tw^0} \cdot
\phi \biggl( \rho_0\frac{w^0-t\rho_0}{\rho_0-tw^0},
\frac{\rho_0 w^1}{\rho_0-tw^0}, \frac{\rho_0 w^2}{\rho_0-tw^0},
\frac{\rho_0 w^3}{\rho_0-tw^0}, \frac{\rho_0 w^4}{\rho_0-tw^0} \biggr).
$$
Rewriting it in $(\rho,v^1,v^2,v^3,v^4)$ coordinates, we obtain
$$
\bigl(\pi(a_t)\phi\bigr)(W) = \frac{\rho_0}{\rho_0-tw^0} \cdot
\phi \biggl( \rho_0, \frac{\rho_0 v^1}{\rho_0-tw^0},
\frac{\rho_0 v^2}{\rho_0-tw^0}, \frac{\rho_0 v^3}{\rho_0-tw^0},
\frac{\rho_0 v^4}{\rho_0-tw^0} \biggr).
$$
Hence
$$
\frac{d}{dt} \bigl(\pi(a_t)\phi\bigr) \Bigr|_{t=0}
= \frac{w^0}{\rho_0} \phi + \frac{w^0}{\rho_0} \deg \phi
= \frac{w^0}{\rho_0} \degt \phi
= \bigl(1+\m^2N(X)\bigr)^{1/2} \degt \phi,
$$
since $w^0= \rho_0 \bigl(1+\m^2N(X)\bigr)^{1/2}$, where
$N(X) = (v^1)^2+(v^2)^2+(v^3)^2+(v^4)^2$.
Finally, the theorem follows from the lemma below.
\end{proof}

\begin{lem}
The operator $\phi \mapsto \bigl(1+\m^2N(X)\bigr)^{1/2} \degt \phi$
preserves the kernel of $\tlap$.
\end{lem}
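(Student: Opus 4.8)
The plan is to prove the sharper operator identity
\[
\tlap \circ \bigl( (1+\m^2 N(X))^{1/2}\,\degt \bigr)
= (1+\m^2 N(X))^{1/2}(\degt + 2)\circ \tlap ,
\]
from which the lemma is immediate: if $\tlap\phi = 0$, the right-hand side kills $\phi$, hence $\tlap\bigl((1+\m^2 N(X))^{1/2}\degt\phi\bigr)=0$ as well. Throughout write $g=(1+\m^2 N(X))^{1/2}$, a function of $N=N(X)=(v^1)^2+(v^2)^2+(v^3)^2+(v^4)^2$ alone, and record the elementary identities $\deg N=2N$, $\deg g = 2Ng'$, $\square g = 8g'+4Ng''$ (here $\square$ and $\deg$ are the four-dimensional operators defined above, so the constant is $2\cdot 4=8$), and the crucial relation $g^2 = 1+\m^2 N$.

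First I would record the single commutator $[\tlap,\degt]=2\square$. Since $\tlap=\square+\m^2(\degt^2+\degt)$ and $\degt$ commutes with itself, this reduces to $[\square,\degt]=[\square,\deg]=2\square$, i.e. to the fact that $\square$ is homogeneous of degree $-2$. Substituting $\square=\tlap-\m^2(\degt^2+\degt)$ then gives $\tlap\degt=(\degt+2)\tlap-2\m^2(\degt^2+\degt)$; in particular $\degt$ by itself does \emph{not} preserve $\ker\tlap$, which is exactly why the radial factor $g$ is needed.

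Next I would expand $\tlap(g\psi)$ for an arbitrary $\psi$, using the Leibniz rule for $\square$ (with $\sum_i(\partial_i g)(\partial_i\psi)=2g'\deg\psi$ because $g$ is radial) and the first-order rule $\degt(g\psi)=(\deg g)\psi+g\,\degt\psi$ iterated once to handle $\degt^2$. Collecting terms yields
\[
\tlap(g\psi)=g\,\tlap\psi+\bigl(\square g+\m^2(\deg(\deg g)+\deg g)\bigr)\psi+4g'\deg\psi+2\m^2(\deg g)\,\degt\psi .
\]
Then I set $\psi=\degt\phi$ and reduce everything to $\phi$: the commutator step gives $\tlap\psi=(\degt+2)\tlap\phi-2\m^2(\degt^2+\degt)\phi$, while $\deg\psi=(\degt^2-\degt)\phi$ and $\degt\psi=\degt^2\phi$. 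The part proportional to $\tlap\phi$ assembles into $g(\degt+2)\tlap\phi$, matching the claimed identity, and the remaining part is a combination of $\degt^2\phi$ and $\degt\phi$ with coefficients that are explicit functions of $N$.

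The crux is to verify that these two coefficients vanish identically. The coefficient of $\degt^2\phi$ is $-2\m^2 g+2\m^2(\deg g)+4g'$, which, using $\deg g=2Ng'=\m^2 N g^{-1}$ and $g'=\tfrac{\m^2}{2}g^{-1}$, equals $\tfrac{2\m^2}{g}\bigl(-g^2+\m^2 N+1\bigr)=0$ by $g^2=1+\m^2 N$; the coefficient of $\degt\phi$ collapses to $0$ by the same mechanism after combining the $g^{-3}$ terms coming from $\square g$ and $\deg(\deg g)$ and repeatedly applying $g^2=1+\m^2 N$. I expect the only real difficulty to be organizational rather than conceptual: one must keep careful track of the fact that $\degt$ does not commute with multiplication by $g(N)$, so ordering matters until the differential operators have acted on $\phi$, after which all surviving factors are honest functions of $N$ and may be freely collected. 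This cancellation is the arithmetic heart of the lemma and reflects the $SO(1,5)$-invariance underlying the operator $g\,\degt$ (it is the infinitesimal generator of the hyperbolic rotation $a_t$ in the $(w^0w^5)$-plane); one could alternatively deduce invariance from the wave operator on the cone in $\BB R^{1,5}$, but the direct identity above keeps the argument self-contained and avoids circularity with the theorem it supports.
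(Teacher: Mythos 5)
Your proposal is correct and follows essentially the same route as the paper's proof: the paper likewise establishes the operator identity $\tlap\circ\bigl(g\,\degt\bigr)=g\,(\degt+2)\circ\tlap$ (with the radial factor normalized as $g=(\m^{-2}+N(X))^{1/2}$, a harmless scalar multiple of yours) by computing the commutators $\bigl[\square,g\bigr]$, $\bigl[\deg,g\bigr]$, $\bigl[\deg^2,g\bigr]$, assembling them into $\bigl[\tlap,g\bigr]=2\m^2 g(\deg+2)$, and combining with $\bigl[\tlap,\degt\bigr]=2\square$. Your expand-and-collect verification that the coefficients of $\degt^2\phi$ and $\degt\phi$ vanish via $g^2=1+\m^2N(X)$ is the same computation, merely organized without isolating $\bigl[\tlap,g\bigr]$ as a standalone formula.
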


\begin{proof}
We need to show that if $\phi$ is a function on $H_{\rho_0}$ satisfying
$\tlap\phi=0$, then
$\tlap \bigl( \bigl(\m^{-2}+N(X)\bigr)^{1/2} \degt \phi \bigr)=0$.
For this purpose we compute the following commutators of operators:
\begin{multline*}
\Bigl[ \square, \bigl( \m^{-2}+N(X) \bigr)^{1/2} \Bigr]
= \square \bigl( \m^{-2} + N(X) \bigr)^{1/2} + 2 \sum_{i=1}^4
\left( \frac{\partial}{\partial v^i} \bigl( \m^{-2} + N(X) \bigr)^{1/2} \right)
\cdot \frac{\partial}{\partial v^i} \\
= \frac4{\bigl(\m^{-2}+N(X)\bigr)^{1/2}}-\frac{N(X)}{\bigl(\m^{-2}+N(X)\bigr)^{3/2}}
+ \frac2{\bigl(\m^{-2}+N(X)\bigr)^{1/2}} \deg,
\end{multline*}
$$
\Bigl[ \deg, \bigl( \m^{-2} + N(X) \bigr)^{1/2} \Bigr]
= \deg \bigl(\m^{-2}+N(X)\bigr)^{1/2} = N(X) \cdot \bigl(\m^{-2}+N(X)\bigr)^{-1/2},
$$
\begin{multline*}
\Bigl[ \deg^2, \bigl( \m^{-2} + N(X) \bigr)^{1/2}\Bigr]
=\deg \circ \Bigl[ \deg, \bigl( \m^{-2} + N(X) \bigr)^{1/2} \Bigr] +
\Bigl[ \deg, \bigl( \m^{-2} + N(X) \bigr)^{1/2} \Bigr] \circ \deg \\
= \frac{2N(X)}{\bigl(\m^{-2}+N(X)\bigr)^{1/2}}
- \frac{N(X)^2}{\bigl(\m^{-2}+N(X)\bigr)^{3/2}}
+ \frac{2N(X)}{\bigl(\m^{-2}+N(X)\bigr)^{1/2}} \deg,
\end{multline*}
\begin{multline*}
\Bigl[ \tlap,  \bigl( \m^{-2} + N(X) \bigr)^{1/2} \Bigr] = \\
\bigl(\m^{-2}+N(X)\bigr)^{-1/2} \biggl( 4-\frac{N(X)}{\m^{-2}+N(X)}+2\deg
+ \m^2 \Bigl( 5N(X) - \frac{N(X)^2}{\m^{-2}+N(X)} + 2N(X)\deg \Bigr)\biggr) \\
= 2\m^2 \bigl(\m^{-2}+N(X)\bigr)^{1/2} (\deg +2).
\end{multline*}
Finally, we get:
\begin{multline*}
\tlap \Bigl( \bigl(\m^{-2}+N(X)\bigr)^{1/2} \degt \phi \Bigr)
= \bigl(\m^{-2}+N(X)\bigr)^{1/2} \tlap \degt \phi
+ \Bigl[ \tlap, \bigl(\m^{-2}+N(X)\bigr)^{1/2} \Bigr] \degt \phi \\
= \bigl(\m^{-2}+N(X)\bigr)^{1/2} \Bigl( 2\square + 2\m^2 (\deg+2)\degt \Bigr)\phi
=2 \bigl(\m^{-2}+N(X)\bigr)^{1/2} \tlap \phi =0.
\end{multline*}
\end{proof}

\section{Extension of Harmonic Functions to $\BB R^{1,4}$}  \label{extension-section}

If we identify the group $SU(2)$ with the unit sphere in $\BB H$,
then functions on $SU(2)$ can be extended to
$\BB H^{\times} = \BB H \setminus \{0\}$ as harmonic functions.
If we require such an extension to be regular either at the origin or at
infinity, then it is unique. For example, let us consider
the matrix coefficients of $SU(2)$ given by (\ref{t}).
The restrictions $t^l_{m\,\underline{n}}(X) \bigr|_{SU(2)}$ can be extended from
$SU(2)$ to $\BB H^{\times}$ as $t^l_{m\,\underline{n}}(X)$ which are homogeneous
polynomials of degree $2l$ -- hence regular at the origin --
or as $N(X)^{-2l-1} \cdot t^l_{m\,\underline{n}}(X)$ which are homogeneous rational
functions of degree $-2l-2$ -- hence regular at infinity -- and in both cases
$$
\square t^l_{m\,\underline{n}}(X) =0, \qquad
\square \bigl( t^l_{m\,\underline{n}}(X) \cdot N(X)^{-2l-1} \bigr) =0.
$$

In this section we start with a function on the unit sphere $S^3$ centered
at the origin in $\BB R^4$,
realize $\BB R^4$ as a hyperplane $\{w^0=const\}$ inside $\BB R^{1,4}$ and
find the function's extensions to $\BB R^{1,4}_+$ which are annihilated both
by the wave operator $\square_{1,4}$ and the conformal Laplacian $\tlap$.
Let $(r, \overrightarrow{n})$ be the spherical coordinates of $\BB R^4$
spanned by $w^1$, $w^2$, $w^3$, $w^4$, so that
$$
r = \sqrt{(w^1)^2+(w^2)^2+(w^3)^2+(w^4)^2} \qquad \text{and} \qquad
\overrightarrow{n} \in S^3.
$$
Then
$$
\frac{\partial^2}{(\partial w^1)^2} + \frac{\partial^2}{(\partial w^2)^2}
+ \frac{\partial^2}{(\partial w^3)^2} + \frac{\partial^2}{(\partial w^4)^2}
= \frac{\partial^2}{\partial r^2} + \frac3r \frac{\partial}{\partial r}
+ \frac{\Delta_{S^3}}{r^2},
$$
where $\Delta_{S^3}$ denotes the spherical Laplacian on the unit sphere in
$\BB R^4$. In particular, we obtain a set of coordinates on $\BB R^{1,4}_+$:
$$
(w^0,w^1,w^2,w^3,w^4) \quad \leftrightsquigarrow \quad
(w^0, r, \overrightarrow{n}).
$$
We perform another change of coordinates
$$
(w^0, r, \overrightarrow{n}) \quad \leftrightsquigarrow \quad
(\rho, \theta, \overrightarrow{n})
$$
with
$$
w^0=\rho \cosh \theta, \qquad r= \rho \sinh \theta, \qquad
\rho = \sqrt{(w^0)^2-r^2}, \qquad \tanh \theta = r/w^0.
$$
Then
$$
\frac{\partial}{\partial w^0} = \cosh\theta \frac{\partial}{\partial \rho}
- \frac{\sinh \theta}{\rho} \frac{\partial}{\partial \theta}, \qquad
\frac{\partial}{\partial r} = - \sinh\theta \frac{\partial}{\partial \rho}
+ \frac{\cosh \theta}{\rho} \frac{\partial}{\partial \theta},
$$
$$
\frac{\partial^2}{(\partial w^0)^2} - \frac{\partial^2}{\partial r^2}
= \frac{\partial^2}{\partial \rho^2}
+ \frac1{\rho} \frac{\partial}{\partial \rho}
- \frac1{\rho^2} \frac{\partial^2}{\partial \theta^2},
$$
and it follows that
$$
\square_{1,4}
= \frac{\partial^2}{(\partial w^0)^2} - \frac{\partial^2}{\partial r^2}
- \frac3r \frac{\partial}{\partial r} - \frac{\Delta_{S^3}}{r^2}
= \frac{\partial^2}{\partial \rho^2}
+ \frac4{\rho} \frac{\partial}{\partial \rho}
- \frac1{\rho^2} \frac{\partial^2}{\partial \theta^2}
- \frac3{\rho^2} \frac{\cosh\theta}{\sinh\theta}\frac{\partial}{\partial \theta}
- \frac{\Delta_{S^3}}{\rho^2\sinh^2\theta}.
$$

Now we look for solutions of $\square_{1,4} \phi(W)=0$ in the separated form
$$
\phi(W) = \rho^{\lambda} \cdot s_l(\theta) \cdot
t^l_{m\,\underline{n}}(\overrightarrow{n}),
$$
where $t^l_{m\,\underline{n}}$'s are the matrix coefficients of $SU(2)$ defined
by (\ref{t}). Since $r^{2l} \cdot t^l_{m\,\underline{n}}(\overrightarrow{n})$
are harmonic homogeneous polynomials in $w^1, \dots, w^4$ of degree $2l$,
it follows that the matrix coefficients $t^l_{m\,\underline{n}}$'s are
eigenfunctions for $\Delta_{S^3}$:
$$
\Delta_{S^3} t^l_{m\,\underline{n}}(\overrightarrow{n})
= -4l(l+1) t^l_{m\,\underline{n}}(\overrightarrow{n}).
$$
Moreover, any eigenfunction of $\Delta_{S^3}$ is a linear combination of
$t^l_{m\,\underline{n}}$'s.

Since $\square_{\m}$ does not depend on $\rho$, by (\ref{deformed_laplacian}),
$$
\square_{1,4} \bigl(\rho^{\lambda} \cdot s_l(\theta) \cdot
t^l_{m\,\underline{n}}(\overrightarrow{n})\bigr) =0
\qquad \Longleftrightarrow \qquad
\bigl( \square_{\m} - \lambda(\lambda+3) \m^2 \bigr)
\bigl( s_l(\theta) \cdot t^l_{m\,\underline{n}}(\overrightarrow{n}) \bigr) =0.
$$
Recall that we are looking for functions annihilated by
$\tlap=\square_{\m}+2\m^2$ as well. Hence
\begin{equation}  \label{lambda-values}
\lambda(\lambda+3)+2=0 \qquad \text{and} \qquad
\lambda=-1 \quad \text{or} \quad \lambda=-2.
\end{equation}

The equation $\square_{1,4} \phi(W)=0$ becomes
an ordinary differential equation for $s_l(\theta)$:
\begin{multline*}
0= \square_{1,4} \phi(W)
= \biggl( \frac{\partial^2}{\partial \rho^2}
+ \frac4{\rho} \frac{\partial}{\partial \rho}
- \frac1{\rho^2} \frac{\partial^2}{\partial \theta^2}
- \frac3{\rho^2} \frac{\cosh\theta}{\sinh\theta}\frac{\partial}{\partial \theta}
- \frac{\Delta_{S^3}}{\rho^2\sinh^2\theta} \biggr)
\rho^{\lambda} \cdot s_l(\theta) \cdot t^l_{m\,\underline{n}}(\overrightarrow{n}) \\
=
\rho^{\lambda-2} \cdot t^l_{m\,\underline{n}}(\overrightarrow{n}) \cdot
\biggl( \lambda(\lambda-1) + 4\lambda - \frac{\partial^2}{\partial \theta^2}
- 3\frac{\cosh\theta}{\sinh\theta}\frac{\partial}{\partial \theta}
+ \frac{4l(l+1)}{\sinh^2\theta} \biggr) s_l(\theta). \\
\end{multline*}
Thus, the function $s_l(\theta)$ satisfies a differential equation
$$
\biggl( \frac{d^2}{d\theta^2}
+ 3\frac{\cosh\theta}{\sinh\theta}\frac{d}{d\theta}
- \frac{4l(l+1)}{\sinh^2\theta} - \lambda(\lambda+3)\biggr) s_l(\theta) =0.
$$
Changing the variable $\theta$ to $t=\cosh\theta$ and using
$\frac{d}{d\theta}=\sinh\theta\frac{d}{dt}$, we can rewrite this equation as
$$
\biggl( (t^2-1)\frac{d^2}{dt^2} + 4t\frac{d}{dt} - \frac{4l(l+1)}{t^2-1}
- \lambda(\lambda+3)\biggr) s_l(t) =0.
$$
But $\lambda(\lambda+3)=-2$ by (\ref{lambda-values}), so
$$
\biggl( (t^2-1)\frac{d^2}{dt^2} + 4t\frac{d}{dt} - \frac{4l(l+1)}{t^2-1}
+2 \biggr) s_l(t) =0.
$$
It is easy to verify directly that
$$
\frac{(t-1)^l}{(t+1)^{l+1}} \quad \text{and} \quad \frac{(t+1)^l}{(t-1)^{l+1}},
\qquad l=0, \frac12, 1, \frac32, 2, \dots,
$$
are two linearly independent solutions of this equation.
Thus we obtain four families of functions on $\BB R^{1,4}_+$ that
simultaneously satisfy $\square_{1,4} \phi =0$ and $\tlap \phi =0$:
$$
\rho^{\lambda} \cdot \frac{(\cosh\theta-1)^l}{(\cosh\theta+1)^{l+1} } \cdot
t^l_{m\,\underline{n}}(\overrightarrow{n}) \quad \text{and} \quad
\rho^{\lambda} \cdot \frac{(\cosh\theta+1)^l}{(\cosh\theta-1)^{l+1}} \cdot
t^l_{m\,\underline{n}}(\overrightarrow{n}), \qquad
\begin{matrix} \lambda=-1 \text{ or } \lambda=-2, \\
l=0,\frac12,1,\frac32,2,\dots. \end{matrix}
$$
Since $\m^2\rho^2 N(X) = r^2 = \rho^2 \sinh^2\theta$, we have
$\sinh^2\theta=\m^2N(X)$ and $\cosh^2\theta = 1+\m^2N(X)$.
Then we can rewrite our functions using
$$
t^l_{m\,\underline{n}}(\overrightarrow{n})
= t^l_{m\,\underline{n}}(X) \cdot N(X)^{-l}
= \m^{2l} \cdot (\sinh\theta)^{-2l} \cdot t^l_{m\,\underline{n}}(X)
= \m^{2l} \cdot (\cosh\theta -1)^{-l} \cdot (\cosh\theta +1)^{-l}
\cdot t^l_{m\,\underline{n}}(X).
$$

We summarize the results of this section as a proposition.

\begin{prop}
We have four families of functions on $\BB R^{1,4}_+$ that
simultaneously satisfy $\square_{1,4} \phi =0$ and $\tlap \phi =0$
\begin{equation} \label{t-extensions}
\frac{\rho^{\lambda} \cdot t^l_{m\,\underline{n}}(X)}
{\bigl(\bigl(1+\m^2N(X)\bigr)^{1/2}+1\bigr)^{2l+1}}
\quad \text{and} \quad
\frac{\rho^{\lambda} \cdot t^l_{m\,\underline{n}}(X)}
{\bigl(\bigl(1+\m^2N(X)\bigr)^{1/2}-1\bigr)^{2l+1}},
\qquad \begin{matrix} \lambda=-1 \text{ or} \\ \lambda=-2, \end{matrix}
\end{equation}
where $\rho = \sqrt{(w^0)^2 - (w^1)^2 - (w^2)^2 - (w^3)^2 - (w^4)^2}$ and
$X = (x^0,x^1,x^2,x^3) = \bigl( \frac{w^1}{\m \rho}, \frac{w^2}{\m \rho},
\frac{w^3}{\m \rho}, \frac{w^4}{\m \rho} \bigr)$.

Up to proportionality coefficients, these functions extend the
matrix coefficient functions $t^l_{m\,\underline{n}}(\overrightarrow{n})$ on
$$
S^3 = \bigl\{ W \in H_{\rho_0}; \: (w^1)^2+(w^2)^2+(w^3)^2+(w^4)^2=1 \bigr\}
\quad \subset \BB R^{1,4}_+.
$$
Moreover, any other extension of $t^l_{m\,\underline{n}}(\overrightarrow{n})$
to $\BB R^{1,4}_+$ satisfying both $\square_{1,4} \phi =0$ and
$\tlap \phi =0$ is a linear combination of functions (\ref{t-extensions}).
\end{prop}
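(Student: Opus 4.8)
The statement bundles three claims: that the listed functions satisfy both $\square_{1,4}\phi=0$ and $\tlap\phi=0$, that they restrict on $S^3$ to scalar multiples of $t^l_{m\,\underline{n}}(\overrightarrow n)$, and that they span all such simultaneous extensions. The first two are essentially contained in the separation computation preceding the statement, so the plan is to record them briefly and then concentrate on completeness. For the first claim I would write a separated function as $\phi=\rho^{\lambda}s_l(\theta)t^l_{m\,\underline{n}}(\overrightarrow n)$ and invoke the equivalence $\square_{1,4}(\rho^{\lambda}g)=0\Leftrightarrow(\square_{\m}-\lambda(\lambda+3)\m^2)g=0$ already established: since $\lambda(\lambda+3)=-2$, the right-hand side reads $(\square_{\m}+2\m^2)g=\tlap g=0$, and because $\tlap$ acts only in the $X$-variables (with $\rho$ a free coordinate) this is the same as $\tlap\phi=0$. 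Thus for separated functions the two equations coincide, and it remains only to check by direct substitution that $s^{(1)}_l=\frac{(t-1)^l}{(t+1)^{l+1}}$ and $s^{(2)}_l=\frac{(t+1)^l}{(t-1)^{l+1}}$, $t=\cosh\theta$, solve the ODE $\bigl((t^2-1)\partial_t^2+4t\partial_t-\frac{4l(l+1)}{t^2-1}+2\bigr)s_l=0$; rewriting via $\cosh\theta=(1+\m^2N(X))^{1/2}$ and $t^l_{m\,\underline{n}}(\overrightarrow n)=\m^{2l}(\cosh\theta-1)^{-l}(\cosh\theta+1)^{-l}t^l_{m\,\underline{n}}(X)$ converts the two products into the two families $(\ref{t-extensions})$. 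For the second claim, on $S^3$ both $\rho=\rho_0$ and $\theta$ (hence $\cosh\theta=\sqrt{1+\rho_0^{-2}}$) are constant, so each family restricts to a constant multiple of $t^l_{m\,\underline{n}}(\overrightarrow n)$.

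The substance is completeness, and the key is to decouple the variables by using both equations simultaneously. First I would use that $\square_{1,4}$ and $\tlap$ are invariant under the $SO(4)$ rotations of $(w^1,w^2,w^3,w^4)$, hence commute with $\Delta_{S^3}$ and preserve each of its eigenspaces; expanding an arbitrary simultaneous solution in the $t^{l'}_{m'\,\underline{n'}}(\overrightarrow n)$ reduces the problem to a single angular type, i.e.\ to functions of the form $g(\rho,\theta)t^{l'}_{m'\,\underline{n'}}(\overrightarrow n)$. Next, substituting $\square_{\m}=\tlap-2\m^2$ into $(\ref{deformed_laplacian})$ and using $\tlap\phi=0$ collapses $\square_{1,4}\phi=0$ into the Euler equation $g_{\rho\rho}+\frac4\rho g_\rho+\frac2{\rho^2}g=0$ in $\rho$ alone, whose indicial roots are exactly $-1$ and $-2$; this forces $g(\rho,\theta)=\rho^{-1}a(\theta)+\rho^{-2}b(\theta)$. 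Feeding this back into $\tlap\phi=0$ and using the linear independence of $\rho^{-1}$ and $\rho^{-2}$ yields $\tlap(a\,t^{l'}_{m'\,\underline{n'}})=\tlap(b\,t^{l'}_{m'\,\underline{n'}})=0$, and restricting $\tlap$ to the fixed angular type turns each of these (via the equivalence of the first paragraph, now for $\lambda=-1$ and $\lambda=-2$) into the very same second-order $\theta$-ODE. Since that ODE has a two-dimensional solution space spanned by $s^{(1)}_{l'},s^{(2)}_{l'}$, the simultaneous solutions of a fixed angular type form a four-dimensional space $\{\rho^{-1},\rho^{-2}\}\otimes\{s^{(1)}_{l'},s^{(2)}_{l'}\}$, which for the type $t^l_{m\,\underline{n}}$ is exactly the span of $(\ref{t-extensions})$.

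The main obstacle is precisely this completeness step: arguing that every simultaneous solution is captured by separation of variables. The clean mechanism is the Euler reduction above, which works only because both equations are imposed together -- it is $\tlap\phi=0$ that turns $\square_{1,4}$ into an ordinary differential operator in $\rho$ and pins the exponents to $-1,-2$. One must also confine the completeness claim honestly to a fixed angular sector: a simultaneous solution built from a different eigenvalue $l'\neq l$ can be arranged to vanish on all of $S^3$ (for instance $(\rho^{-1}-\rho_0\rho^{-2})s^{(1)}_{l'}(\theta)t^{l'}_{m'\,\underline{n'}}(\overrightarrow n)$), so ``extension of $t^l_{m\,\underline{n}}(\overrightarrow n)$'' must be read as an extension lying in the $t^l_{m\,\underline{n}}$ angular type, which is exactly what the $\Delta_{S^3}$-eigenspace decomposition supplies. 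The remaining ingredients -- the direct ODE verification and the Wronskian check confirming that $s^{(1)}_{l'},s^{(2)}_{l'}$ (regular, resp.\ singular, at $\theta=0$) span the $\theta$-solution space -- are routine.
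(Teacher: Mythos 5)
Your proposal is correct, and its computational core coincides with the paper's: the same coordinates $(\rho,\theta,\overrightarrow{n})$, the same constraint $\lambda(\lambda+3)=-2$ pinning $\lambda$ to $-1,-2$, and the same ODE in $t=\cosh\theta$ with the two solutions $(t-1)^l/(t+1)^{l+1}$ and $(t+1)^l/(t-1)^{l+1}$. Where you genuinely depart from the paper is the completeness step. The paper searches for solutions in the separated form $\rho^{\lambda}s_l(\theta)t^l_{m\,\underline{n}}(\overrightarrow{n})$ from the outset, determines them all, and then asserts in the proposition that these exhaust all simultaneous solutions; the reduction of an arbitrary solution to separated form is left implicit. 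You supply exactly this missing reduction, by a clean mechanism: project onto $\Delta_{S^3}$-isotypic components (legitimate since both operators are $SO(4)$-invariant), then observe that on the kernel of $\tlap$ the identity (\ref{deformed_laplacian}) collapses $\square_{1,4}\phi=0$ to the Euler equation $\phi_{\rho\rho}+\frac{4}{\rho}\phi_{\rho}+\frac{2}{\rho^2}\phi=0$, whose indicial roots $-1,-2$ force the radial dependence $\rho^{-1}a+\rho^{-2}b$, after which linear independence of $\rho^{-1},\rho^{-2}$ returns each coefficient to the two-dimensional $\theta$-ODE. This buys an actual proof of the ``moreover'' clause rather than a plausibility argument, at the cost of invoking Peter--Weyl and an equivariance argument the paper never needs to state. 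You also correctly flag that the uniqueness claim, read literally, fails: a solution such as $(\rho^{-1}-\rho_0\rho^{-2})\,s^{(1)}_{l'}(\theta)\,t^{l'}_{m'\,\underline{n'}}(\overrightarrow{n})$ with $l'\ne l$ vanishes identically on $H_{\rho_0}$, hence on $S^3$, so ``extension'' must be understood within the fixed angular type (equivalently, modulo simultaneous solutions vanishing on $S^3$) --- a precision the paper does not address. The only caveat worth adding is that the second family is singular along the axis $\theta=0$, so statements about functions on $\BB R^{1,4}_+$ and about two-dimensionality of the $\theta$-solution space should be read on the complement of that axis, a convention the paper itself adopts tacitly.
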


\section{Spaces of Solutions of $\tlap\phi=0$ and
an Invariant Bilinear Pairing} \label{H_mu-section}

We denote by $\overline{{\cal H}_{\m}}$ the space of solutions of
$\tlap\phi=0$ on $\BB H^{\times}$; these solutions are real analytic functions.
We introduce algebraic subspaces
$$
{\cal H}_{\m}^+ = \text{$\BB C$-span of }
\frac{t^l_{m\,\underline{n}}(X)}
{\bigl( \bigl(1+\m^2N(X)\bigr)^{1/2}+1 \bigr)^{2l+1}},
\qquad \begin{matrix} l = 0, \frac12, 1, \frac32, 2, \dots, \\
m,n = -l , -l+1, \dots, l, \end{matrix}
$$
$$
{\cal H}_{\m}^- = \text{$\BB C$-span of }
\frac{t^l_{m\,\underline{n}}(X)}
{\bigl( \bigl(1+\m^2N(X)\bigr)^{1/2}-1 \bigr)^{2l+1}},
\qquad \begin{matrix} l = 0, \frac12, 1, \frac32, 2, \dots, \\
m,n = -l , -l+1, \dots, l, \end{matrix}
$$
and ${\cal H}_{\m} = {\cal H}_{\m}^+ \oplus {\cal H}_{\m}^-$.
Note that when $\m \to 0$,
$$
\frac{2^{2l+1} t^l_{m\,\underline{n}}(X)}
{\bigl( \bigl(1+\m^2N(X)\bigr)^{1/2}+1 \bigr)^{2l+1}}
\quad \to \quad t^l_{m\,\underline{n}}(X)
$$
and
$$
\frac{2^{-2l-1}\m^{4l+2} \cdot t^l_{m\,\underline{n}}(X)}
{\bigl( \bigl(1+\m^2N(X)\bigr)^{1/2}-1 \bigr)^{2l+1}}
\quad \to \quad t^l_{m\,\underline{n}}(X) \cdot N(X)^{-2l-1}.
$$
Since the restrictions of $t^l_{m\,\underline{n}}(X) \cdot
\bigl( \bigl(1+\m^2N(X)\bigr)^{1/2} \pm 1 \bigr)^{-(2l+1)}$
to the unit ball in $\BB H$ are dense in the space of all analytic
functions on that ball, ${\cal H}_{\m}$ is dense in $\overline{{\cal H}_{\m}}$,
justifying the notation. Taking closures we obtain a decomposition
$\overline{{\cal H}_{\m}} = \overline{{\cal H}_{\m}^+}
\oplus \overline{{\cal H}_{\m}^-}$.
The space ${\cal H}_{\m}$ can be characterized as the space of all
$SO(4)$-finite solutions of $\tlap\phi=0$ on $\BB H^{\times}$.
Then ${\cal H}_{\m}^+$ can be characterized as the subspace of
${\cal H}_{\m}$ consisting of functions that are regular at the origin.
Finally, ${\cal H}_{\m}^-$ can be characterized as the subspace of
${\cal H}_{\m}$ consisting of functions that decay at infinity
(or ``regular at infinity'').

We introduce a bilinear pairing between $\overline{{\cal H}_{\m}^+}$ and
$\overline{{\cal H}_{\m}^-}$:
\begin{multline}  \label{h-pairing}
(\phi_1, \phi_2)_{\m} =
\frac{\sqrt{1+\m^2R^2}}{2\pi^2} \int_{X \in S^3_R} (\degt\phi_1)(X) \cdot \phi_2(X)
\,\frac{dS}R  \\
=-\frac{\sqrt{1+\m^2R^2}}{2\pi^2} \int_{X \in S^3_R}
\phi_1(X) \cdot (\degt\phi_2)(X) \,\frac{dS}R, \qquad
\phi_1 \in \overline{{\cal H}_{\m}^+}, \: \phi_2 \in \overline{{\cal H}_{\m}^-},
\end{multline}
where $S^3_R$ denotes a sphere of radius $R>0$ in $\BB H$ centered at the
origin and $dS$ denotes the standard Euclidean measure on $S^3_R$ inherited
from $\BB H$.

\begin{prop}
The two expressions in (\ref{h-pairing}) agree; the resulting bilinear
pairing is $SO^+(1,4)$-invariant, $\mathfrak{so}(6,\BB C)$-invariant,
non-degenerate and independent of $R$. Moreover,
\begin{equation} \label{t-orthog-desitter}
\left(
\frac{t^l_{m\,\underline{n}}(X)}{\bigl( \bigl(1+\m^2N(X)\bigr)^{1/2}+1 \bigr)^{2l+1}},
\frac{t^{l'}_{n'\,m'}(X^+)}{\bigl( \bigl(1+\m^2N(X)\bigr)^{1/2}-1 \bigr)^{2l+1}}
\right)_{\m}
= \frac{\delta_{ll'}\delta_{mm'}\delta_{nn'}}{\m^{4l+2}}.
\end{equation}
\end{prop}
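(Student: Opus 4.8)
The plan is to reduce every assertion to the behaviour of the basis elements on a single sphere $S^3_R$, exploiting the crucial simplification that on $S^3_R$ one has $N(X)=R^2$, so that the deformation factors $\bigl(\sqrt{1+\m^2N(X)}\pm1\bigr)^{-(2l+1)}$ are \emph{constant}. Writing $s=\sqrt{1+\m^2R^2}$, each basis function of ${\cal H}_{\m}^{\pm}$ restricts on $S^3_R$ to a constant multiple of the homogeneous matrix coefficient $t^l_{m\,\underline{n}}(X)$, so that all the claims follow from the classical Schur orthogonality of the $t^l_{m\,\underline{n}}$ over $SU(2)=S^3_1$ together with homogeneity in $R$.

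First I would establish the orthogonality relations (\ref{t-orthog-desitter}), which are the computational heart and from which non-degeneracy and $R$-independence drop out. A short computation using $\deg t^l_{m\,\underline{n}}(X)=2l\,t^l_{m\,\underline{n}}(X)$ and $\deg N=2N$ gives, on $S^3_R$,
\[
\degt\Bigl(\tfrac{t^l_{m\,\underline{n}}(X)}{(s+1)^{2l+1}}\Bigr)\Big|_{S^3_R}
=\frac{2l+1}{s\,(s+1)^{2l+1}}\,t^l_{m\,\underline{n}}(X),
\]
the key point being that the prefactor $\sqrt{1+\m^2R^2}=s$ in the definition of the pairing cancels the factor $1/s$ produced by $\degt$. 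The pairing then collapses to $\frac{2l+1}{2\pi^2}(s+1)^{-(2l+1)}(s-1)^{-(2l'+1)}\int_{S^3_R}t^l_{m\,\underline{n}}(X)\,t^{l'}_{n'\,m'}(X^+)\,\frac{dS}R$. Since on $S^3_R$ we may write $X=R\,U$ with $U\in SU(2)$, so that $X^+=R\,U^{-1}$, this sphere integral is a rescaled Schur orthogonality integral; by homogeneity it equals a constant times $R^{4l+2}\delta_{ll'}\delta_{mm'}\delta_{nn'}$, and when $l=l'$ the surviving deformation factors combine as $(s+1)^{-(2l+1)}(s-1)^{-(2l+1)}=(s^2-1)^{-(2l+1)}=\m^{-(4l+2)}R^{-(4l+2)}$. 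The powers of $R$ cancel, leaving $\m^{-(4l+2)}\delta_{ll'}\delta_{mm'}\delta_{nn'}$, which proves (\ref{t-orthog-desitter}). Because the answer is visibly independent of $R$ and the pairing is determined on the dense subspace ${\cal H}_{\m}$ by these values, the pairing is independent of $R$; and because the pairing matrix between the two bases is diagonal with nonzero entries, it is non-degenerate.

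Next I would prove that the two expressions in (\ref{h-pairing}) agree, i.e. that $\int_{S^3_R}\bigl[(\degt\phi_1)\phi_2+\phi_1(\degt\phi_2)\bigr]\,\frac{dS}R=0$. Using $(\degt\phi_1)\phi_2+\phi_1(\degt\phi_2)=2\phi_1\phi_2+\deg(\phi_1\phi_2)$, it suffices to treat basis products. For $l=l'$ the product equals $t^l_{m\,\underline{n}}(X)\,t^l_{n'\,m'}(X^+)\,(\m^2N)^{-(2l+1)}$, which is homogeneous of degree $-2$, so $\deg(\phi_1\phi_2)=-2\phi_1\phi_2$ and the integrand vanishes pointwise; for $l\neq l'$ the angular factor $t^l(X)\,t^{l'}(X^+)$ integrates to zero over the sphere by Schur orthogonality. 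By bilinearity and density this gives the agreement in general.

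Finally, for the invariance I would reduce to skew-symmetry of the pairing under a generating set of the real Lie algebra $\mathfrak{so}(1,5)$ and then complexify to $\mathfrak{so}(6,\BB C)$. The compact rotations $SO(4)\subset SO^+(1,4)$ preserve each $S^3_R$, the measure $dS$, the function $N$ and the operator $\degt$, so the pairing is manifestly $SO(4)$-invariant. The remaining generators -- the Lorentz boosts in $\mathfrak{so}(1,4)$ and, above all, the extra $(w^0w^5)$-rotation, which by Section~\ref{conformal-section} acts as $\phi\mapsto\bigl(1+\m^2N(X)\bigr)^{1/2}\degt\phi$ -- require a genuine computation: one verifies $(h\phi_1,\phi_2)_{\m}+(\phi_1,h\phi_2)_{\m}=0$ by integrating by parts on $S^3_R$, using $\tlap\phi_i=0$ to rewrite the second-order radial terms and invoking the $R$-independence already established to discard the resulting boundary contributions. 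I expect this invariance check, and specifically the skew-symmetry of the extra generator, to be the main obstacle; it is precisely here that the normalizing factor $\sqrt{1+\m^2R^2}$ in (\ref{h-pairing}) is forced, since (exactly as in the orthogonality computation) it is what cancels the $1/\sqrt{1+\m^2N}$ produced when $\degt$ differentiates the deformation factor, making the generator act skew-adjointly.
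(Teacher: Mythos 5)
Your treatment of everything except the invariance is correct and is essentially the paper's own argument: the paper likewise computes
$$
\degt \biggl(
\frac{t^l_{m\,\underline{n}}(X)}
{\bigl(\bigl(1+\m^2N(X)\bigr)^{1/2}\pm1 \bigr)^{2l+1}}\biggr)
=
 \frac{\pm(2l+1)\,t^l_{m\,\underline{n}}(X)}{\bigl(1+\m^2N(X)\bigr)^{1/2} \cdot
\bigl( \bigl(1+\m^2N(X)\bigr)^{1/2}\pm 1 \bigr)^{2l+1}},
$$
observes that the prefactor $\sqrt{1+\m^2R^2}$ cancels the $\bigl(1+\m^2N(X)\bigr)^{-1/2}$ on $S^3_R$, and then invokes the classical orthogonality relations (\ref{t-orthog}) to get (\ref{t-orthog-desitter}), non-degeneracy, $R$-independence and the agreement of the two expressions in (\ref{h-pairing}) by density. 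Your variant for the agreement (homogeneity degree $-2$ of the product when $l=l'$, Schur orthogonality when $l\ne l'$) is a fine substitute; just note that to get the exact constant $\m^{-4l-2}$ rather than a proportionality you must pin down the Schur constant $\frac{2\pi^2}{2l+1}$, which is exactly what (\ref{t-orthog}) encodes.

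The genuine gap is the invariance, which you defer to an unexecuted ``integration by parts on $S^3_R$'' and yourself flag as the main obstacle. That is precisely the part that does not follow from sphere-by-sphere manipulations in any straightforward way: the boost generators of $\mathfrak{so}(1,4)$ act by vector fields with radial components (e.g.\ a boost acts as $\phi \mapsto \m^{-1}\bigl(1+\m^2N(X)\bigr)^{1/2}\partial_{x^0}\phi$ up to sign), so $\degt(h\phi_i)$ involves second-order mixed radial--angular derivatives, and no closed-form cancellation is exhibited. The paper does something structurally different: it postpones this claim to Corollary \ref{SO(1,4)-invariance}, whose proof uses the Poisson formula (Theorem \ref{Poisson5}, itself resting only on the expansion of Proposition \ref{1/N-exp-deformed} and the orthogonality relations just established, so there is no circularity) to represent $\phi_1,\phi_2$ by integrals against the kernel $K_{\m}$, and then reduces group invariance to the equivariance of that representation; for the sixth generator it uses the kernel symmetry from Lemma \ref{fundamental_sol}, namely $\bigl(1+\m^2N(X)\bigr)^{1/2}\degt_X K_{\m}(X,Y) = -\bigl(1+\m^2N(Y)\bigr)^{1/2}\degt_Y K_{\m}(X,Y)$.

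One more point, opposite to your expectation: the extra generator $\phi \mapsto \bigl(1+\m^2N(X)\bigr)^{1/2}\degt\phi$ is actually the \emph{easy} part once (\ref{t-orthog-desitter}) is known, because your own $\degt$ computation shows the basis functions of ${\cal H}_{\m}^{\pm}$ are eigenfunctions of this operator with eigenvalues $\pm(2l+1)$; skew-symmetry then follows at once from the fact that the pairing is diagonal, supported on $l=l'$, so $\bigl((2l+1)-(2l'+1)\bigr)\delta_{ll'}=0$. What has no such shortcut is the non-compact part of $\mathfrak{so}(1,4)$, and for that your proposal as written does not contain a proof.
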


\begin{proof}
Using
\begin{equation}  \label{eq2}
\deg \bigl(1+\m^2N(X)\bigr)^{1/2} =
\m^2N(X) \cdot \bigl(1+\m^2N(X)\bigr)^{-1/2},
\end{equation}
we obtain
$$
\degt \biggl(
\frac{t^l_{m\,\underline{n}}(X)}
{\bigl(\bigl(1+\m^2N(X)\bigr)^{1/2}\pm1 \bigr)^{2l+1}}\biggr)
=
 \frac{\pm(2l+1)t^l_{m\,\underline{n}}(X)}{\bigl(1+\m^2N(X)\bigr)^{1/2} \cdot
\bigl( \bigl(1+\m^2N(X)\bigr)^{1/2}\pm 1 \bigr)^{2l+1}}.
$$
Then (\ref{t-orthog-desitter}) follows from
the orthogonality relations (\ref{t-orthog}).

Since these basis functions are dense in $\overline{{\cal H}_{\m}^+}$ and
$\overline{{\cal H}_{\m}^-}$ respectively, this computation proves that
the two expressions in (\ref{h-pairing}) agree, independent of $R>0$
and the resulting bilinear pairing is non-degenerate.
It remains to prove that it is $SO^+(1,4)$-invariant
and $\mathfrak{so}(6,\BB C)$-invariant.
The proof will be given in Corollary \ref{SO(1,4)-invariance}.
\end{proof}

\section{Poisson Formula}  \label{Poisson-section}

In this section we prove a Poisson-type formula for functions on $\BB H$
annihilated by $\tlap$. As an intermediate step, we derive an expansion for
$\bigl(\langle \hat X - \hat Y,\hat X - \hat Y\rangle_{1,4}\bigr)^{-1}$
similar to the matrix coefficient expansions for $N(X-Y)^{-1}$ given by
Proposition 25 from \cite{FL1} and restated here as
equation (\ref{1/N-expansion}).
We recall the notations of Lemma \ref{fundamental_sol}:
for $X, Y \in \BB H$, let
$$
\hat X = \bigl( \sqrt{\m^{-2}+N(X)},x^0,x^1,x^2,x^3 \bigr) \quad \text{and} \quad
\hat Y = \bigl( \sqrt{\m^{-2}+N(Y)},y^0,y^1,y^2,y^3 \bigr)
\quad \in \BB R^{1,4}_+.
$$

\begin{prop}  \label{1/N-exp-deformed}
We have the following expansion:
$$
- \frac1{\langle \hat X - \hat Y, \hat X - \hat Y \rangle_{1,4}}
= \sum_{l,m,n} \frac{t^l_{m\,\underline{n}}(X)}
{\bigl( \bigl(1+\m^2N(X)\bigr)^{1/2}+1 \bigr)^{2l+1}}
\cdot \frac{\m^{4l+2}\cdot t^l_{n\,\underline{m}}(Y^+)}
{\bigl((1+\m^2N(Y))^{1/2}-1\bigr)^{2l+1}},
$$
which converges pointwise absolutely in the region
$\{ (X,Y) \in \BB H \times \BB H ;\: N(X) < N(Y) \}$.
The sum is taken first over all $m,n=-l,-l+1,\dots,l$, then over
$l = 0,\frac12,1,\frac32,2,\dots$.
\end{prop}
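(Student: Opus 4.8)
The plan is to prove the expansion by direct summation, collapsing the sums over $m$ and $n$ into characters of $SU(2)$ and recognizing the remaining series in $l$ as a pair of geometric series. Throughout write $a = \bigl(1+\m^2N(X)\bigr)^{1/2}$ and $b = \bigl(1+\m^2N(Y)\bigr)^{1/2}$ for brevity, and set $\langle X,Y\rangle = x^0y^0+x^1y^1+x^2y^2+x^3y^3$. First I would record the closed form of the kernel. From the computation in the proof of the final lemma of Section \ref{AdS-section},
$$
\langle \hat X - \hat Y, \hat X - \hat Y \rangle_{1,4}
= 2\m^{-2}(1-ab) + 2\langle X,Y\rangle
= -2\m^{-2}\bigl(ab-1-\m^2\langle X,Y\rangle\bigr),
$$
so that the left-hand side of the proposition equals $\dfrac{\m^2}{2\bigl(ab-1-\m^2\langle X,Y\rangle\bigr)}$. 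The goal is to show the series on the right-hand side sums to exactly this.

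In the right-hand side the factors $(a+1)^{-(2l+1)}$ and $(b-1)^{-(2l+1)}$ depend only on $X$ and only on $Y$ respectively, so they pull out of the sum over $m,n$. The multiplicativity relation (\ref{t-mult}) gives $\sum_{n} t^l_{m\,\underline{n}}(X)\,t^l_{n\,\underline{m}}(Y^+) = t^l_{m\,\underline{m}}(XY^+)$, and summing over $m$ produces the character $\chi_l(XY^+)=\sum_m t^l_{m\,\underline{m}}(XY^+)$ of the $(2l+1)$-dimensional representation. Hence the right-hand side equals $\sum_l \m^{4l+2}\bigl((a+1)(b-1)\bigr)^{-(2l+1)}\chi_l(XY^+)$. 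Assuming $XY^+$ has distinct eigenvalues $\lambda_1\ne\lambda_2$ (a dense condition, so the general case follows by analytic continuation exactly as in Section \ref{formal-section}), I would substitute $\chi_l(XY^+)=(\lambda_1^{2l+1}-\lambda_2^{2l+1})/(\lambda_1-\lambda_2)$ and split into two geometric series in $p=2l+1=1,2,3,\dots$ with ratios $t_i = \m^2\lambda_i/\bigl((a+1)(b-1)\bigr)$. Summing $\sum_{p\ge1}t_i^p = t_i/(1-t_i)$ and simplifying, using $\lambda_1+\lambda_2=\tr(XY^+)=2\langle X,Y\rangle$, $\lambda_1\lambda_2=N(X)N(Y)$, together with $\m^2N(X)=a^2-1$ and $\m^2N(Y)=b^2-1$, the denominator factors as $(a+1)(b-1)\cdot 2\bigl(ab-1-\m^2\langle X,Y\rangle\bigr)$ while the numerator becomes $\m^2(a+1)(b-1)$; after cancellation the sum collapses precisely to the closed form above.

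The main point to handle carefully is convergence together with the degenerate eigenvalue locus. The two geometric series converge absolutely exactly when $\max(|t_1|,|t_2|)<1$. Since $X,Y\in\BB H$, the product $XY^+$ is the matrix form of a quaternion of norm $N(X)N(Y)$, so its two eigenvalues are complex conjugates of equal modulus $|\lambda_1|=|\lambda_2|=\sqrt{N(X)N(Y)}$; the condition $\m^2\sqrt{N(X)N(Y)}<(a+1)(b-1)$ then, after squaring and cancelling $(a+1)(b-1)$, reduces to $(a-1)(b+1)<(a+1)(b-1)$, i.e. $a<b$, i.e. $N(X)<N(Y)$ — exactly the stated region. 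The one genuinely delicate case is $\lambda_1=\lambda_2$, occurring when $XY^+$ is a real scalar (that is, $X$ and $Y$ are parallel): there the character must be read as the limit $\chi_l=(2l+1)\lambda^{2l}$, but since both sides are real-analytic in $(X,Y)$ on $\{N(X)<N(Y)\}$ and agree on the dense open subset where $\lambda_1\ne\lambda_2$, the identity extends by continuity. The absolute convergence established above also legitimizes the interchange of the $l$-summation with the $(m,n)$-summations used to pass to characters, completing the argument.
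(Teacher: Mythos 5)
Your proof is correct and follows essentially the same route as the paper's: both collapse the sums over $m,n$ into characters via the multiplicativity relation (\ref{t-mult}), sum the resulting geometric series in $l$, and identify the result with the closed form of $-\bigl(\langle \hat X - \hat Y, \hat X - \hat Y \rangle_{1,4}\bigr)^{-1}$, arriving at the same convergence condition $N(X)<N(Y)$. The only difference is bookkeeping: the paper normalizes $X,Y$ to unit quaternions and substitutes hyperbolic angles $\sinh\theta_i=\m\sqrt{N(\cdot)}$, whereas you work directly with the eigenvalues of $XY^+$ and symmetric-function identities, and you additionally make explicit the analytic-continuation step at the degenerate locus $\lambda_1=\lambda_2$, which the paper leaves implicit.
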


\begin{proof}
Let $X, Y \in \BB H$ and
$$
\overrightarrow{u} = \frac{X}{\sqrt{N(X)}}, \quad
\overrightarrow{v} = \frac{Y}{\sqrt{N(Y)}}
\quad \in SU(2) \quad \subset \BB H,
$$
then $\overrightarrow{u} \overrightarrow{v}^{-1}$ is similar to a diagonal
matrix $\bigl(\begin{smallmatrix} \lambda & 0 \\
0 & \lambda^{-1} \end{smallmatrix}\bigr)$,
where $\lambda \in \BB C$, $|\lambda|=1$.
Define $\theta_1$, $\theta_2$, $t_1$ and $t_2$ by
\begin{equation}  \label{theta-t}
\sinh \theta_1 = \m \sqrt{N(X)}, \qquad
\sinh \theta_2 = \m \sqrt{N(Y)}, \qquad
t_1 = \cosh \theta_1, \qquad t_2 = \cosh \theta_2.
\end{equation}
Using the multiplicativity property of matrix coefficients (\ref{t-mult}),
we compute:
\begin{multline*}
\sum_{l,m,n} \frac{t^l_{m\,\underline{n}}(X)}
{\bigl( \bigl(1+\m^2N(X)\bigr)^{1/2}+1 \bigr)^{2l+1}}
\cdot \frac{\m^{4l+2}\cdot t^l_{n\,\underline{m}}(Y^+)}
{\bigl((1+\m^2N(Y))^{1/2}-1\bigr)^{2l+1}}\\
= \m^2 \sum_l \frac{(t_1-1)^l}{(t_1+1)^{l+1}} \cdot \frac{(t_2+1)^l}{(t_2-1)^{l+1}}
\cdot \chi_l(\overrightarrow{u} \overrightarrow{v}^{-1}) \\
= \frac{\m^2}{\lambda-\lambda^{-1}} \sum_l \frac{(t_1-1)^l}{(t_1+1)^{l+1}} \cdot
\frac{(t_2+1)^l}{(t_2-1)^{l+1}} \cdot (\lambda^{2l+1}-\lambda^{-2l-1}).
\end{multline*}
Let
\begin{equation}  \label{ab}
a= (e^{\theta_1/2}+e^{-\theta_1/2})(e^{\theta_2/2}-e^{-\theta_2/2}), \qquad
b= (e^{\theta_1/2}-e^{-\theta_1/2})(e^{\theta_2/2}+e^{-\theta_2/2}),
\end{equation}
then
\begin{multline*}
\sum_{l,m,n} \frac{t^l_{m\,\underline{n}}(X)}
{\bigl( \bigl(1+\m^2N(X)\bigr)^{1/2}+1 \bigr)^{2l+1}}
\cdot \frac{\m^{4l+2}\cdot t^l_{n\,\underline{m}}(Y^+)}
{\bigl((1+\m^2N(Y))^{1/2}-1\bigr)^{2l+1}}\\
= \frac{4\m^2 (\lambda-\lambda^{-1})^{-1}}{(e^{\theta_1}+e^{-\theta_1}+2)
(e^{\theta_2}+e^{-\theta_2}-2)} \sum_l a^{-2l} \cdot b^{2l} \cdot
(\lambda^{2l+1}-\lambda^{-2l-1}) \\
= \frac{4\m^2}{(\lambda-\lambda^{-1}) a} \biggl( \frac{\lambda}{a-\lambda b} 
- \frac{\lambda^{-1}}{a-\lambda^{-1}b} \biggr)
= \frac{4\m^2}{(a-\lambda b)(a-\lambda^{-1}b)}
= \frac{4\m^2}{N(b\overrightarrow{u} - a\overrightarrow{v})}.
\end{multline*}
Since $X=\m^{-1}\sinh\theta_1\overrightarrow{u}$ and
$Y=\m^{-1}\sinh\theta_2\overrightarrow{v}$,
\begin{multline}  \label{1/N}
\frac{4\m^2}{N(b\overrightarrow{u} - a\overrightarrow{v})}
= 4\m^2 \left[ N\biggl(\frac{\m bX}{\sinh\theta_1}
- \frac{\m aY}{\sinh\theta_2} \biggr) \right]^{-1}  \\
= \left[ N\biggl( \frac{e^{\theta_2/2}+e^{-\theta_2/2}}{e^{\theta_1/2}+e^{-\theta_1/2}}X -
\frac{e^{\theta_1/2}+e^{-\theta_1/2}}{e^{\theta_2/2}+e^{-\theta_2/2}}Y \biggr)\right]^{-1}
= \frac{(\cosh\theta_1+1) (\cosh\theta_2+1)}
{N\bigl( (\cosh\theta_2+1)X - (\cosh\theta_1+1)Y \bigr)} \\
= \frac{1/2}{\m^{-2}\cosh\theta_1\cosh\theta_2 - \m^{-2}
- (x^0y^0+x^1y^1+x^2y^2+x^3y^3)} \\
= - \frac1{\langle \hat X , \hat X \rangle_{1,4}
+ \langle \hat Y, \hat Y \rangle_{1,4} - 2 \langle \hat X, \hat Y \rangle_{1,4}}
= - \frac1{\langle \hat X - \hat Y,\hat X - \hat Y \rangle_{1,4}}.
\end{multline}
This expansion holds whenever $|b/a|<1$.
Since
$$
\frac ba = \frac{\tanh(\theta_1/2)}{\tanh(\theta_2/2)} \ge 0
$$
and $\tanh \theta$ is monotone increasing, the expansion holds whenever
$\theta_1<\theta_2$ or, equivalently, $N(X)<N(Y)$.
\end{proof}

We introduce a notation
$$
K_{\m}(X,Y) =
-\frac1{\langle \hat X - \hat Y, \hat X - \hat Y \rangle_{1,4}}.
$$
Now we are ready to prove the Poisson-type formula.
Let $S^3_R$ denote a sphere of radius $R$ in $\BB H$ centered at the origin.

\begin{thm} \label{Poisson5}
Let $\phi$ be a real analytic solution of $\tlap \phi =0$
defined on a closed ball $\{X \in \BB H ;\: N(X) \le R^2 \}$, for some $R>0$.
Then, for all $Y \in \BB H$ with $N(Y)<R^2$,
\begin{align*}
\phi(Y) &= \bigl( \phi(X), K_{\m}(X,Y) \bigr)_{\m} \\
&= \frac{\sqrt{1+\m^2R^2}}{2\pi^2} \int_{X \in S^3_R} (\degt\phi)(X) \cdot
K_{\m}(X,Y) \,\frac{dS}R \\
&= - \frac{\sqrt{1+\m^2R^2}}{2\pi^2} \int_{X \in S^3_R}
\bigl( \degt_X K_{\m}\bigr) (X,Y) \cdot \phi(X) \,\frac{dS}R.
\end{align*}

Similarly, suppose $\phi$ is a real analytic solution of $\tlap \phi =0$
defined on a closed set $\{X \in \BB H ;\: N(X) \ge R^2 \}$, for some $R>0$,
and regular at infinity. Then, for all $Y \in \BB H$ with $N(Y)>R^2$,
\begin{align*}
\phi(Y) &= \bigl( K_{\m}(X,Y), \phi(X) \bigr)_{\m} \\
&= - \frac{\sqrt{1+\m^2R^2}}{2\pi^2} \int_{X \in S^3_R} (\degt\phi)(X) \cdot
K_{\m}(X,Y) \,\frac{dS}R \\
&= \frac{\sqrt{1+\m^2R^2}}{2\pi^2} \int_{X \in S^3_R}
\bigl( \degt_X K_{\m} \bigr) (X,Y) \cdot \phi(X) \,\frac{dS}R.
\end{align*}
\end{thm}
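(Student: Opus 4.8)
The plan is to recognize $K_{\m}(X,Y)$ as the reproducing kernel for the bilinear pairing $(\cdot,\cdot)_{\m}$ of (\ref{h-pairing}). The essential inputs are the expansion of Proposition \ref{1/N-exp-deformed}, the orthogonality relations (\ref{t-orthog-desitter}), the symmetry $K_{\m}(X,Y)=K_{\m}(Y,X)$ (immediate, since $\langle \hat X-\hat Y,\hat X-\hat Y\rangle_{1,4}$ is symmetric in $X,Y$), and the fact from Lemma \ref{fundamental_sol} that $\tlap$ annihilates $K_{\m}(X,Y)$ in the $X$ variable, so that $K_{\m}(\cdot,Y)$ genuinely lies in the solution space and may be legitimately inserted into the pairing. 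The three displayed identities in each half of the theorem are really one statement each: the first equality is the reproducing property $(\phi,K_{\m}(\cdot,Y))_{\m}=\phi(Y)$, while the remaining two equalities are just the two equivalent forms of the pairing already recorded in (\ref{h-pairing}).

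For the first part I would fix $Y$ with $N(Y)<R^2$ and integrate over $X\in S^3_R$, where $N(X)=R^2>N(Y)$. Applying Proposition \ref{1/N-exp-deformed} after interchanging $X$ and $Y$ (legitimate by symmetry of $K_{\m}$, and convergent precisely because $N(Y)<N(X)$), I expand
\[
K_{\m}(X,Y)=\sum_{l,m,n}\frac{t^l_{m\,\underline{n}}(Y)}{\bigl(\bigl(1+\m^2N(Y)\bigr)^{1/2}+1\bigr)^{2l+1}}\cdot\frac{\m^{4l+2}\,t^l_{n\,\underline{m}}(X^+)}{\bigl(\bigl(1+\m^2N(X)\bigr)^{1/2}-1\bigr)^{2l+1}},
\]
a series which, in the $X$ variable, is built from the ${\cal H}_{\m}^-$ basis functions and whose $Y$-coefficients are exactly the ${\cal H}_{\m}^+$ basis functions; thus $K_{\m}(\cdot,Y)\in\overline{{\cal H}_{\m}^-}$ on $S^3_R$. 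Since $N(X)=R^2$ is constant there, the geometric ratio $|b/a|=\tanh(\theta_1/2)/\tanh(\theta_2/2)$ governing convergence in Proposition \ref{1/N-exp-deformed} is a constant strictly less than $1$, so the series converges uniformly on $S^3_R$ and may be integrated term by term. Pairing against $\phi\in\overline{{\cal H}_{\m}^+}$ and invoking (\ref{t-orthog-desitter}), each kernel term extracts from $\phi$ precisely the coefficient of its dual basis function, the factors $\m^{4l+2}$ cancel, and the double sum collapses to the expansion of $\phi$ evaluated at $Y$. This gives $(\phi(X),K_{\m}(X,Y))_{\m}=\phi(Y)$, and the two integral expressions follow by writing this pairing in its two equivalent forms from (\ref{h-pairing}), the second carrying the extra minus sign with $\degt$ transferred onto $K_{\m}$.

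The second part I would handle by the symmetric argument. Now $\phi\in\overline{{\cal H}_{\m}^-}$, $N(Y)>R^2=N(X)$, and I apply Proposition \ref{1/N-exp-deformed} directly (no interchange), so that $K_{\m}(\cdot,Y)$ lies in $\overline{{\cal H}_{\m}^+}$ as a function of $X$, with $Y$-coefficients now drawn from ${\cal H}_{\m}^-$. Pairing $(K_{\m}(X,Y),\phi(X))_{\m}$ — with the ${\cal H}_{\m}^+$ argument occupying the first slot — and applying (\ref{t-orthog-desitter}) again reproduces $\phi(Y)$; the signs of the two integral forms are exactly those dictated by which argument sits in which slot of (\ref{h-pairing}).

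The main obstacle is analytic rather than algebraic: justifying the two term-by-term interchanges. One interchange is the uniform convergence of the $K_{\m}$-expansion on $S^3_R$, which I obtain from the constancy of $N(X)$ there together with the explicit geometric bound just noted. The other is the requirement that a real analytic solution $\phi$ of $\tlap\phi=0$ on the closed ball (respectively, regular at infinity on the closed exterior) admit an expansion in the ${\cal H}_{\m}^+$ (respectively ${\cal H}_{\m}^-$) basis converging uniformly on $S^3_R$, so that the pairing itself may be evaluated termwise; this is where the density statements for ${\cal H}_{\m}^{\pm}$ recorded in Section \ref{H_mu-section} are needed. A secondary point, easy once the conventions are fixed, is to track the index labels $(l,n,\underline{m})$ in the kernel against the slots $(l,m,\underline{n})$ and $(l',n',m')$ of (\ref{t-orthog-desitter}) so that the Kronecker deltas identify the correct coefficient of $\phi$.
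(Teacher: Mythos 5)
Your proposal is correct and follows essentially the same route as the paper's own (very terse) proof: reduce to the basis functions of ${\cal H}_{\m}^{\pm}$, expand $K_{\m}(X,Y)$ via Proposition \ref{1/N-exp-deformed}, and collapse the pairing using the orthogonality relations (\ref{t-orthog-desitter}). The extra material you supply --- the symmetry of $K_{\m}$, the uniform convergence of the expansion on $S^3_R$, and the slot/sign bookkeeping in (\ref{h-pairing}) --- simply fills in details the paper leaves implicit.
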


\begin{proof}
It is sufficient to prove the formulas when
$$
\phi(X) = \frac{t^l_{m\,\underline{n}}(X)}
{\bigl( \bigl(1+\m^2N(X)\bigr)^{1/2} \pm1 \bigr)^{2l+1}}.
$$
Then the result follows from the expansion of the kernel $K_{\m}(X,Y)$ and
orthogonality relations (\ref{t-orthog-desitter}).
\end{proof}

\begin{cor} \label{SO(1,4)-invariance}
The bilinear pairing (\ref{h-pairing}) is $SO^+(1,4)$-invariant and
$\mathfrak{so}(6,\BB C)$-invariant.
\end{cor}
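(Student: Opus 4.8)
The plan is to deduce both invariance statements from the Poisson reproducing formula (Theorem \ref{Poisson5}) together with the manifest $SO^+(1,4)$-invariance of the kernel $K_{\m}$. The starting point is the observation that, writing $a \cdot X$ for the quaternion corresponding to the point $a \cdot \hat X$ on the hyperboloid $H_{\m^{-1}}$, one has $\widehat{a \cdot X} = a \cdot \hat X$ for $a \in SO^+(1,4)$, so that
$$
\langle \widehat{a \cdot X} - \widehat{a \cdot Y}, \widehat{a \cdot X} - \widehat{a \cdot Y}\rangle_{1,4} = \langle \hat X - \hat Y, \hat X - \hat Y\rangle_{1,4},
$$
since $a$ preserves the form $\langle\cdot,\cdot\rangle_{1,4}$. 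Hence $K_{\m}(a\cdot X, a\cdot Y) = K_{\m}(X,Y)$. For $a \in SO^+(1,4)$ the cocycle factor $\rho_0/\nu(a^{-1}\cdot W)$ is identically $1$ (as $a$ fixes the coordinate $w^5$), so the action is simply $(\pi(a)\phi)(X) = \phi(a^{-1}\cdot X)$, and the kernel invariance translates into $\pi(a)K_{\m}(\cdot, Y) = K_{\m}(\cdot, a\cdot Y)$.

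Next I would run the standard reproducing-kernel adjointness argument. For $\phi_1 \in \overline{{\cal H}_{\m}^+}$ the first Poisson formula gives $\phi_1(Y) = (\phi_1, K_{\m}(\cdot,Y))_{\m}$; applying it to $\pi(a)\phi_1$ and using $(\pi(a)\phi_1)(Y) = \phi_1(a^{-1}\cdot Y) = (\phi_1, K_{\m}(\cdot, a^{-1}\cdot Y))_{\m} = (\phi_1, \pi(a^{-1})K_{\m}(\cdot,Y))_{\m}$ yields
$$
(\pi(a)\phi_1, K_{\m}(\cdot,Y))_{\m} = (\phi_1, \pi(a^{-1})K_{\m}(\cdot,Y))_{\m}.
$$
Since the functions $K_{\m}(\cdot, Y)$ with $N(Y) < R^2$ span a dense subspace of $\overline{{\cal H}_{\m}^-}$ (read off from the expansion in Proposition \ref{1/N-exp-deformed}), this extends to $(\pi(a)\phi_1, \phi_2)_{\m} = (\phi_1, \pi(a^{-1})\phi_2)_{\m}$ for all $\phi_2 \in \overline{{\cal H}_{\m}^-}$; replacing $\phi_2$ by $\pi(a)\phi_2$ gives the desired identity $(\pi(a)\phi_1, \pi(a)\phi_2)_{\m} = (\phi_1,\phi_2)_{\m}$. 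This uses that $\pi(a)$ preserves each of $\overline{{\cal H}_{\m}^\pm}$, which I would record separately as the statement that these are the (inequivalent) irreducible constituents.

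For $\mathfrak{so}(6,\BB C)$-invariance it suffices, after complexification, to treat $\mathfrak{so}(1,5)$, which is generated by $\mathfrak{so}(1,4)$ and the infinitesimal hyperbolic rotation $\xi$ in the $(w^0 w^5)$-plane. Invariance under $\mathfrak{so}(1,4)$ follows by differentiating the $SO^+(1,4)$-invariance just established. For $\xi$ I would use the computation of Section \ref{conformal-section} that $\pi(\xi)$ acts as $(1+\m^2 N(X))^{1/2}\degt$; combined with (\ref{eq2}) this diagonalizes $\pi(\xi)$, giving eigenvalue $+(2l+1)$ on each basis vector $t^l_{m\,\underline{n}}(X)\bigl((1+\m^2 N(X))^{1/2}+1\bigr)^{-(2l+1)}$ of ${\cal H}_{\m}^+$ and eigenvalue $-(2l+1)$ on the whole $l$-block of ${\cal H}_{\m}^-$ (the vectors with a minus sign in the denominator). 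Because the orthogonality relations (\ref{t-orthog-desitter}) pair two basis vectors nontrivially only when their values of $l$ agree, the two eigenvalues cancel and $(\pi(\xi)\phi_1,\phi_2)_{\m} + (\phi_1,\pi(\xi)\phi_2)_{\m} = 0$, which is precisely the infinitesimal invariance under $\xi$.

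The main obstacle is the $SO^+(1,4)$ step rather than the $\xi$ step: one must justify that $\pi(a)$ genuinely preserves the completed subspaces $\overline{{\cal H}_{\m}^\pm}$ (so that the pairing of $\pi(a)\phi_1$ with $\pi(a)\phi_2$ is even defined), and one must control the convergence regions $N(Y)\lessgtr R^2$ in the two versions of the Poisson formula, ensuring that the kernels used lie in the correct space and that the density statement applies. The $\xi$-computation, by contrast, is purely algebraic once the eigenvalue formula is in hand.
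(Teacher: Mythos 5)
Your proposal is correct, and it splits naturally into a part that mirrors the paper and a part that is genuinely different. For the $SO^+(1,4)$-invariance the paper does what you do --- combine the Poisson formula (Theorem \ref{Poisson5}) with the invariance of $K_{\m}$ under the simultaneous action on both arguments --- but instead of your adjointness-on-kernels-plus-density argument it substitutes Poisson representations for \emph{both} $\phi_1$ and $\phi_2$ (over spheres $S^3_{R_1}$, $S^3_{R_2}$ with $R_2<R<R_1$) and reduces the claim to the single identity $\bigl(\pi(a)_X K_{\m}(X,Y_1),\pi(a)_X K_{\m}(X,Y_2)\bigr)_{\m}=\bigl(K_{\m}(X,Y_1),K_{\m}(X,Y_2)\bigr)_{\m}$. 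These are nearly the same argument: to make your density step rigorous, and especially to justify substituting $\psi=\pi(a)\phi_2$ into an identity you established only for $\psi$ in the closure of the span of the kernels, the natural move is to write $\pi(a)\phi_2$ via the second Poisson formula as an integral of kernels $K_{\m}(\cdot,X')$ over a small sphere and apply Fubini --- which is precisely the paper's second substitution. Where you genuinely diverge is the extra generator $\xi$ of $\mathfrak{so}(1,5)$: the paper invokes Lemma \ref{fundamental_sol} to get the kernel identity $(1+\m^2N(X))^{1/2}\degt_X K_{\m}(X,Y)=-(1+\m^2N(Y))^{1/2}\degt_Y K_{\m}(X,Y)$ and then repeats the Poisson argument verbatim, whereas you diagonalize $\pi(\xi)=(1+\m^2N(X))^{1/2}\degt$ on the $K$-type basis --- eigenvalue $+(2l+1)$ on the ${\cal H}^+_{\m}$ basis vectors and $-(2l+1)$ on the ${\cal H}^-_{\m}$ ones, which is exactly the computation carried out in Section \ref{H_mu-section} --- and conclude infinitesimal skew-adjointness from the orthogonality relations (\ref{t-orthog-desitter}). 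Your version of this step is more elementary and purely algebraic on $K$-finite vectors; the paper's is uniform with the group case and operates directly on the completed spaces.

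One correction to your framing: $\pi(a)$ does \emph{not} preserve $\overline{{\cal H}^{\pm}_{\m}}$ as spaces of solutions on $\BB H^{\times}$ --- it moves the singular point of an ${\cal H}^-_{\m}$-type function from $0$ to $a\cdot 0$ --- so these are not literally invariant subspaces on which the group acts, and recording them as ``the irreducible constituents'' will not supply the missing justification. The invariance must be read, as in the paper's proof, for $a$ close to the identity with the pairing taken over a fixed sphere $S^3_R$ and with each argument regular on an appropriate neighborhood of its side of the sphere; this is exactly the domain bookkeeping you flag as the main obstacle, and the double Poisson substitution is what discharges it.
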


\begin{proof}
Since the group is connected, to prove $SO^+(1,4)$-invariance, it is
sufficient to show invariance for $a \in SO^+(1,4)$ sufficiently close to the
identity only.
Choose $R_1$, $R_2$ such that $0<R_2<R<R_1$ and, using the Poisson formula, write
$$
\phi_i(X) = (-1)^{i+1}\frac{\sqrt{1+\m^2R_i^2}}{2\pi^2} \int_{Y_i \in S^3_{R_i}}
(\degt\phi_i)(Y_i) \cdot K_{\m}(X,Y_i) \,\frac{dS}{R_i}, \qquad i=1,2.
$$
In short,
$$
\phi_1(X) = \bigl( \phi_1(Y_1), K_{\m}(X,Y_1) \bigr)_{\m}
\qquad \text{and} \qquad
\phi_2(X) = \bigl( K_{\m}(X,Y_2), \phi_2(Y_2) \bigr)_{\m}.
$$
Then
\begin{multline*}
\bigl( \bigl(\pi(a)\phi_1\bigr)(X), \bigl(\pi(a)\phi_2\bigr)(X) \bigr)_{\m}  \\
= \Bigl( \bigl( \phi_1(Y_1), \pi(a)_X K_{\m}(X,Y_1) \bigr)_{\m},
\bigl( \pi(a)_X K_{\m}(X,Y_2), \phi_2(Y_2) \bigr)_{\m}
\Bigr)_{\m} \\
= \bigl( (\phi_1(Y_1), K_{\m}(X,Y_1))_{\m},
(K_{\m}(X,Y_2), \phi_2(Y_2))_{\m} \bigr)_{\m}
= (\phi_1(X), \phi_2(X))_{\m}
\end{multline*}
because the Poisson formula is $SO^+(1,4)$-equivariant and
\begin{multline*}
\bigl(\pi(a)_X K_{\m}(X,Y_1), \pi(a)_X K_{\m}(X,Y_2) \bigr)_{\m} \\
= \bigl( \pi(a)_{Y_1} \circ \pi(a)_X K_{\m}(X,Y_1), K_{\m}(X,Y_2) \bigr)_{\m}
= (K_{\m}(X,Y_1), K_{\m}(X,Y_2))_{\m}.
\end{multline*}

Since the action of $\mathfrak{so}(6,\BB C)$ is generated by the actions of
$\mathfrak{so}(1,4)$ and the operator
$\phi \mapsto \bigl(1+\m^2N(X)\bigr)^{1/2} \degt \phi$,
to prove $\mathfrak{so}(6,\BB C)$-invariance, it is sufficient to prove
invariance under the operator
$\phi \mapsto \bigl(1+\m^2N(X)\bigr)^{1/2} \degt \phi$ only.
By Lemma \ref{fundamental_sol},
\begin{multline*}
\bigl(1+\m^2N(X)\bigr)^{1/2} \degt_X K_{\m}(X,Y)
= \frac2{\m^2} \frac{\bigl(1+\m^2N(Y)\bigr)^{1/2} - \bigl(1+\m^2N(X)\bigr)^{1/2}}
{\bigl( \langle \hat X - \hat Y, \hat X - \hat Y \rangle_{1,4} \bigr)^2} \\
= - \bigl(1+\m^2N(Y)\bigr)^{1/2} \degt_Y K_{\m}(X,Y),
\end{multline*}
and then the proof continues in exactly the same way as for
$SO^+(1,4)$-invariance.
\end{proof}

\section{Regular Functions}  \label{reg-fun-section}

In order to define analogues of left and right regular functions,
we need to factor $\tlap$ as a product of two Dirac-like operators.
The factorization
$$
\square = \nabla\nabla^+ = \nabla^+\nabla
\quad \text{can be rewritten as}\quad
\begin{pmatrix} 0 & \nabla^+ \\ \nabla & 0 \end{pmatrix}^2
= \begin{pmatrix} \square & 0 \\ 0 & \square \end{pmatrix}
= \square \cdot I_{2 \times 2},
$$
and
$$
\begin{pmatrix} 0 & \nabla^+ \\ \nabla & 0 \end{pmatrix}
\begin{pmatrix} f_1 \\ f_2 \end{pmatrix} =0
\qquad \Longleftrightarrow \qquad
\text{$\nabla f_1=0$ and $\nabla^+f_2=0$},
$$
i.e. $f_2$ is left-regular and $f_1$ is anti-left-regular.

\begin{prop} \label{lap-factorization}
Let
$$
\nabla_{\m} =
\begin{pmatrix} \m(X\nabla-\degt) & \bigl(1+\m^2N(X)\bigr)^{1/2} \nabla^+ \\
\bigl(1+\m^2N(X)\bigr)^{1/2} \nabla & \m(X^+\nabla^+-\degt) \end{pmatrix},
$$
then we have a factorization
$$
\tlap = \nabla_{\m}(\nabla_{\m}-\m).
$$
\end{prop}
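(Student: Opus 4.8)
The plan is to read the asserted equality as an identity of $2\times2$ matrices of (non‑commuting) differential operators, whose right‑hand side is $\tlap\cdot I_{2\times2}$, and to verify it entrywise. Abbreviating $F=\bigl(1+\m^2N(X)\bigr)^{1/2}$, I would first record the block decomposition
$$
\nabla_{\m}=\m\mathcal D+F\mathcal N,\qquad
\mathcal D=\begin{pmatrix}X\nabla-\degt&0\\0&X^+\nabla^+-\degt\end{pmatrix},\qquad
\mathcal N=\begin{pmatrix}0&\nabla^+\\\nabla&0\end{pmatrix},
$$
so that $\nabla_{\m}-\m=\m(\mathcal D-I)+F\mathcal N$ and
$$
\nabla_{\m}(\nabla_{\m}-\m)=\m^2\mathcal D(\mathcal D-I)+\bigl(\m\mathcal D\,F\mathcal N+\m F\mathcal N(\mathcal D-I)\bigr)+F\mathcal N F\mathcal N.
$$
Since $\mathcal D$ is diagonal and $\mathcal N$ off‑diagonal, the first and third terms are diagonal and the middle bracket is the only off‑diagonal contribution; the proof therefore splits into a diagonal identity, which must reproduce $\tlap$, and an off‑diagonal identity, which must vanish.

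The analytic inputs I would use are the classical factorization $\square=\nabla\nabla^+=\nabla^+\nabla$, the relations $XX^+=X^+X=N(X)$, the degree commutators $[\deg,\nabla]=-\nabla$ and $[\deg,X]=X$ (whence $[\degt,X\nabla]=0$), and — because $F$ is a scalar function — the clean Leibniz rule $\nabla^+\circ F=M_{\nabla^+F}+F\nabla^+$ with
$$
\nabla^+F=\m^2F^{-1}X,\qquad \nabla F=\m^2F^{-1}X^+,\qquad \deg F=\m^2N(X)F^{-1},
$$
the last being (\ref{eq2}) and the first two following from the same one‑line computation (here $M_g$ denotes multiplication by $g$). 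A short calculation then gives the $(1,1)$ entry of $F\mathcal N F\mathcal N$, namely $F\nabla^+F\nabla=\m^2X\nabla+F^2\square$. Substituting this into the diagonal term collapses all $F$‑dependence to $F^2=1+\m^2N(X)$, and matching against $\tlap=\square+\m^2(\degt^2+\degt)$ reduces the $(1,1)$ block to the single $\m$‑free, second order identity
$$
(X\nabla)^2=2\,\degt\,(X\nabla)-N(X)\,\square,
$$
the $(2,2)$ block following from its conjugate with $X^+\nabla^+$ in place of $X\nabla$. Expanding the off‑diagonal bracket with the same rules, the factor $F$ cancels completely (using $F^{-1}XF=X$), and the $(1,2)$ block vanishes exactly when
$$
\nabla^+X^+\nabla^+=2\,\nabla^+\degt-X\,\square,
$$
with the $(2,1)$ block governed by the conjugate identity $\nabla X\nabla=2\,\nabla\degt-X^+\square$.

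Thus the proposition reduces to these two $\m$‑independent quaternionic identities, and establishing them is the main obstacle. The difficulty is that the Dirac coefficients do not commute with multiplication by $X$ or $X^+$: the naive Leibniz rule for $\nabla\circ X$ produces a ``twisted'' first order operator $\sum_{\mu}e_{\mu}X\partial_{\mu}$ rather than $X\nabla$, so one cannot manipulate $X\nabla$ and $\nabla^+X^+\nabla^+$ purely formally. I would resolve this by a direct Clifford computation: writing out $\sum_{\mu}e_{\mu}X\partial_{\mu}$ in the generators and using the defining relations $e_{\mu}\bar e_{\nu}+e_{\nu}\bar e_{\mu}=2\delta_{\mu\nu}$ that underlie $\square=\nabla\nabla^+$, the symmetric part assembles into $\deg$ while the antisymmetric part is a degree‑zero angular operator; forming $(X\nabla)^2$ and $\nabla^+X^+\nabla^+$ and combining with $N(X)\square=\deg^2+2\deg+\Delta_{S^3}$ then cancels all angular contributions and leaves only degree operators. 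Alternatively, since both identities have polynomial coefficients and are of order $\le 2$, it suffices to verify them on the basis functions $t^l_{m\,\underline{n}}(X)$ and $t^l_{m\,\underline{n}}(X)\,N(X)^{k}$, on which $\nabla$, $X\nabla$ and $\degt$ act explicitly, and then extend by density. Once the two identities are in hand, reassembling the four entries yields $\nabla_{\m}(\nabla_{\m}-\m)=\tlap\cdot I_{2\times2}$, which is the claim.
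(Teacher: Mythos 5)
Your proposal is correct and is essentially the paper's own argument: the paper likewise proves the factorization by an entrywise computation of the $2\times2$ operator matrix (it shows $\nabla_{\m}^2=\tlap+\m\nabla_{\m}$, which is the same thing), uses the same Leibniz rules for $\bigl(1+\m^2N(X)\bigr)^{1/2}$ (its identities (\ref{eq1}) and (\ref{eq2})), and reduces the diagonal and off-diagonal blocks to exactly your two residual identities, which appear there as (\ref{eq4}) and as a rearrangement of (\ref{eq3}). The only place you make the ending harder than necessary is the proof of those two identities: both follow in one line from the single first-order identity $X^+\nabla^+ + \nabla X = \nabla^+X^+ + X\nabla = 2(\deg+2)$ (the paper's (\ref{eq3}), itself a one-line coordinate/Clifford computation of the type you describe), namely $\nabla^+X^+\nabla^+=\bigl(2(\deg+2)-X\nabla\bigr)\nabla^+=2\nabla^+\degt-X\square$ and $X\nabla X\nabla=X\bigl(2(\deg+2)-X^+\nabla^+\bigr)\nabla=2\degt\,X\nabla-N(X)\square$, so the angular-operator machinery or the basis-verification fallback you sketch is not needed.
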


\begin{proof}
We use the following identities:
\begin{equation}  \label{eq1}
\nabla \bigl(1+\m^2N(X)\bigr)^{1/2} = \frac{\m^2X^+}{\bigl(1+\m^2N(X)\bigr)^{1/2}},
\qquad
\nabla^+ \bigl(1+\m^2N(X)\bigr)^{1/2} = \frac{\m^2X}{\bigl(1+\m^2N(X)\bigr)^{1/2}},
\end{equation}
\begin{equation}  \label{eq3}
X^+\nabla^+ + \nabla X = \nabla^+X^+ + X\nabla = 2(2+\deg),
\end{equation}
the last identity in turn implies
\begin{equation}  \label{eq4}
(X\nabla-\degt)^2 = (X^+\nabla^+-\degt)^2 = \degt^2 - N(X)\square.
\end{equation}
First, we find that $\nabla_\m^2$ is equal to
$$
\begin{pmatrix} \begin{matrix} \m^2(X\nabla-\degt)^2 + \\
\bigl(1+\m^2N(X)\bigr)^{1/2}\nabla^+\bigl(1+\m^2N(X)\bigr)^{1/2}\nabla\end{matrix}
&  & \begin{matrix} \m(X\nabla-\degt)\bigr(1+\m^2N(X)\bigr)^{1/2}\nabla^+ \\
+ \m\bigl(1+\m^2N(X)\bigr)^{1/2}\nabla^+(X^+\nabla^+-\degt) \end{matrix} \\
\quad &  & \quad \\
\begin{matrix} \m(X^+\nabla^+-\degt)\bigl(1+\m^2N(X)\bigr)^{1/2}\nabla \\
+ \m\bigl(1+\m^2N(X)\bigr)^{1/2}\nabla(X\nabla-\degt) \end{matrix} &  &
\begin{matrix} \m^2(X^+\nabla^+-\degt)^2 +\\
\bigl(1+\m^2N(X)\bigr)^{1/2}\nabla\bigl(1+\m^2N(X)\bigr)^{1/2}\nabla^+
\end{matrix} \end{pmatrix}
$$
and then work out each entry separately.
By (\ref{eq1}) and (\ref{eq4}), the diagonal terms are
\begin{align*}
&\m^2\bigl(\degt^2-N(X)\square\bigr) + \bigl(1+\m^2N(X)\bigr)\square+\m^2X\nabla
= \square + \m^2(\degt^2 + X\nabla),\\
& \m^2\bigl(\degt^2-N(X)\square\bigr)
+\bigl(1+\m^2N(X)\bigr)\square+\m^2X^+\nabla^+
= \square + \m^2(\degt^2 + X^+\nabla^+).
\end{align*}
By (\ref{eq2}), (\ref{eq1}) and (\ref{eq3}), the off-diagonal terms are
\begin{multline*}
\m(X\nabla-\degt)\bigl(1+\m^2N(X)\bigr)^{1/2}\nabla^+
+ \m\bigl(1+\m^2N(X)\bigr)^{1/2}\nabla^+(X^+\nabla^+-\degt) \\
= \m\bigl(1+\m^2N(X)\bigr)^{1/2}
\bigl( X\square - \degt\nabla^+ + \nabla^+X^+\nabla^+ - \nabla^+\degt \bigr)
= \m\bigl(1+\m^2N(X)\bigr)^{1/2}\nabla^+
\end{multline*}
and similarly
\begin{multline*}
\m(X^+\nabla^+-\degt)\bigl(1+\m^2N(X)\bigr)^{1/2}\nabla
+ \m\bigl(1+\m^2N(X)\bigr)^{1/2}\nabla(X\nabla-\degt) \\
= \m\bigl(1+\m^2N(X)\bigr)^{1/2}
\bigl( X^+\square - \degt\nabla + \nabla X\nabla - \nabla\degt \bigr)
= \m\bigl(1+\m^2N(X)\bigr)^{1/2}\nabla.
\end{multline*}
This proves $\nabla_\m^2 = \tlap + \m \nabla_{\m}$.
\end{proof}

The two equations (\ref{eq3}) can be combined into a single equation as
$$
\begin{pmatrix} 0 & \nabla^+ \\ \nabla & 0 \end{pmatrix}
\begin{pmatrix} 0 & X \\ X^+ & 0 \end{pmatrix}
+ \begin{pmatrix} 0 & X \\ X^+ & 0 \end{pmatrix}
\begin{pmatrix} 0 & \nabla^+ \\ \nabla & 0 \end{pmatrix}
= 2 \begin{pmatrix} \deg+2 & 0 \\ 0 & \deg+2 \end{pmatrix} = 2(\deg+2).
$$
In our context this formula becomes
$$
(\nabla_{\m}-\m) \begin{pmatrix} 0 & X \\ X^+ & 0 \end{pmatrix}
+ \begin{pmatrix} 0 & X \\ X^+ & 0 \end{pmatrix} \nabla_{\m}
= 2\bigl( 1+\m^2N(X) \bigr)^{1/2}(\deg+2).
$$


We introduce another operator, $\overleftarrow{\nabla}_{\m}$,
which we apply to functions on the right:
\begin{multline*}
\overleftarrow{\nabla}_{\m}: (g_1, g_2) \quad \mapsto \quad
(g_1, g_2)
\begin{pmatrix} \m(\degt-\nabla^+X^+) & \nabla^+\bigl(1+\m^2N(X)\bigr)^{1/2} \\
\nabla \bigl(1+\m^2N(X)\bigr)^{1/2} & \m(\degt-\nabla X) \end{pmatrix} \\
= \begin{pmatrix} \m(\degt g_1-(g_1\nabla^+)X^+) +
\bigl(1+\m^2N(X)\bigr)^{1/2}(g_2\nabla) \\
\bigl(1+\m^2N(X)\bigr)^{1/2}(g_1\nabla^+)
+ \m(\degt g_2-(g_2\nabla)X) \end{pmatrix}.
\end{multline*}

\begin{prop}
We have a factorization
$$
\tlap = \overleftarrow{\nabla}_{\m} (\overleftarrow{\nabla}_{\m}+\m).
$$
\end{prop}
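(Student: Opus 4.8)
The plan is to prove the operator identity $\tlap\,I=\overleftarrow{\nabla}_{\m}(\overleftarrow{\nabla}_{\m}+\m)$ by the same entry-by-entry computation that established Proposition \ref{lap-factorization}, now reading every product of operators as a right action. Writing $M_{\m}$ for the $2\times2$ matrix of operators defining $\overleftarrow{\nabla}_{\m}$, the claim is equivalent to $M_{\m}^2=\tlap\,I-\m M_{\m}$, where the square denotes the composition $g\mapsto (gM_{\m})M_{\m}$. The essential feature is that, because the operators act on the right, a symbol such as $\nabla^+X^+$ is applied by first differentiating and then multiplying; thus the relevant operator algebra is the opposite of the one appearing in Proposition \ref{lap-factorization}, and this reversal is precisely what turns the factor $(\nabla_{\m}-\m)$ on the left into the factor $(\overleftarrow{\nabla}_{\m}+\m)$ on the right.

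First I would expand $M_{\m}^2$ into a $2\times2$ matrix exactly as in the proof of Proposition \ref{lap-factorization} and then simplify the four entries. For the diagonal entries I expect, after using (\ref{eq1}) to move $\nabla$ and $\nabla^+$ across the scalar factor $\bigl(1+\m^2N(X)\bigr)^{1/2}$ and the right-action form of (\ref{eq4}), namely $(\degt-\nabla^+X^+)^2=(\degt-\nabla X)^2=\degt^2-N(X)\square$, to obtain
$$
\square+\m^2\bigl(\degt^2+\nabla^+X^+\bigr)
\qquad\text{and}\qquad
\square+\m^2\bigl(\degt^2+\nabla X\bigr).
$$
Since the diagonal of $\m M_{\m}$ equals $\m^2(\degt-\nabla^+X^+)$ and $\m^2(\degt-\nabla X)$, subtracting it leaves $\square+\m^2(\degt^2+\degt)=\tlap$ in both slots. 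For the off-diagonal entries I expect, using (\ref{eq1}) together with the right-action counterpart of (\ref{eq3}), the values $-\m\nabla^+\bigl(1+\m^2N(X)\bigr)^{1/2}$ and $-\m\nabla\bigl(1+\m^2N(X)\bigr)^{1/2}$, which are exactly $-\m$ times the off-diagonal entries of $M_{\m}$ and therefore match the off-diagonal part of $\tlap\,I-\m M_{\m}$. Assembling the four entries gives $M_{\m}^2=\tlap\,I-\m M_{\m}$, which is the asserted factorization.

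The hard part will be establishing the right-action analogues of the identities (\ref{eq1}), (\ref{eq3}) and (\ref{eq4}) while keeping the composition order straight throughout. Unlike $\square$, the operator $\degt$ is not invariant under reversal of composition order, so one cannot simply transpose Proposition \ref{lap-factorization}; the shift produced when commuting $\nabla$ or $\nabla^+$ past the multiplication operators $X$, $X^+$ and $\bigl(1+\m^2N(X)\bigr)^{1/2}$ is exactly what flips the sign of the $\m$-term, converting $(\nabla_{\m}-\m)$ into $(\overleftarrow{\nabla}_{\m}+\m)$. Concretely, I would verify once and for all the right-action versions of (\ref{eq1}) and (\ref{eq3}), then plug them into each off-diagonal entry; the diagonal entries then follow from the right-action form of (\ref{eq4}) in the same way as in the left case. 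Everything else is the routine bookkeeping already carried out in Proposition \ref{lap-factorization}, mirrored on the opposite side.
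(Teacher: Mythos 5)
Your proposal is correct and is exactly the argument the paper intends: the paper states this proposition without proof, leaving it as the right-action mirror of the computation in Proposition \ref{lap-factorization}, which is precisely what you outline. Your intermediate claims all check out — the right-action form of (\ref{eq4}), the diagonal entries $\square+\m^2\bigl(\degt^2+\nabla^+X^+\bigr)$ and $\square+\m^2\bigl(\degt^2+\nabla X\bigr)$, and the off-diagonal entries $-\m\nabla^+\bigl(1+\m^2N(X)\bigr)^{1/2}$ and $-\m\nabla\bigl(1+\m^2N(X)\bigr)^{1/2}$ — so assembling them indeed yields $\overleftarrow{\nabla}_{\m}^{\,2}=\tlap-\m\overleftarrow{\nabla}_{\m}$, i.e. $\tlap = \overleftarrow{\nabla}_{\m}(\overleftarrow{\nabla}_{\m}+\m)$.
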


\begin{rem}  \label{reg-remark}
One can produce functions satisfying $\nabla_{\m} f=0$ and
$g \overleftarrow{\nabla}_{\m}=0$ as follows.
Start with a $\BB C$-valued function $\phi$ annihilated by $\tlap$
(for example, take $\phi \in {\cal H}_{\m}$). Then the two columns of
$(\nabla_{\m}-\m)\phi$ satisfy $\nabla_{\m} f=0$ and the two rows of
$\phi (\overleftarrow{\nabla}_{\m}+\m)$ satisfy $g \overleftarrow{\nabla}_{\m}=0$.
\end{rem}

\section{Basic Properties of Regular Functions}  \label{reg-fun-prop-section}

Recall from \cite{FL1} that
$$
Dx = dx^1 \wedge dx^2 \wedge dx^3 - i dx^0 \wedge dx^2 \wedge dx^3
+ j dx^0 \wedge dx^1 \wedge dx^3 - k dx^0 \wedge dx^1 \wedge dx^2
$$
is an $\BB H$-valued 3-form on $\BB H$ that is Hodge dual to
$$
dX = dx^0 + idx^1 + jdx^2 + kdx^3.
$$
We also introduce
$$
Dx^+ = dx^1 \wedge dx^2 \wedge dx^3 + i dx^0 \wedge dx^2 \wedge dx^3
- j dx^0 \wedge dx^1 \wedge dx^3 + k dx^0 \wedge dx^1 \wedge dx^2,
$$
which is Hodge dual to
$$
dX^+ = dx^0 - idx^1 - jdx^2 - kdx^3
$$
and
$$
Dr=  x^0 dx^1 \wedge dx^2 \wedge dx^3 - x^1 dx^0 \wedge dx^2 \wedge dx^3
+ x^2 dx^0 \wedge dx^1 \wedge dx^3 - x^3 dx^0 \wedge dx^1 \wedge dx^2,
$$
which is Hodge dual to
$$
dr= x^0dx^0 + x^1dx^1 + x^2dx^2 + x^3dx^3.
$$
Recall that $dV= dx^0 \wedge dx^1 \wedge dx^2 \wedge dx^3$
is the volume form on $\BB H$.
If $f= \bigl( \begin{smallmatrix} f_1 \\ f_2 \end{smallmatrix} \bigr)$ and
$g= (g_1, g_2)$ are two functions defined on an open set in $\BB H$, then
$$
\begin{matrix}
d(Dx \cdot f) = (\nabla^+ f)dV, & \qquad & d(Dx^+\cdot f) = (\nabla f)dV,
& \qquad & d(f \cdot Dr)= (\deg f +4f)dV, \\
d(g \cdot Dx) = (g\nabla^+)dV, & \qquad & d(g \cdot Dx^+) = (g\nabla) dV.
\end{matrix}
$$
Note that
$$
Dr = \frac12 (X \cdot Dx^+ + Dx \cdot X^+)
= \frac12 (X^+ \cdot Dx + Dx^+ \cdot X).
$$
Consider a matrix-valued 3-form on $\BB H$
$$
Dx_{\m} = \begin{pmatrix} \m \frac{X \cdot Dx^+ - Dr}{(1+\m^2N(X))^{1/2}} &
Dx \\ Dx^+ & \m \frac{X^+ \cdot Dx - Dr}{(1+\m^2N(X))^{1/2}} \end{pmatrix}
= \begin{pmatrix}
\frac{\m}2 \frac{X \cdot Dx^+ - Dx \cdot X^+}{(1+\m^2N(X))^{1/2}} & Dx \\
Dx^+ & \frac{\m}2 \frac{X^+ \cdot Dx - Dx^+ \cdot X}{(1+\m^2N(X))^{1/2}}
\end{pmatrix}.
$$

\begin{lem}
\begin{equation*}
d(g \cdot Dx_{\m} \cdot f) = \bigl( 1+\m^2N(X)\bigr)^{-1/2}
\bigl( (g \overleftarrow{\nabla}_{\m})f+g(\nabla_{\m}f) \bigr)dV.
\end{equation*}
\end{lem}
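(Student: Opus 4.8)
The plan is to expand the matrix product $g\cdot Dx_{\m}\cdot f$ into its four quaternionic constituents and differentiate each summand with the graded Leibniz rule, feeding in the five fundamental identities for $d(Dx\cdot f)$, $d(Dx^+\cdot f)$, $d(f\cdot Dr)$, $d(g\cdot Dx)$ and $d(g\cdot Dx^+)$ recalled just above the statement. Writing $f=\bigl(\begin{smallmatrix} f_1\\ f_2\end{smallmatrix}\bigr)$ and $g=(g_1,g_2)$, the off-diagonal entries of $Dx_{\m}$ contribute $g_1\cdot Dx\cdot f_2$ and $g_2\cdot Dx^+\cdot f_1$, while the diagonal entries contribute the two $\m\bigl(1+\m^2N(X)\bigr)^{-1/2}$-weighted terms built from $X\cdot Dx^+-Dr$ and $X^+\cdot Dx-Dr$. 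The goal is to show that, after applying $d$ and collecting, everything assembles into $\bigl(1+\m^2N(X)\bigr)^{-1/2}$ times the two operators $\nabla_{\m}$ (acting on $f$ on the left) and $\overleftarrow{\nabla}_{\m}$ (acting on $g$ on the right).

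First I would record the sandwiched versions of the fundamental identities. Since $Dx$ and $Dx^+$ have constant coefficients, the graded Leibniz rule combined with the one-sided formulas gives $d(g\cdot Dx\cdot f)=\bigl((g\nabla^+)f+g(\nabla^+f)\bigr)\,dV$ and $d(g\cdot Dx^+\cdot f)=\bigl((g\nabla)f+g(\nabla f)\bigr)\,dV$. Applied to the two off-diagonal terms these produce precisely $\bigl((g_1\nabla^+)f_2+g_1(\nabla^+f_2)\bigr)\,dV$ and $\bigl((g_2\nabla)f_1+g_2(\nabla f_1)\bigr)\,dV$, which match the off-diagonal entries of $(g\overleftarrow{\nabla}_{\m})f+g(\nabla_{\m}f)$ once one notes that the factor $\bigl(1+\m^2N(X)\bigr)^{1/2}$ carried by those entries of $\nabla_{\m}$ and $\overleftarrow{\nabla}_{\m}$ is exactly cancelled by the overall prefactor $\bigl(1+\m^2N(X)\bigr)^{-1/2}$.

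The diagonal terms are the heart of the matter. Here I would keep the $Dr$-form and split, for instance, $g_1\cdot\m\tfrac{X\cdot Dx^+-Dr}{(1+\m^2N(X))^{1/2}}\cdot f_1=\m\bigl[g_1(1+\m^2N(X))^{-1/2}X\cdot Dx^+\cdot f_1-g_1(1+\m^2N(X))^{-1/2}\,Dr\cdot f_1\bigr]$. Differentiating the first summand by the sandwiched $Dx^+$-identity yields the clean piece $g_1(1+\m^2N(X))^{-1/2}(X\nabla f_1)$ together with a left-derivative term in which $\nabla$ also falls on the scalar $\bigl(1+\m^2N(X)\bigr)^{1/2}$ and on the linear factor $X$; these last derivatives are evaluated by (\ref{eq1})--(\ref{eq2}). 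In the second summand $Dr$ is scalar-valued, so it commutes to the right and $d(\,\cdot\,Dr)=(\deg+4)\,dV$ contributes the $\deg$-pieces, with $\deg\bigl(1+\m^2N(X)\bigr)^{1/2}=\m^2N(X)\bigl(1+\m^2N(X)\bigr)^{-1/2}$ from (\ref{eq2}). The contributions in which $\nabla$ or $\nabla^+$ meets the linear factor $X$ or $X^+$ are reorganized via $X^+\nabla^++\nabla X=\nabla^+X^++X\nabla=2(2+\deg)$ from (\ref{eq3}).

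The closing step is bookkeeping: upon collecting, the spurious terms carrying an explicit $N(X)$ that arise from $\deg\bigl(1+\m^2N(X)\bigr)^{\pm1/2}$ cancel against the $\m^2$-weighted terms coming from the $X\cdot Dx^+$ summand, leaving a single overall factor $\bigl(1+\m^2N(X)\bigr)^{-1/2}$, and the surviving pieces assemble into $\m(X\nabla-\degt)f_1$, $\m(X^+\nabla^+-\degt)f_2$ on the $f$-side and $\m(\degt-\nabla^+X^+)g_1$, $\m(\degt-\nabla X)g_2$ on the $g$-side, exactly matching the definitions of $\nabla_{\m}$ and $\overleftarrow{\nabla}_{\m}$. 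I expect the main obstacle to be precisely this noncommutative bookkeeping in the diagonal terms: verifying that the derivatives of the scalar $\bigl(1+\m^2N(X)\bigr)^{1/2}$ and the Leibniz corrections generated when $\nabla$ passes the quaternionic factor $X$ recombine --- through (\ref{eq1}) and (\ref{eq3}) --- into the stated $\degt$-structure, while all $N(X)$- and $\m^2$-weighted remainders cancel and the single prefactor $\bigl(1+\m^2N(X)\bigr)^{-1/2}$ survives.
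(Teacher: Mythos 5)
Your proposal is correct, and the two cancellations your plan hinges on do occur. Concretely, for the $(1,1)$ term, writing $h=\bigl(1+\m^2N(X)\bigr)^{-1/2}$, the sandwiched $Dx^+$-identity gives $d(g_1hX\cdot Dx^+\cdot f_1)=\bigl(((g_1hX)\nabla)f_1+g_1hX(\nabla f_1)\bigr)dV$, where $(g_1hX)\nabla=h\,(g_1X)\nabla+g_1X(\nabla h)=h\bigl(2(\deg+2)g_1-(g_1\nabla^+)X^+\bigr)-\m^2N(X)\bigl(1+\m^2N(X)\bigr)^{-3/2}g_1$ by the right-hand version of (\ref{eq3}) and by (\ref{eq1}); subtracting $d(g_1hf_1\,Dr)=\bigl(\deg(g_1hf_1)+4g_1hf_1\bigr)dV$, the radial remainder $-\m^2N(X)\bigl(1+\m^2N(X)\bigr)^{-3/2}g_1f_1$ cancels against $-g_1(\deg h)f_1$, the constant term $+4hg_1f_1$ cancels against $-4hg_1f_1$, and what survives is $h\bigl((\deg g_1-(g_1\nabla^+)X^+)f_1+g_1(X\nabla-\deg)f_1\bigr)dV$ --- which equals the required $\degt$-expression because the two extra $\pm g_1f_1$ terms cancel in the sum $(g\overleftarrow{\nabla}_{\m})f+g(\nabla_{\m}f)$ (only the sum is asserted by the lemma, not each side separately). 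This said, your organization genuinely differs from the paper's. The paper uses its second formula for $Dx_{\m}$, whose diagonal entries are $\frac{\m}2(X\cdot Dx^+-Dx\cdot X^+)\bigl(1+\m^2N(X)\bigr)^{-1/2}$ (equivalent to yours via $Dr=\frac12(X\cdot Dx^++Dx\cdot X^+)$), computes $d\bigl(g_1(X\cdot Dx^+-Dx\cdot X^+)f_1\bigr)$ by the sandwiched identities and (\ref{eq3}) alone, and never differentiates the prefactor at all: the commutation relations $\bigl[X\nabla-\deg,\bigl(1+\m^2N(X)\bigr)^{\alpha}\bigr]=\bigl[X^+\nabla^+-\deg,\bigl(1+\m^2N(X)\bigr)^{\alpha}\bigr]=0$ show that $h$ simply passes through the operators from which the diagonal contribution is built. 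Your route is more elementary, invoking only the five displayed $d$-identities and (\ref{eq1})--(\ref{eq3}), at the price of tracking and cancelling the radial and constant remainders by hand; the paper's antisymmetrization plus commutator trick makes those remainders invisible and yields a shorter computation.
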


\begin{proof}
Using
$$
\bigl[ X\nabla-\deg, \bigl( 1+\m^2N(X)\bigr)^{\alpha} \bigr] =
\bigl[ X^+\nabla^+-\deg, \bigl( 1+\m^2N(X)\bigr)^{\alpha} \bigr] = 0,
$$
we compute the components of $d(g \cdot Dx_{\m} \cdot f)$ coming from
the diagonal entries of $Dx_{\m}$:
\begin{multline*}
d\bigl( g_1 (X \cdot Dx^+ - Dx \cdot X^+) f_1 \bigr)
= \Bigl( \bigl( (g_1X)\nabla-(g_1\nabla^+)X^+ \bigr) \cdot f_1
+ g_1 \cdot \bigl( X (\nabla f_1) - \nabla^+(X^+f_1) \bigr) \Bigr) dV \\
= 2 \Bigl( \bigl( \deg g_1 - (g_1\nabla^+)X^+ \bigr) \cdot f_1
+ g_1 \cdot \bigl( X (\nabla f_1) - \deg f_1 \bigr) \Bigr) dV,
\end{multline*}
\begin{multline*}
d\bigl( g_2 (X^+ \cdot Dx - Dx^+ \cdot X) f_2 \bigr)
= \Bigl( \bigl( (g_2X^+)\nabla^+ - (g_2\nabla)X \bigr) \cdot f_2
+ g_2 \cdot \bigl( X^+ (\nabla^+ f_2) - \nabla(Xf_2) \bigr) \Bigr) dV \\
= 2 \Bigl( \bigl( \deg g_2 - (g_2\nabla)X \bigr) \cdot f_2
+ g_2 \cdot \bigl( X^+ (\nabla^+ f_2) - \deg f_2 \bigr) \Bigr) dV.
\end{multline*}
Then the result follows.
\end{proof}

As an immediate consequence we obtain Cauchy's integral theorem:

\begin{cor}  \label{cauchy}
Let $f= \bigl( \begin{smallmatrix} f_1 \\ f_2 \end{smallmatrix} \bigr)$ and
$g= (g_1, g_2)$ be two functions defined on an open set $U \subset \BB H$ such
that $\nabla_{\m}f=0$ and $g \overleftarrow{\nabla}_{\m}=0$.
Then $g \cdot Dx_{\m} \cdot f$ is a closed 3-form.
In particular, if $C$ is a 3-cycle in $U$ (with compact support),
then the integral $\int_C g \cdot Dx_{\m} \cdot f$ depends only on
the homology class of $C$.
\end{cor}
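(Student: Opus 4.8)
The plan is to read the statement off directly from the identity established in the preceding Lemma, namely
\[
d(g \cdot Dx_{\m} \cdot f) = \bigl( 1+\m^2N(X)\bigr)^{-1/2}
\bigl( (g \overleftarrow{\nabla}_{\m})f+g(\nabla_{\m}f) \bigr)\,dV .
\]
First I would observe that the hypotheses $\nabla_{\m}f=0$ and $g\overleftarrow{\nabla}_{\m}=0$ make both summands inside the parentheses vanish identically on $U$. Since the scalar factor $\bigl(1+\m^2N(X)\bigr)^{-1/2}$ is finite and nonzero on all of $\BB H$ (on real quaternions $N(X)\ge 0$, so the radicand is at least $1$), the right-hand side is the zero $4$-form. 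Hence $d(g\cdot Dx_{\m}\cdot f)=0$; that is, $g\cdot Dx_{\m}\cdot f$ is a closed $3$-form on $U$.

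For the second assertion I would invoke Stokes' theorem. If $C_0$ and $C_1$ are homologous $3$-cycles in $U$, then $C_1-C_0=\partial B$ for some $4$-chain $B$ supported in $U$, and
\[
\int_{C_1} g\cdot Dx_{\m}\cdot f - \int_{C_0} g\cdot Dx_{\m}\cdot f
= \int_{\partial B} g\cdot Dx_{\m}\cdot f
= \int_{B} d(g\cdot Dx_{\m}\cdot f) = 0 ,
\]
using closedness in the last step. Thus the integral $\int_C g\cdot Dx_{\m}\cdot f$ depends only on the homology class of $C$. The compact-support hypothesis on the cycle is precisely what guarantees convergence of these integrals and the applicability of Stokes' theorem with no boundary contribution at infinity.

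There is essentially no obstacle here: all of the analytic content is packaged in the preceding Lemma, whose proof in turn rests on the commutator identities (\ref{eq1})--(\ref{eq4}) together with the product formulas for $d(Dx\cdot f)$, $d(Dx^+\cdot f)$ and $d(f\cdot Dr)$. The only point worth a moment's care is that $Dx_{\m}$ and the two Dirac-type operators $\nabla_{\m}$, $\overleftarrow{\nabla}_{\m}$ are matrix-valued, so the two conditions are vector equations; but because $(g\overleftarrow{\nabla}_{\m})f$ and $g(\nabla_{\m}f)$ arise exactly as a row vector contracted against the vanishing column $\nabla_{\m}f$ and a vanishing row $g\overleftarrow{\nabla}_{\m}$ contracted against $f$, the two regularity conditions $\nabla_{\m}f=0$ and $g\overleftarrow{\nabla}_{\m}=0$ suffice to annihilate the entire expression, and the corollary follows.
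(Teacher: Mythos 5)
Your proof is correct and follows exactly the route the paper intends: the corollary is stated there as an immediate consequence of the preceding lemma, and your argument---both terms on the right-hand side of $d(g \cdot Dx_{\m} \cdot f) = \bigl(1+\m^2N(X)\bigr)^{-1/2}\bigl((g\overleftarrow{\nabla}_{\m})f + g(\nabla_{\m}f)\bigr)\,dV$ vanish under the hypotheses, and Stokes' theorem then gives homology invariance---is precisely that deduction. Nothing is missing.
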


\begin{lem}
Let $a, d \in \BB H$ with $N(a)=N(d)=1$, then the pull-back of $Dx_{\m}$ under
the map $X \mapsto aXd^{-1}$ is
$$
\begin{pmatrix} a & 0 \\ 0 & d \end{pmatrix} Dx_{\m}
\begin{pmatrix} a & 0 \\ 0 & d \end{pmatrix}^{-1}.
$$
\end{lem}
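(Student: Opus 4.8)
The plan is to reduce the statement to transformation rules for the building blocks $Dx$, $Dx^+$, $Dr$ and $N(X)$ out of which $Dx_{\m}$ is assembled, and then to verify the claimed conjugation formula one matrix entry at a time. Write $T$ for the linear map $T(X)=aXd^{-1}$. Since $N(a)=N(d)=1$ we have $a^{-1}=a^+$ and $d^{-1}=d^+$, and $N(TX)=N(a)N(X)N(d)^{-1}=N(X)$, so $T$ is a Euclidean rotation of $\BB H\cong\BB R^4$. Moreover $(a,d)\mapsto(X\mapsto aXd^{-1})$ is the standard double cover $SU(2)\times SU(2)\to SO(4)$, so $T$ lies in the identity component and is orientation-preserving; this is the one structural fact I would flag at the outset.

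First I would record the pull-backs of the basic forms. Because $a$ and $d$ are constant, $T^*(dX)=a\,dX\,d^{-1}$, and taking quaternionic conjugates gives $T^*(dX^+)=d\,dX^+\,a^{-1}$, using $(aXd^{-1})^+=dX^+a^{-1}$. The paper already records $Dx$, $Dx^+$ and $Dr$ as the Hodge duals of $dX$, $dX^+$ and $dr$; since $T\in SO(4)$ is orientation-preserving the Hodge star $\star$ commutes with $T^*$, and since $\star$ acts only on the form part it commutes with left and right multiplication by the constant quaternions $a$ and $d$. Hence
$$
T^*(Dx)=a\,Dx\,d^{-1}, \qquad T^*(Dx^+)=d\,Dx^+\,a^{-1}.
$$
For the remaining ingredients $N(X)$ is $T$-invariant, so the factor $\bigl(1+\m^2N(X)\bigr)^{1/2}$ is unchanged, while $dr=\frac12\,dN(X)$ gives $T^*(dr)=dr$ and therefore $T^*(Dr)=Dr$. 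The single observation that makes everything fit is that $Dr$ is a \emph{real}-valued $3$-form, hence central in $\BB H$, so conjugation by any unit quaternion fixes it.

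With these rules in hand the verification is mechanical. Conjugating $Dx_{\m}$ by $P=\bigl(\begin{smallmatrix}a&0\\0&d\end{smallmatrix}\bigr)$ multiplies its $(1,1),(1,2),(2,1),(2,2)$ entries on the left by $a,a,d,d$ and on the right by $a^{-1},d^{-1},a^{-1},d^{-1}$ respectively. The off-diagonal entries match immediately from the displayed rules. For the $(1,1)$ entry I would compute $T^*(X\cdot Dx^+)=(aXd^{-1})(d\,Dx^+\,a^{-1})=a\,(X\,Dx^+)\,a^{-1}$ after the inner $d^{-1}d$ collapses, combine with $T^*(Dr)=Dr=a\,Dr\,a^{-1}$ by centrality, and use the invariance of the denominator to conclude $T^*(A)=aAa^{-1}$. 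The $(2,2)$ entry is identical after replacing $X$, $a$, $d^{-1}d$ by $X^+$, $d$, $a^{-1}a$, giving $T^*(D)=dDd^{-1}$. Assembling the four entries reproduces $P\,Dx_{\m}\,P^{-1}$ exactly.

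The steps are all routine once the building-block rules are in place, so I expect no serious obstacle; the two points I would state with care are (i) the orientation-preservation of $T$, which is what licenses $\star$ to commute with $T^*$ without a stray sign, and (ii) the cancellation pattern in the diagonal entries — the cross terms $X\cdot Dx^+$ and $X^+\cdot Dx$ acquire the \emph{mixed} conjugation $a(\cdots)a^{-1}$ and $d(\cdots)d^{-1}$ precisely because the inner $d^{-1}d$ and $a^{-1}a$ collapse, while the subtracted term $Dr$ survives conjugation only because it is scalar-valued.
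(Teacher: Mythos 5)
Your proposal is correct and complete. The paper states this lemma without proof (treating it as a routine verification), so there is no argument of the authors' to compare against; your write-up supplies the missing details accurately. The two facts you flag are exactly the right ones: the map $X \mapsto aXd^{-1}$ lies in $SO(4)$ (via the double cover $SU(2)\times SU(2)\to SO(4)$), which is what lets the Hodge star commute with the pull-back and hence gives $T^*(Dx)=a\,Dx\,d^{-1}$, $T^*(Dx^+)=d\,Dx^+\,a^{-1}$, $T^*(Dr)=Dr$; and the scalar-valuedness of $Dr$ together with invariance of $N(X)$ handles the diagonal entries. One small simplification worth noting: if you use the paper's second displayed expression for $Dx_{\m}$, whose diagonal entries are $\frac{\m}2\bigl(X\cdot Dx^+ - Dx\cdot X^+\bigr)\bigl(1+\m^2N(X)\bigr)^{-1/2}$ and its conjugate, then the diagonal terms transform as $a(\cdots)a^{-1}$ and $d(\cdots)d^{-1}$ entirely by the cancellation $d^{-1}d=1$, $a^{-1}a=1$, and the centrality of $Dr$ is never needed.
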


\begin{lem}  \label{rotation}
Let $f= \bigl(\begin{smallmatrix} f_1 \\ f_2 \end{smallmatrix} \bigr)$ be a
left-regular function (i.e. satisfying $\nabla_{\m} f=0$). Then so is
$$
\begin{pmatrix} a^{-1} f_1(aXd^{-1}) \\ d^{-1}f_2(aXd^{-1}) \end{pmatrix},
\qquad \text{for any $a, d \in \BB H$ with $N(a)=N(d)=1$}.
$$
\end{lem}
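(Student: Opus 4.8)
The plan is to exhibit the twisted pull-back
\[
\sigma:\quad f \longmapsto \begin{pmatrix} a^{-1} & 0 \\ 0 & d^{-1} \end{pmatrix}(f\circ\Phi),\qquad \Phi(X)=aXd^{-1},
\]
and to show that it intertwines $\nabla_{\m}$ with an invertible (block) twist, so that $\nabla_{\m}f=0$ forces $\nabla_{\m}(\sigma f)=0$. Since $N(a)=N(d)=1$ we have $a^+=a^{-1}$, $d^+=d^{-1}$, and $\Phi$ is an $SO(4)$-rotation of $\BB H\simeq\BB R^4$; hence $N\circ\Phi=N$ and $\Phi$ preserves homogeneity degree. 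Consequently $\bigl(1+\m^2N(X)\bigr)^{1/2}$ and $\degt$ are compatible with $\circ\Phi$, so the only genuine work lies in the first-order Dirac parts $\nabla,\nabla^+,X\nabla,X^+\nabla^+$.

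The computational heart is the covariance of the Cauchy--Fueter operators. Working in the $2\times2$ matrix realization with $\nabla=2\partial$, $\partial=(\partial_{ij})$, the chain rule for $Y=aXd^{-1}$ together with anti-multiplicativity of the $2\times2$ adjugate (which produces $\nabla^+$, using $\operatorname{adj}(a)=a^{-1}$ and $\operatorname{adj}(d^{-1})=d$) is expected to give
\[
\nabla\bigl(\psi\circ\Phi\bigr)=d^{-1}\bigl((\nabla\psi)\circ\Phi\bigr)a,\qquad \nabla^+\bigl(\psi\circ\Phi\bigr)=a^{-1}\bigl((\nabla^+\psi)\circ\Phi\bigr)d .
\]
From these one reads off the transformation of $X\nabla$ and $X^+\nabla^+$ as well. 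The target is an identity $\nabla_{\m}(\sigma f)=T\cdot\bigl((\nabla_{\m}f)\circ\Phi\bigr)$ with $T$ an invertible twist built from $a,d$; granting it, the lemma follows at once.

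I would then assemble $\nabla_{\m}(\sigma f)$ entry by entry: the off-diagonal entries are governed directly by the two displayed laws, while the diagonal entries $\m(X\nabla-\degt)$ and $\m(X^+\nabla^+-\degt)$ use the same laws for $\nabla,\nabla^+$ together with the invariance of $\degt$ and of $\bigl(1+\m^2N(X)\bigr)^{1/2}$ noted above. A convenient alternative bookkeeping device is Corollary \ref{cauchy}: the preceding lemma gives $\Phi^*(Dx_{\m})=\bigl(\begin{smallmatrix} a&0\\0&d\end{smallmatrix}\bigr)Dx_{\m}\bigl(\begin{smallmatrix} a&0\\0&d\end{smallmatrix}\bigr)^{-1}$, so pulling the closed form $g\,Dx_{\m}\,f$ back along $\Phi$ and comparing with $d\bigl(\tilde g\,Dx_{\m}\,(\sigma f)\bigr)$ through the identity $d(g\,Dx_{\m}\,f)=\bigl(1+\m^2N(X)\bigr)^{-1/2}\bigl((g\overleftarrow{\nabla}_{\m})f+g(\nabla_{\m}f)\bigr)dV$ yields a second, geometric route to the same intertwining.

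The main obstacle is that left multiplication by the constant quaternions $a^{-1},d^{-1}$ does \emph{not} commute with $\nabla$ or $\nabla^+$ (since $\sum_\alpha e_\alpha a^{-1}(\cdots)\neq a^{-1}\sum_\alpha e_\alpha(\cdots)$), so no single term of $\nabla_{\m}(\sigma f)$ transforms cleanly in isolation. The real content is that the diagonal ($X\nabla$-type) and off-diagonal ($\nabla^+$-type) contributions conspire: using $a^+=a^{-1}$, $d^+=d^{-1}$ and $N\circ\Phi=N$, the stray factors of $a,d$ reorganize into the single block twist $T$. Verifying this cancellation is the one genuinely laborious step; everything else reduces to the bookkeeping described above.
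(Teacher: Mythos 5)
Your overall target is the right one---the paper's proof establishes exactly the intertwining you aim for, namely $\nabla_{\m}(\sigma f)=\bigl(\begin{smallmatrix} a & 0 \\ 0 & d \end{smallmatrix}\bigr)^{-1}(\nabla_{\m}f)\bigr|_{aXd^{-1}}$---but the two displayed covariance laws on which you hang the entire argument are false, and they fail precisely because of the non-commutativity you flag in your last paragraph. For quaternion-valued (i.e.\ matrix-valued) $\psi$, the plain pull-back does \emph{not} satisfy $\nabla(\psi\circ\Phi)=d^{-1}\bigl((\nabla\psi)\circ\Phi\bigr)a$. Test it on $\psi(Z)=Z$ with the paper's convention $\partial=\bigl(\begin{smallmatrix}\partial_{11} & \partial_{21}\\ \partial_{12} & \partial_{22}\end{smallmatrix}\bigr)$: one computes $\partial\bigl(aZd^{-1}\bigr)=\tr(a)\,d^{-1}$, whereas your right-hand side is $2\,d^{-1}a$; these agree only when $a$ is real (take $a=i$: the left side vanishes, the right side does not). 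Your law is valid only for \emph{scalar}-valued $\psi$, where constant matrices slide through $\partial$ and emerge on the right---but the lemma is inherently about non-scalar $f_1,f_2$, since otherwise the twists $a^{-1}f_1$, $d^{-1}f_2$ are meaningless. The correct identities, which are what the paper uses, keep the constant twist \emph{inside} the derivative:
\[
\nabla\bigl(a^{-1}f_1(aXd^{-1})\bigr)=d^{-1}(\nabla f_1)\bigr|_{aXd^{-1}},
\qquad
\nabla^+\bigl(d^{-1}f_2(aXd^{-1})\bigr)=a^{-1}(\nabla^+f_2)\bigr|_{aXd^{-1}}.
\]
Once these are in hand, the ``conspiracy'' between diagonal and off-diagonal entries that you defer as the one laborious step simply evaporates: each block row of $\nabla_{\m}$ transforms by a single constant left factor ($a^{-1}$ for the first row, $d^{-1}$ for the second), using only $N(aXd^{-1})=N(X)$, $\degt(\phi\circ\Phi)=(\degt\phi)\circ\Phi$, $Xd^{-1}=a^{-1}(aXd^{-1})$, and $X^+a^{-1}=d^{-1}(aXd^{-1})^+$ (this last is where $N(a)=N(d)=1$ enters, via $a^+=a^{-1}$ and $(d^{-1})^+=d$). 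So the step you postponed is exactly the place where your formulas would lead you astray, and with the right formulas there is nothing laborious left.

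The fallback ``geometric route'' via Corollary \ref{cauchy} also does not close the gap: pulling back the closed form $g\cdot Dx_{\m}\cdot f$ only tells you that certain integrals of $\tilde g\cdot Dx_{\m}\cdot(\sigma f)$ over cycles vanish. To conclude the pointwise statement $\nabla_{\m}(\sigma f)=0$ you would need a family of right-regular $g$ rich enough to separate values of $\nabla_{\m}(\sigma f)$---essentially the Cauchy--Fueter kernel---and the paper's proof of Theorem \ref{Cauchy-Fueter-5} itself invokes Lemma \ref{rotation}, so this route is at best roundabout and at worst circular. The fix is to discard both displayed laws, prove the two boxed-style identities above directly from the chain rule (a three-line index computation in the matrix realization, where the hypothesis $A'=a^{-1}$ makes the stray factor $aA'=1$ disappear), and then read off the block-diagonal intertwining.
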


\begin{proof}
Using
$$
\nabla \bigl( a^{-1} f_1(aXd^{-1}) \bigr) = d^{-1} (\nabla f_1) \bigr|_{aXd^{-1}}
\qquad \text{and} \qquad
\nabla^+ \bigl( d^{-1} f_2(aXd^{-1}) \bigr) = a^{-1}(\nabla^+ f_2) \bigr|_{aXd^{-1}},
$$
we compute
$$
\nabla_{\m} \begin{pmatrix} a^{-1} f_1(aXd^{-1}) \\ d^{-1}f_2(aXd^{-1}) \end{pmatrix}
= \begin{pmatrix} a & 0 \\ 0 & d \end{pmatrix}^{-1}
\nabla_{\m} \begin{pmatrix} f_1 \\ f_2 \end{pmatrix} \biggr|_{aXd^{-1}} =0.
$$
\end{proof}

\section{Analogue of the Cauchy-Fueter Formula}  \label{Cauchy-Fueter-section}

The Cauchy-Fueter kernel in our setting is
$$
k_{\m}(X,Y) = -\frac12 K_{\m}(X,Y) (\overleftarrow{\nabla}_{\m}+\m).
$$
If $U \subset \BB H$ is an open region with piecewise ${\cal C}^1$ boundary
$\partial U$, we define a preferred orientation on $\partial U$ as follows.
The positive orientation of $U$ is determined by the vectors
$\{1, i, j, k \}$ (or the volume form $dV$).
Pick a non-singular point $p \in \partial U$ and let $\overrightarrow{n_p}$
be a non-zero vector in $T_p\BB H$ perpendicular to $T_p\partial U$ and
pointing outside of $U$.
Then $\{\overrightarrow{\tau_1}, \overrightarrow{\tau_2},
\overrightarrow{\tau_3}\} \subset T_p \partial U$ is positively oriented
in $\partial U$ if and only if
$\{\overrightarrow{n_p}, \overrightarrow{\tau_1}, \overrightarrow{\tau_2},
\overrightarrow{\tau_3}\}$ is positively oriented in $\BB H$.
Now we can prove our analogue of the Cauchy-Fueter formula:

\begin{thm}  \label{Cauchy-Fueter-5}
Let $U \subset \BB H$ be an open bounded subset with piecewise ${\cal C}^1$
boundary $\partial U$. Suppose that $f(X)$ is left-regular on a neighborhood
of the closure $\overline{U}$ (i.e. satisfying $\nabla_{\m} f=0$), then
$$
\frac 1 {2\pi^2} \int_{\partial U} k_{\m}(X,Y) \cdot Dx_{\m} \cdot f(X) =
\begin{cases}
f(Y) & \text{if $Y \in U$;} \\
0 & \text{if $Y \notin \overline{U}$.}
\end{cases}
$$
\end{thm}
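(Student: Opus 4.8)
The plan is to follow the classical proof of the Cauchy--Fueter formula, using the deformed Stokes identity established in Section~\ref{reg-fun-prop-section} in place of the flat one. First I would check that the kernel $k_{\m}(X,Y)$ is right-regular in $X$ away from the diagonal. By definition $k_{\m}(X,Y)=-\tfrac12 K_{\m}(X,Y)(\overleftarrow{\nabla}_{\m}+\m)$, which is precisely the construction of Remark~\ref{reg-remark} applied to the scalar function $\phi=K_{\m}(\,\cdot\,,Y)$; since $\tlap K_{\m}=0$ by Lemma~\ref{fundamental_sol}, each row of the $2\times2$ matrix $k_{\m}(X,Y)$ satisfies $g\overleftarrow{\nabla}_{\m}=0$. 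Applying the Leibniz-type lemma for $d(g\cdot Dx_{\m}\cdot f)$ row by row, together with the hypothesis $\nabla_{\m}f=0$, gives
$$
d\bigl(k_{\m}(X,Y)\cdot Dx_{\m}\cdot f(X)\bigr)=\bigl(1+\m^2N(X)\bigr)^{-1/2}\bigl((k_{\m}\overleftarrow{\nabla}_{\m})f+k_{\m}(\nabla_{\m}f)\bigr)\,dV=0,
$$
so $k_{\m}\cdot Dx_{\m}\cdot f$ is a closed, column-valued $3$-form on any region avoiding $X=Y$. The last lemma of Section~\ref{AdS-section}, which shows $\langle\hat X-\hat Y,\hat X-\hat Y\rangle_{1,4}<0$ for $X\ne Y$, guarantees that the only singularity of $k_{\m}(X,Y)$ inside $\overline U$ is at $X=Y$.

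The case $Y\notin\overline U$ is then immediate: the form is smooth and closed on a neighborhood of $\overline U$, so Stokes' theorem yields $\int_{\partial U}k_{\m}\cdot Dx_{\m}\cdot f=\int_U d(\cdots)=0$. For $Y\in U$ I would excise a small Euclidean ball $B_\epsilon(Y)\subset U$ and invoke closedness of the form on $U\setminus\{Y\}$ to replace the boundary integral by one over the sphere $S^3_\epsilon(Y)$, the value being independent of $\epsilon$. Since $f$ is continuous, $f(X)=f(Y)+o(1)$ on $S^3_\epsilon(Y)$, so the problem reduces to showing that the purely kernel-theoretic limit
$$
\frac1{2\pi^2}\lim_{\epsilon\to0}\int_{S^3_\epsilon(Y)}k_{\m}(X,Y)\cdot Dx_{\m}=I_{2\times2}
$$
holds, after which the right-hand side of the theorem follows with the stated normalization.

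I expect this residue computation to be the main obstacle. The idea is that the leading singularity of $k_{\m}$ comes only from the derivatives in $\overleftarrow{\nabla}_{\m}$ falling on $K_{\m}$: the terms carrying an explicit factor $\m$ or coming from differentiating $\bigl(1+\m^2N(X)\bigr)^{1/2}$ are less singular by one order and, paired with $Dx_{\m}=O(\epsilon^3)$, contribute $O(\epsilon)$ and drop out. Using the exact expression for $K_{\m}$ from equation~(\ref{1/N}) one finds that near the diagonal $\langle\hat X-\hat Y,\hat X-\hat Y\rangle_{1,4}=-h^{\mathsf T}Mh+O(|h|^3)$ with $h=X-Y$ and $M=I-\bigl(\m^{-2}+N(Y)\bigr)^{-1}yy^{\mathsf T}$, so $K_{\m}\sim (h^{\mathsf T}Mh)^{-1}$ is anisotropic for $Y\ne0$. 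The delicate point is that this anisotropy, the Hessian factor $\det M=(1+\m^2N(Y))^{-1}$, and the compensating factors $\bigl(1+\m^2N(X)\bigr)^{\pm1/2}$ appearing in both $\overleftarrow{\nabla}_{\m}$ and in the off-diagonal slots $Dx,Dx^+$ of $Dx_{\m}$ must conspire so that the surviving $\epsilon^{-3}$ part of $k_{\m}\cdot Dx_{\m}$, once integrated over $S^3_\epsilon$, collapses to exactly the identity matrix---equivalently, to the classical Cauchy--Fueter residue $\frac1{2\pi^2}\int_{S^3_\epsilon}\frac{(X-Y)^+}{N(X-Y)^2}\,Dx=1$ computed componentwise as in \cite{FL1}. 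Carrying out this bookkeeping of the $2\times2$ matrix product, and verifying the cancellation of all $\m$- and $Y$-dependent geometric factors, is where the real work lies; a convenient check is the special case $Y=0$, where $M=I$ and the kernel degenerates isotropically to the flat Cauchy--Fueter kernel.
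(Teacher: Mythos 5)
Your overall architecture coincides with the paper's: right-regularity of $k_{\m}(\cdot,Y)$ in $X$ via Remark \ref{reg-remark}, closedness of $k_{\m}\cdot Dx_{\m}\cdot f$ via Corollary \ref{cauchy}, Stokes' theorem for $Y\notin\overline{U}$, and excision of a small sphere $S^3_{\epsilon}(Y)$ reducing everything to the residue limit $\lim_{\epsilon\to 0^+}\int_{S^3_{\epsilon}(Y)}k_{\m}(X,Y)\cdot Dx_{\m}\cdot f(X)=2\pi^2 f(Y)$. Up to that point you are exactly aligned with the paper's proof.

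The genuine gap is the one step you defer, and the heuristic you offer for it is false, so the plan as stated would produce a wrong answer. For $Y\ne 0$ it is not true that the terms of $k_{\m}$ carrying an explicit factor of $\m$ are less singular by one order: the operators $\degt$ and $(K_{\m}\nabla^{\pm})X^{\pm}$ involve multiplication by coordinates of size $|Y|$, so the diagonal entries of $k_{\m}$ are $O(\epsilon^{-3})$, the same order as the off-diagonal ones; likewise the diagonal entries of $Dx_{\m}\bigr|_{S^3_{\epsilon}(Y)}$ are of the same order as the off-diagonal entries $X'\,dS/\epsilon$, just with coefficient proportional to $\m|Y|$. In the paper's computation (which first uses Lemma \ref{rotation} to reduce to $Y$ real --- a genuine reduction, unlike your check at $Y=0$ --- and writes $b=\sqrt{1+\m^2N(Y)}$, $X'=X-Y$), these deformation terms pair at leading order: the products of the diagonal entries contribute $\frac{\m^2Y^2}{b}\bigl(\im(X')\bigr)^2$, the $\m^2$-corrections in the off-diagonal products contribute $-\frac{\m^2Y^2}{b}\bigl(\re(X')\bigr)^2$, and since $\bigl(\im(X')\bigr)^2-\bigl(\re(X')\bigr)^2=-\epsilon^2$ (quaternionic squares), they convert the ``flat'' scalar $b\epsilon^2$ into $\epsilon^2/b$. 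If you discard them as you propose, the surviving integrand is $\epsilon^{-3}\,b\,\bigl(1-\frac{\m^2Y^2}{\epsilon^2 b^2}(\re X')^2\bigr)^{-2}dS$, whose integral over $S^3_{\epsilon}(Y)$ equals $2\pi^2 b^2=2\pi^2\bigl(1+\m^2N(Y)\bigr)$ --- off by the factor $1+\m^2N(Y)$. The correct limit requires keeping the $\m$-terms (yielding the prefactor $b^{-1}$ instead of $b$) and then evaluating the anisotropic integral $\int_{-\pi/2}^{\pi/2}\sin^2\theta\,(1-a\cos^2\theta)^{-2}\,d\theta=\pi/(2\sqrt{1-a})$ with $a=\m^2Y^2/b^2$, whose factor $1/\sqrt{1-a}=b$ cancels the $b^{-1}$. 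In other words, only the integral, not the integrand, collapses to the classical Cauchy--Fueter residue; your proposal treats that collapse as a pointwise reduction, which is exactly where it breaks, and the special case $Y=0$ cannot detect the error because there all the problematic terms vanish identically.
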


\begin{rem}
There is a similar formula for right-regular functions
(i.e. functions satisfying $g\overleftarrow{\nabla}_{\m} =0$).
The Cauchy-Fueter kernel in that case is
$k'_{\m}(X,Y) = -\frac12 (\nabla_{\m}-\m) K_{\m}(X,Y)$.
\end{rem}

\begin{proof}
By Remark \ref{reg-remark} and Corollary \ref{cauchy} the integrand is a
closed 3-form.
If $Y \notin \overline{U}$, by Corollary \ref{cauchy} the integral is zero.
So let us assume $Y \in U$.
Consider a sphere $S^3_{\epsilon}(Y)$ of radius $\epsilon$ centered at $Y$,
and let $\epsilon$ be small enough so that the closed ball of radius
$\epsilon$ centered at $Y$ lies inside $U$. Then
$$
\int_{\partial U} k_{\m}(X,Y) \cdot Dx_{\m} \cdot f(X)
= \int_{S^3_{\epsilon}(Y)} k_{\m}(X,Y) \cdot Dx_{\m} \cdot f(X).
$$
The right hand side is independent from $\epsilon$
and it is sufficient to show that
$$
\lim_{\epsilon \to 0^+} \int_{S^3_{\epsilon}(Y)} k_{\m}(X,Y) \cdot Dx_{\m} \cdot f(X)
= 2\pi^2 \cdot f(Y).
$$
We compute
\begin{multline*}
k_{\m}(X,Y) \cdot
\bigl( \langle \hat X - \hat Y, \hat X - \hat Y \rangle_{1,4} \bigr)^2 \\
= \begin{pmatrix} \m YX^+ & X\sqrt{1+\m^2N(Y)} - Y\sqrt{1+\m^2N(X)} \\
X^+\sqrt{1+\m^2N(Y)} - Y^+\sqrt{1+\m^2N(X)} & \m Y^+X \end{pmatrix} \\
+ 2\m^{-1} +\frac{\m}2 \tr(XY^+)
- 2\m^{-1}\sqrt{1+\m^2N(X)}\cdot\sqrt{1+\m^2N(Y)}.
\end{multline*}
Let $X'=X-Y$, then $N(X')=\epsilon^2$ and by Lemma 6 of \cite{FL1}
$$
Dx_{\m} \bigr|_{S^3_{\epsilon}(Y)} =
\begin{pmatrix} \frac{\m}2 \frac{XX'^+-X'X^+}{\sqrt{1+\m^2N(X)}} & X' \\
X'^+ & \frac{\m}2 \frac{X^+X'-X'^+X}{\sqrt{1+\m^2N(X)}} \end{pmatrix}
\frac{dS}{\epsilon}.
$$
To simplify upcoming expressions, we introduce a notation
$b = \sqrt{1+\m^2N(Y)}$. Working with the lowest order terms with respect to
$X'$ and ignoring the higher order terms we get:
$$
f(X) \sim f(Y), \qquad
\bigl( \langle \hat X - \hat Y, \hat X - \hat Y \rangle_{1,4} \bigr)^2
\sim \epsilon^{4} \cdot
\biggl(1- \frac{\m^2}{4\epsilon^2b^2} \bigl( \tr(X'Y^+) \bigr)^2 \biggr)^{2},
$$
$$
Dx_{\m} \bigr|_{S^3_{\epsilon}(Y)} \sim \epsilon^{-1} \cdot
\begin{pmatrix} \frac{\m}{2b} (YX'^+-X'Y^+) & X' \\
X'^+ & \frac{\m}{2b} (Y^+X'-X'^+Y) \end{pmatrix} dS,
$$
\begin{multline*}
k_{\m}(X,Y) \sim \\
\epsilon^{-4} \cdot
\biggl(1- \frac{\m^2}{4\epsilon^2b^2} \bigl( \tr(X'Y^+) \bigr)^2 \biggr)^{-2}
\cdot \begin{pmatrix} \m YX'^+ - \frac{\m}2 \tr(X'Y^+) &
b X' - \frac{\m^2}{2b} Y \cdot \tr(X'Y^+) \\
b X'^+ - \frac{\m^2}{2b} Y^+ \cdot \tr(X'Y^+) &
\m Y^+X' - \frac{\m}2 \tr(X'Y^+) \end{pmatrix}.
\end{multline*}
Using Lemma \ref{rotation} we can assume that $Y$ is real. Then
$$
Dx_{\m} \bigr|_{S^3_{\epsilon}(Y)} \sim \epsilon^{-1} \cdot
\begin{pmatrix} - \frac{\m}{b} Y \cdot \im(X') & X' \\
X'^+ & \frac{\m}{b} Y \cdot \im(X') \end{pmatrix} dS,
$$
\begin{multline*}
k_{\m}(X,Y) \sim \\
\epsilon^{-4} \cdot
\biggl(1- \frac{\m^2 Y^2}{\epsilon^2b^2} \bigl( \re(X') \bigr)^2 \biggr)^{-2}
\cdot
\begin{pmatrix} -\m Y \cdot \im(X') & b X' - \frac{\m^2}b Y^2 \cdot \re(X') \\
b X'^+ - \frac{\m^2}b Y^2 \cdot \re(X') & \m Y \cdot \im(X') \end{pmatrix},
\end{multline*}
\begin{multline*}
k_{\m}(X,Y) \cdot Dx_{\m} \bigr|_{S^3_{\epsilon}(Y)} \sim \\
\epsilon^{-5} \cdot
\biggl(1- \frac{\m^2 Y^2}{\epsilon^2b^2} \bigl( \re(X') \bigr)^2 \biggr)^{-2}
\cdot \biggl( \epsilon^2 b + \frac{\m^2 Y^2}b \Bigl( \bigl(\im(X')\bigr)^2
- \bigl(\re(X')\bigr)^2 \Bigr) \biggr) dS \\
+ \epsilon^{-5} \cdot
\biggl(1- \frac{\m^2 Y^2}{\epsilon^2b^2} \bigl( \re(X') \bigr)^2 \biggr)^{-2}
\cdot \frac{\m^2 Y^2}{b^2} \re(X') \cdot \im(X')
\begin{pmatrix} b & -\m Y \\ \m Y & -b \end{pmatrix} dS.
\end{multline*}
The integral over $S^3_{\epsilon}(Y)$ of the last term is zero,
and the first term simplifies to
$$
\epsilon^{-3} \cdot b^{-1}
\biggl(1- \frac{\m^2 Y^2}{\epsilon^2b^2} \bigl( \re(X') \bigr)^2 \biggr)^{-2} dS.
$$
We finish the proof by integrating in spherical coordinates and
using an integral
$$
\int_{\theta=-\pi/2}^{\theta=\pi/2} \frac{\sin^2\theta\,d\theta}{(1-a\cos^2\theta)^2}
= \frac{\pi}{2\sqrt{1-a}}, \qquad |a|<1,
$$
with $a=\m^2 Y^2 b^{-2}$.
\end{proof}

\section{Deformation of $\Zh$ and the Second Order Pole}  \label{Zh_mu-section}

Similarly to how we did in Section \ref{H_mu-section}, we introduce
a space of functions $\Zh_{\m}$, which is a deformation of $\Zh$.
Then we discuss the analogues of the second order pole formulas
given in Corollary \ref{proj-cor} and Theorem \ref{Zh^0-projector}.
Thus we introduce a vector space
$$
\Zh_{\m} = \text{$\BB C$-span of }
\frac{t^l_{m\,\underline{n}}(X) \cdot \bigl( \bigl(1+\m^2N(X)\bigr)^{1/2}-1 \bigr)^k}
{\bigl( \bigl(1+\m^2N(X)\bigr)^{1/2}+1 \bigr)^{2l+k+2}},
\qquad \begin{matrix} k \in \BB Z,\quad l = 0, \frac12, 1, \frac32, \dots, \\
m,n = -l, -l+1, \dots, l. \end{matrix}
$$
Note that when $\m \to 0$,
$$
2^{2l+2k+2}\m^{-2k}
\frac{t^l_{m\,\underline{n}}(X) \cdot \bigl( \bigl(1+\m^2N(X)\bigr)^{1/2}-1 \bigr)^k}
{\bigl( \bigl(1+\m^2N(X)\bigr)^{1/2}+1 \bigr)^{2l+k+2}}
\quad \to \quad t^l_{m\,\underline{n}}(X) \cdot N(X)^k.
$$
We can extend these functions to an open neighborhood of $\BB H^{\times}$ in
$\HC^{\times}$ as follows.
(We exclude $Z \in \HC$ such that $N(Z)=0$ because
$\bigl(1+\m^2N(X)\bigr)^{1/2}-1$ vanishes there.)
The matrix coefficient functions $t^l_{m\,\underline{n}}(X)$'s are
polynomials in $X$, hence extend to $\HC$ without any problem.
The only obstacle to extending the functions spanning $\Zh_{\m}$
is the square root in $\bigl(1+\m^2N(X)\bigr)^{1/2} \pm 1$.
Thus we choose the branch of $z^{1/2}$ defined on the complex plane without the
negative real axis and observe that the functions
$$
f_{k,l,m,n}(Z) =
\frac{t^l_{m\,\underline{n}}(Z) \cdot \bigl( \bigl(1+\m^2N(Z)\bigr)^{1/2}-1 \bigr)^k}
{\bigl( \bigl(1+\m^2N(Z)\bigr)^{1/2}+1 \bigr)^{2l+k+2}}, \qquad Z \in \HC,
$$
are well defined as long as $N(Z) \notin (-\infty,- \m^{-2}]$ and $N(Z) \ne 0$.
For this reason we introduce an open region in $\HC$
$$
\BB U_{\m} =\{ Z \in \HC^{\times} ;\: N(Z) \notin (-\infty,- \m^{-2}] \}.
$$

Our next task is to define a natural bilinear pairing on $\Zh_{\m}$.
Fix an $R>0$ and parameterize $U(2)_R$ as in Chapter III, \S 1, of \cite{V}:
\begin{multline*}
Z(\alpha, \phi, \theta, \psi) = R e^{i\alpha}
\begin{pmatrix} e^{i\frac{\phi}2} & 0 \\ 0 & e^{-i\frac{\phi}2} \end{pmatrix}
\begin{pmatrix} \cos\frac{\theta}2 & i\sin\frac{\theta}2 \\ 
i\sin\frac{\theta}2 & \cos\frac{\theta}2 \end{pmatrix}
\begin{pmatrix} e^{i\frac{\psi}2} & 0 \\ 0 & e^{-i\frac{\psi}2} \end{pmatrix} \\
= \begin{pmatrix} \cos\frac{\theta}2 \cdot e^{i\frac{\phi+\psi}2} &
i\sin\frac{\theta}2 \cdot e^{i\frac{\phi-\psi}2} \\ 
i\sin\frac{\theta}2 \cdot e^{i\frac{\psi-\phi}2} &
\cos\frac{\theta}2 \cdot e^{-i\frac{\phi+\psi}2} \end{pmatrix}, \qquad
\begin{matrix} 0 \le \alpha < \pi, \\ 0 \le \phi < 2\pi, \\
0 < \theta < \pi, \\ -2\pi \le \psi < 2\pi.\end{matrix}
\end{multline*}
By direct computation we find
\begin{multline*}
dV \Bigl|_{U(2)_R} = dz^0 \wedge dz^1 \wedge dz^2 \wedge dz^3 \Bigl|_{U(2)_R} \\
= \frac14 dz_{11} \wedge dz_{12} \wedge dz_{21} \wedge dz_{22} \Bigl|_{U(2)_R} 
= \frac{R^4}{8i} e^{4i\alpha} \sin\theta
\, d\alpha \wedge d\phi \wedge d\theta \wedge d\psi.
\end{multline*}
For $0 < R < \m^{-1}$, define a measure on $U(2)_R$ by
$$
dV_{R,\m} = \frac{R^4}{16} e^{4i\alpha} \sin\theta \,
d\phi \wedge d\theta \wedge d\psi \wedge
d \log \biggl( \frac{(1+\m^2 R^2 e^{2i\alpha})^{1/2}-1}
{(1+\m^2 R^2 e^{2i\alpha})^{1/2}+1} \biggr)
$$
and define a bilinear pairing on $\Zh_{\m}$ as
\begin{equation}  \label{Zh_mu-pairing}
\langle f_1, f_2 \rangle_{\m} = 
\frac i{2\pi^3} \int_{Z \in U(2)_R} f_1(Z) \cdot f_2(Z) \,dV_{R,\m},
\qquad f_1,f_2 \in \Zh_{\m}.
\end{equation}
(The parameter $R$ is restricted to $0 < R < \m^{-1}$ so that
$U(2)_R \subset \BB U_{\m}$.)
We have the following analogue of the orthogonality relations
(\ref{orthogonality}):

\begin{prop}
The symmetric pairing (\ref{Zh_mu-pairing}) is independent of the choice of $R$
(as long as $0 < R < \m^{-1}$) and non-degenerate. Let
$$
f'_{k,l,m,n}(Z) =
\frac{t^l_{n\,\underline{m}}(Z^+) \cdot \bigl(\bigl(1+\m^2N(Z)\bigr)^{1/2}+1\bigr)^k}
{\bigl( \bigl(1+\m^2N(Z)\bigr)^{1/2}-1 \bigr)^{2l+k+2}} \quad \in \Zh_{\m},
$$
then we have orthogonality relations
\begin{equation}  \label{mu-orthogonality}
\bigl\langle f_{k,l,m,n}(Z), f'_{k',l',m',n'}(Z) \bigr\rangle_{\m}
= \frac{\m^{-4l-4}}{2l+1} \delta_{kk'}\delta_{ll'} \delta_{mm'} \delta_{nn'},
\end{equation}
where the indices $k,l,m,n$ are $k \in \BB Z$,
$l = 0, \frac12, 1, \frac32, \dots$, $m,n \in \BB Z +l$, $-l \le m,n \le l$
and similarly for $k',l',m',n'$.
\end{prop}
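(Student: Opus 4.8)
The plan is to evaluate $\langle f_{k,l,m,n}, f'_{k',l',m',n'}\rangle_{\m}$ directly in the coordinates $(\alpha,\phi,\theta,\psi)$ and to read off all three assertions from the resulting normal form. Writing $Z=Re^{i\alpha}U$ with $U=U(\phi,\theta,\psi)\in SU(2)$, one has $N(Z)=R^2e^{2i\alpha}$, so that $A:=\bigl(1+\m^2N(Z)\bigr)^{1/2}$ and $w:=\frac{A-1}{A+1}$ are functions of $\alpha$ alone. Two structural facts drive the whole computation. First, the matrix coefficients are homogeneous of degree $2l$, so $t^l_{m\,\underline{n}}(Z)=(Re^{i\alpha})^{2l}\,t^l_{m\,\underline{n}}(U)$. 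Second --- and this is the crux --- the involution $Z\mapsto Z^+$ appearing in $f'$ is the quaternionic conjugate $Z^+=N(Z)Z^{-1}$ (the adjugate), so on $U(2)_R$ one gets $Z^+=Re^{i\alpha}U^{-1}$ and hence $t^{l'}_{n'\,\underline{m'}}(Z^+)=(Re^{i\alpha})^{2l'}\,t^{l'}_{n'\,\underline{m'}}(U^{-1})$. Thus $t^l(Z)$ and $t^{l'}(Z^+)$ carry the \emph{same} phase $e^{+i\alpha}$; it is precisely this coincidence that forces the $\alpha$-dependence to collapse.

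Next I would factor the integral into its $SU(2)$-part and its $w$-part. Integrating over $\phi,\theta,\psi$ pairs $t^l_{m\,\underline{n}}(U)$ against $t^{l'}_{n'\,\underline{m'}}(U^{-1})$, so Schur orthogonality on $SU(2)$ forces $l=l'$, $m=m'$, $n=n'$ and produces the factor $\tfrac1{2l+1}$, the total $SU(2)$-volume normalizing against the prefactor $\tfrac{i}{2\pi^3}$ exactly as in the classical relation (\ref{orthogonality}). For the surviving one-variable integral I would pass to the variable $w$, using $A-1=\tfrac{2w}{1-w}$, $A+1=\tfrac{2}{1-w}$ and $R^2e^{2i\alpha}=\tfrac{4w}{\m^2(1-w)^2}$. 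The three $\alpha$-dependent blocks --- the combined matrix-coefficient factor $(R^2e^{2i\alpha})^{2l}$, the measure factor $\tfrac{R^4}{16}e^{4i\alpha}$, and the ratio $\tfrac{(A-1)^{k-k'-2l-2}}{(A+1)^{k-k'+2l+2}}$ --- then multiply to $2^{-4}\m^{-4l-4}\,w^{\,k-k'}$: every power of $(1-w)$ and every explicit power of $R$ cancels. Since the measure supplies $d\log w=\tfrac{dw}{w}$, the remaining integral is $\oint w^{\,k-k'-1}\,dw$ over the loop $\alpha\in[0,\pi)$, which winds once around $w=0$ and equals $2\pi i\,\delta_{kk'}$. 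Collecting the constants gives exactly $\tfrac{\m^{-4l-4}}{2l+1}\,\delta_{kk'}\delta_{ll'}\delta_{mm'}\delta_{nn'}$, which is (\ref{mu-orthogonality}).

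The remaining claims then follow quickly. Symmetry of (\ref{Zh_mu-pairing}) is immediate, the integrand being $f_1f_2=f_2f_1$. For independence of $R$ I would note that after the substitution the integrand is a meromorphic form in $w$ whose only singularity inside the loop is at $w=0$ (negative powers of $A-1$ create poles only there, while negative powers of $A+1$ stay regular, and any pole at $w=1$ lies outside the loop $|w|<1$); as $R$ ranges over $(0,\m^{-1})$ the loop $w(\alpha)$ deforms continuously in $\BB C\setminus\{0,1\}$ without changing its winding number about $0$, so the value is unchanged --- equivalently, the cycles $U(2)_R\subset\BB U_{\m}$ are homologous and $f_1f_2\,dV_{R,\m}$ is closed. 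Finally, non-degeneracy is formal: $\{f_{k,l,m,n}\}$ is a basis of $\Zh_{\m}$ and each $f'_{k',l',m',n'}$ again lies in $\Zh_{\m}$, so (\ref{mu-orthogonality}) exhibits a dual family up to the nonzero scalars $\m^{-4l-4}/(2l+1)$; pairing $f=\sum c_{k,l,m,n}f_{k,l,m,n}$ against $f'_{k',l',m',n'}$ returns $c_{k',l',m',n'}$ up to that scalar, so $\langle f,\,\cdot\,\rangle_{\m}\equiv0$ forces $f=0$.

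The main obstacle is the bookkeeping in the contour step: tracking the powers of $2$, $\m$, $R$, $w$ and $1-w$ through the three blocks and confirming that they conspire to leave precisely $w^{\,k-k'}$. The computation depends entirely on using the correct involution $Z^+=N(Z)Z^{-1}$; had $Z^+$ denoted the Hermitian adjoint, the two matrix-coefficient phases would be $e^{+i\alpha}$ and $e^{-i\alpha}$, a spurious factor $(1-w)^{4l}$ would survive, and the pairing would fail to be diagonal in $k$. A secondary point needing care is fixing the orientation of $U(2)_R$ so that $\oint dw/w=+2\pi i$, which is what pins down the overall sign in (\ref{mu-orthogonality}).
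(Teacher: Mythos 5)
Your proposal is correct and takes essentially the same route as the paper's own proof: the paper likewise splits the $U(2)_R$-integral into the $SU(2)$-part, killed by matrix-coefficient (Schur) orthogonality so that only $l=l'$, $m=m'$, $n=n'$ survive with the factor $\frac1{2l+1}$, times a residual contour integral $\oint z^{k-k'-1}\,dz = 2\pi i\,\delta_{kk'}$ in the variable $z=\bigl(\sqrt{1+\m^2N(Z)}-1\bigr)\big/\bigl(\sqrt{1+\m^2N(Z)}+1\bigr)$, and it deduces $R$-independence and non-degeneracy from the orthogonality relations exactly as you do. One bookkeeping correction: the orientation of $U(2)_R$ is not yours to fix (it is pinned by $\int_{U(2)_R} dV/N(Z)^2 = -2\pi^3 i$, i.e.\ the ordering $(\alpha,\phi,\theta,\psi)$ is positive), and the sign your constant-collection actually needs is the $(-1)^3$ from transposing the measure, $d\phi\wedge d\theta\wedge d\psi\wedge d\log z = -\,d\log z\wedge d\phi\wedge d\theta\wedge d\psi$, since with the counterclockwise value $\oint dz/z=+2\pi i$ alone your stated constants $\frac{i}{2\pi^3}\cdot\frac{16\pi^2}{2l+1}\cdot\frac{\m^{-4l-4}}{16}\cdot 2\pi i$ give $-\,\m^{-4l-4}/(2l+1)$ rather than the claimed $+\,\m^{-4l-4}/(2l+1)$.
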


\begin{proof}
Since each family of functions $f_{k,l,m,n}(Z)$'s and $f'_{k,l,m,n}(Z)$'s
generates $\Zh_{\m}$, the independence of $R$ and non-degeneracy of the
bilinear pairing follow from the orthogonality relations
(\ref{mu-orthogonality}).
Using the orthogonality relations (\ref{t-orthog}), we obtain
\begin{multline*}
-2\pi^3 i \cdot \bigl\langle f_{k,l,m,n}(Z), f'_{k',l',m',n'}(Z) \bigr\rangle_{\m} \\
= \int_{Z \in U(2)_R}
\frac{t^l_{m\,\underline{n}}(Z) \cdot \bigl( \bigl(1+\m^2N(Z)\bigr)^{1/2}-1 \bigr)^k}
{\bigl( \bigl(1+\m^2N(Z)\bigr)^{1/2}+1 \bigr)^{2l+k+2}} \cdot
\frac{t^{l'}_{n'\,\underline{m'}}(Z^+) \cdot
\bigl( \bigl(1+\m^2N(Z)\bigr)^{1/2}+1 \bigr)^{k'}}
{\bigl( \bigl(1+\m^2N(Z)\bigr)^{1/2}-1 \bigr)^{2l'+k'+2}} \,dV_{R,\m} \\
= \int
\frac{\m^{-4} \cdot t^l_{m\,\underline{n}}(Z) \cdot t^{l'}_{n'\,\underline{m'}}(Z^+)
\cdot N(Z)^{-2}}{\bigl( \bigl(1+\m^2N(Z)\bigr)^{1/2}+1 \bigr)^{2l} \cdot 
\bigl( \bigl(1+\m^2N(Z)\bigr)^{1/2}-1 \bigr)^{2l'}}
\biggl( \frac{\bigl(1+\m^2N(Z)\bigr)^{1/2}-1}
{\bigl(1+\m^2N(Z)\bigr)^{1/2}+1} \biggr)^{k-k'} dV_{R,\m} \\
= -\pi^2 \frac{\m^{-4l-4}}{2l+1} \delta_{ll'} \delta_{mm'} \delta_{nn'}
\int_{\alpha=0}^{\alpha=\pi} 
\biggl( \frac{(1+\m^2 R^2 e^{2i\alpha})^{1/2}-1}
{(1+\m^2 R^2 e^{2i\alpha })^{1/2}+1} \biggr)^{k-k'}
d\log \biggl( \frac{(1+\m^2 R^2 e^{2i\alpha})^{1/2}-1}
{(1+\m^2 R^2 e^{2i\alpha})^{1/2}+1} \biggr)  \\
= -\pi^2 \frac{\m^{-4l-4}}{2l+1} \delta_{ll'} \delta_{mm'} \delta_{nn'}
\oint z^{k-k'-1} \,dz
= -2\pi^3 i \frac{\m^{-4l-4}}{2l+1} \delta_{kk'}\delta_{ll'} \delta_{mm'} \delta_{nn'}.
\end{multline*}
\end{proof}




Let us recall Proposition 27 from \cite{FL1} restated here as
Proposition \ref{prop27}. We want to obtain a similar expansion for
$\bigl(\langle \hat X - \hat Y,\hat X - \hat Y\rangle_{1,4}\bigr)^{-2}$.
We proceed as in the proof of Proposition \ref{1/N-exp-deformed}.
Let $X, Y \in \BB U_{\m}$, let
$$
t_1=\sqrt{1+\m^2N(X)}, \qquad t_2=\sqrt{1+\m^2N(Y)} \quad\in \BB C
$$
and choose $\theta_1, \theta_2 \in \BB C$ so that
$$
\cosh\theta_1 = t_1 = \sqrt{1+\m^2N(X)} \quad \text{and} \quad
\cosh\theta_2 = t_2 = \sqrt{1+\m^2N(Y)}.
$$
(The square roots $\sqrt{1+\m^2N(X)}$ and $\sqrt{1+\m^2N(Y)}$ are uniquely
defined, but $\theta_1$ and $\theta_2$ are not.)
Then $\sinh^2\theta_1 = \m^2N(X)$ and $\sinh^2\theta_2 = \m^2N(Y)$.
Define
$$
\overrightarrow{u} = \frac{\m X}{\sinh\theta_1}, \quad
\overrightarrow{v} = \frac{\m Y}{\sinh\theta_2} \quad \in \HC,
$$
and suppose that $\overrightarrow{u} \overrightarrow{v}^{-1}$ is similar to a
diagonal matrix $\bigl(\begin{smallmatrix} \lambda & 0 \\
0 & \lambda^{-1} \end{smallmatrix}\bigr)$,
where $\lambda \in \BB C$.
Using the multiplicativity property of matrix coefficients (\ref{t-mult})
and our previous notations (\ref{ab}), we compute a sum over all
$m,n = -l, -l+1, \dots, l$, then over $k=0,1,2,3,\dots$ and
$l = 0, \frac12, 1, \frac32, \dots$:
\begin{multline}  \label{1/N^2}
\sum_{k,l,m,n} (2l+1) \m^{4l+4} f_{k,l,m,n}(X) \cdot f'_{k,l,m,n}(Y) \\
= \m^4 \sum_{k,l} (2l+1) \frac{(t_1-1)^{l+k}}{(t_1+1)^{l+k+2}} \cdot
\frac{(t_2+1)^{l+k}}{(t_2-1)^{l+k+2}}
\cdot \chi_l(\overrightarrow{u} \overrightarrow{v}^{-1}) \\
= \frac{16\m^4 (\lambda-\lambda^{-1})^{-1}}{(e^{\theta_1}+e^{-\theta_1}+2)^2
(e^{\theta_2}+e^{-\theta_2}-2)^2} \sum_{k,l} (2l+1) \frac{b^{2l+2k}}{a^{2l+2k}} \cdot
(\lambda^{2l+1}-\lambda^{-2l-1}) \\
= \frac{16\m^4 (\lambda-\lambda^{-1})^{-1}}{(e^{\theta_1/2}+e^{-\theta_1/2})^4
(e^{\theta_2/2}-e^{-\theta_2/2})^4 (1-b^2/a^2)} \sum_l (2l+1) \frac{b^{2l}}{a^{2l}}
\cdot (\lambda^{2l+1}-\lambda^{-2l-1}) \\
= \frac{16\m^4}{(a-\lambda b)^2(a-\lambda^{-1}b)^2}
= \frac{16\m^4}{N^2(b\overrightarrow{u} - a\overrightarrow{v})}
= \frac1{\bigl( \langle \hat X - \hat Y,\hat X - \hat Y \rangle_{1,4}\bigr)^2},
\end{multline}
where in the last step we used (\ref{1/N}) and
$$
\hat X = \bigl( \sqrt{\m^{-2}+N(X)},x^0,x^1,x^2,x^3 \bigr) \quad \text{and} \quad
\hat Y = \bigl( \sqrt{\m^{-2}+N(Y)},y^0,y^1,y^2,y^3 \bigr)
\quad \in \BB R^{1,4}_+.
$$
Like the expansion of
$\bigl(\langle \hat X - \hat Y,\hat X - \hat Y\rangle_{1,4}\bigr)^{-1}$,
this expansion holds whenever
$$
|\lambda b/a|<1 \quad \text{and} \quad |\lambda^{-1} b/a|<1
$$
and, in particular, for those $X$, $Y$ the denominator does not turn to zero.

We avoid finding the region where these inequalities are satisfied and
impose instead an assumption that $|\lambda|=1$. We have:
$$
\frac{b^2}{a^2} = \frac{\tanh^2(\theta_1/2)}{\tanh^2(\theta_2/2)}
= \frac{t_1-1}{t_1+1} \frac{t_2+1}{t_2-1}
= \frac{\sqrt{1+\m^2N(X)}-1}{\sqrt{1+\m^2N(X)}+1}
\frac{\sqrt{1+\m^2N(Y)}+1}{\sqrt{1+\m^2N(Y)}-1}.
$$
Thus the expansion (\ref{1/N^2}) certainly holds for $X, Y \in \BB U_{\m}$
such that $XY^{-1}$ is diagonalizable with both eigenvalues having the same
length and
$$
\biggl| \frac{\sqrt{1+\m^2N(X)}-1}{\sqrt{1+\m^2N(X)}+1} \biggr|
< \biggl| \frac{\sqrt{1+\m^2N(Y)}-1}{\sqrt{1+\m^2N(Y)}+1} \biggr|.
$$
The condition that $XY^{-1}$ is diagonalizable with both eigenvalues having
the same length is automatically satisfied if $X \in U(2)_{R_1}$ and
$Y \in U(2)_{R_2}$.

Using single variable calculus, we can prove:

\begin{lem}
Let $0 < R < \m^{-1}$. Then, as $X$ ranges over $U(2)_R$,
$$
\frac{\sqrt{1+\m^2R^2}-1}{\sqrt{1+\m^2R^2}+1} \le
\biggl| \frac{\sqrt{1+\m^2N(X)}-1}{\sqrt{1+\m^2N(X)}+1} \biggr|
\le \frac{1-\sqrt{1-\m^2R^2}}{\sqrt{1-\m^2R^2}+1}.
$$
\end{lem}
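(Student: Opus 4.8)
The plan is to reduce the statement to a one-variable monotonicity question, exploiting the fact that $N(X)$ sweeps out a circle as $X$ ranges over $U(2)_R$. First I would note that every $X \in U(2)_R$ has the form $X = RZ$ with $Z \in U(2)$, so $N(X) = R^2 \det Z$; since $\det \colon U(2) \to U(1)$ is surjective, $N(X)$ ranges over the entire circle $\{R^2 e^{i\beta} : \beta \in \BB R\}$ of radius $R^2$. Writing $c = \m^2 R^2$, the hypothesis $0 < R < \m^{-1}$ forces $c \in (0,1)$, so $1 + \m^2 N(X) = 1 + c e^{i\beta}$ lies in the open disk of radius $c$ about $1$, which stays in the half-plane $\re > 1 - c > 0$. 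Hence the principal branch $s := \sqrt{1 + \m^2 N(X)}$ is well defined with $\re s > 0$, and the problem becomes that of finding the range of $\bigl| \frac{s-1}{s+1} \bigr|$ as $\beta$ varies.

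The key simplification, and the one genuinely non-obvious step, is the identity
$$
\frac{s-1}{s+1} = \frac{s^2 - 1}{(s+1)^2} = \frac{c e^{i\beta}}{(s+1)^2},
\qquad\text{so that}\qquad
\biggl| \frac{s-1}{s+1} \biggr| = \frac{c}{|s+1|^2}.
$$
Because $c$ is constant, the extrema of the left-hand side are governed entirely by those of $|s+1|^2$, with the ordering reversed: the minimum of this modulus occurs where $|s+1|^2$ is largest and the maximum where $|s+1|^2$ is smallest. Writing $s = x + iy$ with $x > 0$, I would record $|s+1|^2 = p + 2x + 1$, where $p := |s|^2 = \sqrt{1 + 2c\cos\beta + c^2}$ and, from $\re(s^2) = x^2 - y^2 = 1 + c\cos\beta$ together with $x^2 + y^2 = p$, one obtains $2x^2 = p + 1 + c\cos\beta$.

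It then remains to prove $|s+1|^2$ is monotone on $[0,\pi]$; the symmetry $\beta \mapsto -\beta$, which sends $s$ to $\bar s$ and fixes $|s+1|^2$, extends this to all $\beta$. Differentiating gives $p'(\beta) = -c\sin\beta / p \le 0$ and, for $q := 2x^2 = p + 1 + c\cos\beta$, $q'(\beta) = -c\sin\beta\,(p^{-1} + 1) \le 0$ on $[0,\pi]$; thus both $p$ and $x$ decrease, and hence so does $|s+1|^2 = p + 2x + 1$. Consequently $\bigl| \frac{s-1}{s+1} \bigr| = c/|s+1|^2$ increases on $[0,\pi]$, taking its minimum at $\beta = 0$ (where $N(X) = R^2$, $s = \sqrt{1+c}$) and its maximum at $\beta = \pi$ (where $N(X) = -R^2$, $s = \sqrt{1-c}$).

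Finally I would evaluate and rationalize at the two endpoints: $\frac{c}{(\sqrt{1+c}+1)^2} = \frac{\sqrt{1+c}-1}{\sqrt{1+c}+1}$ and $\frac{c}{(\sqrt{1-c}+1)^2} = \frac{1 - \sqrt{1-c}}{1 + \sqrt{1-c}}$, which, after substituting $c = \m^2 R^2$, are exactly the asserted lower and upper bounds. The only real subtlety is the opening identity $\bigl| \frac{s-1}{s+1} \bigr| = c/|s+1|^2$; once that is in hand, the monotonicity is a routine sign check, which is presumably the "single variable calculus" referred to in the statement.
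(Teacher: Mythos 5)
Your proof is correct, and since the paper omits the argument entirely (saying only "using single variable calculus, we can prove"), yours is exactly the kind of argument intended: the reduction via $\bigl|\tfrac{s-1}{s+1}\bigr| = \m^2R^2/|s+1|^2$ turns the claim into a one-variable monotonicity check in $\beta$, with the endpoint evaluations at $N(X)=R^2$ and $N(X)=-R^2$ giving precisely the stated bounds. All the computational steps (the surjectivity of $\det$ on $U(2)$, the branch choice, the derivative signs, and the rationalizations) check out.
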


Define subspaces of $\Zh_{\m}$ similar to $\Zh^+$, $\Zh^-$ and $\Zh^0$:
\begin{align*}
\Zh^+_{\m} &= \BB C-\text{span of }
\bigl\{ f_{k,l,m,n}(Z);\: k \ge 0 \bigr\}, \\
\Zh^-_{\m} &= \BB C-\text{span of }
\bigl\{ f_{k,l,m,n}(Z);\: k \le -(2l+2) \bigr\}, \\
\Zh^0_{\m} &= \BB C-\text{span of }
\bigl\{ f_{k,l,m,n}(Z);\: -(2l+1) \le k \le -1 \bigr\}
\end{align*}
(and the ranges of indices $l$, $m$, $n$ are
$l = 0, \frac12, 1, \frac32, \dots$, $m,n = -l , -l+1, \dots, l$, as before).
Thus $\Zh_{\m} = \Zh^-_{\m} \oplus \Zh^0_{\m} \oplus \Zh^+_{\m}$.

\begin{rem}
The spaces $\Zh_{\m}$, $\Zh^-_{\m}$, $\Zh^0_{\m}$ and $\Zh^+_{\m}$ are defined by
analogy with spaces ${\cal H}_{\m}$ and $\Zh = \Zh^- \oplus \Zh^0 \oplus \Zh^+$.
We believe that these spaces can be characterized as images under the
multiplication maps on ${\cal H}^{\pm}_{\m} \otimes {\cal H}^{\pm}_{\m}$.
Thus $\Zh_{\m}$, $\Zh^-_{\m}$, $\Zh^0_{\m}$ and $\Zh^+_{\m}$ should be the images
of ${\cal H}_{\m} \otimes {\cal H}_{\m}$,
${\cal H}^-_{\m} \otimes {\cal H}^-_{\m}$,
${\cal H}^-_{\m} \otimes {\cal H}^+_{\m}$ and
${\cal H}^+_{\m} \otimes {\cal H}^+_{\m}$ respectively.
(Compare with Lemma \ref{image-lemma}.)
\end{rem}

Note that
$$
\Zh^-_{\m} = \BB C-\text{span of }
\bigl\{ f'_{k,l,m,n}(Z);\: k \ge 0 \bigr\}.
$$
From our expansion of
$\bigl(\langle \hat X - \hat Y,\hat X - \hat Y\rangle_{1,4}\bigr)^{-2}$
we immediately obtain the following analogue of Corollary \ref{proj-cor}:

\begin{prop}
Let $0 < R < \m^{-1}$ and $r>0$.
\begin{enumerate}
\item
If $Y \in U(2)_r$ and
$$
\biggl| \frac{\sqrt{1+\m^2N(Y)}-1}{\sqrt{1+\m^2N(Y)}+1} \biggr|
< \frac{\sqrt{1+\m^2R^2}-1}{\sqrt{1+\m^2R^2}+1},
$$
then $r<R$ and the map
$$
f \mapsto (\P_{\m}^+ f)(Y) = 
\frac i{2\pi^3} \int_{X \in U(2)_R} \frac{f(X) \,dV_{R,\m}}
{\bigl(\langle \hat X - \hat Y,\hat X - \hat Y\rangle_{1,4}\bigr)^2},
\qquad f \in \Zh_{\m},
$$
is a projector onto $\Zh^+_{\m}$ annihilating $\Zh^-_{\m} \oplus \Zh^0_{\m}$ and,
in particular, provides a reproducing formula for functions in $\Zh^+_{\m}$;
\item
If $Y \in U(2)_r \cap \BB U_{\m}$ and
$$
\frac{1-\sqrt{1-\m^2R^2}}{\sqrt{1-\m^2R^2}+1} <
\biggl| \frac{\sqrt{1+\m^2N(Y)}-1}{\sqrt{1+\m^2N(Y)}+1} \biggr|,
$$
then $r>R$ and the map
$$
f \mapsto (\P_{\m}^- f)(Y) = 
\frac i{2\pi^3} \int_{X \in U(2)_R} \frac{f(X) \,dV_{R,\m}}
{\bigl(\langle \hat X - \hat Y,\hat X - \hat Y\rangle_{1,4}\bigr)^2},
\qquad f \in \Zh_{\m},
$$
is a projector onto $\Zh^-_{\m}$ annihilating $\Zh^0_{\m} \oplus \Zh^+_{\m}$ and,
in particular, provides a reproducing formula for functions in $\Zh^-_{\m}$.
\end{enumerate}
\end{prop}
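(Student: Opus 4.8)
The plan is to prove both statements by substituting the matrix-coefficient expansion (\ref{1/N^2}) of $\bigl(\langle\hat X-\hat Y,\hat X-\hat Y\rangle_{1,4}\bigr)^{-2}$ into the defining integral and then collapsing the resulting series against the orthogonality relations (\ref{mu-orthogonality}). Since the two parts are dual, I would write out part (1) in full and obtain part (2) by the same argument with the roles of the two families $f_{k,l,m,n}$ and $f'_{k,l,m,n}$ interchanged.

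For part (1), I would first record that the hypothesis on $Y$ forces $r<R$: the preceding Lemma applied at radius $r$ gives $\frac{\sqrt{1+\m^2r^2}-1}{\sqrt{1+\m^2r^2}+1}\le\bigl|\frac{\sqrt{1+\m^2N(Y)}-1}{\sqrt{1+\m^2N(Y)}+1}\bigr|$, and combining this with the assumed upper bound and the monotonicity of $t\mapsto\frac{\sqrt{1+\m^2t^2}-1}{\sqrt{1+\m^2t^2}+1}$ (which follows from writing it as $1-\frac2{\sqrt{1+\m^2t^2}+1}$) yields $r<R$. More importantly, the same Lemma at radius $R$ shows that \emph{every} $X\in U(2)_R$ satisfies $\bigl|\frac{\sqrt{1+\m^2N(Y)}-1}{\sqrt{1+\m^2N(Y)}+1}\bigr|<\bigl|\frac{\sqrt{1+\m^2N(X)}-1}{\sqrt{1+\m^2N(X)}+1}\bigr|$, so the $Y$-quantity lies strictly below the $X$-quantity uniformly over the contour. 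This is precisely the regime in which the expansion (\ref{1/N^2}), read with $X$ and $Y$ interchanged (the kernel being symmetric in its two arguments), converges pointwise absolutely; I would invoke this absolute convergence to integrate term by term over $U(2)_R$.

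Carrying out the substitution gives
\begin{equation*}
(\P_{\m}^+f)(Y)=\sum_{k\ge0,\,l,m,n}(2l+1)\m^{4l+4}\,f_{k,l,m,n}(Y)\cdot\langle f,f'_{k,l,m,n}\rangle_{\m}.
\end{equation*}
Applying this to a basis element $f_{k',l',m',n'}$ and using the orthogonality relations (\ref{mu-orthogonality}), namely $\langle f_{k',l',m',n'},f'_{k,l,m,n}\rangle_{\m}=\frac{\m^{-4l-4}}{2l+1}\delta_{kk'}\delta_{ll'}\delta_{mm'}\delta_{nn'}$, the pairing is nonzero only when $k=k'\ge0$, whereupon the $\m$- and $(2l+1)$-factors cancel. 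Hence $\P_{\m}^+f_{k',l',m',n'}=f_{k',l',m',n'}$ when $k'\ge0$ and $\P_{\m}^+f_{k',l',m',n'}=0$ when $k'<0$. Since $\Zh^+_{\m}$ is the $\BB C$-span of the $f_{k,l,m,n}$ with $k\ge0$ and $\Zh^-_{\m}\oplus\Zh^0_{\m}$ the span of those with $k<0$, this proves $\P_{\m}^+$ is the identity on $\Zh^+_{\m}$ and annihilates $\Zh^-_{\m}\oplus\Zh^0_{\m}$, i.e.\ a projector onto $\Zh^+_{\m}$, and its restriction to $\Zh^+_{\m}$ is the asserted reproducing formula. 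For part (2), the opposite bound from the Lemma places the $Y$-quantity strictly above the $X$-quantity, so (\ref{1/N^2}) applies in its original (unswapped) form, producing $(\P_{\m}^-f)(Y)=\sum_{k\ge0,\,l,m,n}(2l+1)\m^{4l+4}f'_{k,l,m,n}(Y)\langle f,f_{k,l,m,n}\rangle_{\m}$; using that $\Zh^-_{\m}$ equals the $\BB C$-span of $\{f'_{k,l,m,n};\,k\ge0\}$ (as noted before the Proposition) together with the same orthogonality, one reads off that $\P_{\m}^-$ is the identity on $\Zh^-_{\m}$ and kills $\Zh^0_{\m}\oplus\Zh^+_{\m}$.

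The main obstacle I anticipate is the convergence bookkeeping rather than the algebra. One must verify that the symmetric kernel admits the expansion (\ref{1/N^2}) in the \emph{correct} ordering on the entire contour $U(2)_R$ (not merely at the single point $Y$), and justify the passage from pointwise absolute convergence to term-by-term integration on the compact torus $U(2)_R$, for instance by a uniform- or dominated-convergence argument. The delicate point is exactly that the strict inequalities in the hypotheses, transported through the preceding Lemma, guarantee that the series converges uniformly on $U(2)_R$ and that the ``wrong'' family of basis functions never resonates; once this is in place the orthogonality collapse is routine.
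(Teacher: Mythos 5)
Your proposal is correct and is essentially the paper's own argument: the paper states this Proposition with no written proof, asserting that it follows ``immediately'' from the expansion (\ref{1/N^2}) together with the preceding Lemma and the orthogonality relations (\ref{mu-orthogonality}), which is exactly the substitution-plus-orthogonality-collapse you carry out (including the correct observation that part (1) uses the expansion with $X$ and $Y$ interchanged and part (2) uses it in its original form). Your extra care about uniform convergence on the compact contour $U(2)_R$ and the identification of $\Zh^-_{\m}$ with the span of $\{f'_{k,l,m,n};\,k\ge 0\}$ only makes explicit what the paper leaves implicit.
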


The reproducing kernel and projector for the space $\Zh^0_{\m}$ can be
obtained formally as in Section \ref{formal-section} and with full rigor
as in Section \ref{embeddings-section}.
The advantage of the anti de Sitter deformation of $\Zh^0$ (and also $\Zh^{\pm}$)
is that now we can extend the functions from this representation to the
ambient five-dimensional Minkowski space $\BB R^{1,4}$, and we expect some
additional flexibility in the permissible choices of integration cycles for the
quaternionic analogues of Cauchy's formula for the second order pole
(cf. Theorem \ref{Zh^0-projector} in this paper for the scalar case and
Theorem 77 in \cite{FL1} for the spinor case).



\separate

\separate

\noindent
{\em Department of Mathematics, Yale University,
P.O. Box 208283, New Haven, CT 06520-8283}\\
{\em Department of Mathematics, Indiana University,
Rawles Hall, 831 East 3rd St, Bloomington, IN 47405}   

\end{document}